\definecolor{mygreen}{rgb}{0.05,0.6,0.05}
\newtheorem{thm}{\sc Theorem.}[section]
\newtheorem{lem}[thm]{\sc Lemma.}
\newtheorem{cor}[thm]{\sc Corollary.}
\newtheorem{rem}[thm]{\sc Remark.}
\newcommand{\R}{{\mathbb R}}
\newcommand{\dx}{\; {\rm d}x}
\newcommand{\ds}{\; {\rm d}s}
\renewcommand{\vec}{}
\newcommand{\unitn}{\vec{\rm n}}
\newcommand{\nabs}{\nabla_{\!s}}
\def\conduct{\mathcal{K}}
\newcommand{\dd}[1]{\frac{\rm d}{{\rm d}#1}}
\newcommand{\ddt}{\dd{t}}
\def\epsilon{\varepsilon} 
\newcommand{\uD}{u_D}
\newcommand{\cPsi}{c_\Psi}
\newcommand{\Varrho}{{\rm P}}
\newcommand{\hatrho}{{\widehat\varrho}}
\newcommand{\ShD}{S^h_D}
\newenvironment{AMS}%
{{\upshape\bfseries AMS subject classifications. }\ignorespaces}{}
\newenvironment{keywords}{{\upshape\bfseries Key words. }\ignorespaces}{}
\def\vL{L\kern-0.08cm\char39}
\begin{document}
\title{
Stable Phase Field Approximations of \\ Anisotropic Solidification}

\author{John W. Barrett\footnotemark[2] \and 
        Harald Garcke\footnotemark[3]\ \and 
        Robert N\"urnberg\footnotemark[2]}

\renewcommand{\thefootnote}{\fnsymbol{footnote}}
\footnotetext[2]{Department of Mathematics, 
Imperial College London, London, SW7 2AZ, UK}
\footnotetext[3]{Fakult{\"a}t f{\"u}r Mathematik, Universit{\"a}t Regensburg, 
93040 Regensburg, Germany}

\date{}

\maketitle

\begin{abstract}
We introduce unconditionally stable finite element approximations for
a phase field model for solidification, which take highly anisotropic
surface energy and kinetic effects into account. We hence approximate Stefan
problems with anisotropic Gibbs--Thomson law with kinetic undercooling, and 
quasi-static variants thereof. The phase field model is given by
\begin{align*}
\vartheta\,w_t + \lambda\,\varrho(\varphi)\,\varphi_t & = 
\nabla \,.\, (b(\varphi)\,\nabla\, w ) \,, \\
\cPsi\,\tfrac{a}\alpha\,\varrho(\varphi)\,w & = 
\epsilon\,\tfrac\rho\alpha\,\mu(\nabla\,\varphi)\,\varphi_t
-\epsilon\,\nabla \,.\, A'(\nabla\, \varphi) + \epsilon^{-1}\,\Psi'(\varphi)
\end{align*}
subject to initial and boundary conditions for the phase variable $\varphi$ and
the temperature approximation $w$.
Here $\epsilon > 0$ is the interfacial parameter, 
$\Psi$ is a double well potential, 
$\cPsi = \int_{-1}^1 \sqrt{2\,\Psi(s)}\;{\rm d}s$, $\varrho$ is a shape
function and $A(\nabla\,\varphi) = \tfrac12\,|\gamma(\nabla\,\varphi)|^2$,
where $\gamma$ is the anisotropic density function. Moreover,
$\vartheta \geq 0$, $\lambda > 0$, $a > 0$, $\alpha > 0$ and $\rho \geq 0$
are physical parameters from the Stefan problem, while $b$ and $\mu$ are
coefficient functions which also relate to the sharp interface problem.

On introducing the novel fully practical finite element approximations 
for the anisotropic phase field model, we prove
their stability and demonstrate their applicability with some numerical
results.
\end{abstract} 

\begin{keywords} 
phase field models, parabolic partial differential equations, 
Stefan problem, anisotropy, 
Allen--Cahn equation, viscous Cahn--Hilliard equation, crystal growth, finite element approximation
\end{keywords}

\begin{AMS} 65M60, 65M12, 35K55, 74N20 \end{AMS}
\renewcommand{\thefootnote}{\arabic{footnote}}

\section{Introduction} \label{sec:0}

Phase field models are a successful approach for interface evolution in
cases where interfacial energy is important, and many numerical
approaches for the underlying equations have been studied in the
literature. However, in situations where anisotropy is incorporated
only very few results related to the numerical analysis of
approximations to the phase field system have appeared in the
literature. The reason for this is that the underlying equations
involve highly nonlinear parabolic partial differential
equations. Since phase field models describe very unstable
solidification phenomena, it seems to be very important to use stable
approximation schemes which do not trigger additional instabilities
resulting from discretization errors. In this context we would like to
mention that there exist many computations on anisotropic
solidification, with the help of phase field equations, showing pattern
formation which is driven by the discretization rather than by the
underlying partial differential equations. The goal of this paper is
to introduce and analyze a new stable finite element approximation for
the anisotropic phase field system. The approach is based on earlier
work for the Allen--Cahn and the Cahn--Hilliard equations, see \cite{eck},
and on ideas on how to handle the anisotropy that have been used earlier for
sharp interface models by the same authors, see \cite{triplejANI, ani3d}.
To our knowledge, the introduced finite element approximation is the first
unconditionally stable approximation of a phase field model 
for anisotropic solidification in the literature.

As the phase field model and its quasi-stationary variant, the viscous
Cahn--Hilliard equation, converge to sharp interface models for
solidification in the asymptotic limit as the interfacial thickness
tends to zero we first introduce the sharp interface model. 
Let $\Gamma(t) \subset \R^d$, $d=2,3$, denote the interface
between a solid and liquid phase, say, or a solid phase and a gas phase.
Then the surface energy of $\Gamma(t)$ is defined as 
\begin{equation} \label{eq:EGamma}
\int_{\Gamma(t)} \gamma(\unitn) \ds \,,
\end{equation} 
where $\unitn$ denotes the unit normal of $\Gamma(t)$, and where 
the anisotropic density function $\gamma : \R^d \to \R_{\geq 0}$ with 
$\gamma\in C^2(\R^d \setminus \{ \vec 0 \}) \cap C(\R^d)$ is assumed to be 
absolutely homogeneous of degree one, i.e.\
\begin{equation}
\gamma(\lambda\,\vec{p}) = |\lambda|\gamma(\vec{p}) \quad \forall \
\vec{p}\in \R^d,\ \forall\ \lambda \in\R \quad \Rightarrow
\quad \gamma'(\vec{p})\,.\,\vec{p} = \gamma(\vec{p})
\quad \forall\ \vec{p}\in \R^d\setminus\{\vec0\}, \label{eq:homo}
\end{equation}
with $\gamma'$ denoting the gradient of $\gamma$.

Relevant for our considerations is the first variation, $-\kappa_\gamma$, of
(\ref{eq:EGamma}), which can be computed as
\begin{equation*}
\kappa_\gamma := - \nabs\,.\, \gamma'(\unitn)\,;
\end{equation*}
where $\nabs .$ is the tangential divergence of $\Gamma$, see e.g.\
\cite{CahnH74,ani3d,dendritic}.
Note that $\kappa_\gamma$ reduces to the sum of the principal curvatures of 
$\Gamma$ in the isotropic case, i.e.\ when $\gamma$ satisfies
\begin{equation} \label{eq:iso}
\gamma(\vec p) = |\vec p| \qquad \forall\ \vec p \in \R^d\,.
\end{equation}

Then the full Stefan problem that we want to consider in this paper is given as
follows, where $\Omega\subset\mathbb{R}^d$ is a given fixed domain with
boundary $\partial\Omega$ and outer normal $\vec\nu$.

Find $u : \Omega \times [0,T] \to \R$ 
and the interface $(\Gamma(t))_{t\in[0,T]}$ such that
for all $t\in (0,T]$ the following conditions hold:
\begin{subequations}
\begin{alignat}{2}
\vartheta\,u_t - \conduct_-\,\Delta u & = 0 
\qquad \mbox{in } \Omega_-(t), \qquad
\vartheta\,u_t - \conduct_+\,\Delta u = 0 
\qquad &&\mbox{in } \Omega_+(t), \label{eq:1a} \\
\left[ \conduct\,\frac{\partial u}{\partial \unitn} \right]_{\Gamma(t)}
&=-\lambda\,{\cal V} \qquad &&\mbox{on } \Gamma(t), 
\label{eq:1b} \\ 
\frac{\rho\,{\cal V}}{\beta(\unitn)} &= \alpha\,\kappa_\gamma - 
a\,u \qquad &&\mbox{on } \Gamma(t), 
\label{eq:1c} \\ 
\frac{\partial u}{\partial \vec\nu} &= 0 
\qquad \mbox{on } \partial_N\Omega, \qquad
u = \uD \qquad &&\mbox{on }
\partial_D \Omega \,, \label{eq:1d} \\
\Gamma(0) & = \Gamma_0 \,, \qquad\qquad 
\vartheta\,u(\cdot,0) = \vartheta\,u_0 \quad && \mbox{in } 
\Omega \,. \label{eq:1e} 
\end{alignat}
\end{subequations}
In the above $u$ denotes the deviation from the melting temperature $T_M$,
i.e.\ $T_M$ is the melting temperature for a planar interface.
In addition, $\Omega_-(t)$ is the solid region, with boundary
$\Gamma(t) = \partial \Omega_-(t)$, so that the liquid region is given 
by $\Omega_+(t):=\Omega\setminus\overline{\Omega_-(t)}$. 
Here we assume that the solid region $\overline{\Omega_-(t)}$ has no
intersection with the external boundary $\partial\Omega$, but more general
situations can also be considered, as will be outlined in Section~\ref{sec:3}
below.
Moreover, here and throughout this paper, for
a quantity $v$ defined on $\Omega$, we use the shorthand notations
$v_- := v\!\mid_{\Omega_-}$ and $v_+ := v\!\mid_{\Omega_+}$.
The parameters $\vartheta\geq0$, $\lambda>0$, $\rho\geq0$, $\alpha>0$, $a>0$ 
are assumed to be constant, while $\conduct_\pm>0$ are
assumed to be constant in each phase. 
The mobility coefficient $\beta:\R^d \to \R_{\geq0}$ is assumed to satisfy
$\beta(\vec p) > 0$ for all $\vec p \not= \vec 0$ and to be positively
homogeneous of degree one.
In addition 
$[\conduct\,\frac{\partial u}{\partial \unitn}]_{\Gamma(t)}(\vec{z}) := 
(\conduct_+\,\frac{\partial u_+}{\partial \unitn} - 
\conduct_-\,\frac{\partial u_-}{\partial \unitn})
(\vec{z})$ for all $\vec{z}\in\Gamma(t)$, and
$\mathcal{V}$ is the velocity of $\Gamma(t)$ in the direction of its normal
$\unitn$, which from now on we assume is pointing into $\Omega_+(t)$. 
Finally, $\partial\Omega =
\overline{\partial_N\Omega}\cup\overline{\partial_D\Omega}$ with
$\partial_N\Omega\cap\partial_D\Omega = \emptyset$, 
$\uD : \partial_D\Omega \to \R$ is the applied supercooling at the boundary,
and
$\Gamma_0 \subset \overline\Omega$ and
$u_0 : \Omega \to \R$ are given initial data.

The model (\ref{eq:1a}--e) can be derived for example
within the theory of rational thermodynamics and we refer to
\cite{Gurtin88a} for details. We remark that a derivation from thermodynamics
would lead to the identity
$a = \frac{\lambda}{T_M}$.
We note that
(\ref{eq:1b}) is the well-known Stefan condition, while (\ref{eq:1c}) is the
Gibbs--Thomson condition, with kinetic undercooling if $\rho>0$. The
case $\vartheta>0$, $\rho>0$, $\alpha >0$ leads to the Stefan
problem with the Gibbs--Thomson law and kinetic undercooling.
In some models in the literature, see e.g.\ \cite{Luckhaus90}, the
kinetic undercooling is set to zero, i.e.\ $\rho=0$. Setting
$\vartheta=\rho=0$ but keeping $\alpha>0$ leads to the
Mullins--Sekerka problem with the Gibbs--Thomson law, see
\cite{MullinsS63}. 

For later reference, we introduce the function spaces
\begin{equation*}
S_0 := \{ \eta \in H^1(\Omega) : \eta = 0 \ \mbox{ on } \partial_D\Omega \}
\quad\mbox{and}\quad
S_D := \{ \eta \in H^1(\Omega) : \eta = \uD \ \mbox{ on } \partial_D\Omega\}
\,, 
\end{equation*}
where we assume for simplicity of the presentation from now on that 
\begin{align}
\mbox{either\quad (i)~~} & \partial\Omega = \partial_D\Omega\,, 
\mbox{\qquad\quad (ii)~~} \partial\Omega = \partial_N\Omega\,, \nonumber \\
\mbox{or \quad (iii)~~} &
\Omega=(-H,H)^d,\quad \partial_D\Omega = [-H,H]^{d-1}\times\{H\},\quad H > 0\,;
\label{eq:assOmega}
\end{align}
and, in the cases (\ref{eq:assOmega})(i) and (iii), 
that $\uD \in H^{\frac{1}{2}} (\partial_D \Omega)$. For notational convenience,
we define $\uD:=0$ in the case (\ref{eq:assOmega})(ii). 

We recall from \cite{dendritic} that, on assuming that $\uD$ is constant,
for a solution $u$ and $\Gamma$ to
(\ref{eq:1a}--e) it can be shown that the following formal energy equality 
holds
\begin{align}
& \ddt\left(\frac\vartheta2\,|u-\uD|^2_0 + 
\frac{\lambda\,\alpha}{a}\, \int_{\Gamma(t)} \gamma(\unitn)\ds
-\lambda\,\uD\,|\Omega_+(t)|\right) + (\conduct\,\nabla\,u, \nabla\,u) 
\nonumber \\ & \hspace{10cm}
+\frac{\lambda\,\rho}{a}\, \int_{\Gamma(t)} \frac{{\cal V}^2}{\beta(\unitn)} \ds
= 0 \,, \label{eq:testD}
\end{align}
where $(\cdot,\cdot)$ denotes the $L^2$--inner product
over $\Omega$, with the corresponding norm given by $|\cdot|_0$, and where
$|\Omega_+(t)| := \int_{\Omega_+(t)} 1 \dx$.

In Section~\ref{sec:2} we will precisely state a phase field model
which approximates the free boundary problem (\ref{eq:1a}--e). We only
mention here that the phase field method is based on the idea of a
diffuse interface, which hence has a positive thickness.
Let us briefly discuss some relevant literature. For
solidification the phase field method was originally proposed by
\cite{Langer86} as a model for solidification of a pure
substance. It was \cite{Kobayashi93} who first was able to
simulate complicated dendritic patterns which resemble those 
appearing during solidification. Since then an enormous effort has
gone into numerically studying phase field models. We refer only to
\cite{ElliottG96preprint,KarmaR96,KarmaR98} and to the reviews 
\cite{BoettingerWBK02,Chen02,McFadden02,SingerLoginovaS08}.

A phase field model, and its numerical approximation, for the sharp interface 
problem (\ref{eq:1a}--e) with $\vartheta=\rho=0$ and
$\conduct_+=\conduct_-$ has been considered in the recent paper \cite{eck}. 
In particular, the authors were able to present unconditionally stable finite
element approximations, where the treatment of the anisotropy does not lead to
new nonlinearities compared to the isotropic situation.
It is one of the aims of the present article to extend the discretizations 
in \cite{eck} to the more general problem (\ref{eq:1a}--e),
i.e.\ in particular to the case $\vartheta > 0$, and $\rho > 0$, and to a wider 
class of anisotropies than considered in \cite{eck}. The new anisotropies
considered in the present article will lead to more nonlinear schemes, however.

The remainder of the paper is organized as follows. 
In Section~\ref{sec:2}
we state the two phase field models for the approximation of
the sharp interface problem (\ref{eq:1a}--e) that we want to consider in this
paper.
In Section~\ref{sec:3} we introduce our finite element approximations for 
these problems, and we prove stability results 
for these approximations. 
Solution methods for the discrete equations are shortly reviewed in
Section~\ref{sec:4}.
In addition, we present several numerical experiments in
Section~\ref{sec:5}. 

\setcounter{equation}{0} 
\section{Phase field models and anisotropies} \label{sec:2}

Phase field models are a computational tool to compute approximations for sharp
interface evolutions such as (\ref{eq:1a}--e), without having to capture the
sharp interface $\Gamma(t)$ directly. On introducing a phase field
$\varphi:\Omega \times (0,T) \to \R$, where the sets
$\Omega_\pm^\epsilon(t) := 
\{ x \in \Omega : \pm \varphi(x,t) > 0\}$ are approximations to
$\Omega_\pm(t)$, a system of partial differential equations for $\varphi$ can
be derived so that the zero level sets of $\varphi$ formally approximate the
interface $\Gamma(t)$, satisfying e.g.\ (\ref{eq:1a}--e), 
in a well defined limit. 
For more details on phase field
methods and other approaches to the approximation of the evolution of
interfaces we refer to the review article \cite{DeckelnickDE05} and the
references therein.

On introducing the small interfacial parameter $\epsilon>0$, it can be shown
that
$$
\frac1\cPsi\,\mathcal{E}_\gamma(\varphi) \approx 
\int_{\Gamma} \gamma(\unitn) \ds\,,
$$
for $\epsilon$ sufficiently small, where
\begin{equation} \label{eq:Eg}
{\cal E}_\gamma(\varphi) := 
\int_\Omega \tfrac\epsilon2\,|\gamma(\nabla\, \varphi)|^2 +
\epsilon^{-1}\,\Psi(\varphi) \dx
\quad \text{with} \quad
\cPsi := \int_{-1}^1 \sqrt{2\,\Psi(s)}\;{\rm d}s\,.
\end{equation}
Here $\Psi : \R \to [0,\infty]$ is a double well potential, which for
simplicity we assume to be symmetric and to have its global minima at $\pm1$.
The canonical example is
\begin{equation} \label{eq:quartic}
\Psi(s) := \tfrac14\,(s^2 - 1)^2
\qquad\Rightarrow\qquad
\Psi'(s) = s^3-s
\quad \text{and}\quad
\cPsi = \tfrac13\,{2^{\frac32}}\,.
\end{equation}
Another possibility is to choose
\begin{equation} \label{eq:obstacle}
\Psi(s):= \begin{cases}
\textstyle \frac 12 \left(1-s^2\right)  & |s|\leq 1\,,\\
\infty & |s|> 1\,,
\end{cases} 
\qquad\Rightarrow\qquad
\cPsi = \tfrac\pi2\,;
\end{equation}
see e.g.\ \cite{BloweyE92,ElliottG96preprint,Elliott97}. 
Clearly the obstacle potential (\ref{eq:obstacle}), 
which forces $\varphi$ to stay within the interval $[-1,1]$,
is not differentiable at $\pm1$. 
Hence, whenever we write $\Psi'(s)$ in the case (\ref{eq:obstacle}) in this
paper, we mean that the
expression holds only for $|s|<1$, and that in general a variational inequality
needs to be employed. While it can be shown that the asymptotic interface 
thickness in phase field models with (\ref{eq:Eg}) for the isotropic surface
energy (\ref{eq:iso}) is proportional to $\epsilon$, for anisotropic energy
densities the asymptotic interface thickness is no longer uniform, but now also
depends on $\gamma$ and on $\nabla\,\varphi$, see
e.g.\ \cite{BellettiniP96, ElliottS96, WheelerM96}.

We remark that other, non-classical,
phase field models are based on the energy
\begin{equation} \label{eq:TL}
\int_\Omega |\nabla\,\varphi|^{-1}\,\gamma(\nabla\,\varphi)
\left(\tfrac\epsilon2\,|\nabla\, \varphi|^2 +
\epsilon^{-1}\,\Psi(\varphi) \right)\dx
\end{equation}
for e.g.\ the smooth double-well potential (\ref{eq:quartic}),
see \cite{TorabiLVW09}.
The energy (\ref{eq:TL}) has the
advantage that the asymptotic interface thickness is now only determined by
$\epsilon$ (independently of $\gamma$ and the orientation of the interface),  
whereas the disadvantage is that the resultant partial differential equations 
become more nonlinear and are singular at $\nabla\,\varphi = \vec 0$.
We note that higher order regularizations of the energies 
(\ref{eq:Eg}) and (\ref{eq:TL}) in the case of a
non-convex anisotropy density function $\gamma$, 
which lead to sixth order Cahn--Hilliard type equations, 
have been considered in e.g.\ \cite{LiLRV09}. 

We are not aware of any numerical analysis for discretizations of 
anisotropic phase field
models for (\ref{eq:1a}--e) involving either (\ref{eq:Eg}) or (\ref{eq:TL}).

We now state the two phase field models that we are going to consider in this
paper. 
To this end, for $\vec p \in \R^d$, let
\begin{equation}
A(\vec p) = \tfrac12\,|\gamma(\vec p)|^2 
\quad\Rightarrow\quad
A'(\vec p) = 
\begin{cases}
\gamma(\vec p)\,\gamma'(\vec p) & \vec p \not= 0\,, \\
\vec 0 & \vec p = \vec 0\,,
\end{cases}
\label{eq:Ap}
\end{equation}
and define
\begin{equation} \label{eq:mu}
\mu(\vec p) = \begin{cases}
\dfrac{\gamma(\vec p)}{\beta(\vec p)} & \vec p \not = \vec 0\,, \\
\bar\mu & \vec p = \vec 0\,,
\end{cases}
\end{equation}
where $\bar\mu\in\R$ is a constant satisfying
$\min_{\vec{p} \not= \vec 0} \frac{\gamma(\vec p)}{\beta(\vec p)}
\leq \bar\mu \leq 
\max_{\vec{p} \not= \vec 0} \frac{\gamma(\vec p)}{\beta(\vec p)}$.

\subsection{Viscous Cahn--Hilliard equation}

A phase field model for (\ref{eq:1a}--e) with $\vartheta = \rho = 0$
has been recently studied by the authors in \cite{eck}. The case
$\vartheta = 0$ and $\rho \geq 0$ gives rise to the following
viscous Cahn--Hilliard equation for the anisotropic Ginzburg--Landau energy
(\ref{eq:Eg}), where $w$ is a phase field approximation
to the (rescaled) temperature $u$:
\begin{subequations}
\begin{alignat}{2}
\tfrac12\,\lambda\,\varphi_t & = 
\nabla \,.\, (b(\varphi)\,\nabla\, w )
&& \mbox{in} \;\;\Omega_T:=\Omega\times(0,T)\,, \label{eq:CHa} \\ 
\tfrac12\,\cPsi\,\frac{a}\alpha\,w & = 
\epsilon\,
\frac\rho\alpha\,\mu(\nabla\,\varphi)\,\varphi_t
-\epsilon\,\nabla \,.\, A'(\nabla\, \varphi) +\epsilon^{-1}\,\Psi'(\varphi) 
\qquad && \mbox{in} \;\;\Omega_T\,, \label{eq:CHb} \\ 
\frac{\partial \varphi}{\partial \vec\nu} & = 0\,, \qquad
&& \mbox{on}\;\;\partial \Omega\times(0,T)\,, \label{eq:CHc} \\
w & = \uD
&& \mbox{on}\;\;\partial_D \Omega\times(0,T)\,, \label{eq:CHd} \\
b(\varphi)\,\frac{\partial w}{\partial \vec\nu} & = 0 \,,\qquad
&& \mbox{on}\;\;\partial_N \Omega\times(0,T)\,, \label{eq:CHe} \\
\qquad \varphi(\cdot,0) & = \varphi_0 && \mbox{in} \;\;\Omega\,, \label{eq:CHf}
\end{alignat}
\end{subequations}
where
\begin{equation} \label{eq:b}
b(s) = \tfrac12\,(1+s)\,\conduct_+ + \tfrac12\,(1-s)\,\conduct_-\,.
\end{equation}
With the help of formal asymptotics, see e.g.\
\cite{McFaddenWBCS93,WheelerM96,BellettiniP96}, it can be shown that the sharp
interface limit of (\ref{eq:CHa}--f), i.e.\ the limit as $\epsilon\to0$,
is given by the 
quasi-static Stefan problem (or Mullins--Sekerka problem)
(\ref{eq:1a}--e) with $\vartheta = 0$, and with $u$ denoting the sharp
interface limit of $w$. 

We remark that the phase field analogue of the sharp interface energy identity
(\ref{eq:testD}) in the case $\vartheta=0$ is given by the formal 
energy bound
\begin{equation} \label{eq:pfLyap}
\ddt\left(\frac{\lambda\,\alpha}{a}\,\frac1\cPsi\,\mathcal{E}_\gamma(\varphi) 
- \tfrac12\,\lambda\,\uD\,\int_\Omega \varphi \dx \right) 
+ (b(\varphi)\,\nabla\,w, \nabla\, w)
+ \epsilon\,
\frac{\lambda\,\rho}{a}\,\frac1\cPsi
\left(\mu(\nabla\,\varphi), (\varphi_t)^2\right)
 \leq 0
\end{equation}
for the phase field model (\ref{eq:CHa}--f) with the potential
(\ref{eq:obstacle}). For smooth potentials such as (\ref{eq:quartic}) the
energy law (\ref{eq:pfLyap}) holds with equality.

\subsection{Heat equation coupled to Allen--Cahn}

The second phase field model is based on the work in
\cite{ElliottG96preprint}, see also \cite{Kobayashi93,KarmaR96} 
for other related approaches, and allows the sharp interface limit 
(\ref{eq:1a}--e) with $\vartheta \geq 0$. 
It consists of a heat equation for the phase field temperature approximation 
$w$ coupled to an Allen--Cahn phase field
equation for $\varphi$. In particular, we have the modified heat equation
\begin{subequations}
\begin{alignat}{2} \label{eq:heata} 
\vartheta\,w_t + \lambda\,\varrho(\varphi)\,\varphi_t & =
\nabla \,.\, (b(\varphi)\,\nabla\, w )  \qquad && \mbox{in } \Omega_T\,, \\
w & = \uD 
&& \mbox{on}\;\;\partial_D \Omega\times(0,T)\,, \label{eq:heatb} \\
b(\varphi)\,\frac{\partial w}{\partial \vec\nu} & = 0 \qquad
&& \mbox{on}\;\;\partial_N \Omega\times(0,T)\,, \label{eq:heatc} \\
\vartheta\,w(\cdot,0) & = \vartheta\,w_0 \quad && \mbox{in } 
\Omega \,, \label{eq:heatd} 
\end{alignat}
\end{subequations}
where $b$ is defined in (\ref{eq:b}), and where the 
function $\varrho \in C^1(\R)$ is such that
\begin{equation*} 
\varrho(s) \geq 0\quad\forall\ s \in[-1,1]\,,\quad
\int^1_{-1} \varrho(y) \; {\rm d}y = 1 \quad \text{and} \quad
\Varrho(s) := \int^s_{-1} \varrho(y) \; {\rm d}y\,.
\end{equation*}
We note that $\Varrho$, which is a monotonically increasing function over the 
interval $[-1,1]$ with $\Varrho(-1) = 0$ and $\Varrho(1) = 1$, 
is often called the interpolation
function. In this paper, we follow the convention from 
\cite{ElliottG96preprint}, where 
$\varrho = \Varrho'$ is called the shape function. 
More details on interpolation functions $\Varrho$, respectively
shape functions $\varrho$, can be found in e.g.\
\cite{WangSWMCM93,GarckeS06,EckGS06,CaginalpCE08}. 
In particular, if one also assumes
symmetry, i.e.\
\begin{equation*} 
\varrho(s) = \varrho(-s) \quad\forall\ s \in[-1,1]\,,
\end{equation*}
then a faster convergence of the phase field model to the sharp interface
limit, as $\epsilon\to0$, can be shown on prescribing suitable first order
corrections in $\epsilon$ for the remaining phase field parameters;
see \cite{KarmaR96,KarmaR98,EckGS06,GarckeS06} for details.
Possible choices of $\varrho$ that will be considered in this paper are
\begin{equation} \label{eq:varrho}
\text{(i)}\ 
\varrho(s) = \tfrac12\,,\qquad
\text{(ii)}\ 
\varrho(s) = \tfrac12\,(1 - s)\,,\qquad
\text{(iii)}\ 
\varrho(s) = \tfrac{15}{16}\,(s^2 - 1)^2\,.
\end{equation}
The heat equation (\ref{eq:heata}--d) 
is coupled to the following modified Allen--Cahn equation:
\begin{subequations}
\begin{alignat}{2}
\cPsi\,\frac{a}\alpha\,\varrho(\varphi)\,w & = 
\epsilon\,\frac\rho\alpha\,\mu(\nabla\,\varphi)\,\varphi_t
-\epsilon\,\nabla \,.\, A'(\nabla\, \varphi) +\epsilon^{-1}\,\Psi'(\varphi) 
\qquad && \mbox{in} \;\;\Omega_T\,, \label{eq:ACa} \\ 
\frac{\partial \varphi}{\partial \vec\nu} & = 0 \qquad
&& \mbox{on}\;\;\partial \Omega\times(0,T)\,, \label{eq:ACb} \\
\qquad \varphi(\cdot,0) & = \varphi_0 && \mbox{in} \;\;\Omega\,. \label{eq:ACc}
\end{alignat}
\end{subequations}
We remark that the phase field analogue of the sharp interface energy identity
(\ref{eq:testD}) is given by the formal energy bound
\begin{align} \label{eq:pf2Lyap}
& \ddt\left(
\frac\vartheta2\,|w - \uD|_0^2 +
\frac{\lambda\,\alpha}{a}\,\frac1\cPsi\,\mathcal{E}_\gamma(\varphi) - 
\lambda\,\uD\,\int_\Omega \Varrho(\varphi) \dx \right) 
+ (b(\varphi)\,\nabla\,w, \nabla\, w)
\nonumber \\ & \hspace{7cm}
+ \epsilon\,
\frac{\lambda\,\rho}{a}\,\frac1\cPsi
\left(\mu(\nabla\,\varphi), (\varphi_t)^2\right)  \leq 0
\end{align}
for the phase field model (\ref{eq:heata}--d), (\ref{eq:ACa}--c)
with the potential
(\ref{eq:obstacle}). For smooth potentials such as (\ref{eq:quartic}) the
energy law (\ref{eq:pf2Lyap}) holds with equality.
We remark that the energy decay in (\ref{eq:pf2Lyap}) for the phase field model
{\rm (\ref{eq:heata}--d)}, {\rm (\ref{eq:ACa}--c)} means that the model can
be said to be thermodynamically consistent. For more details on 
thermodynamically consistent phase field models we refer to e.g.\
\cite{PenroseF90,WangSWMCM93}.

\begin{rem} \label{rem:one}
We remark that in the special case $\vartheta = 0$, and if we choose 
{\rm (\ref{eq:varrho})(i)}, 
then clearly {\rm (\ref{eq:heata}--d)}, {\rm (\ref{eq:ACa}--c)} collapses
to the system {\rm (\ref{eq:CHa}--f)}. 
Similarly, the energy law {\rm (\ref{eq:pf2Lyap})} in this case collapses to 
{\rm (\ref{eq:pfLyap})}. 
Hence from now on in this paper, we will only consider the
more general model {\rm (\ref{eq:heata}--d)}, {\rm (\ref{eq:ACa}--c)}.
Finally we note that the phase field model {\rm (\ref{eq:CHa}--f)} in the case
$\rho = 0$ was recently considered in \cite{eck}.
\end{rem}

We observe that for $\epsilon$ small, on recalling that the thickness of the
interfacial region goes to zero as $\epsilon\to0$, it holds that
\begin{equation} \label{eq:Pterm}
\int_\Omega \Varrho(\varphi) \dx \approx
\Varrho(1)\,|\Omega_+^\epsilon(t)| + \Varrho(-1)\,|\Omega_-^\epsilon(t)|
= |\Omega_+^\epsilon(t)| \,,
\end{equation}
which is a consequence of the fact that 
$\Varrho(\varphi)$ approximates the characteristic function of the liquid
phase $\Omega_+(t)$.
It is clear from (\ref{eq:pf2Lyap}) and (\ref{eq:Pterm}) that for negative values of $\uD$, $\varphi$
is encouraged to take on negative values, so that the approximate
liquid region $\Omega_+^\epsilon(t)$ shrinks, 
whereas positive values of $\uD$ encourage $\varphi$ to
take on positive values, so that the liquid region grows. Of course, this is
simply the phase field analogue of the sharp interface behaviour induced by 
(\ref{eq:testD}).
A side effect of the interpolation function $\Varrho$ in (\ref{eq:pf2Lyap}),
however, is that the function
\begin{equation} \label{eq:G}
G(s) = \alpha\,(a\,\cPsi\,\epsilon)^{-1}\,\Psi(s) - \uD\,\Varrho(s)
\end{equation}
need no longer have local minima at $s = \pm1$. This can result, for example, 
in undesired, artificial boundary layers for strong supercoolings, i.e.\ when
$-\uD$ is large; see also Remarks~\ref{rem:bl} and \ref{rem:sbl} below.
For smooth potentials $\Psi$, 
sufficient conditions for $s=\pm1$ to be local minimum points of $G(s)$ 
are $\varrho(\pm1)=\varrho'(\pm1)=0$, which is evidently satisfied by
(\ref{eq:varrho})(iii). 
In fact, in applications phase field models for solidification almost
exclusively use the quartic potential {\rm (\ref{eq:quartic})} together with
this shape function; see e.g.\ \cite{BoettingerWBK02,Chen02,McFadden02}. 

For the obstacle potential (\ref{eq:obstacle}) the situation is similar,
although there is more flexibility in the possible choices of $\varrho$. 
In particular, here a
sufficient condition for $G(s)$ to have local minima at $s=\pm1$ is given by
\begin{equation} \label{eq:obstcond}
\alpha\,(a\,\cPsi\,\epsilon)^{-1} \pm \uD\,\varrho(\pm1) \geq 0\,.
\end{equation}
Clearly, (\ref{eq:obstcond}) is always satisfied for (\ref{eq:varrho})(iii),
while for $\uD < 0$ it is sufficient to require $\varrho(1) = 0$, e.g.\ by
choosing (\ref{eq:varrho})(ii). A major advantage of (\ref{eq:varrho})(ii)
over (\ref{eq:varrho})(iii) is that for the former it will be possible to
derive almost linear finite element approximations that are 
unconditionally stable. The corresponding unconditionally stable schemes for
the nonlinear shape function (\ref{eq:varrho})(iii), on the other hand, turn
out to be more nonlinear.
Conversely, if $\uD>0$, then only $\varrho(-1) = 0$ is needed in order to
satisfy (\ref{eq:obstcond}). 
The natural analogue for {\rm (\ref{eq:varrho})(ii)} in
this situation is then 
\begin{equation} \label{eq:varrhoii}
\varrho(s) = \tfrac12\,(1 + s)\,,
\end{equation}
and once again it is
possible to derive almost linear finite element approximations that are 
unconditionally stable for this choice of $\varrho$.

Finally we note that the quartic potential {\rm (\ref{eq:quartic})} is often 
preferred in applications because the
discretized equations can then be solved with smooth solution methods, such as
the Newton method. However, the quartic potential has the disadvantage that 
a priori it cannot be guaranteed that $|\varphi| \leq 1$ at all times,
and in practice it can in general be observed that discretizations of $\varphi$
exceed the interval $[-1,1]$. Hence from a practical and from a numerical
analysis point of view it is preferable to use the obstacle potential 
{\rm (\ref{eq:obstacle})}. Here we note that the discretized equations, which
feature variational inequalities, can be efficiently solved with a variety of
modern solution methods; see e.g.\ 
\cite{voids,GraserK07,voids3d,mgch,BlankBG11,HintermullerHT11,%
GraserKS12preprint}.

\subsection{Anisotropies}

In this paper, we will only consider smooth and convex anisotropies, i.e.\
they satisfy
\begin{equation}
\gamma'(\vec{p})\,.\,\vec{q} \leq \gamma(\vec{q})
\quad \forall\ \vec p \in \R^d\setminus\{\vec 0\}\,, \vec q \in \R^d\,,
\label{eq:gest}
\end{equation}
which, on recalling (\ref{eq:homo}), is equivalent to
\begin{equation} \label{eq:convex}
\gamma(\vec{p}) + \gamma'(\vec{p})\,.\,(\vec{q}-\vec{p})\leq\gamma(\vec{q})
\quad \forall\ \vec p \in \R^d\setminus\{\vec 0\}\,, \vec q \in \R^d\,.
\end{equation}

It is the aim of this paper to introduce unconditionally stable finite element
approximations for the phase field models (\ref{eq:CHa}--f) and 
(\ref{eq:heata}--d), (\ref{eq:ACa}--c). 
Based on earlier work by the authors in the context of the
parametric approximation of anisotropic geometric evolution equations
\cite{triplejANI,ani3d}, the crucial idea here is to restrict the class of
anisotropies under consideration. 
The special structure of the chosen anisotropies
can then be exploited to develop discretizations that are stable without the
need for regularization and without a restriction on the time step size.

In particular, the class of anisotropies that we will consider in this paper
is given by
\begin{equation} \label{eq:g}
\gamma(\vec{p}) = \left(\sum_{\ell=1}^L
[\gamma_{\ell}(\vec{p})]^r\right)^{\frac1r}, \quad
\gamma_\ell(\vec{p}):= [{\vec{p}\,.\,G_{\ell}\,\vec{p}}]^\frac12\,,
\qquad \forall\ \vec p \in \R^d\,,\qquad r \in [1,\infty)\,,
\end{equation}
where $G_{\ell} \in \R^{d\times d}$, for $\ell=1\to L$, 
are symmetric and positive definite matrices. 
This class of anisotropies 
has been previously considered by the authors in \cite{ani3d,dendritic}. 
We remark
that anisotropies of the form (\ref{eq:g}) are always strictly convex norms.
In particular, they satisfy (\ref{eq:convex}).
However, despite this seemingly restrictive choice, it is possible with
(\ref{eq:g}) to model and approximate a wide variety of anisotropies that are
relevant in materials science. For the sake of brevity, we refer to the
exemplary Wulff shapes in the authors' previous papers 
\cite{triplejANI,ani3d,clust3d,dendritic,ejam3d,crystal}. 
We remark that in the case $r=1$
all of the numerical schemes introduced in 
Section~\ref{sec:3}, below, will feature no additional nonlinearities compared
to the isotropic case (\ref{eq:iso}). In particular, the finite element
approximation in Section~\ref{sec:31} for the obstacle potential
(\ref{eq:obstacle}) will feature only linear equations and linear 
variational inequalities; see also \cite{eck}. 
Finally, we note that in the two-dimensional case ($d=2$),
the anisotropies (\ref{eq:g}) with the choice $r=1$
adequately approximate most relevant anisotropies.
However, in the three-dimensional setting ($d=3$), it is often necessary to use
$r>1$ in (\ref{eq:g}) in order to model a chosen anisotropy. See \cite{ani3d}
for more details.

In the following, we establish some crucial results for anisotropies of the
form (\ref{eq:g}). 
Note that for $\gamma$ satisfying (\ref{eq:g}) it holds that
\begin{equation} \label{eq:Aprime}
A'(\vec p) = \gamma(\vec p)\,\gamma'(\vec p) 
\,, \quad\text{where}\quad \gamma'(\vec p) = 
\sum_{\ell = 1}^L \left[\frac{\gamma_\ell(\vec p)}{\gamma(\vec p)}\right]^{r-1}
\gamma_\ell'(\vec p)
\qquad \forall\ \vec p \in \R^d\setminus\{\vec 0\}\,.
\end{equation}

For later use we recall 
the elementary identity 
\begin{equation} \label{eq:element}
2\,y\,(y-z) = y^2 - z^2 + (y-z)^2\,.
\end{equation}
Moreover, from now on we use the convention that
\begin{equation} \label{eq:00}
\frac{\gamma_\ell(\vec p)}{\gamma(\vec p)} := 1 \qquad \text{if}\quad 
\vec p = \vec 0\,, \qquad \ell = 1\to L\,.
\end{equation}

\begin{lem}\label{lem:1}
Let $\gamma$ be of the form {\rm (\ref{eq:g})}. Then it holds that
\begin{equation} \label{eq:gr1}
\gamma(\vec p) \leq L^{\frac1{r\,(r+1)}}\,
\left(\sum_{\ell=1}^L
[\gamma_{\ell}(\vec{p})]^{r+1}\right)^{\frac1{r+1}}
\qquad \forall\ \vec p \in \R^d\,.
\end{equation}
Moreover, $\gamma$ is convex and
the anisotropic operator $A$ satisfies
\begin{alignat}{2} \label{eq:mono}
 A'(\vec p) \,.\, (\vec p-\vec q) & \geq 
 \gamma(\vec p)\,[\gamma(\vec p)-\gamma(\vec q) ]
 \qquad && \forall\ \vec p \in \R^d\setminus\{\vec 0\}\,, \vec q \in \R^d\,, \\
 A(\vec p) & \leq \tfrac12\,\gamma(\vec q)\,
 \sum_{\ell=1}^L \left[ \frac{\gamma_\ell(\vec p)}{\gamma(\vec p)}\right]^{r-1}
[\gamma_\ell(q)]^{-1}\,[\gamma_\ell(p)]^2 
 \qquad && \forall\ \vec p\in \R^d\,, \vec q \in \R^d\setminus\{\vec 0\}\,, 
 \label{eq:CS}
\end{alignat}
where in {\rm (\ref{eq:CS})} we recall the convention {\rm (\ref{eq:00})}. 
\end{lem}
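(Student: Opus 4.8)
The plan is to handle the three assertions in turn, in each case reducing the statement about $\gamma$ to an elementary scalar inequality on the quantities $\gamma_\ell(\vec p)=[\vec p\,.\,G_\ell\,\vec p]^{\frac12}$. For the norm inequality (\ref{eq:gr1}) I would read $\gamma(\vec p)=\left(\sum_{\ell=1}^L[\gamma_\ell(\vec p)]^r\right)^{\frac1r}$ as the $\ell^r$-norm of the vector $(\gamma_1(\vec p),\dots,\gamma_L(\vec p))\in\R^L_{\geq0}$ and apply H\"older's inequality to $\sum_\ell[\gamma_\ell(\vec p)]^r=\sum_\ell[\gamma_\ell(\vec p)]^r\cdot1$ with conjugate exponents $\tfrac{r+1}r$ and $r+1$. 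This gives $\sum_\ell[\gamma_\ell(\vec p)]^r\leq\left(\sum_\ell[\gamma_\ell(\vec p)]^{r+1}\right)^{\frac r{r+1}}L^{\frac1{r+1}}$, and raising to the power $\tfrac1r$ yields (\ref{eq:gr1}). This is nothing but the standard embedding $\|\cdot\|_r\leq L^{\frac1r-\frac1{r+1}}\|\cdot\|_{r+1}$ and needs no structural input beyond $\gamma_\ell\geq0$.

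For convexity, I would note that each $\gamma_\ell$ is the norm induced by the inner product $\vec p\,.\,G_\ell\,\vec q$ (using that $G_\ell$ is symmetric positive definite), hence convex, while $N(\vec x)=\left(\sum_\ell|x_\ell|^r\right)^{\frac1r}$ is a convex norm on $\R^L$ that is nondecreasing in each coordinate on $\R^L_{\geq0}$; as $\gamma=N\circ(\gamma_1,\dots,\gamma_L)$ is the composition of a convex nondecreasing outer function with convex inner functions, $\gamma$ is convex, so (\ref{eq:gest}) and (\ref{eq:convex}) hold. The monotonicity (\ref{eq:mono}) then follows at once: using $A'(\vec p)=\gamma(\vec p)\,\gamma'(\vec p)$ from (\ref{eq:Aprime}) and dividing by $\gamma(\vec p)>0$ (valid since $\vec p\neq\vec 0$), the claim becomes $\gamma'(\vec p)\,.\,(\vec p-\vec q)\geq\gamma(\vec p)-\gamma(\vec q)$; the homogeneity identity $\gamma'(\vec p)\,.\,\vec p=\gamma(\vec p)$ from (\ref{eq:homo}) rewrites the left-hand side as $\gamma(\vec p)-\gamma'(\vec p)\,.\,\vec q$, so this reduces exactly to the support inequality $\gamma'(\vec p)\,.\,\vec q\leq\gamma(\vec q)$, i.e.\ (\ref{eq:gest}).

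The Cauchy--Schwarz-type bound (\ref{eq:CS}) is the main obstacle. First I would dispose of $\vec p=\vec 0$, where by the convention (\ref{eq:00}) both sides vanish. For $\vec p\neq\vec 0$, since $A(\vec p)=\tfrac12\gamma(\vec p)^2$, expanding the factors $[\gamma_\ell(\vec p)/\gamma(\vec p)]^{r-1}$ and clearing the powers of $\gamma(\vec p)$ shows that (\ref{eq:CS}) is equivalent to $\gamma(\vec p)^{r+1}\leq\gamma(\vec q)\sum_\ell[\gamma_\ell(\vec p)]^{r+1}[\gamma_\ell(\vec q)]^{-1}$, where $\gamma_\ell(\vec q)>0$ because $\vec q\neq\vec 0$ and $G_\ell$ is positive definite. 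The crux is to establish this reduced inequality, and the trick is to apply H\"older to $\sum_\ell[\gamma_\ell(\vec p)]^r$ via the factorization $[\gamma_\ell(\vec p)]^r=\bigl([\gamma_\ell(\vec p)]^{r+1}[\gamma_\ell(\vec q)]^{-1}\bigr)^{\frac r{r+1}}\bigl([\gamma_\ell(\vec q)]^r\bigr)^{\frac1{r+1}}$ with the same conjugate exponents $\tfrac{r+1}r$ and $r+1$ as above. This yields $\sum_\ell[\gamma_\ell(\vec p)]^r\leq\left(\sum_\ell[\gamma_\ell(\vec p)]^{r+1}[\gamma_\ell(\vec q)]^{-1}\right)^{\frac r{r+1}}\left(\sum_\ell[\gamma_\ell(\vec q)]^r\right)^{\frac1{r+1}}$; raising to the power $\tfrac{r+1}r$ and recognizing $\left(\sum_\ell[\gamma_\ell(\vec q)]^r\right)^{\frac1r}=\gamma(\vec q)$ gives precisely the reduced inequality. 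Identifying the correct exponent splitting is the only genuine difficulty here; the remainder is bookkeeping, and the case $r=1$, where H\"older degenerates into the classical Cauchy--Schwarz inequality, is covered automatically.
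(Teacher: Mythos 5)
Your proof is correct, and two of its three pillars coincide with the paper's. For (\ref{eq:gr1}) your H\"older split (padding with $1$'s, conjugate exponents $\tfrac{r+1}{r}$ and $r+1$) is exactly the paper's, and for (\ref{eq:CS}) your factorization $[\gamma_\ell(\vec p)]^r=\bigl([\gamma_\ell(\vec p)]^{r+1}[\gamma_\ell(\vec q)]^{-1}\bigr)^{\frac{r}{r+1}}\bigl([\gamma_\ell(\vec q)]^{r}\bigr)^{\frac{1}{r+1}}$ is the same splitting the paper uses, followed by the same reduction via (\ref{eq:Ap}); your explicit check of $\vec p=\vec 0$ there is a small addition the paper leaves implicit. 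The genuine difference is the convexity part. The paper proves the support inequality (\ref{eq:gest}) directly: starting from the formula for $\gamma'$ in (\ref{eq:Aprime}), it bounds $(G_\ell\,\vec p)\,.\,\vec q\leq\gamma_\ell(\vec p)\,\gamma_\ell(\vec q)$ by Cauchy--Schwarz for each inner product $G_\ell$, then applies H\"older with exponents $\tfrac{r}{r-1}$ and $r$ (using $\sum_{\ell}[\gamma_\ell(\vec p)/\gamma(\vec p)]^r=1$) to conclude $\gamma'(\vec p)\,.\,\vec q\leq\gamma(\vec q)$; after that, (\ref{eq:convex}), convexity and (\ref{eq:mono}) follow exactly as in your final step. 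You instead obtain convexity abstractly, as a composition of the convex norms $\gamma_\ell$ with a convex outer norm that is coordinatewise nondecreasing on $\R^L_{\geq0}$, and then recover (\ref{eq:convex}) from the first-order characterization of convexity --- which requires differentiability of $\gamma$ away from the origin (valid here, since the $G_\ell$ are positive definite) --- and (\ref{eq:gest}) from (\ref{eq:homo}). Your route is more structural and would work verbatim for arbitrary convex nonnegative $\gamma_\ell$, not just square roots of quadratic forms; the paper's route is a self-contained computation that never invokes the gradient inequality for convex functions and, resting on the same Cauchy--Schwarz/H\"older pattern as (\ref{eq:CS}), keeps the whole lemma to a single technique.
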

\begin{proof}
It follows from a H\"older inequality that
\begin{equation*} 
[\gamma(\vec p)]^r \leq L^{\frac1{r+1}}\,
\left(\sum_{\ell=1}^L
[\gamma_{\ell}(\vec{p})]^{r+1}\right)^{\frac{r}{r+1}}
\qquad \forall\ \vec p \in \R^d\,,
\end{equation*}
which immediately yields the desired result (\ref{eq:gr1}). 
Next we prove (\ref{eq:gest}). It follows from (\ref{eq:Aprime}), a 
Cauchy--Schwarz and a H\"older inequality that
\begin{align*}
\gamma'(\vec p) \,.\,\vec q & = 
\sum_{\ell = 1}^L \left[\frac{\gamma_\ell(\vec p)}{\gamma(\vec p)}\right]^{r-1}
[\gamma_\ell(\vec p)]^{-1}\,(G_\ell\,\vec p)\,.\,\vec q \leq 
\sum_{\ell = 1}^L \left[\frac{\gamma_\ell(\vec p)}{\gamma(\vec p)}\right]^{r-1}
\gamma_\ell(\vec q) \\ & \leq
\left( \sum_{\ell = 1}^L 
\left[\frac{\gamma_\ell(\vec p)}{\gamma(\vec p)}\right]^r \right)^{\frac{r-1}r}
\left( \sum_{\ell = 1}^L [\gamma_\ell(\vec q)]^r \right)^{\frac1r}
= \gamma(\vec q)
\qquad \forall\ \vec p \in \R^d\setminus\{\vec 0\}\,, \vec q \in \R^d\,.
\end{align*}
Together with (\ref{eq:homo}) this implies (\ref{eq:convex}), i.e.\ $\gamma$ is
convex. Multiplying (\ref{eq:convex}) with $\gamma(\vec p)$ yields the desired 
result (\ref{eq:mono}).
Moreover, we have from a H\"older inequality that 
\begin{align*} 
[\gamma(\vec p)]^r & = \sum_{\ell = 1}^L 
[\gamma_\ell(q)]^{\frac{r}{r+1}}\,
\frac{[\gamma_\ell(p)]^r}{[\gamma_\ell(q)]^{\frac{r}{r+1}}} \leq
\left( \sum_{\ell = 1}^L [\gamma_\ell(\vec q)]^r \right)^\frac1{r+1}
\left( \sum_{\ell = 1}^L \frac{[\gamma_\ell(p)]^{r+1}}{\gamma_\ell(q)} 
\right)^{\frac{r}{r+1}} \\ \Rightarrow \quad 
[\gamma(\vec p)]^{r+1} & \leq \gamma(\vec q)\,
\sum_{\ell = 1}^L [\gamma_\ell(p)]^{r+1}\,[\gamma_\ell(q)]^{-1}
\qquad \forall\ \vec p \in \R^d\,, \vec q \in \R^d\setminus\{\vec 0\}\,.
\end{align*}
This immediately yields the desired result (\ref{eq:CS}), on recalling 
(\ref{eq:Ap}).
\end{proof}

Our aim now is to replace the highly nonlinear operator 
$A'(\vec p):\R^d\to\R^d$ in 
(\ref{eq:Aprime}) with an almost linear approximation (linear for $r=1$) 
that still maintains the
crucial monotonicity property (\ref{eq:mono}). It turns out that a
natural linearization is already given in (\ref{eq:Aprime}). 
In particular, we let  
\begin{equation} \label{eq:Br}
B_r(\vec q, \vec p) := \begin{cases}
\gamma(\vec q)\,\displaystyle\sum_{\ell = 1}^L 
\left[\frac{\gamma_\ell(\vec p)}{\gamma(\vec p)}\right]^{r-1}
[\gamma_\ell(\vec q)]^{-1}\,G_\ell
& \vec q \not= \vec 0\,, \\
L^{\frac1r}\,\displaystyle\sum_{\ell = 1}^L 
\left[\frac{\gamma_\ell(\vec p)}{\gamma(\vec p)}\right]^{r-1}
G_\ell & \vec q = \vec 0\,,
\end{cases}
\qquad \forall\ \vec p \in \R^d\,,
\end{equation}
where in the case $\vec p = \vec 0$ we recall (\ref{eq:00}). For later use we
note for $\vec q \in \R^d$ that
\begin{equation} \label{eq:r1}
B_1(\vec q, \vec p) = B_1(\vec q, \vec 0) =: B_1(\vec q) \qquad 
\forall\ \vec p \in \R^d\,.
\end{equation}
Clearly it holds that
$$
B_r(\vec p, \vec p)\,\vec p = A'(\vec p) 
\qquad \forall\ \vec p \in \R^d\setminus\{\vec 0\}\,,
$$
and it turns out that approximating $A'(\vec p)$ with 
$B_r(\vec q, \vec p)\,\vec p$
maintains the monotonicity property (\ref{eq:mono}).

\begin{lem} \label{lem:B}
Let $\gamma$ be of the form {\rm (\ref{eq:g})}. Then it holds that
\begin{equation} \label{eq:Bmono}
[B_r(\vec q, \vec p)\,\vec p]\,.\,(\vec p-\vec q) 
\geq \gamma(p)\,\left[\gamma(p)-\gamma(q)\right] 
\qquad \forall\ \vec p \,, \vec q \in \R^d \,.
\end{equation}
\end{lem}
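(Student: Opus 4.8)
The plan is to expand the left-hand side of (\ref{eq:Bmono}) as
$$[B_r(\vec q,\vec p)\,\vec p]\,.\,(\vec p-\vec q) = [B_r(\vec q,\vec p)\,\vec p]\,.\,\vec p - [B_r(\vec q,\vec p)\,\vec p]\,.\,\vec q\,,$$
to bound the two terms separately under the assumption $\vec p,\vec q\not=\vec 0$, and then to dispose of the degenerate cases at the end. Recalling from (\ref{eq:g}) that $\vec p\,.\,G_\ell\,\vec p = [\gamma_\ell(\vec p)]^2$, the first term obtained from the definition (\ref{eq:Br}) equals $\gamma(\vec q)\sum_{\ell=1}^L[\gamma_\ell(\vec p)/\gamma(\vec p)]^{r-1}[\gamma_\ell(\vec q)]^{-1}[\gamma_\ell(\vec p)]^2$. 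This is precisely twice the right-hand side of (\ref{eq:CS}), so that bound yields at once $[B_r(\vec q,\vec p)\,\vec p]\,.\,\vec p \geq 2\,A(\vec p) = [\gamma(\vec p)]^2$.

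For the second term I would use the Cauchy--Schwarz inequality in the inner product induced by each symmetric positive definite matrix $G_\ell$, i.e.\ $(G_\ell\,\vec p)\,.\,\vec q \leq \gamma_\ell(\vec p)\,\gamma_\ell(\vec q)$. Inserting this into the definition (\ref{eq:Br}) cancels the factor $[\gamma_\ell(\vec q)]^{-1}$ against $\gamma_\ell(\vec q)$ and collapses the term to $\gamma(\vec q)\,[\gamma(\vec p)]^{-(r-1)}\sum_{\ell=1}^L[\gamma_\ell(\vec p)]^r$; the definition (\ref{eq:g}) then identifies the remaining sum with $[\gamma(\vec p)]^r$, leaving the clean estimate $[B_r(\vec q,\vec p)\,\vec p]\,.\,\vec q \leq \gamma(\vec p)\,\gamma(\vec q)$. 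Subtracting the two bounds gives $[\gamma(\vec p)]^2-\gamma(\vec p)\,\gamma(\vec q) = \gamma(\vec p)\,[\gamma(\vec p)-\gamma(\vec q)]$, which is exactly (\ref{eq:Bmono}).

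It remains to handle the degenerate cases. If $\vec p=\vec 0$ then both sides of (\ref{eq:Bmono}) vanish, since $B_r(\vec q,\vec 0)\,\vec 0 = \vec 0$ and $\gamma(\vec 0)=0$. The only genuinely separate case is $\vec q=\vec 0$ with $\vec p\not=\vec 0$: here the second term is absent and, using the $\vec q=\vec 0$ branch of (\ref{eq:Br}), the left-hand side equals $L^{1/r}\,[\gamma(\vec p)]^{-(r-1)}\sum_{\ell=1}^L[\gamma_\ell(\vec p)]^{r+1}$ while the right-hand side is $[\gamma(\vec p)]^2$; the desired inequality is then precisely (\ref{eq:gr1}) raised to the power $r+1$. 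I expect this final point to be the only real subtlety of the argument: the nondegenerate case is an almost mechanical combination of (\ref{eq:CS}) with Cauchy--Schwarz, but the constant $L^{1/r}$ appearing in the $\vec q=\vec 0$ branch of $B_r$ is not arbitrary --- it is calibrated exactly so that the H\"older bound (\ref{eq:gr1}) keeps the monotonicity intact in the limit $\vec q\to\vec 0$.
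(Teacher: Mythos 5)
Your proof is correct and takes essentially the same route as the paper's: both arguments estimate the term $[B_r(\vec q,\vec p)\,\vec p]\,.\,\vec p$ from below via (\ref{eq:CS}), control the cross term $[B_r(\vec q,\vec p)\,\vec p]\,.\,\vec q$ by the Cauchy--Schwarz inequality in the inner product induced by each $G_\ell$, and settle the case $\vec q=\vec 0$ with (\ref{eq:gr1}). The only cosmetic difference is that you separate the two terms at the outset, whereas the paper expands $(\vec p-\vec q)\,.\,G_\ell\,\vec p$ inside the sum before applying the identical estimates.
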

\begin{proof}
If $\vec p = 0$ then (\ref{eq:Bmono}) trivially holds.
Now let $\vec p \in \R^d\setminus\{\vec0\}$.
If $\vec q \neq \vec 0$ it holds, on recalling (\ref{eq:CS}), that
\begin{align*}
[B_r(\vec q, \vec p)\,\vec p]&\,.\,(p-q) = 
\gamma(q)\, \sum_{\ell=1}^L
\left[\frac{\gamma_\ell(\vec p)}{\gamma(\vec p)}\right]^{r-1}
 [\gamma_\ell(q)]^{-1}\,(p-q)\,.\,G_\ell\,p \\ & 
\geq \gamma(q) \,\sum_{\ell=1}^L 
\left[\frac{\gamma_\ell(\vec p)}{\gamma(\vec p)}\right]^{r-1}
 \gamma_\ell(p)\,([\gamma_\ell(q)]^{-1}\,\gamma_\ell(p) - 1) \\
& = \gamma(q) \,\sum_{\ell=1}^L 
\left[\frac{\gamma_\ell(\vec p)}{\gamma(\vec p)}\right]^{r-1}
[\gamma_\ell(q)]^{-1}\,[\gamma_\ell(p)]^2 
 - \gamma(q)\, \gamma(p) 
\geq \gamma(p)\,\left[\gamma(p)-\gamma(q)\right] .
\end{align*}
If $\vec q = \vec 0$, on the other hand, then it follows from (\ref{eq:gr1}) 
that
\begin{align*}
[B_r(\vec q, \vec p)\, \vec p]\,.\,(p-q) & = 
[B_r(\vec 0, \vec p)\,\vec p]\,.\,p  
= L^{\frac1r} [\gamma(\vec p)]^{1-r}\,
\sum_{\ell=1}^L [\gamma_\ell(\vec p)]^{r+1} 
\geq 
[\gamma(p)]^2\,.
\end{align*}
\end{proof}

\begin{cor} \label{cor:B}
Let $\gamma$ be of the form {\rm (\ref{eq:g})}. Then it holds that
\begin{equation} \label{eq:Bstab}
[B_r(\vec q, \vec p)\, \vec p]\,.\,(\vec p-\vec q) 
\geq A(\vec p) - A(q) 
\qquad \forall\ \vec p \,, \vec q \in \R^d \,.
\end{equation}
\end{cor}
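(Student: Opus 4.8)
The plan is to obtain (\ref{eq:Bstab}) as an immediate consequence of Lemma~\ref{lem:B}, so that no new structural work on the anisotropy is required. Lemma~\ref{lem:B} already supplies the lower bound
\[
[B_r(\vec q, \vec p)\,\vec p]\,.\,(\vec p-\vec q) \geq \gamma(\vec p)\,[\gamma(\vec p)-\gamma(\vec q)]
\]
valid for all $\vec p, \vec q \in \R^d$, including the degenerate cases $\vec p = \vec 0$ or $\vec q = \vec 0$ that lemma handles explicitly. Hence it suffices to show that the scalar quantity $\gamma(\vec p)\,[\gamma(\vec p)-\gamma(\vec q)]$ dominates $A(\vec p) - A(\vec q)$.

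First I would recall from (\ref{eq:Ap}) that $A = \tfrac12\,\gamma^2$, so the target inequality reduces to the purely one-dimensional statement
\[
\gamma(\vec p)\,[\gamma(\vec p)-\gamma(\vec q)] \geq \tfrac12\,[\gamma(\vec p)]^2 - \tfrac12\,[\gamma(\vec q)]^2
\]
between the two nonnegative reals $\gamma(\vec p)$ and $\gamma(\vec q)$. Applying the elementary identity (\ref{eq:element}) with $y = \gamma(\vec p)$ and $z = \gamma(\vec q)$ rewrites the left-hand side as
\[
\gamma(\vec p)\,[\gamma(\vec p)-\gamma(\vec q)] = \tfrac12\,\big([\gamma(\vec p)]^2 - [\gamma(\vec q)]^2\big) + \tfrac12\,[\gamma(\vec p)-\gamma(\vec q)]^2 = A(\vec p) - A(\vec q) + \tfrac12\,[\gamma(\vec p)-\gamma(\vec q)]^2 \,.
\]
Discarding the manifestly nonnegative square term then yields (\ref{eq:Bstab}) after chaining with the bound from Lemma~\ref{lem:B}.

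There is essentially no obstacle here: the substance lives entirely in Lemma~\ref{lem:B}, and the passage from the $\gamma$-bound to the $A$-bound is just the ``completing the square'' trick encoded in (\ref{eq:element}). The only point worth verifying is that the argument survives at $\vec p = \vec 0$ and $\vec q = \vec 0$; but since both Lemma~\ref{lem:B} and the identity (\ref{eq:element}) hold for arbitrary arguments, and $A(\vec 0) = \gamma(\vec 0) = 0$, no separate case distinction is needed.
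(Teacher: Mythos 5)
Your proposal is correct and is exactly the paper's own argument: the paper also deduces (\ref{eq:Bstab}) directly from Lemma~\ref{lem:B} combined with the elementary identity (\ref{eq:element}) applied to $y = \gamma(\vec p)$, $z = \gamma(\vec q)$, discarding the nonnegative square term. Your write-up merely makes explicit the one-line computation the paper leaves implicit.
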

\begin{proof}
The desired result follows immediately from Lemma~\ref{lem:B} on noting the
elementary identity (\ref{eq:element}). 
\end{proof}

\setcounter{equation}{0} 
\section{Finite element approximations} \label{sec:3}
Let $\Omega$ be a polyhedral domain and 
let $\{{\cal T}^h\}_{h>0}$ be a family of partitionings of $\Omega$ into
disjoint open simplices $\sigma$ with $h_{\sigma}:={\rm diam}(\sigma)$
and $h:=\max_{\sigma \in {\cal T}^h}h_{\sigma}$, so that
$\overline{\Omega}=\cup_{\sigma\in{\cal T}^h}\overline{\sigma}$.
Associated with ${\cal T}^h$ is the finite element space
\begin{equation*} 
 S^h :=  \{\chi \in C(\overline{\Omega}) : \chi \mid_{\sigma} \mbox{ is linear }
 \forall\ \sigma \in {\cal T}^h\} \subset H^1(\Omega).
\end{equation*}
Let $J$ be the set of nodes of ${\cal T}^h$ and $\{p_{j}\}_{j \in J}$ the
coordinates of these nodes.
Let $\{\chi_{j}\}_{j\in J}$ be the standard basis
functions for $S^h$; that is $\chi_{j} \in S^h$ and $\chi_j(p_{i})=\delta_{ij}$
for all $i,j \in J$.
We introduce $\pi^h:C(\overline{\Omega})\rightarrow S^h$, the interpolation
operator, such that $(\pi^h \eta)(p_j)= \eta(p_j)$ for all $j \in J$. A
discrete semi-inner product on $C(\overline{\Omega})$ is then defined by
\begin{equation*} 
 (\eta_1,\eta_2)^h := \int_\Omega \pi^h(\eta_1(x)\,\eta_2(x))\dx 
\end{equation*}
with the induced discrete semi-norm given by
$|\eta|_h := [\,(\eta,\eta)^h\,]^{\frac{1}{2}}$, for
$\eta\in C(\overline{\Omega})$. We extend these definitions to
functions that are piecewise continuous on $\mathcal{T}^h$ in the usual way,
i.e.\ by setting
\begin{equation*}
( \eta_1, \eta_2 )^h  :=
\sum_{\sigma \in\mathcal{T}^h} ( \eta_1, \eta_2 )^h_\sigma\,,
\end{equation*}
where
$$
( \eta_1, \eta_2 )^h_\sigma := \frac{|\sigma|}{d+1}\,
\sum_{k=0}^{d} 
(\eta_1\,\eta_2) ( (\vec{p}_{j_k})^-),
$$
with $\{\vec{p}_{j_k}\}_{k=0}^{d}$ denoting the vertices of $\sigma$,
and where we define $\eta((\vec{p}_{j_k})^-):=
\underset{\sigma\ni \vec{q}\to \vec{p}_{j_k}}{\lim}\, \eta(\vec{q})$,
$k=0\to d$.

We introduce also
\begin{align*} 
 K^h & := \{\chi \in S^h : |\chi| \leq 1 \mbox{ in } \Omega \}
 \subset K 
 :=\{\eta \in H^1(\Omega) : |\eta| \leq 1 \mbox{ $a.e.$ in }\Omega\}\,, \\
 S^h_0& := \{\chi \in S^h : \chi = 0 \ \mbox{ on $\partial_D\Omega$} \}  
\quad \mbox{and} \quad
 \ShD:= \{\chi \in S^h : \chi = \pi^h\uD\ 
\mbox{ on $\partial_D\Omega$} \} 
\,,
\end{align*}
where in the definition of $\ShD$ we allow for
$\uD \in H^\frac12(\partial_D\Omega) \cap C(\overline{\partial_D\Omega})$.

In addition to ${\cal T}^h$, let
$0= t_0 < t_1 < \ldots < t_{N-1} < t_N = T$ be a
partitioning of $[0,T]$ into possibly variable time steps $\tau_n := t_n -
t_{n-1}$, $n=1\rightarrow N$. We set
$\tau := \max_{n=1\rightarrow N}\tau_n$. 

In the following we will present stable finite element approximations for the
phase field model (\ref{eq:heata}--d), (\ref{eq:ACa}--c) for the obstacle 
potential (\ref{eq:obstacle}) and for the case of a smooth potential such as
(\ref{eq:quartic}), respectively. In order to obtain stable approximations,
the three nonlinearities arising in (\ref{eq:ACa}) from $\varrho(\varphi)$, 
from $A'(\nabla\,\varphi)$ and from $\Psi'(\varphi)$ need to be discretized
appropriately in time. Here the discretization of $A'(\nabla\,\varphi)$ induced
by Corollary~\ref{cor:B} is novel, and is one of the main contributions of this
paper.
The employed splitting of $\Psi'(\varphi)$ 
into implicit/explicit time
discretizations according to a convex/concave splitting of $\Psi$, 
on the other hand, is standard; 
see e.g.\ \cite{ElliottS93,BarrettBG99}.
We employ the same idea to the splitting of $\varrho(\varphi)$, for which we
now introduce some notation. A similar notation will be used in
Section~\ref{sec:32} for the splitting of $\Psi'(\varphi)$ in the case of a
smooth potential $\Psi$.

Let $\varrho^\pm \in C^1(\R)$ such that 
$\varrho(s) = \varrho^+(s) + \varrho^-(s)$. In our finite element
schemes $\varrho^+$ will play the role of the implicit part of the
approximation of $\varrho$, while $\varrho^-$ corresponds to the
explicit part.
We now define
\begin{equation} \label{eq:hatrho}
\hatrho(s_0, s_1) := \varrho^+(s_1) + \varrho^-(s_0) \quad \forall\
s_0,s_1\in\R\,,
\end{equation}
as well as $\Varrho^\pm(s) := \int_{-1}^s \varrho^\pm(y)\;{\rm d}y$.
Of particular interest will be splittings such that
\begin{equation} \label{eq:rhosplit}
\pm\,\uD\,(\varrho^\pm)'(s) \leq 0 \qquad 
\forall\ s \leq \tfrac2{\sqrt{3}} \,. 
\end{equation}
If $\uD < 0$, then (\ref{eq:rhosplit}) enforces $\Varrho^+(s)$ to be convex for 
$s \leq \frac2{\sqrt{3}}$, while $\Varrho^-(s)$ is concave over the same 
region. Possible splittings satisfying 
(\ref{eq:rhosplit}) for the shape functions in 
(\ref{eq:varrho}) are then given by
\begin{alignat}{2}
\text{(i)\ }\ &
\varrho^+(s) = 0\,,\quad && \varrho^-(s) = \varrho(s) = \tfrac12\,, \nonumber\\
\text{(ii)\ }\ &
\varrho^+(s) = 0\,, \quad &&\varrho^-(s) = \varrho(s) = \tfrac12\,(1-s)\,, 
\label{eq:split} \\
\text{(iii) }\ &
\varrho^+(s) = \tfrac{3}{2}\,s\,, \quad &&
\varrho^-(s) = \varrho(s) - \tfrac{3}{2}\,s = 
\tfrac{15}{16}\,(s^4-2\,s^2-\tfrac85\,s+1)\,. \nonumber
\end{alignat}
The fact that the splitting (\ref{eq:split})(iii) satisfies (\ref{eq:rhosplit}) 
follows from the observation that in that case 
$\max_{s \leq \frac2{\sqrt{3}}} \varrho'(s) = \varrho'(\frac2{\sqrt{3}})
= \tfrac{15}{16}\,\frac8{\sqrt{27}} < \tfrac32$.
Note that the above splittings were chosen such that the implicit part of the
approximation of $\varrho$ is as simple as possible.
If $\uD > 0$, on the other hand, then swapping the roles of $\varrho^\pm$ in
(\ref{eq:split}) will satisfy (\ref{eq:rhosplit}). 
However, as the implicit parts
$\varrho^+$ are then unnecessarily nonlinear in the cases (\ref{eq:varrho})(ii)
and (\ref{eq:varrho})(iii), it is more convenient, on recalling
(\ref{eq:varrhoii}), to use the splittings 
\begin{alignat}{2}
\text{(ii)\ }\ &
\varrho^+(s) = 0\,, \quad &&\varrho^-(s) = \varrho(s) = \tfrac12\,(1+s)\,, 
\label{eq:split+} \\
\text{(iii)\ }\ &
\varrho^+(s) = -\tfrac{3}{2}\,s\,,\quad && 
\varrho^-(s) = \varrho(s) + \tfrac32\,s = 
\tfrac{15}{16}\,(s^4-2\,s^2+\tfrac85\,s+1)\,, \nonumber
\end{alignat}
which will then satisfy 
\begin{equation} \label{eq:rhosplit+}
\pm\,\uD\,(\varrho^\pm)'(s) \leq 0 \qquad 
\forall\ s \geq -\tfrac2{\sqrt{3}} \,. 
\end{equation}

\subsection{The obstacle potential} \label{sec:31}

We then consider the following fully practical finite element
approximation for (\ref{eq:heata}--d), (\ref{eq:ACa}--c) in the case of the
obstacle potential (\ref{eq:obstacle}). 
This approximation is an adaptation of the scheme from
\cite{ElliottG96preprint} which, 
with the help of Corollary~\ref{cor:B}, can be shown to be stable. 
Let $\Phi^0 \in K^h$ be an approximation of $\varphi_0 \in K$, e.g.\
$\Phi^0 = \pi^h \varphi_0$ for $\varphi_0 \in C(\overline{\Omega})$. 
Similarly, if $\vartheta>0$ let $W^0 \in \ShD$ be an approximation of 
$u_0$.
Then, for $n \geq 1$, find $(\Phi^n,W^n) \in K^h \times \ShD$ such that
\begin{subequations}
\begin{align}
& \vartheta \left(\dfrac{W^{n}-W^{n-1}}{\tau_n},
 \chi \right)^h + 
\lambda\,\left(\hatrho(\Phi^{n-1}, \Phi^n)\,\dfrac{\Phi^{n}-\Phi^{n-1}}{\tau_n},
 \chi \right)^h 
+ ( \pi^h[b(\Phi^{n-1})]\,\nabla\, W^{n} , \nabla\, \chi )
 = 0 \nonumber \\ & \hspace{10cm}
\qquad \forall\ \chi \in S^h_0, \label{eq:U} \\
& 
\epsilon\,\frac\rho\alpha\left(\mu(\nabla\,\Phi^{n-1})\,
\dfrac{\Phi^{n}-\Phi^{n-1}}{\tau_n}, \chi - \Phi^n\right)^h +
\epsilon\,(B_r(\nabla\, \Phi^{n-1}, \nabla\, \Phi^n)
\, \nabla\, \Phi^{n}, \nabla\, [\chi -\Phi^n]) \nonumber \\ & \qquad\qquad
 \geq \left(\cPsi\,\frac{a}\alpha\,\hatrho(\Phi^{n-1}, \Phi^n)\,W^{n} 
 + \epsilon^{-1}\,\Phi^{n-1},\chi - \Phi^n\right)^h
 \qquad \forall\ \chi \in K^h\,. \label{eq:ACh}
\end{align}
\end{subequations}
The main differences between (\ref{eq:U},b) for $\varrho^+=0$, so that
$\hatrho(\Phi^{n-1}, \Phi^n) = \varrho(\Phi^{n-1})$, 
and the basic scheme in \citet[Eqs.\ (3.1), (3.2)]{ElliottG96preprint} are
our novel approximation of $A'(\nabla\,\varphi)$ in (\ref{eq:ACh}) and the fact
that we evaluate the discrete temperature on the new time level $W^n$ in
(\ref{eq:ACh}). The latter implies that the system (\ref{eq:U},b) is coupled,
and this is needed in order to derive a stability bound, see
Theorem~\ref{thm:stab2}, below. We stress that there is no stability result for
the scheme \citet[Eqs.\ (3.1), (3.2)]{ElliottG96preprint}. 
In addition, we allow for
the splitting $\varrho = \varrho^+ + \varrho^-$, so that unconditional
stability can still be shown for nonlinear functions $\varrho$.

Let 
\begin{equation*} 
\mathcal{E}_\gamma^h(W, \Phi) = 
\frac\vartheta2\,|W - \uD|_h^2 +
\frac{\lambda\,\alpha}{a}\,\frac1\cPsi
\left[ \tfrac12\,\epsilon\,|\gamma(\nabla\, \Phi)|_0^2 + 
\epsilon^{-1}\,(\Psi(\Phi), 1)^h \right] \,,
\end{equation*}
and define
\begin{equation*} 
\mathcal{F}_\gamma^h(W, \Phi) = \mathcal{E}_\gamma^h(W, \Phi)
- \lambda\,\uD\,(\Varrho(\Phi), 1)^h
\end{equation*}
for all $W, \Phi \in S^h$,
as the natural discrete analogue of the energy appearing in
(\ref{eq:pf2Lyap}). We can then show that the solutions to (\ref{eq:U},b)
satisfy a discrete analogue of (\ref{eq:pf2Lyap}).

We begin with considerations for the almost linear scheme (\ref{eq:U},b) with
$\varrho^+ = 0$ and $r=1$.

\begin{lem} \label{lem:ex}
Let $\gamma$ be of the form {\rm (\ref{eq:g})} with $r=1$,
let $\varrho^+=0$ and let $\uD \in \R$.
Then there exists a solution $(\Phi^n,W^n) \in K^h \times \ShD$ to 
{\rm (\ref{eq:U},b)} and $\Phi^n$, $W^n$ are unique up to additive constants.
If $\rho + (|\varrho(\Phi^{n-1})|, 1)^h + |(\Phi^{n-1},1)| > 0$, 
then $\Phi^n$ is unique. 
If $\vartheta > 0$ or $\partial_N\Omega\not=\partial\Omega$
then $W^n$ is unique if $\Phi^n$ is unique.
If $\vartheta= 0$ and $\partial_N\Omega=\partial\Omega$, and
if $\Phi^n$ is unique, then 
$W^n$ is unique if there exists a $j\in J$ such that
$|\Phi^n(p_j)| < 1$ and $\varrho(\Phi^{n-1}(p_j)) \not = 0$.
\end{lem}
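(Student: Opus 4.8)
The plan is to exploit that for $r=1$ and $\varrho^+=0$ the scheme simplifies decisively. By (\ref{eq:r1}) the operator $B_r(\nabla\,\Phi^{n-1},\nabla\,\Phi^n)=B_1(\nabla\,\Phi^{n-1})$ is independent of $\Phi^n$, so the principal part of (\ref{eq:ACh}) is \emph{linear}, while $\hatrho(\Phi^{n-1},\Phi^n)=\varrho(\Phi^{n-1})$ is fully explicit. Since each $G_\ell$ in (\ref{eq:g}) is symmetric positive definite, $B_1(\nabla\,\Phi^{n-1})$ is a uniformly symmetric positive definite matrix field, so that $(B_1(\nabla\,\Phi^{n-1})\,\nabla\,\eta,\nabla\,\eta)\geq c_0\,|\nabla\,\eta|_0^2$; similarly $b(\Phi^{n-1})\geq\min\{\conduct_+,\conduct_-\}>0$ on $K^h$, $\mu(\nabla\,\Phi^{n-1})>0$ and $\varrho(\Phi^{n-1})\geq0$. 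For \textbf{existence}, I would note that for fixed $\Phi\in K^h$ the equation (\ref{eq:U}) is a linear problem for $W\in\ShD$ governed by the bilinear form $a_W(W,\chi):=\frac\vartheta{\tau_n}(W,\chi)^h+(\pi^h[b(\Phi^{n-1})]\,\nabla\,W,\nabla\,\chi)$ on $S^h_0$, which is coercive — and hence $W=:W[\Phi]$ is uniquely determined and depends affinely on $\Phi$ — whenever $\vartheta>0$ (lumped mass term) or $\partial_N\Omega\neq\partial\Omega$ ($S^h_0$ has no nonzero constant). Substituting $W[\Phi]$ into (\ref{eq:ACh}) and using that $a_W$ is symmetric positive definite, the eliminated temperature contributes a positive-semidefinite Schur-complement quadratic in $\Phi$, so the reduced variational inequality is the optimality condition for minimising a convex continuous functional $\mathcal{G}$ over the nonempty compact convex set $K^h$; a minimiser then exists by Weierstrass, and $W^n=W[\Phi^n]$ completes the solution. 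The degenerate case $\vartheta=0$, $\partial_N\Omega=\partial\Omega$ I would pose modulo constants (on mean-zero $W$) and fix the additive constant of $W^n$ afterwards.

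For \textbf{uniqueness up to additive constants}, given two solutions set $\delta\Phi:=\Phi^n_{(1)}-\Phi^n_{(2)}$ and $\delta W:=W^n_{(1)}-W^n_{(2)}\in S^h_0$. Cross-testing the two inequalities (\ref{eq:ACh}) (using $\Phi^n_{(2)}$ in the first and $\Phi^n_{(1)}$ in the second) and adding, the explicit right-hand terms cancel and I obtain $\frac{\epsilon\,\rho}{\tau_n\,\alpha}(\mu(\nabla\,\Phi^{n-1})\,\delta\Phi,\delta\Phi)^h+\epsilon\,(B_1(\nabla\,\Phi^{n-1})\,\nabla\,\delta\Phi,\nabla\,\delta\Phi)\leq\cPsi\,\frac a\alpha\,(\varrho(\Phi^{n-1})\,\delta W,\delta\Phi)^h$. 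Subtracting the two linear equations (\ref{eq:U}) and testing with $\chi=\delta W$ gives $\frac\vartheta{\tau_n}|\delta W|_h^2+(\pi^h[b(\Phi^{n-1})]\,\nabla\,\delta W,\nabla\,\delta W)=-\frac\lambda{\tau_n}(\varrho(\Phi^{n-1})\,\delta\Phi,\delta W)^h$, whose left-hand side is nonnegative, so the coupling term $(\varrho(\Phi^{n-1})\,\delta\Phi,\delta W)^h\leq0$. Feeding this back, the right-hand side of the first inequality is $\leq0$ while the left-hand side is $\geq0$, so both vanish; coercivity of $B_1$ and positivity of $b$ then force $\nabla\,\delta\Phi=\nabla\,\delta W=\vec0$, i.e.\ $\delta\Phi$ and $\delta W$ are constants (and $\delta W=0$ if $\vartheta>0$). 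This is the asserted uniqueness up to additive constants.

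To \textbf{pin the constants}, write $\delta\Phi\equiv c$ and invoke $\rho+(|\varrho(\Phi^{n-1})|,1)^h+|(\Phi^{n-1},1)|>0$. If $\rho>0$, the vanishing $\mu$-term gives $c^2\,(\mu(\nabla\,\Phi^{n-1}),1)^h=0$ with $(\mu(\nabla\,\Phi^{n-1}),1)^h>0$, so $c=0$. If $(|\varrho(\Phi^{n-1})|,1)^h>0$, then, since $\varrho\geq0$ on $[-1,1]$ there is a node with $\varrho(\Phi^{n-1})>0$; subtracting the two equations (\ref{eq:U}) and testing with a suitable $\chi\in S^h_0$ forces $c=0$. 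If $|(\Phi^{n-1},1)|>0$, the explicit term $\epsilon^{-1}(\Phi^{n-1},\cdot)^h$ breaks the shift-invariance of $\mathcal{G}$, so two minimisers differing by a constant must coincide. For the uniqueness of $W^n$ with $\Phi^n$ fixed: if $\vartheta>0$ or $\partial_N\Omega\neq\partial\Omega$ then $a_W$ is coercive and $W^n$ is determined by (\ref{eq:U}); in the remaining case $\vartheta=0$, $\partial_N\Omega=\partial\Omega$, equation (\ref{eq:U}) fixes $W^n$ only up to a constant, which I would pin by choosing a node $p_j$ with $|\Phi^n(p_j)|<1$ — so the obstacle is inactive and (\ref{eq:ACh}) holds with equality under the admissible variations $\Phi^n\pm s\,\chi_j$ — and with $\varrho(\Phi^{n-1}(p_j))\neq0$, so that the resulting nodal Euler--Lagrange identity determines $W^n(p_j)$, hence the missing constant.

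I anticipate the main obstacle to lie in the last paragraph: matching the three terms of the $\Phi^n$-uniqueness criterion to the three distinct sources of constant-shift invariance, and, above all, handling the degenerate pure-Neumann case $\vartheta=0$, where both existence (solvability of (\ref{eq:U}) only modulo constants) and the determination of $W^n$ rest on the complementarity structure of the variational inequality at nodes where the obstacle constraint is inactive. The remaining steps are either reductions to coercivity of $a_W$ and $B_1$ or the compactness of $K^h$, which are routine.
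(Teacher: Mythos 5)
Your overall strategy is the paper's own: eliminate the temperature by solving (\ref{eq:U}) for $W=W[\Phi]$ (the paper's discrete operator $\mathcal{G}^h$), substitute into (\ref{eq:ACh}) so that the temperature contributes a positive semidefinite Schur-complement term, and recognise the reduced variational inequality as the optimality condition of a convex minimisation over the compact convex set $K^h$ --- this is exactly how the paper obtains existence. Your cross-testing monotonicity argument for uniqueness up to constants is a valid (and arguably cleaner) substitute for the paper's strict-convexity reasoning, and your pinning of the constants via the $\mu$-term when $\rho>0$, via testing (\ref{eq:U}) when $(|\varrho(\Phi^{n-1})|,1)^h>0$, and via the loss of shift-invariance when $(\Phi^{n-1},1)\neq 0$ reproduces the paper's three cases (both you and the paper gloss over the possibility that the only nodes with $\varrho(\Phi^{n-1})\neq 0$ lie on $\partial_D\Omega$).

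The genuine gap is in the degenerate case $\vartheta=0$, $\partial_N\Omega=\partial\Omega$. There, choosing $\chi=1$ in (\ref{eq:U}) forces the compatibility condition $(\varrho(\Phi^{n-1}),\Phi^n-\Phi^{n-1})^h=0$, which is a constraint on $\Phi^n$, not on $W^n$. Your plan --- solve (\ref{eq:U}) ``on mean-zero $W$'' and fix the additive constant of $W^n$ afterwards --- only enforces (\ref{eq:U}) against mean-zero test functions; if you then minimise the reduced functional over all of $K^h$, the minimiser will in general violate this compatibility condition, and then \emph{no} $W^n$ satisfies (\ref{eq:U}), since adding a constant to $W^n$ leaves (\ref{eq:U}) unchanged and so cannot repair the defect. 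The missing idea is to build the constraint into the admissible set: the paper minimises over $\widehat{K}^h:=\{\chi\in K^h : (\varrho(\Phi^{n-1}),\chi-\Phi^{n-1})^h=0\}$, with the solution operator $\widehat{\mathcal{G}}^h$ defined on mean-zero data, and the additive constant $\xi^n$ of $W^n$ then arises precisely as the Lagrange multiplier of this constraint. Only after that does your complementarity argument (a node $p_j$ with $|\Phi^n(p_j)|<1$ and $\varrho(\Phi^{n-1}(p_j))\neq0$, so that (\ref{eq:ACh}) holds with equality for $\chi=\chi_j$) correctly determine $\xi^n$ and hence $W^n$, exactly as in the paper. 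You flagged this case as the main obstacle, rightly, but the resolution requires the constrained set $\widehat{K}^h$, which your proposal never introduces.
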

\begin{proof}
The proof follows the ideas in \cite{BarrettBG99}, see also \cite{BloweyE92}.
At first we assume that $\partial_N\Omega \not= \partial\Omega$ or that
$\vartheta > 0$, so that $\mathcal{G}^h : S^h \to S^h_0$ such that
\begin{equation} \label{eq:Gh}
 ( \pi^h[b(\Phi^{n-1})]\,\nabla\, [\mathcal{G}^h\, v^h] , \nabla\, \eta )
+ \frac\vartheta{\tau_n} \,( \mathcal{G}^h\,v^h, \eta )^h 
= (v^h, \eta)^h \qquad \forall\ \eta \in S^h_0\,,\quad\ v^h \in S^h
\end{equation}
is clearly well-defined, on recalling that
\begin{equation*} 
b(s) \geq \min\{\conduct_+, \conduct_-\} > 0 \qquad \forall\ s \in[-1,1]\,.
\end{equation*}
Moreover, it follows from (\ref{eq:U}) and (\ref{eq:Gh}) that
\begin{equation} \label{eq:Wn}
W^n - \uD = \mathcal{G}^h\left[
\frac\vartheta{\tau_n} \,(W^{n-1} - \uD) - \lambda\,
\pi^h\Bigl[\hatrho(\Phi^{n-1}, \Phi^n)\,\dfrac{\Phi^{n}-\Phi^{n-1}}{\tau_n}\Bigr]
\right]\,.
\end{equation}
Substituting (\ref{eq:Wn}) into (\ref{eq:ACh}), and noting (\ref{eq:Gh}) with
$v^h = \pi^h[\hatrho(\Phi^{n-1},\Phi^n)\,(\chi-\Phi^{n})]$ and
$\eta = \mathcal{G}^h\,\pi^h[\hatrho(\Phi^{n-1},\Phi^n)\,(\Phi^n-\Phi^{n-1})]$ 
yields that
\begin{subequations}
\begin{align}
& \frac{\lambda\,\cPsi\,a}{\alpha\,\tau_n} \,\Bigl\{
( \pi^h[b(\Phi^{n-1})]\,\nabla\, [\mathcal{G}^h\,
\pi^h[\hatrho(\Phi^{n-1},\Phi^n)\,(\Phi^n-\Phi^{n-1})]], \nabla\,[
\mathcal{G}^h\,\pi^h[\hatrho(\Phi^{n-1},\Phi^n)\,(\chi-\Phi^{n})]] ) 
\nonumber \\ & \qquad 
+  \frac\vartheta{\tau_n} \,( 
\mathcal{G}^h\,\pi^h[\hatrho(\Phi^{n-1}, \Phi^n)\,(\Phi^n-\Phi^{n-1})],
\mathcal{G}^h\,\pi^h[\hatrho(\Phi^{n-1}, \Phi^n)\,(\chi-\Phi^{n})] )^h \Bigr\}
\nonumber \\ & \qquad
+ \frac{\epsilon\,\rho}{\alpha\,\tau_n}\,(\mu(\nabla\,\Phi^{n-1})\,\Phi^n, 
\chi - \Phi^n)^h
+ \epsilon\,(B_r(\nabla\, \Phi^{n-1}, \nabla\, \Phi^n)
\, \nabla\, \Phi^{n}, \nabla\, [\chi -\Phi^n])
\nonumber \\ & \quad
\geq (f^h, \chi - \Phi^n)^h \qquad \forall\ \chi \in K^h\,,
\label{eq:VI}
\end{align}
where
\begin{align}
f^h := (\frac{\epsilon\,\rho}{\alpha\,\tau_n}\,\mu(\nabla\,\Phi^{n-1})
+ \epsilon^{-1})\,\Phi^{n-1} + \cPsi\,\frac{a}\alpha\,\hatrho(\Phi^{n-1},\Phi^n)\,
(\uD +  \frac\vartheta{\tau_n} \, \mathcal{G}^h\,[W^{n-1}-\uD] ) 
\label{eq:fh}
\end{align}
\end{subequations}
is piecewise continuous on $\mathcal{T}^h$.
As we consider the case $\varrho^+ = 0$, from now on we use the fact that 
$\hatrho(\Phi^{n-1},\Phi^n) = \varrho(\Phi^{n-1})$.
We recall from (\ref{eq:Br}) and (\ref{eq:r1}) 
that $B_1(q) \in \R^{d\times d}$ is symmetric and
positive definite for all $\vec q \in \R^d$, and hence (\ref{eq:VI}) are the
Euler--Lagrange equations for the convex minimization problem
\begin{align*}
\min_{\chi \in K^h} & \left [
\frac{\lambda\,\cPsi\,a}{2\,\alpha\,\tau_n} \,\Bigl\{
( \pi^h[b(\Phi^{n-1})], |\nabla\, [\mathcal{G}^h\,
\pi^h[\varrho(\Phi^{n-1})\,(\chi-\Phi^{n-1})]]|^2 )
\right. \nonumber \\ & \quad \left. 
+ \frac\vartheta{2\,\tau_n} \, 
|\mathcal{G}^h\,\pi^h[\varrho(\Phi^{n-1})\,(\chi-\Phi^{n-1})] |^2_h \Bigr\}
+ \frac{\epsilon\,\rho}{2\,\alpha\,\tau_n}\,(\mu(\nabla\,\Phi^{n-1}), |\chi|^2
)^h
\right. \nonumber \\ & \quad \left. 
+ \frac\epsilon2\,(B_1(\nabla\,\Phi^{n-1})\,\nabla\,\chi, \nabla\,\chi) 
- (f^h, \chi)^h
\right] . 
\end{align*}
Therefore there exists a $\Phi^n \in K^h$ solving (\ref{eq:VI}) that is unique
if $\rho>0$ or $\pi^h[\varrho(\Phi^{n-1})] \not= 0 \in S^h$, and is unique up
to an additive constant otherwise. In the latter case, if 
$(\Phi^{n-1},1)\not=0$, then it immediately follows from (\ref{eq:ACh}) that
$\Phi^n$ is unique.
If $\Phi^n$ is unique, then
the existence of a unique $W^n \in S^h_D$, such that $(\Phi^n, W^n)$ solve 
(\ref{eq:U},b), follows from (\ref{eq:Wn}). 

For the remainder of the proof we assume that 
$\partial_N\Omega = \partial\Omega$ and that $\vartheta = 0$. 
Then it follows
immediately on choosing $\chi=1$ in (\ref{eq:U}) that
$(\varrho(\Phi^{n-1}), \Phi^n)^h = (\varrho(\Phi^{n-1}), \Phi^{n-1})^h$. Taking
this into account, we define
$\widehat{\mathcal{G}}^h : \widehat{S}^h \to \widehat{S}^h$ such that
\begin{equation*} 
 ( \pi^h[b(\Phi^{n-1})]\,\nabla\, [\widehat{\mathcal{G}}^h\, v^h], 
\nabla\, \eta)
= (v^h, \eta)^h \qquad \forall\ \eta \in S^h\,,\quad\ v^h \in \widehat{S}^h\,,
\end{equation*}
where $\widehat{S}^h := \{ \chi \in S^h : (\chi, 1) = 0 \}$, and observe that 
(\ref{eq:U}) then implies that
\begin{equation} \label{eq:wW}
W^n = - \frac\lambda{\tau_n}\,
\widehat{\mathcal{G}}^h\,\pi^h\,[\varrho(\Phi^{n-1})\,(\Phi^{n}-\Phi^{n-1})]
+ \xi^n\,,
\end{equation}
where $\xi^n\in \R$ is a Lagrange multiplier. It follows that (\ref{eq:VI})
holds with $\vartheta = 0$, with 
$\mathcal{G}^h$ replaced by $\widehat{\mathcal{G}}^h$, and with $K^h$ replaced
by $\widehat{K}^h := 
\{ \chi \in K^h : (\varrho(\Phi^{n-1}), \chi - \Phi^{n-1})^h = 0\}$. As before
we can interpret this variational inequality as the Euler--Lagrange equations
of a convex minimization problem, which yields the existence of a 
solution $\Phi^n \in \widehat{K}^h$ that is unique
unless $\rho=0$, $\pi^h[\varrho(\Phi^{n-1})] = 0$ 
and $(\Phi^{n-1},1)=0$. 
Therefore, on noting (\ref{eq:wW}), we have existence of a solution 
$(\Phi^n,W^n) \in K^h\times S^h$ to
(\ref{eq:U},b). If $\Phi^n$ is unique, and if
$|\Phi^n(p_j)|<1$ and $\varrho(\Phi^{n-1}(p_j)) \not = 0$
for some $j\in J$ then (\ref{eq:ACh}) 
holds with equality for $\chi = \chi_j$, which uniquely determines
$\xi^n$ and hence yields the uniqueness of $W^n$. 
\end{proof}

It turns out that most of the technical assumptions in Lemma~\ref{lem:ex} are 
trivially satisfied for the shape function choices in (\ref{eq:varrho}). In
particular, we obtain the following result.

\begin{cor} \label{cor:ex}
Let $\gamma$ be of the form {\rm (\ref{eq:g})} with $r=1$,
let $\varrho$ be given by one of the choices in {\rm (\ref{eq:varrho})}
or by {\rm (\ref{eq:varrhoii})}, 
let $\varrho^+=0$ and let $\uD \in \R$.
Then there exists a solution $(\Phi^n,W^n) \in K^h \times \ShD$ to 
{\rm (\ref{eq:U},b)} and $\Phi^n$, $W^n$ are unique up to additive constants.
Moreover, $\Phi^n$ is unique unless 
$\varrho$ is of the form {\rm (\ref{eq:varrho})(iii)}, and $\rho=0$,
$(|\Phi^{n-1}|, 1)^h = |\Omega|$ and $|(\Phi^{n-1},1)| = 0$.
\end{cor}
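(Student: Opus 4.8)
The plan is to obtain the result directly from Lemma~\ref{lem:ex} by checking its hypotheses and specializing its uniqueness criteria to the concrete shape functions in (\ref{eq:varrho}) and (\ref{eq:varrhoii}). Since $\gamma$ is of the form (\ref{eq:g}) with $r=1$, $\varrho^+=0$, and $\uD\in\R$, the existence of a solution $(\Phi^n,W^n)\in K^h\times\ShD$ together with uniqueness up to additive constants is immediate from Lemma~\ref{lem:ex}. It remains only to sharpen the statement of when $\Phi^n$ itself is unique.

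The key tool is the sufficient condition from Lemma~\ref{lem:ex}, namely that $\Phi^n$ is unique whenever $\rho+(|\varrho(\Phi^{n-1})|,1)^h+|(\Phi^{n-1},1)|>0$. Hence $\Phi^n$ can fail to be unique only if simultaneously $\rho=0$, $(|\varrho(\Phi^{n-1})|,1)^h=0$, and $(\Phi^{n-1},1)=0$. Because the discrete inner product employs nodal quadrature and each summand is nonnegative, $(|\varrho(\Phi^{n-1})|,1)^h=0$ is equivalent to $\varrho(\Phi^{n-1}(p_j))=0$ at every node $j\in J$. The remaining task is therefore, for each admissible $\varrho$, to identify the nodal values forcing $\varrho(\Phi^{n-1})$ to vanish and to test compatibility with $(\Phi^{n-1},1)=0$.

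I would then work through the cases. For (\ref{eq:varrho})(i) one has $\varrho\equiv\frac12$, so $(|\varrho(\Phi^{n-1})|,1)^h=\frac12\,|\Omega|>0$ always, and $\Phi^n$ is unique. For (\ref{eq:varrho})(ii) and (\ref{eq:varrhoii}), $\varrho(s)=\frac12\,(1\mp s)$ vanishes only at $s=\pm1$; recalling $\Phi^{n-1}\in K^h$, the requirement $\varrho(\Phi^{n-1}(p_j))=0$ for all $j$ then forces $\Phi^{n-1}\equiv\pm1$, whence $|(\Phi^{n-1},1)|=|\Omega|>0$ and $\Phi^n$ is again unique. Only for (\ref{eq:varrho})(iii), with $\varrho(s)=\frac{15}{16}\,(s^2-1)^2$ vanishing at $s=\pm1$, can non-uniqueness occur: here $\varrho(\Phi^{n-1}(p_j))=0$ for all $j$ is equivalent to $|\Phi^{n-1}(p_j)|=1$ for all $j$, which, since $|\Phi^{n-1}|\le1$ and the nodal norm is then saturated, is in turn equivalent to $(|\Phi^{n-1}|,1)^h=|\Omega|$, while $(\Phi^{n-1},1)=0$ remains possible through cancellation of the $\pm1$ nodal values. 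This reproduces exactly the exceptional set listed in the statement.

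The argument is entirely routine; the only point needing care is the equivalence $(|\varrho(\Phi^{n-1})|,1)^h=0\Leftrightarrow(|\Phi^{n-1}|,1)^h=|\Omega|$ in case (iii), which rests on the nodal quadrature saturating the pointwise bound $|\Phi^{n-1}|\le1$ precisely when every nodal value has modulus one. Beyond this translation of the abstract condition of Lemma~\ref{lem:ex} into elementary statements about the zero sets of the three shape functions, I anticipate no genuine obstacle.
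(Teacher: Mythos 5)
Your proposal is correct and takes exactly the route of the paper: the paper's proof consists of the single line that the results follow immediately from Lemma~\ref{lem:ex}, and your argument simply makes that ``immediately'' explicit by translating the uniqueness criterion $\rho+(|\varrho(\Phi^{n-1})|,1)^h+|(\Phi^{n-1},1)|>0$ into the zero sets of the shape functions {\rm(\ref{eq:varrho})(i)--(iii)} and {\rm(\ref{eq:varrhoii})}, including the correct observation that in case {\rm(\ref{eq:varrho})(iii)} the vanishing of $(|\varrho(\Phi^{n-1})|,1)^h$ is equivalent to $(|\Phi^{n-1}|,1)^h=|\Omega|$ by saturation of the nodal bound $|\Phi^{n-1}|\leq1$. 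No gaps; the case analysis and the compatibility check with $(\Phi^{n-1},1)=0$ are exactly what the corollary's exceptional set encodes.
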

\begin{proof}
The desired results follow immediately from Lemma~\ref{lem:ex}.
\end{proof}

\begin{rem} \label{rem:bl}
Let the assumptions of {\rm Lemma~\ref{lem:ex}} hold and let 
$\partial_N\Omega \not= \partial\Omega$. 
Then it is easy to prove that if $\Phi^{n-1} = 1$ and
$\vartheta\,(W^{n-1} - \uD) = 0$, and if
\begin{equation} \label{eq:bl}
-\frac{a}\alpha\, \varrho(1)\,\uD \leq \frac1\cPsi\,\epsilon^{-1}
\end{equation}
then the unique solution to {\rm (\ref{eq:U},b)} is given by 
$\Phi^n = 1$ and $W^n = \uD$. 
If the phase field parameter $\epsilon$ does not satisfy 
{\rm (\ref{eq:bl})}, then $\Phi^n = 1$ and $W^n = \uD$ is no longer the 
solution to {\rm (\ref{eq:U},b)}. 
In practice
it is observed that if $\epsilon$ does not satisfy {\rm (\ref{eq:bl})}, then
the solution $\Phi^n$
exhibits a boundary layer close to $\partial\Omega$ where $\Phi^n < 1$. This
artificial boundary layer
is an undesired effect of the phase field approximation for the sharp 
interface problem {\rm (\ref{eq:1a}--e)}.
In fact, and not surprisingly, {\rm (\ref{eq:bl})} is precisely the condition
on $\varrho(1)$ in {\rm (\ref{eq:obstcond})}. 
This motivates the use of shape functions with $\varrho(1) = 0$, such as 
{\rm (\ref{eq:varrho})(ii)} and {\rm (\ref{eq:varrho})(iii)}, in practice. 
An obvious advantage over e.g.\ {\rm (\ref{eq:varrho})(i)} then is to be
able to use larger values of $\epsilon$, which in itself means that less fine
discretization parameters may be employed.

For completeness we note that if, and only if, the condition
\begin{equation} \label{eq:bl-}
\frac{a}\alpha\,\varrho(-1)\,\uD \leq \frac1\cPsi\,\epsilon^{-1}
\end{equation}
holds, then $\Phi^n = -1$, $W^n = \uD$ is the unique solution to 
{\rm (\ref{eq:U},b)} for $\Phi^{n-1} = -1$ and 
$\vartheta\,(W^{n-1} - \uD) = 0$. Satisfying both {\rm (\ref{eq:bl})} and
{\rm (\ref{eq:bl-})} is equivalent to satisfying {\rm (\ref{eq:obstcond})}. 
\end{rem}

\begin{rem} \label{rem:rgt1}
Let $\gamma$ be of the form {\rm (\ref{eq:g})} with $r>1$, 
and let the remaining assumptions of {\rm Lemma~\ref{lem:ex}} hold.
Then the highly nonlinear system
{\rm (\ref{eq:U},b)} for $(\Phi^{n},W^n)$ is no longer
continuously dependent on the variable $\Phi^{n}$, recall 
{\rm (\ref{eq:Br})}. 
Due to this fact it is not possible to show existence
of solutions to {\rm(\ref{eq:U},b)} with the help
of Brouwer's fixed point theorem. However, in practice we 
have no difficulties in finding solutions to the nonlinear system
{\rm(\ref{eq:U},b)}, and the employed iterative solvers always converge; 
see {\rm Section~\ref{sec:42}}.
We recall that the same situation occurred in \cite{ani3d}, see Remark~3.3
there, where discretizations for anisotropic geometric evolution equations for
anisotropic energies of the form {\rm (\ref{eq:g})} were considered for the
very first time.
\end{rem}

We now extend the existence result from Lemma~\ref{lem:ex} to the case of a
general splitting $\varrho = \varrho^+ + \varrho^-$. On recalling from 
(\ref{eq:obstcond}) and from
Remark~\ref{rem:bl} that nontrivial choices of $\varrho$, i.e.\ alternatives to
(\ref{eq:varrho})(i), are only of interest when 
$\partial_N\Omega\not=\partial\Omega$, we consider the case $\varrho^+\not=0$
only in the presence of Dirichlet boundary conditions on $W^n$.

\begin{thm} \label{thm:ex}
Let $\gamma$ be of the form {\rm (\ref{eq:g})} with $r=1$ and let $\uD \in \R$. In addition let $\rho+ |(\Phi^{n-1},1)| >0$ or
\begin{equation} \label{eq:hatrhoold}
( |\hatrho(\Phi^{n-1},\chi)|, 1 )^h > 0  \qquad \forall\ \chi \in K^h\,.
\end{equation}
Moreover we assume that either $\varrho^+=0$,
or $\vartheta>0$, or $\partial_N\Omega\not=\partial\Omega$.
Then there exists a solution $(\Phi^n,W^n) \in K^h \times \ShD$ to 
{\rm (\ref{eq:U},b)}.
\end{thm}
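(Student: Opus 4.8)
The plan is to establish existence via a fixed point argument, treating the coupled system \eqref{eq:U},b as a fixed point equation for the phase variable $\Phi^n$. The key difficulty compared with Lemma~\ref{lem:ex} is that we now allow $\varrho^+\not=0$, so that $\hatrho(\Phi^{n-1},\Phi^n) = \varrho^+(\Phi^n) + \varrho^-(\Phi^{n-1})$ genuinely depends on the unknown $\Phi^n$ through its implicit part. Since $r=1$, the operator $B_1(\nabla\,\Phi^{n-1})$ is still independent of $\Phi^n$ by \eqref{eq:r1}, so the anisotropic term poses no additional obstacle here; all the new nonlinearity is carried by $\varrho^+$.

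First I would freeze the phase variable: for a given $\Xi \in K^h$, replace $\hatrho(\Phi^{n-1},\Phi^n)$ by the known quantity $\pi^h[\hatrho(\Phi^{n-1},\Xi)] = \pi^h[\varrho^+(\Xi) + \varrho^-(\Phi^{n-1})]$ in both \eqref{eq:U} and \eqref{eq:ACh}. For this frozen problem, the argument of Lemma~\ref{lem:ex} applies essentially verbatim: eliminating $W^n$ via the discrete Green's operator $\mathcal{G}^h$ of \eqref{eq:Gh} (using $\vartheta>0$ or $\partial_N\Omega\not=\partial\Omega$; the case $\varrho^+=0$, $\vartheta=0$, $\partial_N\Omega=\partial\Omega$ is already covered by Lemma~\ref{lem:ex}) reduces \eqref{eq:ACh} to the variational inequality \eqref{eq:VI}, which, since $B_1(\nabla\,\Phi^{n-1})$ is symmetric positive definite, are the Euler--Lagrange conditions of a strictly convex minimization over $K^h$. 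This yields a unique minimizer, which I would call $\Phi^n = \mathcal{M}(\Xi) \in K^h$, together with the associated $W^n \in \ShD$ recovered from \eqref{eq:Wn}. Here the hypothesis $\rho + |(\Phi^{n-1},1)| > 0$ or \eqref{eq:hatrhoold} is exactly what guarantees the quadratic form in the frozen functional is coercive, hence that $\mathcal{M}$ is well-defined and single-valued.

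Next I would verify that $\mathcal{M}: K^h \to K^h$ is continuous. The set $K^h$ is a convex compact subset of the finite-dimensional space $S^h$. Continuity of $\Xi \mapsto \mathcal{M}(\Xi)$ follows from the continuous dependence of the strictly convex obstacle problem on its data: the map $\Xi \mapsto \pi^h[\varrho^+(\Xi)]$ is continuous because $\varrho^+ \in C^1(\R)$, and the solution operator of a strictly convex variational inequality over a fixed convex set depends continuously on the coefficients and the right-hand side $f^h$ of \eqref{eq:fh}. I would then invoke Brouwer's fixed point theorem to produce $\Xi^\star \in K^h$ with $\mathcal{M}(\Xi^\star) = \Xi^\star =: \Phi^n$; setting $W^n$ to be the corresponding temperature from \eqref{eq:Wn} gives a solution $(\Phi^n,W^n)\in K^h\times\ShD$ to the full coupled system \eqref{eq:U},b.

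The main obstacle I anticipate is establishing coercivity of the frozen functional uniformly enough to guarantee $\mathcal{M}$ maps into $K^h$ and is continuous, rather than merely nonempty-valued. Because $\varrho^+$ appears quadratically (through the $\mathcal{G}^h$-regularized term involving $\pi^h[\hatrho(\cdot)(\Phi^n-\Phi^{n-1})]$), one must check that the sign structure in \eqref{eq:rhosplit}, \eqref{eq:rhosplit+}, together with the constraint $|\chi|\leq 1$ on $K^h$, prevents degeneracy; the hypotheses $\rho+|(\Phi^{n-1},1)|>0$ or \eqref{eq:hatrhoold} are designed precisely to rule out the single direction (the constant mode, as in Lemma~\ref{lem:ex}) along which coercivity could otherwise fail. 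Once coercivity is in hand, well-definedness, continuity, and the Brouwer conclusion are routine, and I would not belabour the elementary estimates.
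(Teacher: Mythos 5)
Your proposal is correct and takes essentially the same approach as the paper: the paper likewise handles $\varrho^+\not=0$ by defining a map ${\rm T}:K^h\to K^h$ that freezes $\hatrho(\Phi^{n-1},\cdot)$ at the old iterate, solves the frozen problem uniquely via the convex minimization from the proof of Lemma~\ref{lem:ex} (with the hypotheses $\rho+|(\Phi^{n-1},1)|>0$ or (\ref{eq:hatrhoold}) ensuring ${\rm T}$ is single-valued), applies Brouwer's fixed point theorem on the compact convex set $K^h$, and recovers $W^n$ from (\ref{eq:Wn}). Your only deviation is terminological (describing the uniqueness hypothesis as a coercivity condition), but your reading of its role---excluding degeneracy along the constant mode---matches the paper's argument.
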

\begin{proof}
The desired result for the case $\varrho^+=0$ has been shown in
Lemma~\ref{lem:ex}. We now consider the case $\varrho^+\not=0$, so that
either $\vartheta > 0$ or $\partial_N\Omega \not= \partial\Omega$.
Then we can apply Brouwer's fixed point theorem to
prove existence of a solution $\Phi^n$ as follows.
Let the map ${\rm T} : K^h \to K^h$ be defined such that 
$\Phi^{\rm new} = {\rm T}(\Phi^{\rm old})$ is the solution of 
(\ref{eq:VI},b) with $\hatrho(\Phi^{n-1}, \Phi^n)$ replaced by
$\hatrho(\Phi^{n-1}, \Phi^{\rm old})$, and with all other occurrences of
$\Phi^n$ replaced by $\Phi^{\rm new}$. 
It follows from the proof of Lemma~\ref{lem:ex} and our assumptions
that there exists a unique $\Phi^{\rm new} \in K^h$, 
and the continuity of the map 
$\Phi^{\rm old} \mapsto \Phi^{\rm new} = {\rm T}(\Phi^{\rm old})$ 
together with the fact that $K^h$ is
compact and convex then yields the existence of a solution $\Phi^n \in K^h$
to {\rm (\ref{eq:U},b)}. The existence of a solution $W^n \in S^h_D$ 
then follows from (\ref{eq:Wn}).
\end{proof}

The following stability theorem is the main result of this paper. 

\begin{thm} \label{thm:stab2}
Let $\gamma$ be of the form {\rm (\ref{eq:g})} 
and let $\uD \in \R$.
Then it holds for a solution $(\Phi^n,W^n) \in K^h \times \ShD$ to 
{\rm (\ref{eq:U},b)} that
\begin{align} \label{eq:stab2}
& \mathcal{E}_\gamma^h(W^n, \Phi^n)  
- \uD\,\lambda\,(\hatrho(\Phi^{n-1}, \Phi^n), \Phi^n - \Phi^{n-1})^h
+ \tau_n\,( \pi^h[b(\Phi^{n-1})] \,\nabla\, W^{n} , \nabla\, W^n ) 
\nonumber \\ & \hspace{2cm}
+ \tau_n\,\frac{\lambda\,\rho}{a}\,\frac\epsilon\cPsi
\left| [\mu(\nabla\,\Phi^{n-1})]^{\frac12}\, 
\dfrac {\Phi^{n}-\Phi^{n-1}}{\tau_n}\right|_h^2 
\leq \mathcal{E}_\gamma^h(W^{n-1}, \Phi^{n-1})
\,.
\end{align}
In particular, if the splitting $\varrho = \varrho^+ + \varrho^-$ satisfies
\begin{equation} \label{eq:rhosplit-}
\pm\,\uD\,(\varrho^\pm)'(s) \leq 0 \qquad 
\forall\ s \in [-1,1]
\end{equation}
then it holds that
\begin{align} \label{eq:stab3}
& \mathcal{F}_\gamma^h(W^n, \Phi^n)  
+ \tau_n\,( \pi^h[b(\Phi^{n-1})] \,\nabla\, W^{n} , \nabla\, W^n ) 
+ \tau_n\,\frac{\lambda\,\rho}{a}\,\frac\epsilon\cPsi
\left| [\mu(\nabla\,\Phi^{n-1})]^{\frac12}\, 
\dfrac {\Phi^{n}-\Phi^{n-1}}{\tau_n}\right|_h^2 
\nonumber \\ & \hspace{9cm}
\leq \mathcal{F}_\gamma^h(W^{n-1}, \Phi^{n-1})
\,.
\end{align}
\end{thm}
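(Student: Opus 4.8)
The plan is to test each equation with a well-chosen function and add the results so that the coupling term cancels. Since $\uD\in\R$ is constant we have $W^n-\uD\in S^h_0$ and $\nabla\,(W^n-\uD)=\nabla\,W^n$, so $\chi=W^n-\uD$ is admissible in (\ref{eq:U}); and since $\Phi^{n-1}\in K^h$, the choice $\chi=\Phi^{n-1}$ is admissible in (\ref{eq:ACh}). Testing (\ref{eq:U}) in this way, multiplying by $\tau_n$, and applying the identity (\ref{eq:element}) to $\vartheta\,(W^n-W^{n-1},W^n-\uD)^h$ produces the $\vartheta$--part of the energy difference $\tfrac\vartheta2(|W^n-\uD|_h^2-|W^{n-1}-\uD|_h^2)$, the nonnegative remainder $\tfrac\vartheta2|W^n-W^{n-1}|_h^2$, the diffusion term $\tau_n\,(\pi^h[b(\Phi^{n-1})]\,\nabla\,W^n,\nabla\,W^n)$, and the coupling contribution $\lambda\,(\hatrho(\Phi^{n-1},\Phi^n)\,(\Phi^n-\Phi^{n-1}),W^n)^h-\lambda\,\uD\,(\hatrho(\Phi^{n-1},\Phi^n),\Phi^n-\Phi^{n-1})^h$.

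For the phase equation I use $\chi-\Phi^n=-(\Phi^n-\Phi^{n-1})$ and reverse the inequality. The kinetic term becomes $\tfrac{\epsilon\,\rho}{\alpha\,\tau_n}\,|[\mu(\nabla\,\Phi^{n-1})]^{1/2}(\Phi^n-\Phi^{n-1})|_h^2\ge 0$; the anisotropic term is bounded below, by applying Corollary~\ref{cor:B} pointwise with $\vec q=\nabla\,\Phi^{n-1}$, $\vec p=\nabla\,\Phi^n$ and integrating, by $\tfrac\epsilon2(|\gamma(\nabla\,\Phi^n)|_0^2-|\gamma(\nabla\,\Phi^{n-1})|_0^2)$, which is exactly the anisotropic part of the energy difference; and for the obstacle potential, where $\Psi'(s)=-s$, the term $\epsilon^{-1}(\Phi^{n-1},\Phi^n-\Phi^{n-1})^h$ is rewritten by (\ref{eq:element}) as $-\epsilon^{-1}((\Psi(\Phi^n),1)^h-(\Psi(\Phi^{n-1}),1)^h)$ minus the nonnegative remainder $\tfrac{\epsilon^{-1}}2|\Phi^n-\Phi^{n-1}|_h^2$, supplying the double--well part. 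Multiplying this inequality by $\tfrac{\lambda\,\alpha}{a\,\cPsi}$ assembles these into the $\Phi$--dependent part of $\mathcal{E}_\gamma^h$ and turns its right--hand side $\cPsi\tfrac a\alpha(\hatrho(\Phi^{n-1},\Phi^n)\,W^n,\Phi^n-\Phi^{n-1})^h$ into exactly $\lambda\,(\hatrho(\Phi^{n-1},\Phi^n)\,(\Phi^n-\Phi^{n-1}),W^n)^h$, matching the coupling term above.

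Substituting the value of this common coupling term obtained from the heat equation into the phase inequality eliminates it, and discarding the two nonnegative remainders $\tfrac\vartheta2|W^n-W^{n-1}|_h^2$ and $\tfrac{\lambda\,\alpha}{a\,\cPsi}\tfrac{\epsilon^{-1}}2|\Phi^n-\Phi^{n-1}|_h^2$ leaves precisely (\ref{eq:stab2}), with $-\lambda\,\uD\,(\hatrho(\Phi^{n-1},\Phi^n),\Phi^n-\Phi^{n-1})^h$ on the left. For (\ref{eq:stab3}) it then suffices to prove the pointwise bound $-\uD\,\hatrho(\Phi^{n-1},\Phi^n)\,(\Phi^n-\Phi^{n-1})\ge-\uD\,(\Varrho(\Phi^n)-\Varrho(\Phi^{n-1}))$ at each node. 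Writing $\hatrho(\Phi^{n-1},\Phi^n)=\varrho^+(\Phi^n)+\varrho^-(\Phi^{n-1})$ and $\Varrho=\Varrho^++\Varrho^-$, condition (\ref{eq:rhosplit-}) makes $\Varrho^+$ convex and $\Varrho^-$ concave when $\uD<0$ (and the reverse when $\uD>0$); the supporting--line inequalities for $\Varrho^\pm$, evaluated at the appropriate endpoints, then give the claim in each sign case. Summing over nodes with $(\cdot,\cdot)^h$ and substituting into (\ref{eq:stab2}), the identity $\mathcal{F}_\gamma^h=\mathcal{E}_\gamma^h-\lambda\,\uD\,(\Varrho(\cdot),1)^h$ makes the $\lambda\,\uD\,(\Varrho(\Phi^{n-1}),1)^h$ terms cancel on both sides, yielding (\ref{eq:stab3}).

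I expect the main obstacle to be the bookkeeping rather than any single deep step: one must keep the prefactor $\tfrac{\lambda\,\alpha}{a\,\cPsi}$ and the powers of $\tau_n$ aligned so that all three energy pieces and, crucially, the two forms of the coupling term coincide exactly before cancellation, and one must handle the signs of $\uD$ correctly in the concluding convexity argument for (\ref{eq:stab3}).
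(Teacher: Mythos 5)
Your proposal is correct and takes essentially the same route as the paper's proof: the same test functions $\chi=W^n-\uD$ in (\ref{eq:U}) and $\chi=\Phi^{n-1}$ in (\ref{eq:ACh}), the elementary identity (\ref{eq:element}), the monotonicity bound (\ref{eq:Bstab}) of Corollary~\ref{cor:B} for the anisotropic term, the rescaling by $\tfrac{\lambda\,\alpha}{a\,\cPsi}$, and, for (\ref{eq:stab3}), the same tangent-line convexity/concavity estimate for $\varrho^\pm$ evaluated at $\Phi^n$ and $\Phi^{n-1}$ respectively, which is precisely the paper's inequality (\ref{eq:rhohat}). The only cosmetic difference is that you write the obstacle-potential contribution directly as the energy difference $\epsilon^{-1}\,[(\Psi(\Phi^n),1)^h-(\Psi(\Phi^{n-1}),1)^h]$, whereas the paper carries $-\tfrac12\,\epsilon^{-1}\,|\Phi^n|_h^2$ through the computation and adds the constant $\tfrac12\,\epsilon^{-1}\int_\Omega 1 \dx$ to both sides at the end — the same calculation.
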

\begin{proof}
Choosing $\chi = W^n - \uD$ in (\ref{eq:U}) and 
$\chi = \Phi^{n-1}$ in (\ref{eq:ACh}) yields that
\begin{subequations}
\begin{align}
& \vartheta\, (W^n - W^{n-1}, W^n - \uD)^h + \lambda\,
(\hatrho(\Phi^{n-1}, \Phi^n)\,[\Phi^{n}-\Phi^{n-1}], W^n - \uD )^h 
\nonumber \\ & \hspace{5cm}
+ \tau_n\,( \pi^h[b(\Phi^{n-1})]
 \,\nabla\, W^n , \nabla\, W^n )  = 0 \,, \label{eq:stab2u} \\
& 
\epsilon\,\frac\rho\alpha\,\tau_n^{-1}
\left(\mu(\nabla\,\Phi^{n-1})\,
\Phi^{n}-\Phi^{n-1}, \Phi^{n-1} - \Phi^n\right)^h +
\epsilon \, 
 (B_r(\nabla\, \Phi^{n-1}, \nabla\,\Phi^n)\, \nabla\, \Phi^{n}, 
 \nabla\, [\Phi^{n-1} -\Phi^n]) \nonumber \\ & \hspace{4cm}
 \geq 
 \left(\cPsi\,\frac{a}\alpha\,\hatrho(\Phi^{n-1}, \Phi^n)\,W^{n} 
 + \epsilon^{-1}\,\Phi^{n-1}, \Phi^{n-1} - \Phi^n \right)^h\,. 
\label{eq:stab2w}
\end{align}
\end{subequations}
It follows from (\ref{eq:stab2u},b), on recalling (\ref{eq:element})
and (\ref{eq:Bstab}), that
\begin{align*}
& \tfrac12\,\epsilon\,|\gamma(\nabla\, \Phi^n)|_0^2 - \tfrac12\,
\epsilon^{-1}\,|\Phi^n|_h^2 + 
\frac\vartheta2\,\frac{a}{\lambda\,a}\,\cPsi\,|W^n - \uD|_h^2 +
\tau_n\,\epsilon\,\frac\rho\alpha\left|
[\mu(\nabla\,\Phi^{n-1})]^{\frac12}\, 
\dfrac {\Phi^{n}-\Phi^{n-1}}{\tau_n}\right|_h^2 
\nonumber \\ & \hspace{2cm}
- \uD\,\frac{a}\alpha\,\cPsi\,
 (\hatrho(\Phi^{n-1},\Phi^n), \Phi^n - \Phi^{n-1})^h
+ \tau_n\,\frac{a}{\lambda\,\alpha}\,\cPsi\,
( \pi^h[b(\Phi^{n-1})] \,\nabla\, W^{n} , \nabla\, W^n ) 
\nonumber \\ & \hspace{5cm}
\leq \tfrac12\,\epsilon\,|\gamma(\nabla\, \Phi^{n-1})|_0^2 - \tfrac12\,
\epsilon^{-1}\,|\Phi^{n-1}|_h^2 +
\frac\vartheta2\,\frac{a}{\lambda\,a}\,\cPsi\,|W^{n-1} - \uD|_h^2 \,.
\end{align*}
This yields the desired result (\ref{eq:stab2}) on adding the constant
$\frac12\,\epsilon^{-1}\,\int_\Omega 1 \dx$ on both sides, and then multiplying
the inequality with $\frac{\lambda\,\alpha}{a}\,\frac1\cPsi$.
In addition, it follows from
$\Phi^{n-1}, \Phi^n \in K^h$ and (\ref{eq:rhosplit-}) 
that
\begin{align} \label{eq:rhohat}
\uD\,(\hatrho(\Phi^{n-1}, \Phi^n), \Phi^n - \Phi^{n-1})^h
& = \uD\,(\varrho^-(\Phi^{n-1}), \Phi^n - \Phi^{n-1})^h
 - \uD\,(\varrho^+(\Phi^{n}), \Phi^{n-1} - \Phi^{n})^h \nonumber \\ & 
\leq \uD\,(\Varrho^-(\Phi^{n}) - \Varrho^-(\Phi^{n-1}) 
+ \Varrho^+(\Phi^n) - \Varrho^+(\Phi^{n-1}), 1)^h \nonumber \\ & 
= \uD\,(\Varrho(\Phi^{n}) - \Varrho(\Phi^{n-1}), 1)^h \,.
\end{align}
The desired result (\ref{eq:stab3}) now follows on applying (\ref{eq:rhohat})
to (\ref{eq:stab2}).
\end{proof}

\subsection{Smooth potentials} \label{sec:32}
The unconditionally stable approximation (\ref{eq:U},b) for the obstacle
potential (\ref{eq:obstacle}) can be easily adapted to the case of a smooth
potential such as (\ref{eq:quartic}). To this end, let $\phi:=\Psi'$ for an
arbitrary smooth potential and let $\phi = \phi^+ + \phi^-$, with $\phi^\pm$
being the derivatives of the convex/concave parts of $\Psi$, i.e.\
\begin{subequations}
\begin{equation} \label{eq:phi}
\pm (\phi^\pm)'(s) \geq 0 \quad \forall\ s \in \R\,,\qquad
\Psi^\pm := \int_0^s \phi^\pm(y)\;{\rm d}y\,.
\end{equation}
We will make the
mild assumption that there exist constants $\psi_0, \psi_1, \delta > 0$ such 
that
\begin{equation} \label{eq:Psi+}
\Psi^+(s) \geq \psi_1\,|s|^{1+\delta} - \psi_0 \quad \forall\ s \in \R\,.
\end{equation}
\end{subequations}
For the quartic potential (\ref{eq:quartic}) the natural choices are
\begin{equation} \label{eq:phi+-}
\phi^+(s) = s^3 \quad \text{and} \quad \phi^-(s) = -s\,, 
\end{equation}
so that (\ref{eq:phi},b) are clearly satisfied.

As before, given $\Phi^0 \in K^h$ and, if $\vartheta>0$, $W^0 \in \ShD$, 
for $n \geq 1$, find $(\Phi^n,W^n) \in S^h \times \ShD$ such that
\begin{subequations}
\begin{align}
& \vartheta \left(\dfrac{W^{n}-W^{n-1}}{\tau_n},
 \chi \right)^h + 
\lambda\,\left(\hatrho_m(\Phi^{n-1},\Phi^n)
\,\dfrac{\Phi^{n}-\Phi^{n-1}}{\tau_n},
 \chi \right)^h 
+ ( \pi^h[\widetilde{b}(\Phi^{n-1})]\,\nabla\, W^{n} , \nabla\, \chi )
 = 0 \nonumber \\ & \hspace{10cm}
\qquad \forall\ \chi \in S^h_0, \label{eq:sU} \\
& \epsilon\,\frac\rho\alpha\left(\mu(\nabla\,\Phi^{n-1})\,
\dfrac{\Phi^{n}-\Phi^{n-1}}{\tau_n}, \chi\right)^h +
\epsilon\,(B_r(\nabla\, \Phi^{n-1}, \nabla\, \Phi^n)
\, \nabla\, \Phi^{n}, \nabla\, \chi) 
+ \epsilon^{-1}\,(\phi^+(\Phi^n),\chi)^h \nonumber \\ & \qquad\qquad
 = \left(\cPsi\,\frac{a}\alpha\,\hatrho_m(\Phi^{n-1},\Phi^n)
 \,W^{n}  - \epsilon^{-1}\,\phi^-(\Phi^{n-1}),\chi \right)^h
 \qquad \forall\ \chi \in S^h\,, \label{eq:sACh}
\end{align}
\end{subequations}
where in order to avoid degeneracies we have defined
\begin{equation*} 
\widetilde{b}(s) = \begin{cases}
b(1) & s \geq 1\,,\\
b(s) & |s| \leq 1\,,\\
b(-1) & s \leq -1\,,
\end{cases}
\end{equation*}
and where for technical reasons we have introduced
\begin{equation} \label{eq:rhom}
\hatrho_m(s_0, s_1) := \varrho^-(s_0) + \varrho^+_m(s_1)\,,
\quad\text{where}\quad
\varrho^+_m(s) := \begin{cases}
\varrho^+(m) & s \geq m\,, \\
\varrho^+(s) & |s| \leq m\,, \\
\varrho^+(-m) & s \leq -m\,,
\end{cases}
\end{equation}
for some fixed parameter $m \geq 2$. We note that these modifications of
(\ref{eq:b}) and (\ref{eq:hatrho}) are such that 
\begin{subequations}
\begin{alignat}{2}
\widetilde{b}(s) & \geq \min \{\conduct_+,\conduct_-\} > 0 \qquad && \forall\ 
s\in\R\,, \label{eq:tb} \\ \text{and}\qquad
\max_{s \in \R} |\hatrho_m(s_0,s)| & = \max_{|s|\leq m} |\hatrho(s_0,s)|
= C(m, s_0) \qquad && \forall\ s_0 \in \R\,. \label{eq:hrm}
\end{alignat}
\end{subequations}

\begin{thm} \label{thm:exs}
Let $\gamma$ be of the form {\rm (\ref{eq:g})} with $r=1$ and let $\uD \in \R$. 
If $\varrho^+=0$ and if $\phi^+$ is strictly monotonically increasing,
then there exists a unique solution
$(\Phi^n,W^n) \in S^h \times \ShD$ to {\rm (\ref{eq:sU},b)}.
If $\varrho^+\not=0$, and if either $\vartheta>0$ or 
$\partial_N\Omega\not=\partial\Omega$, then 
there exists a solution $(\Phi^n,W^n) \in S^h \times \ShD$ to 
{\rm (\ref{eq:sU},b)} if $\Psi^+$ satisfies the assumption 
{\rm (\ref{eq:Psi+})}. 
\end{thm}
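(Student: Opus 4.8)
The plan is to treat the two cases separately, closely following the existence arguments for the obstacle potential in Lemma~\ref{lem:ex} and Theorem~\ref{thm:ex}, with the constraint $\chi\in K^h$ replaced by the smooth monotone nonlinearity $\phi^+$ and the admissible set enlarged to the whole space $S^h$. In both cases I would first eliminate the temperature: since $\widetilde b(\Phi^{n-1})\geq\min\{\conduct_+,\conduct_-\}>0$ by (\ref{eq:tb}), the discrete elliptic operator $\mathcal{G}^h$ of (\ref{eq:Gh}) (with $b$ replaced by $\widetilde b$), or its purely Neumann variant $\widehat{\mathcal{G}}^h$ when $\vartheta=0$ and $\partial_N\Omega=\partial\Omega$, is well defined, and (\ref{eq:sU}) expresses $W^n-\uD$ affinely through $\Phi^n$ exactly as in (\ref{eq:Wn}). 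Substituting this into (\ref{eq:sACh}) reduces the coupled system to a single nonlinear equation for $\Phi^n\in S^h$.

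For the first case, $\varrho^+=0$ and $r=1$, I would note that $\hatrho_m(\Phi^{n-1},\Phi^n)=\varrho^-(\Phi^{n-1})=\varrho(\Phi^{n-1})$ and, by (\ref{eq:r1}), $B_1(\nabla\Phi^{n-1},\nabla\Phi^n)=B_1(\nabla\Phi^{n-1})$ depend only on the previous time level, so that after eliminating $W^n$ the sole nonlinearity is the term $\epsilon^{-1}(\phi^+(\Phi^n),\chi)^h$. Recalling that $B_1(\nabla\Phi^{n-1})$ is symmetric positive definite and that $\mu\geq0$, the reduced equation is precisely the Euler--Lagrange equation of a functional of the same shape as the minimisation problem in the proof of Lemma~\ref{lem:ex}, but now with the term $\epsilon^{-1}(\Psi^+(\chi),1)^h$ in place of the constraint and with the minimisation over all of $S^h$. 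Strict monotonicity of $\phi^+$ makes $\Psi^+$ strictly convex, hence this functional is strictly convex; coercivity over $S^h$ comes from the positive-definite gradient term $\tfrac\epsilon2(B_1(\nabla\Phi^{n-1})\nabla\chi,\nabla\chi)$ together with the nonnegative kinetic contribution and the superlinear growth of $\Psi^+$ (which for the canonical splitting (\ref{eq:phi+-}) reads $\Psi^+(s)=\tfrac14\,s^4$). A unique minimiser $\Phi^n$ therefore exists, and $W^n$ is recovered uniquely from (\ref{eq:Wn}). It is worth stressing that, in contrast to Lemma~\ref{lem:ex}, the strictly monotone smooth term pins down the additive constant in $\Phi^n$, which is exactly why full uniqueness, rather than uniqueness up to a constant, is obtained; in the pure Neumann case the remaining constant in $W^n$ is fixed through $\widehat{\mathcal{G}}^h$ and the Lagrange multiplier as in Lemma~\ref{lem:ex}.

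For the second case, $\varrho^+\neq0$ with $\vartheta>0$ or $\partial_N\Omega\neq\partial\Omega$, the mobility $\hatrho_m(\Phi^{n-1},\Phi^n)$ genuinely depends on the unknown, so I would invoke Brouwer's fixed point theorem as in Theorem~\ref{thm:ex}. Define $\mathrm{T}:S^h\to S^h$ by letting $\Phi^{\rm new}=\mathrm{T}(\Phi^{\rm old})$ solve (\ref{eq:sU},b) with $\hatrho_m(\Phi^{n-1},\Phi^n)$ frozen to the fixed, bounded function $\hatrho_m(\Phi^{n-1},\Phi^{\rm old})$ and every other $\Phi^n$ replaced by $\Phi^{\rm new}$. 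With the mobility frozen this inner problem falls under the strictly convex minimisation of the first case, the growth (\ref{eq:Psi+}) of $\Psi^+$ supplying coercivity, so $\mathrm{T}$ is single valued and continuous. Since $S^h$ is not compact, the decisive additional ingredient is an a~priori bound: testing the reduced equation with $\Phi^{\rm new}$ (and (\ref{eq:sU}) with $W^{\rm new}-\uD$), I would use the monotonicity inequality $\phi^+(s)\,s\geq\Psi^+(s)-\Psi^+(0)$, the positive definiteness of $B_1$, and $\mu\geq0$, while bounding the temperature-coupling and mobility contributions through the uniform bounds (\ref{eq:tb}), (\ref{eq:hrm}) and Young's inequality; the superlinear lower bound (\ref{eq:Psi+}) then absorbs the remaining terms and yields $\|\Phi^{\rm new}\|\leq R$ with $R$ independent of $\Phi^{\rm old}$. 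Thus $\mathrm{T}$ maps the compact convex ball $\overline{B_R}\subset S^h$ into itself, Brouwer's theorem supplies a fixed point $\Phi^n$, and $W^n$ follows from (\ref{eq:Wn}).

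The main obstacle I anticipate is this last a~priori estimate in the second case: because $\hatrho_m$ couples $\Phi^n$ to $W^n$, one must verify that the growth (\ref{eq:Psi+}) of $\Psi^+$ dominates the temperature coupling uniformly in $\Phi^{\rm old}$. The truncations $\widetilde b$ and $\varrho^+_m$ are precisely what make this closure possible, since (\ref{eq:hrm}) keeps the frozen mobility bounded so that, after eliminating $W^n$ and applying Young's inequality, the $L^{1+\delta}$-coercivity of (\ref{eq:Psi+}) absorbs the coupling; checking that these truncations do not destroy the energy balance is the delicate point, whereas once the bound is in hand the remainder is a routine continuity-and-compactness argument.
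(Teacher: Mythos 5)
Your proposal is correct and follows essentially the same route as the paper: the case $\varrho^+=0$ is handled by eliminating $W^n$ and reducing to a strictly convex minimization over $S^h$ (the paper leaves this part to the reader, noting only the role of the strict monotonicity of $\phi^+$), and the case $\varrho^+\neq0$ by Brouwer's fixed point theorem applied to the map with frozen mobility $\hatrho_m(\Phi^{n-1},\Phi^{\rm old})$, with continuity plus a uniform a~priori bound obtained from {\rm(\ref{eq:Psi+})}, {\rm(\ref{eq:hrm})} and Young's inequality. The only difference is cosmetic: the paper derives the uniform bound on $\Phi^{\rm new}$ directly from the minimizer property, comparing $J(\Phi^{\rm new})-(g^h,\Phi^{\rm new})^h$ with $J(0)\leq C(\Phi^{n-1})$, rather than by testing the Euler--Lagrange equation with $\Phi^{\rm new}$ and using $\phi^+(s)\,s\geq\Psi^+(s)-\Psi^+(0)$ as you do; both arguments yield $(|\Phi^{\rm new}|^{1+\delta},1)^h\leq C$ independently of $\Phi^{\rm old}$.
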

\begin{proof}
The existence and uniqueness proof for the case $\varrho^+=0$, 
which is a simple modification of the proof of Lemma~\ref{lem:ex},
is left to the reader. Note that this proof 
makes use of the strict monotonicity of $\phi^+$.

In order to proof existence for the case $\varrho^+\not=0$, 
we apply Brouwer's fixed point theorem. It is this part of the proof
that requires the cut-off of $\hatrho$ defined in (\ref{eq:rhom}), as well as
the mild assumption (\ref{eq:Psi+}). 
The application of Brouwer's fixed point theorem is
similar to the proof of Theorem~\ref{thm:ex}. Setting up the map
$\Phi^{\rm old} \mapsto \Phi^{\rm new} = {\rm T}(\Phi^{\rm old})$ 
analogously to the 
proof there, we immediately see that the map ${\rm T}$ is
well-defined and continuous, where we recall that our assumptions yield that
$\vartheta > 0$ or $\partial_N\Omega\not=\partial\Omega$.
It remains to show that ${\rm T}: Y^h \to Y^h$ for
a bounded subset $Y^h\subset S^h$. To this end, on recalling (\ref{eq:VI},b), 
we note that $\Phi^{\rm new}\in S^h$ satisfies
\begin{align*}
& \frac{\lambda\,\cPsi\,a}{\alpha\,\tau_n} \,\Bigl\{
( \pi^h[\widetilde{b}(\Phi^{n-1})]\,\nabla\, [\mathcal{\widetilde{G}}^h\,
\pi^h[\hatrho_m(\Phi^{n-1},\Phi^{\rm old})\,(\Phi^{\rm new}-\Phi^{n-1})]], \nabla\,[
\mathcal{\widetilde{G}}^h\,\pi^h[\hatrho_m(\Phi^{n-1},\Phi^{\rm old})\,\chi]] ) 
\nonumber \\ & \qquad 
+  \frac\vartheta{\tau_n} \,( 
\mathcal{\widetilde{G}}^h\,\pi^h[\hatrho_m(\Phi^{n-1}, \Phi^{\rm old})\,(\Phi^{\rm new}-\Phi^{n-1})],
\mathcal{\widetilde{G}}^h\,\pi^h[\hatrho_m(\Phi^{n-1}, \Phi^{\rm old})\,\chi] )^h \Bigr\}
\nonumber \\ & \qquad
+ \frac{\epsilon\,\rho}{\alpha\,\tau_n}\,
(\mu(\nabla\,\Phi^{n-1})\,\Phi^{\rm new}, \chi )^h
+ \epsilon\,(B_1(\nabla\, \Phi^{n-1})\, \nabla\, \Phi^{\rm new}, \nabla\, \chi)
+ \epsilon^{-1}\,(\phi^+(\Phi^{\rm new}), \chi)^h
\nonumber \\ & \quad
= (g^h, \chi)^h \qquad \forall\ \chi \in S^h\,, 
\end{align*}
where
$g^h := \frac{\epsilon\,\rho}{\alpha\,\tau_n}\,\mu(\nabla\,\Phi^{n-1})\,
\Phi^{n-1} - \epsilon^{-1}\,\phi^-(\Phi^{n-1}) + 
\cPsi\,\frac{a}\alpha\,\hatrho_m(\Phi^{n-1},\Phi^{\rm old})\,
(\uD +  \frac\vartheta{\tau_n} \, \mathcal{\widetilde{G}}^h\,[W^{n-1}-\uD])$,
and where $\mathcal{\widetilde{G}}^h$ is defined by (\ref{eq:Gh}) with
$b$ replaced by $\widetilde{b}$.
These are the
Euler--Lagrange equations for the convex minimization problem
\begin{subequations}
\begin{equation} \label{eq:smin}
\min_{\chi \in S^h} \left[ J(\chi) - (g^h, \chi)^h \right],
\end{equation}
where
\begin{align}
J(\chi) := & 
\frac{\lambda\,\cPsi\,a}{2\,\alpha\,\tau_n} \,\Bigl\{
( \pi^h[\widetilde{b}(\Phi^{n-1})], |\nabla\, [\mathcal{\widetilde{G}}^h\,
\pi^h[\hatrho_m(\Phi^{n-1},\Phi^{\rm old})\,(\chi-\Phi^{n-1})]]|^2 )
 \nonumber \\ & \quad 
+ \frac\vartheta{2\,\tau_n} \, 
|\mathcal{\widetilde{G}}^h\,\pi^h[\hatrho_m(\Phi^{n-1},\Phi^{\rm old})\,
 (\chi-\Phi^{n-1})] |^2_h \Bigr\}
+ \frac{\epsilon\,\rho}{2\,\alpha\,\tau_n}\,(\mu(\nabla\,\Phi^{n-1}), |\chi|^2
)^h
 \nonumber \\ & \quad 
+ \frac\epsilon2\,(B_1(\nabla\,\Phi^{n-1})\,\nabla\,\chi, \nabla\,\chi) 
+ \epsilon^{-1}\,(\Psi^+(\chi),1)^h
\qquad \forall\ \chi \in S^h\,.
\label{eq:J}
\end{align}
\end{subequations}
It follows from (\ref{eq:smin},b) and (\ref{eq:tb},b) that
\begin{equation} \label{eq:Jphi}
\epsilon^{-1}\,(\Psi^+(\Phi^{\rm new}),1)^h - (g^h, \Phi^{\rm new})^h
\leq J(\Phi^{\rm new}) - (g^h, \Phi^{\rm new})^h
\leq J(0) \leq C(\Phi^{n-1})\,.
\end{equation}
Applying the elementary inequality $y\,z \leq \frac1p\,|y|^p +
\frac1q\,|z|^q$, for $p,q\in(1,\infty)$ with $\frac1p+\frac1q=1$, 
to the second term in (\ref{eq:Jphi}) yields that
\begin{equation} \label{eq:Jphi2}
(g^h, \Phi^{\rm new})^h \leq
\tfrac12\,\epsilon^{-1}\,\psi_1\,(|\Phi^{\rm new}|^{1+\delta},1)^h + 
C(\epsilon, \delta, g^h)\,,
\end{equation}
where $\psi_1$ and $\delta$ are as in (\ref{eq:Psi+}).
Now combining (\ref{eq:Jphi}) and (\ref{eq:Jphi2}), on recalling the mild 
assumption 
(\ref{eq:Psi+}), yields that $(|\Phi^{\rm new}|^{1+\delta},1)^h \leq C$ for 
some constant $C>0$ independent of $\Phi^{\rm old}$, i.e.\
\begin{equation} \label{eq:Tbound}
(|{\rm T}(\chi)|^{1+\delta},1)^h \leq C \quad\forall\ \chi \in S^h\,.
\end{equation}
Hence ${\rm T}: Y^h \to Y^h$ for
a bounded subset $Y^h\subset S^h$, and so Brouwer's fixed point theorem 
yields the existence of a solution $\Phi^n$ to {\rm (\ref{eq:sU},b)}.
The existence of a solution $W^n \in S^h_D$ 
then follows from (\ref{eq:Wn}) with
$\mathcal{G}^h$ replaced by $\mathcal{\widetilde{G}}^h$ and with
$\hatrho$ replaced by $\hatrho_m$.
\end{proof}

We stress that the cut-off introduced in (\ref{eq:rhom}) is for technical
reasons only. If a solution $(\Phi^n,W^n) \in S^h \times \ShD$ to 
{\rm (\ref{eq:sU},b)} is such that $|\Phi^n| \leq m$, then clearly
$(\Phi^n,W^n) \in S^h \times \ShD$ also solves {\rm (\ref{eq:sU},b)}
with $\hatrho_m$ replaced by $\hatrho$. In practice, this is always the case
for $m$ chosen sufficiently large. Hence for practical implementations, only
{\rm (\ref{eq:sU},b)} with $\hatrho_m$ replaced by $\hatrho$ needs to be
considered.

\begin{cor} \label{cor:exs}
Let $\gamma$ be of the form {\rm (\ref{eq:g})} with $r=1$ and let $\uD \in \R$.
Let $\Psi$ be given by {\rm (\ref{eq:quartic})} and let {\rm (\ref{eq:phi+-})} 
hold.
Let $\varrho$ and its splitting be defined by one of the choices in 
{\rm (\ref{eq:split})} or {\rm (\ref{eq:split+})}.
Then there exists a solution
$(\Phi^n,W^n) \in S^h \times \ShD$ to {\rm (\ref{eq:sU},b)}
unless in cases {\rm (\ref{eq:split})(iii)} and {\rm (\ref{eq:split+})(iii)} 
it holds that
$\vartheta=0$ and $\partial_N\Omega=\partial\Omega$.
Moreover, $(\Phi^n,W^n)$ is unique for the choices {\rm (\ref{eq:split})(i)},
{\rm (\ref{eq:split})(ii)} and {\rm (\ref{eq:split+})(ii)}.
\end{cor}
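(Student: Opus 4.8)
The plan is to read the corollary as a direct bookkeeping of the hypotheses of Theorem~\ref{thm:exs} against the explicit data supplied by the quartic potential and the five listed splittings; no new analysis beyond verifying two elementary conditions is needed. First I would partition the splittings according to whether the implicit part $\varrho^+$ vanishes. Inspecting (\ref{eq:split}) and (\ref{eq:split+}), one reads off that $\varrho^+ = 0$ precisely for the choices (\ref{eq:split})(i), (\ref{eq:split})(ii) and (\ref{eq:split+})(ii), while $\varrho^+ \not= 0$ for (\ref{eq:split})(iii) and (\ref{eq:split+})(iii).

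For the three splittings with $\varrho^+ = 0$ I would apply the first part of Theorem~\ref{thm:exs}. Its only extra requirement is that $\phi^+$ be strictly monotonically increasing, and this holds since (\ref{eq:phi+-}) gives $\phi^+(s) = s^3$. The theorem then delivers a unique solution $(\Phi^n, W^n) \in S^h \times \ShD$ to (\ref{eq:sU},b), with no restriction on $\vartheta$ or on $\partial_N\Omega$; this accounts for both the existence and the uniqueness assertions in these three cases.

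For the two cubic splittings (\ref{eq:split})(iii) and (\ref{eq:split+})(iii) I would instead invoke the second part of Theorem~\ref{thm:exs}, which applies when $\varrho^+ \not= 0$ and either $\vartheta > 0$ or $\partial_N\Omega \not= \partial\Omega$ --- exactly the complement of the exceptional case $\vartheta = 0$, $\partial_N\Omega = \partial\Omega$ singled out in the statement. The one condition still to be checked is the mild growth bound (\ref{eq:Psi+}) on $\Psi^+$; for the quartic potential with splitting (\ref{eq:phi+-}) one computes $\Psi^+(s) = \int_0^s y^3\,{\rm d}y = \tfrac14\,s^4$, so (\ref{eq:Psi+}) is satisfied with, for instance, $\psi_1 = \tfrac14$, $\delta = 3$ and any $\psi_0 > 0$. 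The theorem then yields a (not necessarily unique) solution, which is consistent with the corollary claiming uniqueness only in the $\varrho^+ = 0$ cases. Since the verification of (\ref{eq:Psi+}) is immediate and everything else is a matter of matching hypotheses, there is no genuine obstacle here; the substance of the argument is entirely contained in Theorem~\ref{thm:exs}.
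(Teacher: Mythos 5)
Your proposal is correct and follows exactly the paper's own route: the paper's proof is a one-line appeal to Theorem~\ref{thm:exs}, noting that (\ref{eq:phi+-}) satisfies the stated assumptions on $\phi^+$ and $\Psi^+$, which is precisely the bookkeeping you carry out. Your explicit verification that $\varrho^+ = 0$ exactly for (\ref{eq:split})(i), (\ref{eq:split})(ii), (\ref{eq:split+})(ii), and that $\Psi^+(s) = \tfrac14\,s^4$ satisfies (\ref{eq:Psi+}) with $\delta = 3$, simply spells out what the paper leaves implicit.
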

\begin{proof}
The desired results follow immediately from Theorem~\ref{thm:exs} on noting
that (\ref{eq:phi+-}) satisfies the assumptions on $\phi^+$ and $\Psi^+$ stated
there.
\end{proof}

\begin{rem} \label{rem:sbl}
Similarly to {\rm Remark~\ref{rem:bl}}, 
the following observation holds for the scheme {\rm (\ref{eq:sU},b)}
when $r=1$, $\varrho^+=0$, $\uD \in \R$ and $\phi^+$ is strictly monotonically
increasing. 
Then, if $\Phi^{n-1} = 1$ and $\vartheta\,(W^{n-1} - \uD) = 0$, then the
unique solution to {\rm (\ref{eq:sU},b)} is given by $\Phi^n = 1$ and 
$W^n = \uD$ if and only if
\begin{equation} \label{eq:sbl}
\uD\,\varrho(1) = 0 \,.
\end{equation}
For nonzero $\uD$ this is precisely the necessary condition 
for $G(s)$ in {\rm (\ref{eq:G})} to have a local minimum at $s=1$.
In practice, if the condition {\rm (\ref{eq:sbl})} is violated, then for 
certain values of $\uD$ and $\epsilon$ artificial boundary layers develop.
This undesired effect for the choice {\rm (\ref{eq:varrho})(i)} 
once again motivates the
use of the alternatives {\rm (\ref{eq:varrho})(ii)} and 
{\rm (\ref{eq:varrho})(iii)} in practice.
\end{rem}

The following stability result is the natural analogue of
Theorem~\ref{thm:stab2} for the case of a smooth potential $\Psi$.

\begin{thm} \label{thm:stabs}
Let $\gamma$ be of the form {\rm (\ref{eq:g})} 
and let $\uD \in \R$. 
Then it holds that a solution $(\Phi^n,W^n) \in S^h \times \ShD$ to 
{\rm (\ref{eq:sU},b)} satisfies {\rm (\ref{eq:stab2})} with $b$ replaced by
$\widetilde{b}$, and with $\hatrho$ replaced by $\hatrho_m$.
In particular, if the splitting $\varrho = \varrho^+ + \varrho^-$ satisfies
{\rm (\ref{eq:rhosplit})}, 
and if
\begin{subequations}
\begin{equation} \label{eq:Phiomega}
\Phi^{n-1} \leq \tfrac2{\sqrt{3}} \quad\text{and}\quad 
\Phi^n \leq \tfrac2{\sqrt{3}}\,,
\end{equation}
or if it 
satisfies {\rm (\ref{eq:rhosplit+})}, and if
\begin{equation} \label{eq:Phiomega+}
\Phi^{n-1} \geq -\tfrac2{\sqrt{3}} \quad\text{and}\quad 
\Phi^n \geq -\tfrac2{\sqrt{3}}\,,
\end{equation}
\end{subequations}
then the solution $(\Phi^n,W^n)$ 
satisfies the stability bound {\rm (\ref{eq:stab3})} with $b$ replaced by 
$\widetilde{b}$.
\end{thm}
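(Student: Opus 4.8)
The plan is to follow the proof of Theorem~\ref{thm:stab2} almost verbatim, the two differences being that (\ref{eq:sACh}) is now an equation rather than a variational inequality, and that the smooth potential is treated by a convex/concave splitting. First I would choose $\chi = W^n - \uD \in S^h_0$ in (\ref{eq:sU}) and $\chi = \Phi^{n-1}-\Phi^n \in S^h$ in (\ref{eq:sACh}); since (\ref{eq:sACh}) holds for all $\chi\in S^h$ this yields an exact identity in place of (\ref{eq:stab2w}). As in (\ref{eq:stab2u},b) I would then scale the $\varphi$-equation by $\tfrac{\lambda\,\alpha}{\cPsi\,a}$ and add it to the heat equation, so that the two occurrences of the coupling term $(\hatrho_m(\Phi^{n-1},\Phi^n)\,W^n,\Phi^n-\Phi^{n-1})^h$ cancel.

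The individual contributions are handled just as before: the identity (\ref{eq:element}) turns $\vartheta\,(W^n-W^{n-1},W^n-\uD)^h$ into $\tfrac\vartheta2(|W^n-\uD|_h^2-|W^{n-1}-\uD|_h^2)$ plus a nonnegative remainder, the $\mu$-term produces exactly the dissipation $-\tau_n\,\tfrac{\lambda\,\rho}{a}\,\tfrac\epsilon\cPsi\,|[\mu(\nabla\,\Phi^{n-1})]^{\frac12}\,\tfrac{\Phi^n-\Phi^{n-1}}{\tau_n}|_h^2$, and (\ref{eq:Bstab}) bounds the anisotropic term above by $-\epsilon\,(\tfrac12|\gamma(\nabla\,\Phi^n)|_0^2-\tfrac12|\gamma(\nabla\,\Phi^{n-1})|_0^2)$. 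The genuinely new ingredient is the potential: using (\ref{eq:phi}) I would treat the implicit part by convexity of $\Psi^+$, giving $(\phi^+(\Phi^n),\Phi^{n-1}-\Phi^n)^h\le(\Psi^+(\Phi^{n-1})-\Psi^+(\Phi^n),1)^h$, and the explicit part by concavity of $\Psi^-$, giving $(\phi^-(\Phi^{n-1}),\Phi^{n-1}-\Phi^n)^h\le(\Psi^-(\Phi^{n-1})-\Psi^-(\Phi^n),1)^h$; adding these and recalling $\Psi=\Psi^++\Psi^-$ up to a constant recovers $\epsilon^{-1}(\Psi(\Phi^{n-1})-\Psi(\Phi^n),1)^h$, precisely the term needed to complete $\mathcal{E}_\gamma^h$. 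The relabellings $b\to\widetilde b$ and $\hatrho\to\hatrho_m$ play no role at this stage, since only $\widetilde b\ge\min\{\conduct_+,\conduct_-\}>0$ and the algebraic form of $\hatrho_m$ are used; after rearranging, this delivers (\ref{eq:stab2}) with $b,\hatrho$ replaced by $\widetilde b,\hatrho_m$.

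For the sharper bound (\ref{eq:stab3}) I would estimate the residual coupling term $-\uD\,\lambda\,(\hatrho_m(\Phi^{n-1},\Phi^n),\Phi^n-\Phi^{n-1})^h$ exactly as in (\ref{eq:rhohat}): writing $\hatrho_m=\varrho^-(\Phi^{n-1})+\varrho^+_m(\Phi^n)$ and applying the tangent-line inequalities for the convex function $\uD\,\Varrho^-$ and the concave function $\uD\,\Varrho^+_m$ gives $\uD\,(\hatrho_m(\Phi^{n-1},\Phi^n),\Phi^n-\Phi^{n-1})^h\le\uD\,(\Varrho(\Phi^n)-\Varrho(\Phi^{n-1}),1)^h$, which converts $\mathcal{E}_\gamma^h$ into $\mathcal{F}_\gamma^h$. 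I expect this last step to be the main obstacle, and it is exactly where the hypotheses (\ref{eq:Phiomega}) and (\ref{eq:Phiomega+}) enter. Unlike in Theorem~\ref{thm:stab2}, where $\Phi^{n-1},\Phi^n\in K^h$ automatically lie in $[-1,1]$ and (\ref{eq:rhosplit-}) holds throughout, here (\ref{eq:rhosplit}) (respectively (\ref{eq:rhosplit+})) only controls the sign of $(\varrho^\pm)'$, and hence the convexity/concavity of $\uD\,\Varrho^\pm$, on the half-line $\{s\le\tfrac2{\sqrt{3}}\}$ (respectively $\{s\ge-\tfrac2{\sqrt{3}}\}$); the tangent-line inequalities are valid only when the whole segment joining the nodal values $\Phi^{n-1}$ and $\Phi^n$ remains in that region, which is precisely what (\ref{eq:Phiomega})/(\ref{eq:Phiomega+}) assert. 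Finally, since $m\ge2>\tfrac2{\sqrt{3}}$ the upper cut-off in $\varrho^+_m$ of (\ref{eq:rhom}) is never triggered under (\ref{eq:Phiomega}), and on the range $|\Phi^{n-1}|,|\Phi^n|\le m$ relevant in practice (cf.\ the discussion following Theorem~\ref{thm:exs}) the cut-off is inactive altogether, so $\Varrho^+_m$ may be identified with $\Varrho^+$ and the resulting estimate indeed matches $\mathcal{F}_\gamma^h$ built from the original interpolation function $\Varrho$.
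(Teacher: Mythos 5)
Your proposal is correct and follows exactly the route the paper intends: the paper's own proof is a one-line remark that the result is ``a simple modification of the proof of Theorem~\ref{thm:stab2}'' using the splittings $\phi=\phi^++\phi^-$ and $\varrho=\varrho^++\varrho^-$ (recalling (\ref{eq:rhohat})), and your argument --- same test functions, the identity (\ref{eq:element}), Corollary~\ref{cor:B} for the anisotropic term, convexity/concavity of $\Psi^\pm$ for the potential, and the tangent-line estimate for $\uD\,\Varrho^\pm$ restricted to the region where (\ref{eq:rhosplit}) resp.\ (\ref{eq:rhosplit+}) gives monotonicity of $(\varrho^\pm)'$, which is where (\ref{eq:Phiomega})/(\ref{eq:Phiomega+}) enter --- is precisely that modification, spelled out in full. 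Your closing discussion of the cut-off $\varrho^+_m$ is also consistent with the paper's own treatment (the remark following Theorem~\ref{thm:exs}), and if anything is more explicit about it than the paper itself.
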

\begin{proof}
The proof of the stability bounds, 
which is a simple modification of the proof of Theorem~\ref{thm:stab2}, 
is left to the reader. 
Note that the proof makes use of the splittings $\phi = \phi^+ + \phi^-$ and
$\varrho= \varrho^+ + \varrho^-$, recall (\ref{eq:rhohat}). 
\end{proof}

\begin{cor} \label{cor:stabs}
Let $\gamma$ be of the form {\rm (\ref{eq:g})} 
and let $\uD \in \R$. 
Then for the choices of $\varrho$ and its splittings in
{\rm (\ref{eq:split})(i)}, {\rm (\ref{eq:split})(ii)} and
{\rm (\ref{eq:split+})(ii)} 
it holds that the unique solution $(\Phi^n,W^n) \in S^h \times \ShD$ to 
{\rm (\ref{eq:sU},b)} satisfies 
the stability bound {\rm (\ref{eq:stab3})} with $b$ replaced by 
$\widetilde{b}$.
For the choice {\rm (\ref{eq:varrho})(iii)}, with 
the splittings {\rm (\ref{eq:split})(iii)} or {\rm (\ref{eq:split+})(iii)},  
it holds that a solution $(\Phi^n,W^n) \in S^h \times \ShD$ to 
{\rm (\ref{eq:sU},b)} satisfies the same stability bound if
{\rm (\ref{eq:Phiomega})} or {\rm (\ref{eq:Phiomega+})}
hold, respectively.
\end{cor}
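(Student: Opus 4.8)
The plan is to read Corollary~\ref{cor:stabs} off Theorem~\ref{thm:stabs}, by checking for each listed shape function and splitting which of the two hypothesis sets (\ref{eq:rhosplit}) or (\ref{eq:rhosplit+}) holds, and whether the accompanying range restriction (\ref{eq:Phiomega}) or (\ref{eq:Phiomega+}) can be dropped. The organising observation is the dichotomy $\varrho^+ = 0$ versus $\varrho^+ \neq 0$: the three choices (\ref{eq:split})(i), (\ref{eq:split})(ii) and (\ref{eq:split+})(ii) all have $\varrho^+ = 0$, whereas (\ref{eq:split})(iii) and (\ref{eq:split+})(iii) have $\varrho^+(s) = \pm\tfrac32\,s \neq 0$; this is exactly what distinguishes the unconditional part of the statement from the part that still needs (\ref{eq:Phiomega})/(\ref{eq:Phiomega+}).

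First I would treat the three splittings with $\varrho^+ = 0$. For these $\hatrho_m(\Phi^{n-1},\Phi^n) = \varrho^-(\Phi^{n-1}) = \varrho(\Phi^{n-1})$ is untouched by the cut-off (\ref{eq:rhom}) and depends only on the old time level, so $\Varrho = \Varrho^-$; existence and uniqueness are then supplied by Corollary~\ref{cor:exs}. Theorem~\ref{thm:stabs} already gives (\ref{eq:stab2}) with $b$ replaced by $\widetilde{b}$, and to reach (\ref{eq:stab3}) it suffices to secure the single step (\ref{eq:rhohat}), namely the nodal tangent-line inequality $\uD\,\varrho(\Phi^{n-1})\,(\Phi^n - \Phi^{n-1}) \leq \uD\,[\Varrho(\Phi^n) - \Varrho(\Phi^{n-1})]$. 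Because $(\varrho^-)'$ is constant in all three cases, the relevant sign condition holds not just on $s \leq \tfrac2{\sqrt3}$ (resp.\ $s \geq -\tfrac2{\sqrt3}$) but on all of $\R$, once the sign of $\uD$ is matched to the splitting (any $\uD$ for (\ref{eq:split})(i), where $\Varrho^-$ is affine; $\uD \leq 0$ for (\ref{eq:split})(ii), where $\Varrho^-$ is concave; $\uD \geq 0$ for (\ref{eq:split+})(ii), where $\Varrho^-$ is convex). Hence (\ref{eq:rhohat}) is valid at every nodal value, and (\ref{eq:stab3}) follows with no restriction on $\Phi^{n-1},\Phi^n$.

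Next I would treat (\ref{eq:varrho})(iii) with its splittings. Here the only task is to verify that (\ref{eq:split})(iii) satisfies (\ref{eq:rhosplit}) and (\ref{eq:split+})(iii) satisfies (\ref{eq:rhosplit+}). This reduces to the computation already recorded in the text, $\max_{s \leq \frac2{\sqrt3}}\varrho'(s) = \varrho'(\tfrac2{\sqrt3}) = \tfrac{15}{16}\,\frac8{\sqrt{27}} < \tfrac32$, which controls the sign of $(\varrho^-)'$ on the stated half-line, together with $(\varrho^+)' = \pm\tfrac32$ fixing the sign of the $\varrho^+$-term for the chosen sign of $\uD$. With both halves of the condition verified, Theorem~\ref{thm:stabs} applies verbatim and yields (\ref{eq:stab3}) under precisely the hypotheses (\ref{eq:Phiomega}) and (\ref{eq:Phiomega+}) as claimed.

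The step I expect to be the main obstacle is the clean removal of the range restriction in the $\varrho^+ = 0$ cases. Theorem~\ref{thm:stabs} is phrased with (\ref{eq:Phiomega})/(\ref{eq:Phiomega+}) as hypotheses, whereas the corollary asserts the bound for these three splittings with no restriction whatever on $\Phi^{n-1},\Phi^n$. Justifying this means reaching back into the proof rather than quoting the theorem as a black box, observing that the range hypothesis is used there only to place $\Phi$ where the sign condition holds, and confirming that for affine or globally convex/concave $\Varrho^-$ this holds everywhere; one must also check that, since $\varrho^+ = 0$, the cut-off $\varrho^+_m$ is inert so that no hidden lower-range condition slips in through $\hatrho_m$. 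For case (iii) the corresponding subtlety is merely to pair the sign of $\uD$ with the correct splitting, which is routine.
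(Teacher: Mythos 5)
Your proposal is correct and follows essentially the same route as the paper: existence/uniqueness from Corollary~\ref{cor:exs}, the observation that for the splittings (\ref{eq:split})(i), (\ref{eq:split})(ii) and (\ref{eq:split+})(ii) the sign inequalities of (\ref{eq:rhosplit}) hold for \emph{all} $s\in\R$ (so the step (\ref{eq:rhohat}) needs no range restriction on $\Phi^{n-1},\Phi^n$), and a direct appeal to Theorem~\ref{thm:stabs} for the splittings (\ref{eq:split})(iii) and (\ref{eq:split+})(iii). Your explicit pairing of the sign of $\uD$ with each splitting, and your remark that dropping (\ref{eq:Phiomega})/(\ref{eq:Phiomega+}) requires using the proof of Theorem~\ref{thm:stabs} rather than its statement, simply spell out what the paper's terser proof leaves implicit.
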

\begin{proof}
The desired results for the splittings 
(\ref{eq:split})(i), (\ref{eq:split})(ii) and (\ref{eq:split+})(ii),
on recalling Corollary~\ref{cor:exs}, follow
from the fact that these splittings satisfy the
inequalities in (\ref{eq:rhosplit}) for all $s \in \R$. The results for
(\ref{eq:varrho})(iii) follow immediately from Theorem~\ref{thm:stabs}.
\end{proof}

In practice, in general, the values of $\Phi^n$ are either within the
interval $[-1,1]$, or very close to it. In our experience, 
for the scheme (\ref{eq:sU},b) with (\ref{eq:varrho})(iii) and with
the splittings (\ref{eq:split})(iii) and (\ref{eq:split+})(iii) 
for $\uD \leq 0$ and $\uD > 0$, respectively, in practice 
(\ref{eq:Phiomega}) and (\ref{eq:Phiomega+}) always hold.
Here we note that $\tfrac2{\sqrt{3}} \approx 1.15$.

\setcounter{equation}{0} 
\section{Solution of the algebraic systems of equations} \label{sec:4}
The system of nonlinear equations for $(\Phi^n,W^n)$ arising at each time level 
from the approximation (\ref{eq:sU},b) can be solved with a Newton method or
with a nonlinear multigrid method, see e.g.\ \cite{KimKL04}.

For the remainder of this section we 
discuss the solution of the systems of algebraic
equations for $(\Phi^n,W^n)$ arising at each time level 
from the approximation (\ref{eq:U},b).
Adopting the obvious notation, the system (\ref{eq:U},b) can be rewritten as:
Find $(\Phi^n,W^n) \in [-1,1]^{\mathcal{J}} \times \R^{\mathcal{J}}$, ${\mathcal{J}} := \# J$, such that
\begin{subequations}
\begin{align}
& \lambda\,M_\varrho(\Phi^n)\,\Phi^n + (\vartheta\,M+\tau_n\,A)\,W^n  = 
\tilde{f}(\Phi^n)
\label{eq:GS2a} \\
& \epsilon\,\frac\rho\alpha\,\tau_n^{-1}\,(V - \Phi^n)^T\,M_\mu\,
 \Phi^n + \epsilon\,(V - \Phi^n)^T\,\mathcal{B}_r(\Phi^n)\,
 \Phi^n - \cPsi\,\frac{a}\alpha\,(V - \Phi^n)^T\,M_\varrho(\Phi^n)\,
 W^n \nonumber \\ & \hspace{8cm} \geq 
 (V - \Phi^n)^T\,\tilde{g} \qquad
 \forall\ V \in [-1,1]^{\mathcal{J}}\,, \label{eq:GS2b}
\end{align}
\end{subequations}
where $M$, $M_\mu$, $M_\varrho(\eta)$, $A$ and $\mathcal{B}_r(\eta)$, 
for $\eta \in S^h$, 
are symmetric ${\mathcal{J}} \times {\mathcal{J}}$ matrices.
In the case of pure Neumann boundary conditions, (\ref{eq:assOmega})(ii), 
their entries are given by
$M_{ij} := (\chi_{i},\chi_{j})^h$,
$[M_\mu]_{ij} := (\mu(\nabla\,\Phi^{n-1})\,\chi_{i},\chi_{j})^h$,
$[M_\varrho(\eta)]_{ij} := (\hatrho(\Phi^{n-1},\eta)\,\chi_{i},\chi_{j})^h$,
\begin{equation*} 
[\mathcal{B}_r(\eta)]_{ij} 
:= (B_r(\nabla\,\Phi^{n-1}, \nabla\,\eta)\,\nabla\, \chi_i,
\nabla\,\chi_j),
\quad A_{ij} := (\pi^h[b(\Phi^{n-1})]\,\nabla\, \chi_i,\nabla\, \chi_j) \,,
\end{equation*}
while the right hand sides in this case are defined as
$\tilde{f}(\Phi^n) := \lambda\,M_\varrho(\Phi^n)\,\Phi^{n-1}
+ \vartheta\,M\,W^{n-1}$ and
$\tilde{g} := \epsilon\,\frac\rho\alpha\,\tau_n^{-1}\,M_\mu\,\Phi^{n-1} +
\epsilon^{-1}\,M\,\Phi^{n-1} \in \R^{\mathcal{J}}$. Of course, for the cases
(\ref{eq:assOmega})(i) and (\ref{eq:assOmega})(iii) these entries need to be
appropriately manipulated.

Clearly, the algebraic system (\ref{eq:GS2a},b) can
be written as a (symmetric) nonsmooth saddle point problem of the form:
Find $(U,W) \in [-1,1]^{\mathcal{J}} \times \R^{\mathcal{J}}$, 
\begin{subequations}
\begin{align}
 \mathcal{M}_\varrho(U)\,U + \mathcal{A}\,W & = f_\varrho(U) \label{eq:SPPa} \\
 (V - U)^T\,\mathcal{C}_r(U)\,U - (V - U)^T\,\mathcal{M}_\varrho(U)\, W & \geq 
 (V - U)^T\,g \qquad  \forall\ V \in [-1,1]^{\mathcal{J}}\,, \label{eq:SPPb}
\end{align}
\end{subequations}
where we prefer to write the unknowns as $(U,W)$ in place of $(\Phi^n,W^n)$, in
order to highlight the connection to discretizations of Cahn--Hilliard
equations, where the former notation is standard.
On recalling (\ref{eq:r1}) and (\ref{eq:hatrho}), 
we note that (\ref{eq:SPPa},b) in the case $r=1$ and $\varrho^+ = 0$ 
collapses to 
\begin{subequations}
\begin{align}
 \mathcal{M}\,U + \mathcal{A}\,W & = f \label{eq:SPP1a} \\
 (V - U)^T\,\mathcal{C}\,U - (V - U)^T\,\mathcal{M}\, W & \geq 
 (V - U)^T\,g \qquad \forall\ V \in [-1,1]^{\mathcal{J}}\,, \label{eq:SPP1b}
\end{align}
\end{subequations}
where $\mathcal{C} := \mathcal{C}_1(\vec 0)$,
$\mathcal{M}:=\mathcal{M}_\varrho(0)$ and $f:=f_\varrho(0)$. 
Nonsmooth
saddle point problems of the form (\ref{eq:SPP1a},b) are well-known from the
numerical approximation of (isotropic) Cahn--Hilliard equations.
Various different solution methods for the system (\ref{eq:SPP1a},b) are
discussed in
\cite{voids,GraserK07,voids3d,mgch,BlankBG11,HintermullerHT11,%
GraserKS12preprint}.
In the case $r=1$ we use the solution method from \cite{voids3d} in order to
solve (\ref{eq:SPP1a},b). In the remainder of this section we consider the case
$r \geq 1$. 
We now state possible solution methods for the nonlinear nonsmooth saddle point 
problem (\ref{eq:SPPa},b).

\subsection{Nonlinear Uzawa-multigrid iteration}
In what follows, we will extend the Uzawa-multigrid iteration from 
\cite{voids3d}, which is based on the ideas in \cite{GraserK07}, to the highly
nonlinear saddle point problem (\ref{eq:SPPa},b). The method 
from \cite{voids3d} can be interpreted
as a primal active set method, where the approximation of the active set is
driven by the current iterate $W_k$ in (\ref{eq:SPP1b}), rather than via a dual
parameter as in e.g.\ \cite{BlankBG11}. 

Given an initial iterate 
$(U_0,W_0) \in [-1,1]^{\mathcal{J}} \times \R^{\mathcal{J}}$, for $k\geq0$
let $U_{k+\frac12} \in [-1,1]^{\mathcal{J}}$ be the solution of
\begin{subequations}
\begin{equation}
 (V - U_{k+\frac12})^T\,\mathcal{C}_r(U_{k})\,U_{k+\frac12} 
 \geq (V - U_{k+\frac12})^T\,(g + \mathcal{M}_\varrho(U_k)\, W_{k})
\qquad  \forall\ V \in [-1,1]^{\mathcal{J}}\,. \label{eq:Uk}
\end{equation}
Then we define the active sets as
\begin{equation} \label{eq:Jpm}
J_k^{\pm} := \{ j \in J : [U_{k+\frac12}]_j = \pm 1 \} 
\quad\text{and let}\quad J_k = J_k^+ \cup J_k^-\,.
\end{equation}
Now we seek the solution $(U_{k+1}, W_{k+1}) \in \R^{\mathcal{J}} \times \R^{\mathcal{J}}$ to the
linear system
\begin{equation} \label{eq:blockA}
\begin{pmatrix}
\widehat{\mathcal{C}}_r(J_k, U_k) & -\widehat{\mathcal{M}}_\varrho(J_k,U_k) \\
\mathcal{M}_\varrho(U_k) & \mathcal{A} 
\end{pmatrix}
\begin{pmatrix} U_{k+1} \\ W_{k+1} \end{pmatrix}
=
\begin{pmatrix} \widehat g(J_k^+,J_k^-) \\ f_\varrho(U_k) \end{pmatrix},
\end{equation}
\end{subequations}
where, for $j\in J$,
\begin{equation*} 
[\widehat{\mathcal{C}_r}(J_k, U_k)]_{ij} = \begin{cases}
\delta_{ij} & i \in J_k\,, \\
[{\mathcal{C}_r}(U_k)]_{ij} & i \in J\setminus J_k\,,
\end{cases} \quad
[\widehat{\mathcal{M}}_\varrho(J_k,U_k)]_{ij} =
\begin{cases}
0 & i \in J_k\,, \\
[\mathcal{M}_\varrho(U_k)]_{ij} & i \in J\setminus J_k\,,
\end{cases}
\end{equation*}
and
\begin{equation*} 
[\widehat{g}(J_k^+,J_k^-)]_i=
\begin{cases}
\pm 1 & i \in J_k^\pm\,, \\
g_{i} & i \in J\setminus J_k\,.
\end{cases}
\end{equation*}
Now we continue the iteration (\ref{eq:Uk}--c), until convergence is obtained, 
i.e.\ until 
\begin{equation} \label{eq:tol}
J_{k+1}^\pm = J_k^\pm \quad\text{and}\quad 
\max \left\{ \max_{j \in J} \left|[U_{k+1}]_j - [U_{k}]_j\right|, 
\max_{j \in J} \left|[W_{k+1}]_j - [W_{k}]_j\right|\right\} < tol\,,
\end{equation}
where $tol=10^{-8}$ is a given fixed tolerance. If a good initial guess $W_0$ 
is not available, then for $k=0$ it can be beneficial to set 
$U_\frac12=U_0$, rather than to employ (\ref{eq:Uk}).
Observe that since the iterates
$U_{k+\frac12}$ are only needed to define the active sets $J_{k}^\pm$ in 
(\ref{eq:Jpm}), an iterative procedure to find the solution of (\ref{eq:Uk})
can be stopped as soon as the active sets $J_{k}^\pm$ have been found. In
practice we stop the iteration as soon as two successive iterates for
(\ref{eq:Uk}) have the same active sets, which is usually the case
after a few projected block Gauss--Seidel
iterations. Alternatively, a monotone multigrid method
could be employed to solve (\ref{eq:Uk}), see \cite{Kornhuber94a}.
The linear saddle point problems (\ref{eq:blockA}) can be solved with a
multigrid method using block Gauss--Seidel smoothers or, alternatively, 
with a direct solution method such as UMFPACK (\cite{Davis04}) or
LDL (\cite{Davis05}), together with the 
sparse matrix ordering package AMD (\cite{AmestoyDD04}). Here for the multigrid
solver and the LDL factorization package, the linear system (\ref{eq:blockA}) 
needs to be equivalently reformulated with a symmetric block matrix, which is
easily possible. Finally, we observe that in the case $r=1$ and $\varrho^+=0$, 
the first stopping
criterion in (\ref{eq:tol}) immediately implies the second criterion in 
(\ref{eq:tol}), as then the linear system (\ref{eq:blockA}) 
does not depend on the iterates $U_k$.

\begin{rem} \label{rem:uzawa}
In practice, in our computations, the iteration {\rm (\ref{eq:Uk}--c)} did not
converge for values of $r > 3$, while it usually converged for smaller values
of $r$. In particular, it always converged in the case $r=1$ for the nonlinear
approximation {\rm (\ref{eq:U},b)} with the splitting 
{\rm (\ref{eq:split})(iii)}.
However, as we are interested in performing simulations for much larger values 
of $r$, e.g.\ $r=9$ for {\sc ani$_9$}, below, 
we also consider a more robust solution method in the next subsection.
\end{rem}

\subsection{Lagged fixed point iteration} \label{sec:42}
In this subsection we consider a lagged fixed point iteration, where at each
iteration a subproblem of the form (\ref{eq:SPP1a},b) needs to be solved.

Let $k=0$. Given an initial iterate 
$(U_0,W_0) \in [-1,1]^{\mathcal{J}} \times \R^{\mathcal{J}}$, we let 
$(U_{k+\frac12}, W_{k+\frac12}) \in
 [-1,1]^{\mathcal{J}} \in \times \R^{\mathcal{J}}$ be the solution of
\begin{subequations}
\begin{align}
 \mathcal{M}_\varrho(U_k)\,U_{k+\frac12} + \mathcal{A}\,W_{k+\frac12} & = 
f_\varrho(U_k)
 \label{eq:SPPk1a} \\
 (V - U_{k+\frac12})^T\,\mathcal{C}_r(U_{k})\,U_{k+\frac12} 
  - (V - U_{k+\frac12})^T\,\mathcal{M}_\varrho(U_k)\, W_{k+\frac12} & \geq 
 (V - U_{k+\frac12})^T\,g \nonumber \\ & 
 \qquad\qquad \forall\ V \in [-1,1]^{\mathcal{J}}\,. 
 \label{eq:SPPk1b}
\end{align}
On obtaining $(U_{k+\frac12}, W_{k+\frac12})$, we set
\begin{equation}
(U_{k+1}, W_{k+1}) = (1-\mu)\,(U_{k}, W_{k}) + 
\mu\,(U_{k+\frac12}, W_{k+\frac12})\,,
\label{eq:itnew}
\end{equation} 
\end{subequations}
where $\mu\in(0,1]$ is a fixed relaxation parameter.
The iteration (\ref{eq:SPPk1a}--c) is repeated until 
\begin{equation*} 
\max \left\{ \max_{j \in J} \left|[U_{k+1}]_j - [U_{k}]_j\right|, 
\max_{j \in J} \left|[W_{k+1}]_j - [W_{k}]_j\right|\right\} < tol\,.
\end{equation*}
In practice, the iteration  (\ref{eq:SPPk1a}--c)
always converged, provided $\mu$ was chosen sufficiently small.

\setcounter{equation}{0} 
\section{Numerical experiments} \label{sec:5}
In this section we report on numerical experiments for the proposed finite
element approximations. 
Apart from a single computation for the approximation (\ref{eq:sU},b) in the
case of the quartic potential (\ref{eq:quartic}), where we employ the splitting
(\ref{eq:phi+-}),
we will present results for
the approximation (\ref{eq:U},b) for the obstacle potential (\ref{eq:obstacle}) 
only. Our preference for the scheme (\ref{eq:U},b) over the alternative
approximation (\ref{eq:sU},b) stems from the fact that in the former the phase
field approximation $\Phi^n$ is guaranteed to stay inside the interval 
$[-1,1]$, while the latter scheme in general admits values $|\Phi^n| >1$,
which in practice is observed if e.g.\ a
well developed interface is present. Moreover, the bulk regions for the
approximation (\ref{eq:U},b) are easily identified through $\Phi^n = \pm1$,
whereas for the scheme (\ref{eq:sU},b) this is less straightforward.
For the implementation of the approximations we have
used the adaptive finite element toolbox ALBERTA, see \cite{Alberta}. 
For the approximation (\ref{eq:U},b) we employ
the adaptive mesh strategy introduced in \cite{voids} and
\cite{voids3d}, respectively, for $d=2$ and $d=3$.
This results in a fine mesh of uniform mesh size 
$h_f$ inside the interfacial region $|\Phi^{n-1}|<1$ 
and a coarse mesh of uniform 
mesh size $h_c$ further away from it. 
Here $h_{f} = \frac{2\,H}{N_{f}}$ and $h_{c} =  \frac{2\,H}{N_{c}}$
are given by two integer numbers $N_f >  N_c$, where we assume from now on that
$\Omega = (-H,H)^d$. In all of the experiments below we have $H=\frac12$ with
(\ref{eq:assOmega})(i), unless otherwise stated.

Throughout this section the initial data $\varphi_0\in C(\overline\Omega)$ 
is either chosen constant, $\varphi_0 = 1$, or is chosen
with a well developed interface of width
$\epsilon\,\pi$, in which $\varphi_0$ varies smoothly and such that 
$\Gamma_0 = \{ x \in \Omega : \varphi_0(x) = 0 \}$.
Details of such initial data can be found in e.g.\ 
\cite{voids,wccmproc,voids3d}. 
In general the initial interface $\Gamma_0$ is a circle/sphere of radius 
$R_0 \in (0,H)$ around the origin. We use $R_0=0.1$ unless otherwise stated.
If $\vartheta > 0$, we set
\begin{equation*}
u_0(\vec{z}) = \begin{cases} 0 & |\vec{z}| \leq R_0\,, \\
\dfrac{u_D}{1 - e^{R_0 - H}}\left(1 - e^{R_0 - |\vec{z}|}\right) 
& R_0 < |\vec{z}| < H\,, \\
u_D & |\vec{z}| \geq H \,.
\end{cases}
\end{equation*}
We always fix 
$\Phi^0 = \pi^h\varphi_0$ and, if $\vartheta>0$, $W^0=\pi^h u_0$.

Unless otherwise stated we always let 
$\epsilon^{-1}=16\,\pi$ and $N_f = 128$, $N_c=16$. In addition, we employ
uniform time steps $\tau_n = \tau$, $n = 1 \to N$. As an indication for the
computational effort that is involved in producing the simulations presented in
this section, we state for each simulation an exemplary CPU time for a 
single-thread run on an Intel i7-860 (2.8 GHz) processor.

For the anisotropies in our numerical results we always choose among
\begin{alignat*}{2}
& \text{\sc ani$_1^{(\delta)}$:} \qquad
\gamma(\vec{p}) = \sum_{j=1}^d\, \left[ \delta^2\,
|\vec{p}|^2+ p_j^2\,(1-\delta^2) \right]^{\frac12} \,,\quad
\text{with}\quad \delta > 0\,, 
\qquad && [r=1\,, L=d]\,,
\\ 
& \text{{\sc ani$_2$:} $\gamma$ as on the bottom of Figure~3 in \cite{ani3d}},
\qquad && [r=1\,, L=2]\,,
\\ 
& \text{{\sc ani$_3$:} $\gamma$ as on the right of Figure~2 in 
\cite{dendritic}}, 
\qquad && [r=1\,, L=3]\,,
\\ 
& \text{{\sc ani$_4$:} $\gamma$ as in Figure~3 in \cite{jcg}}\,, 
\qquad && [r=1\,, L=4]\,,
\\ 
& \text{{\sc ani$_9$:} $\gamma$ as on the right of Figure~3 in 
\cite{dendritic}}\,,
\qquad && [r=9\,, L=3]\,.
\end{alignat*}
We remark that {\sc ani$_1^{(\delta)}$} is a regularized $l_1$--norm, so that
its Wulff shape for $\delta$ small is given by a smoothed square (in 2d) or a
smoothed cube (in 3d) with nearly flat sides/facets. Anisotropies with such 
flat sides or facets are called crystalline. Also the choices
{\sc ani$_i$}, $i=2\to4$, represent nearly crystalline anisotropies. Here the
Wulff shapes are given by a smoothed cylinder, a smoothed hexagon and a
smoothed hexagonal prism, respectively. 
The Wulff shape for the cubic anisotropy {\sc ani$_9$} is given by a smoothed 
octahedron.
Finally, we denote by
{\sc ani$_k^\star$} the anisotropies {\sc ani$_k$}, $k=3\to4$, 
rotated by $\tfrac\pi{12}$ in
the $x_1-x_2$-plane. 

Finally, unless otherwise stated, we choose
$\lambda = a = \conduct_\pm = 1$ and 
$\beta = \gamma$, where we recall (\ref{eq:mu}).

\subsection{Mullins--Sekerka in two space dimensions}
In this subsection we always choose $\vartheta = 0$.
We begin with an investigation into the choice of $\varrho$. At first we choose
(\ref{eq:varrho})(i).
In order to visualize the possible onset of a boundary layer as explained in
Remark~\ref{rem:bl} for the obstacle potential (\ref{eq:obstacle}), 
we present a computation for (\ref{eq:U},b) with the
initial data $\Phi^0 = \varphi_0 = 1$. For this experiment we use
$\rho=10^{-3}$.
On setting $\alpha = 1$, the critical 
value for $\uD$ in (\ref{eq:bl}) is 
$-\frac2\cPsi\,\epsilon^{-1} = -\frac4\pi\,16\,\pi = -64$. 
In our numerical
computations this lower bound appears to be sharp. In particular, we observe
that $U^n=1$ is a steady state whenever $\uD \geq -64$, but a boundary layer
forms already for e.g.\ $\uD = -64 - 10^{-8}$. The same behaviour has been
observed by the authors in \cite{eck} for the choice $\rho=0$.
As an example for the case $\rho=10^{-3}$ considered here, we present a run
for $\uD = -65$ in Figure~\ref{fig:bl}, where we can clearly see how the 
boundary layer develops. Note that this phenomenon is completely
independent from the choice of anisotropy $\gamma$. The discretization
parameters for this experiment were $N_f = N_c = 128$ and $\tau = 10^{-5}$.
Note that in the presence of $\rho>0$ we observe a convex shape in
Figure~\ref{fig:bl}, in contrast to the corresponding evolution in 
\citet[Fig.\ 4]{eck}, where $\rho=0$.
\begin{figure}
\center
\ifpdf
\includegraphics[angle=-0,width=0.19\textwidth]{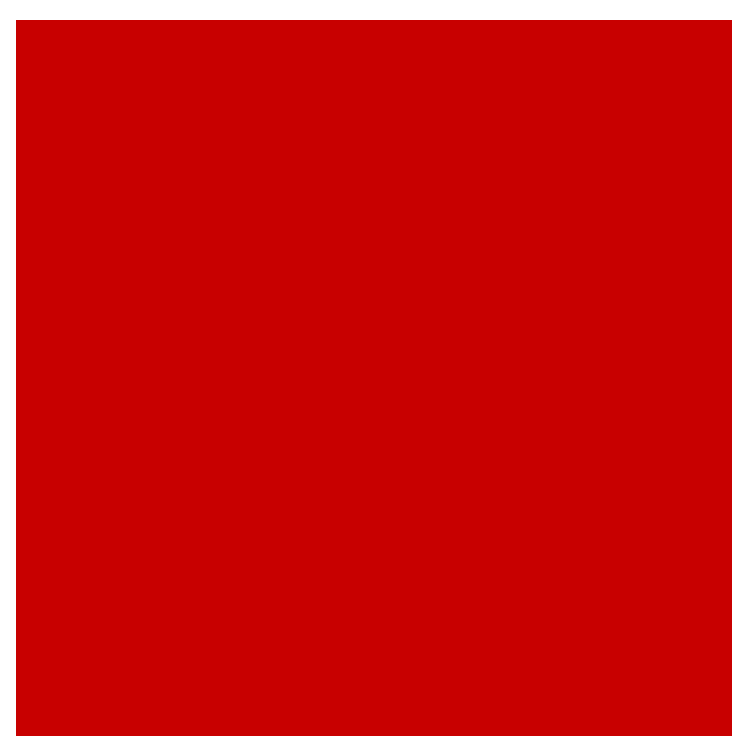}
\includegraphics[angle=-0,width=0.19\textwidth]{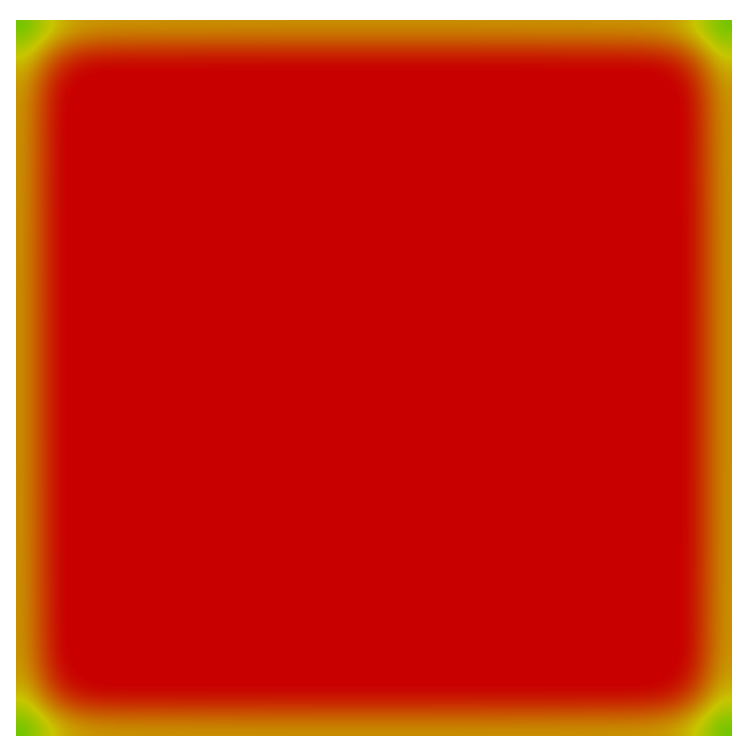}
\includegraphics[angle=-0,width=0.19\textwidth]{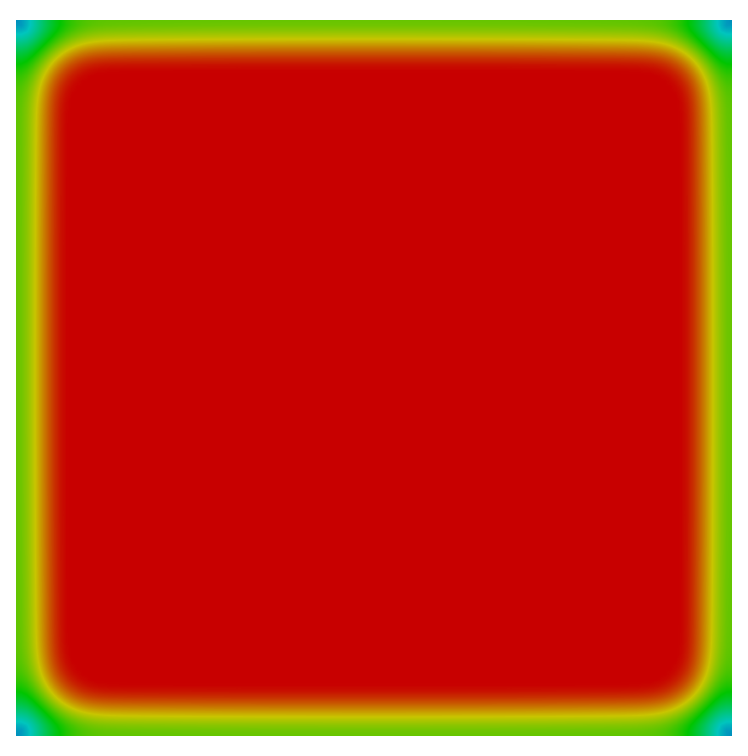}
\includegraphics[angle=-0,width=0.19\textwidth]{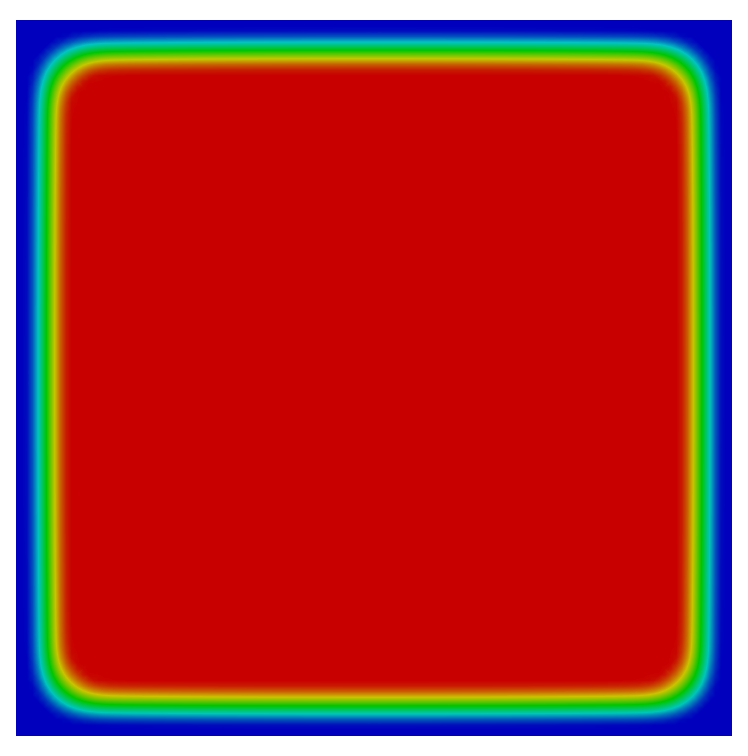}
\includegraphics[angle=-0,width=0.19\textwidth]{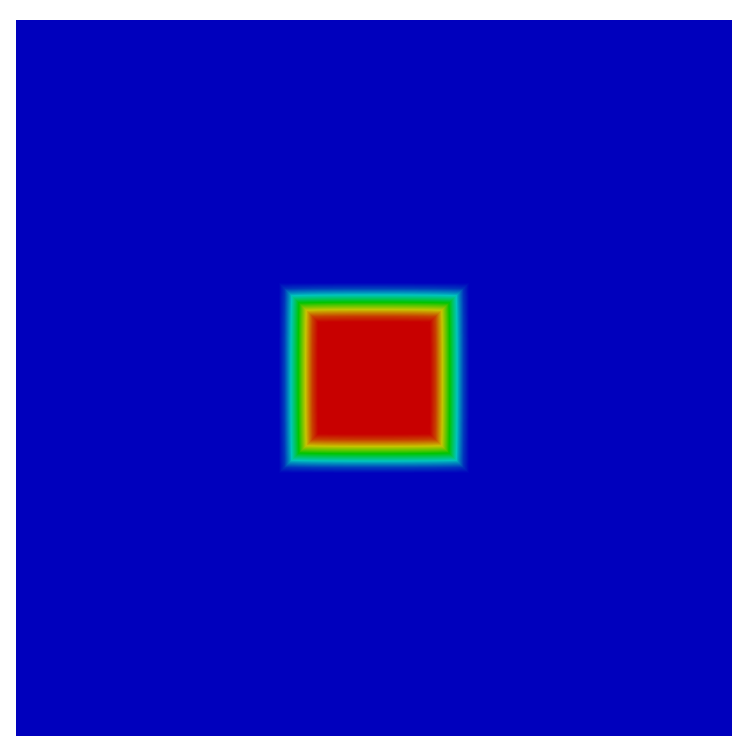}
\fi
\caption{($\epsilon^{-1} = 16\,\pi$, {\sc ani$_1^{(0.01)}$}, 
(\ref{eq:varrho})(i), $\alpha=1$, $\rho=10^{-3}$, $\uD = -65$,
$\Omega=(-\frac12,\frac12)^2$)
Creation of a boundary layer for the scheme (\ref{eq:U},b).
Snapshots of the solution at times $t=0,\,4\times10^{-5},\,5\times10^{-5},\,
\,7\times10^{-5},\,10^{-3}$.
}
\label{fig:bl}
\end{figure}%

For the approximation (\ref{eq:sU},b), i.e.\ in the case of the smooth quartic
potential (\ref{eq:quartic}), 
we observe that the criterion
(\ref{eq:sbl}) is of course not sharp, in the sense that even for values
$0 > \uD \geq -41$ no boundary layer forms in practice, even though 
(\ref{eq:sbl}) is then violated for the shape function (\ref{eq:varrho})(i). 
What happens in practice
is that $\Phi^n$ attains values less than $1$, without forming an interface,
i.e.\ $\min_{x \in \Omega} \Phi^n(x) > 0$. However, for the value 
$\uD=-42$, with the remaining parameters fixed as in Figure~\ref{fig:bl}, we do
observe the creation of a boundary layer. The evolution can be seen in
Figure~\ref{fig:sbl}. We remark that the colour range in Figure~\ref{fig:sbl}
is from red for $1$ to blue for $-1$, as in Figure~\ref{fig:bl}, even though
the extremal values for $\Phi^n$ during the evolution are approximately 
$1.03$ and $-1.16$, respectively.
\begin{figure}
\center
\ifpdf
\includegraphics[angle=-0,width=0.19\textwidth]{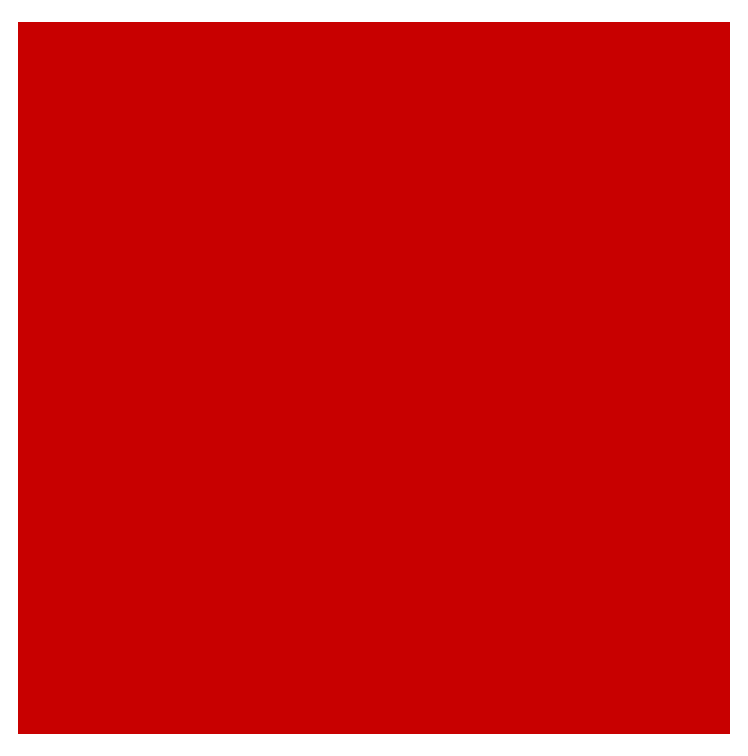}
\includegraphics[angle=-0,width=0.19\textwidth]{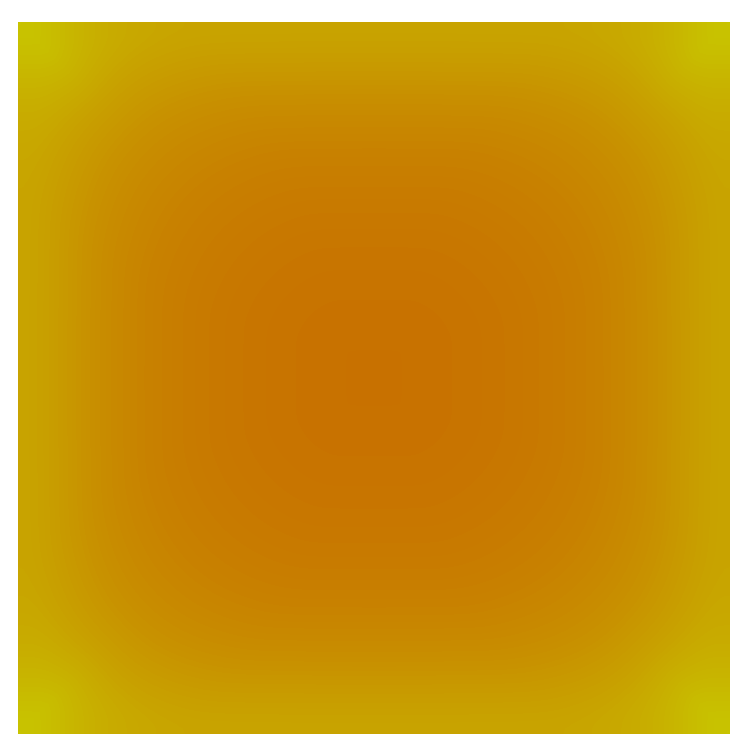}
\includegraphics[angle=-0,width=0.19\textwidth]{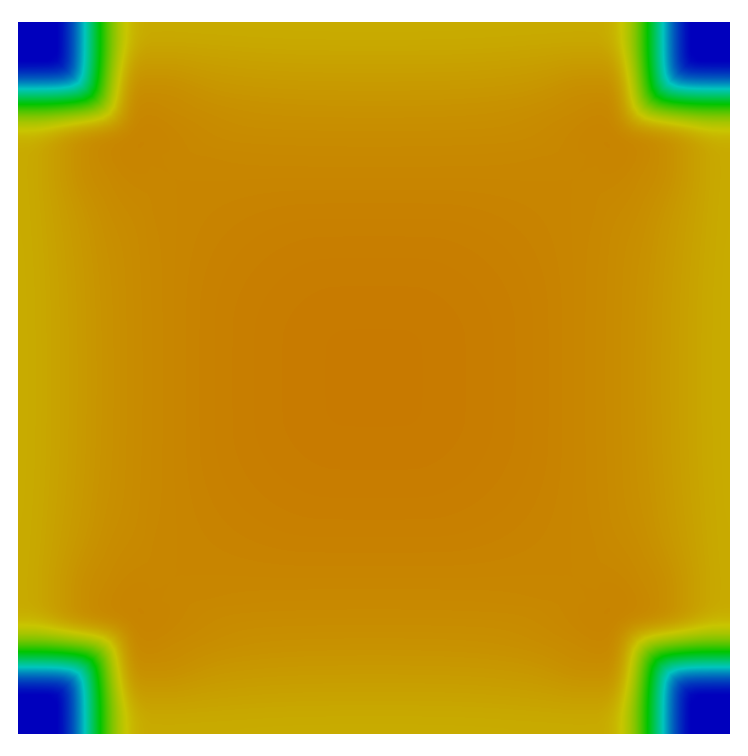}
\includegraphics[angle=-0,width=0.19\textwidth]{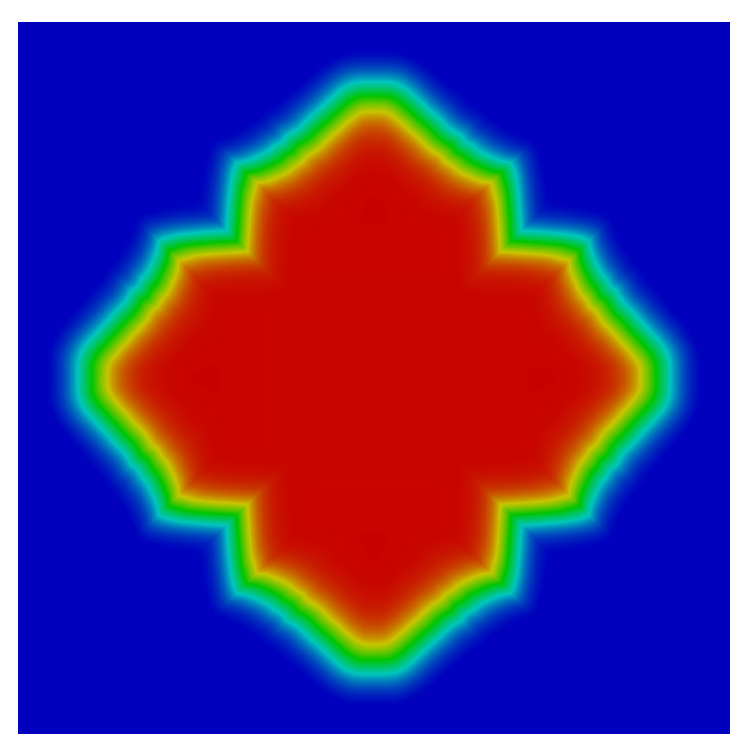}
\includegraphics[angle=-0,width=0.19\textwidth]{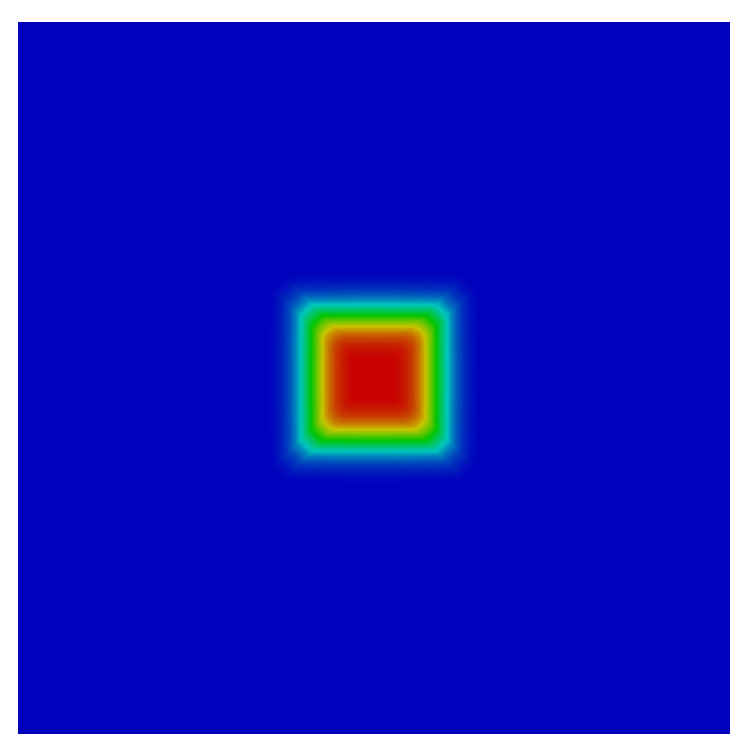}
\fi
\caption{
($\epsilon^{-1} = 16\,\pi$, {\sc ani$_1^{(0.01)}$}, 
(\ref{eq:varrho})(i), $\alpha=1$, $\rho=10^{-3}$, $\uD = -42$,
$\Omega=(-\frac12,\frac12)^2$)
Creation of a boundary layer for the scheme (\ref{eq:sU},b).
Snapshots of the solution at times $t=0,\,8\times10^{-4},\,10^{-3},\,
1.5\times10^{-3},\,2.5\times10^{-3}$.
}
\label{fig:sbl}
\end{figure}%
Finally, we recall that if we choose the shape functions 
(\ref{eq:varrho})(ii) or (\ref{eq:varrho})(iii)
instead, then the conditions (\ref{eq:bl}) and (\ref{eq:sbl}) yield that
$\Phi^n=1$ and $W^n=\uD$ for all $n\geq1$ for the two schemes (\ref{eq:U},b) 
and (\ref{eq:sU},b), respectively.

In the following experiments, we return to the initial data described
previously, so that $\varphi_0$ models a circular interface of radius
$R_0=0.1$. We also set $\rho = 0$.
For convenience we recall the simulation from \citet[Fig.\ 5]{eck}, so that
(\ref{eq:varrho})(i) applies. 
Here $H=8$ and $\vartheta = \rho = 0$.
Moreover, $\uD = -2$ and $\alpha = 0.03$; and we observe  
that for this choice of parameters the condition (\ref{eq:bl}) is satisfied
if we choose $\epsilon^{-1} = 32\,\pi > \frac{50}3\,\pi$.
A run for (\ref{eq:U},b), 
with the discretization parameters $N_f = 4096$, $N_c = 128$, $\tau = 10^{-4}$
and $T=7$ is shown in Figure~\ref{fig:BSnew32pi}.
\begin{figure}
\center
\ifpdf
\includegraphics[angle=-0,width=0.19\textwidth]{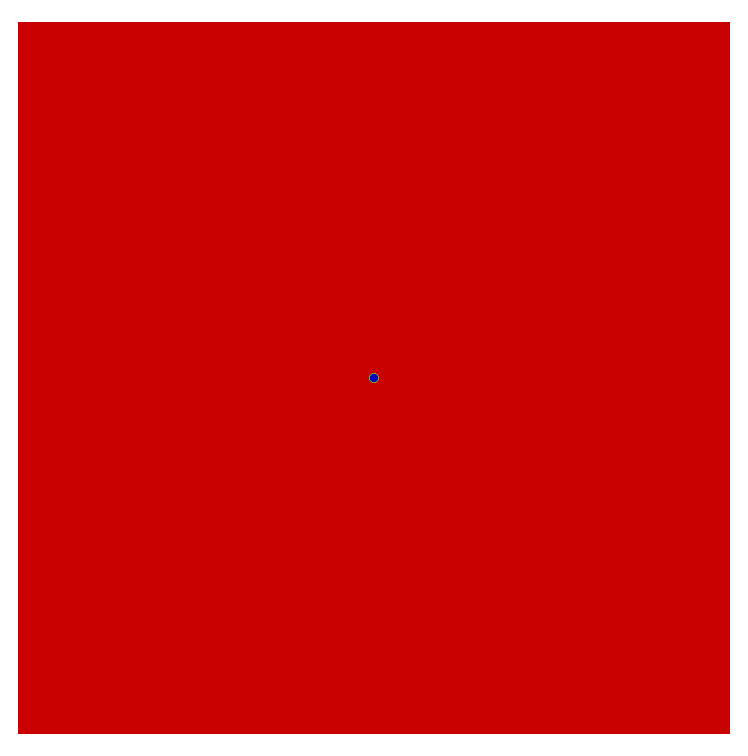}
\includegraphics[angle=-0,width=0.19\textwidth]{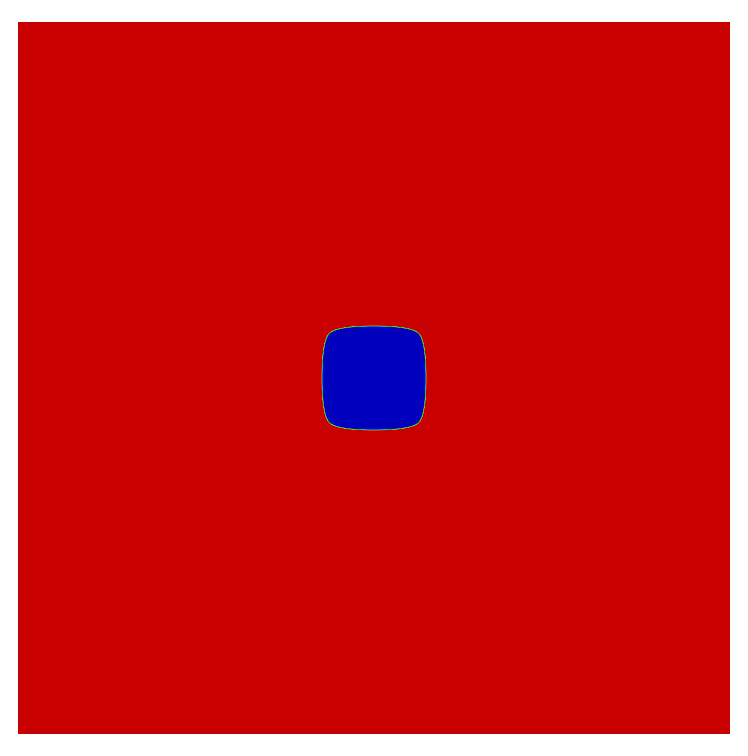}
\includegraphics[angle=-0,width=0.19\textwidth]{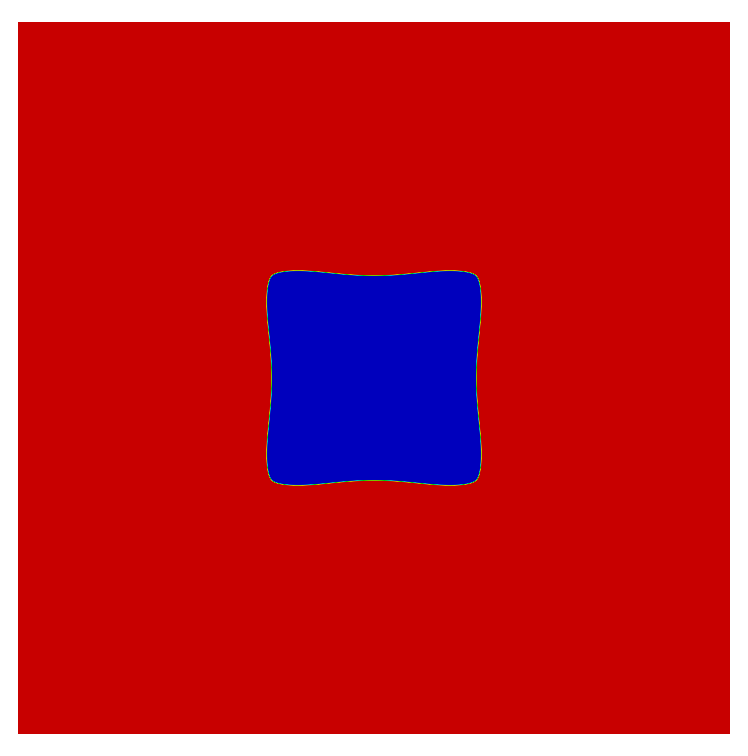}
\includegraphics[angle=-0,width=0.19\textwidth]{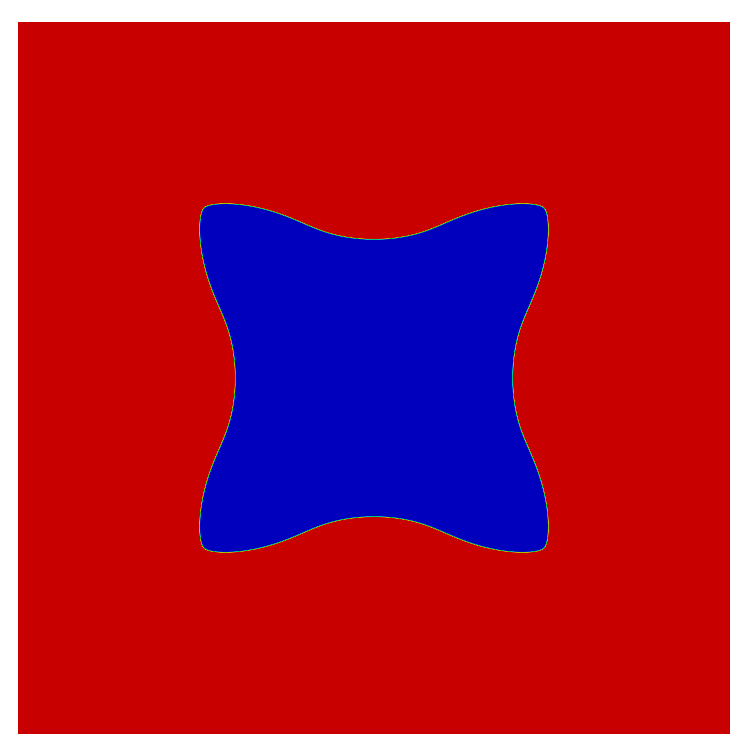}
\includegraphics[angle=-0,width=0.19\textwidth]{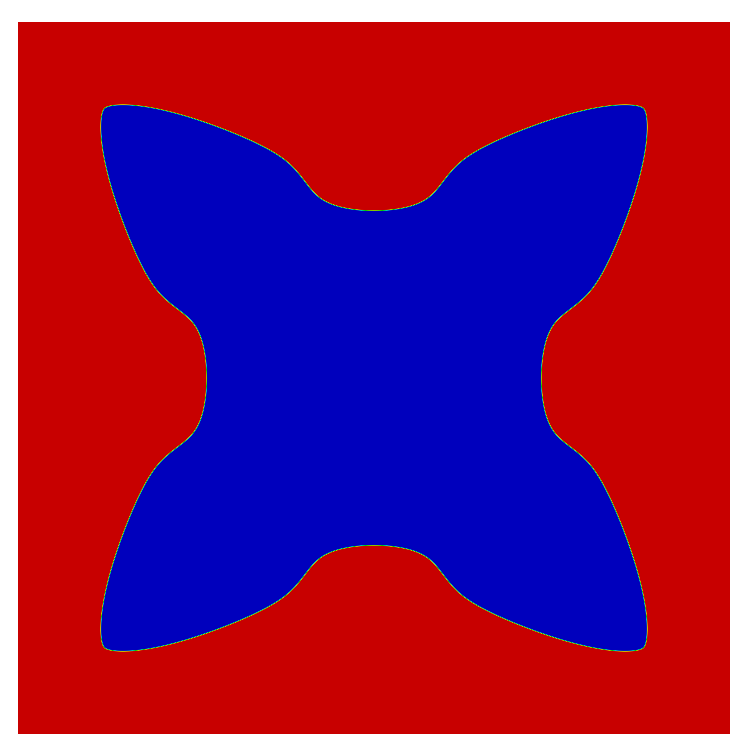}
\fi
\caption{($\epsilon^{-1} = 32\,\pi$, {\sc ani$_1^{(0.3)}$}, 
(\ref{eq:varrho})(i), $\alpha=0.03$, $\rho = 0$, $\uD = -2$, $\Omega=(-8,8)^2$)
Snapshots of the solution at times $t=0,\,1,\,3,\,5,\,7$.
[This computation took $6$ days.]
}
\label{fig:BSnew32pi}
\end{figure}%
Now the advantage of the shape function choices (\ref{eq:varrho})(ii) and
(\ref{eq:varrho})(iii) over the simple choice (\ref{eq:varrho})(i), as
highlighted in Remark~\ref{rem:bl}, is that for the same physical parameters a
larger value of $\epsilon$ can be chosen. To illustrate this, 
we repeat the simulation from Figure~\ref{fig:BSnew32pi} but now for the
choices (\ref{eq:varrho})(ii) and (\ref{eq:varrho})(iii). This means that 
we can use e.g.\ $\epsilon^{-1} =
8\,\pi$ together with the coarser discretization parameters 
$N_f = 1024$, $N_c = 128$ and $\tau = 10^{-3}$. 
The new results are shown in Figures~\ref{fig:BSnewii} and \ref{fig:BSnewv}, 
where we observe the
good qualitative agreement with Figure~\ref{fig:BSnew32pi}. We draw
particular attention to the dramatic reduction in CPU time necessary to compute
the respective simulations.
\begin{figure}
\center
\ifpdf
\includegraphics[angle=-0,width=0.19\textwidth]{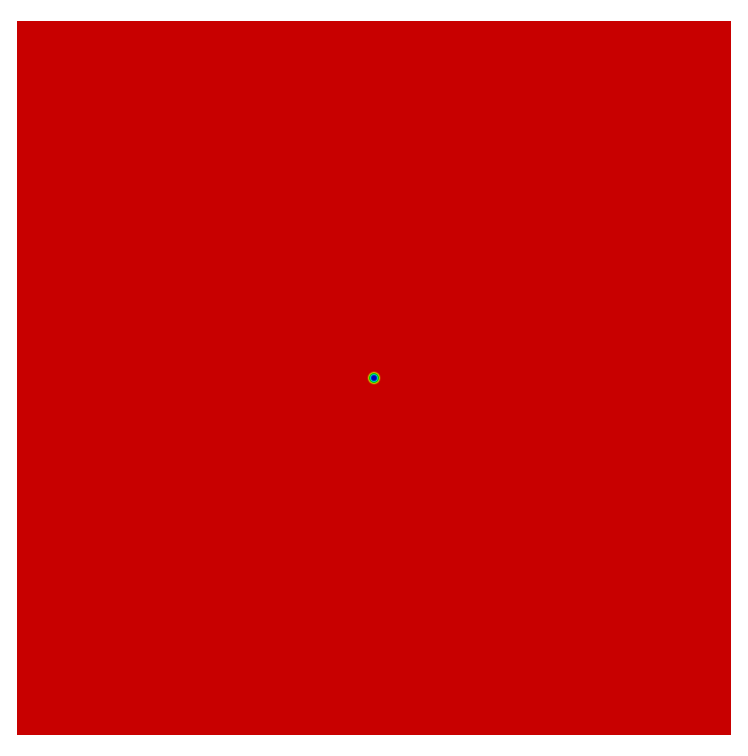}
\includegraphics[angle=-0,width=0.19\textwidth]{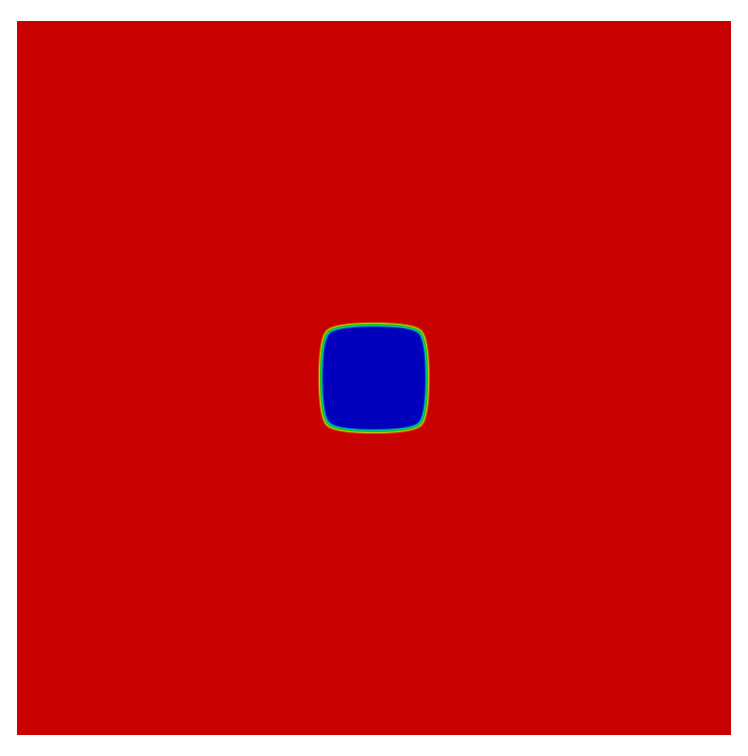}
\includegraphics[angle=-0,width=0.19\textwidth]{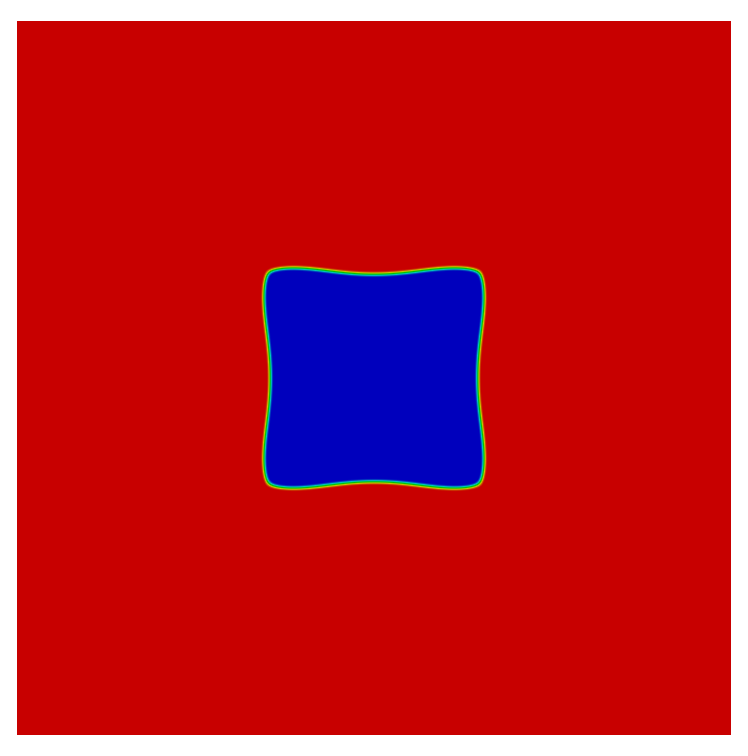}
\includegraphics[angle=-0,width=0.19\textwidth]{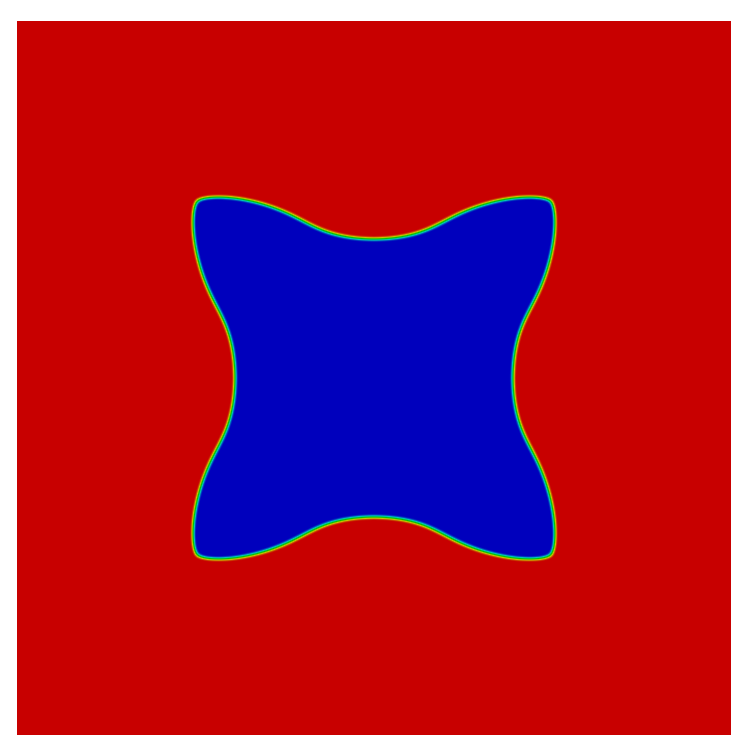}
\includegraphics[angle=-0,width=0.19\textwidth]{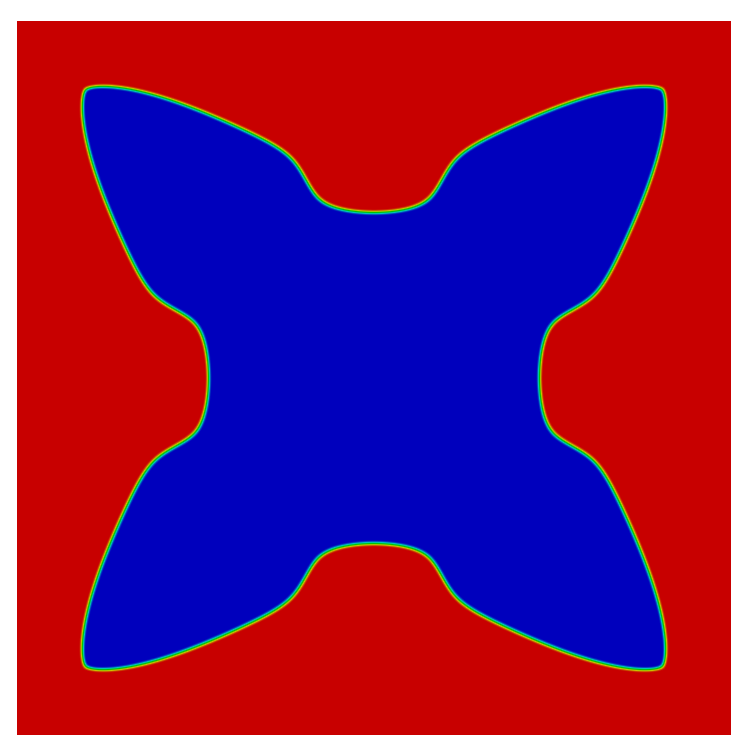}
\fi
\caption{($\epsilon^{-1} = 8\,\pi$, {\sc ani$_1^{(0.3)}$}, 
(\ref{eq:varrho})(ii), $\alpha=0.03$, $\rho = 0$, $\uD = -2$, 
$\Omega=(-8,8)^2$)
Snapshots of the solution at times $t=0,\,1,\,3,\,5,\,7$.
[This computation took $4.5$ hours.]
}
\label{fig:BSnewii}
\end{figure}%
\begin{figure}
\center
\ifpdf
\includegraphics[angle=-0,width=0.19\textwidth]{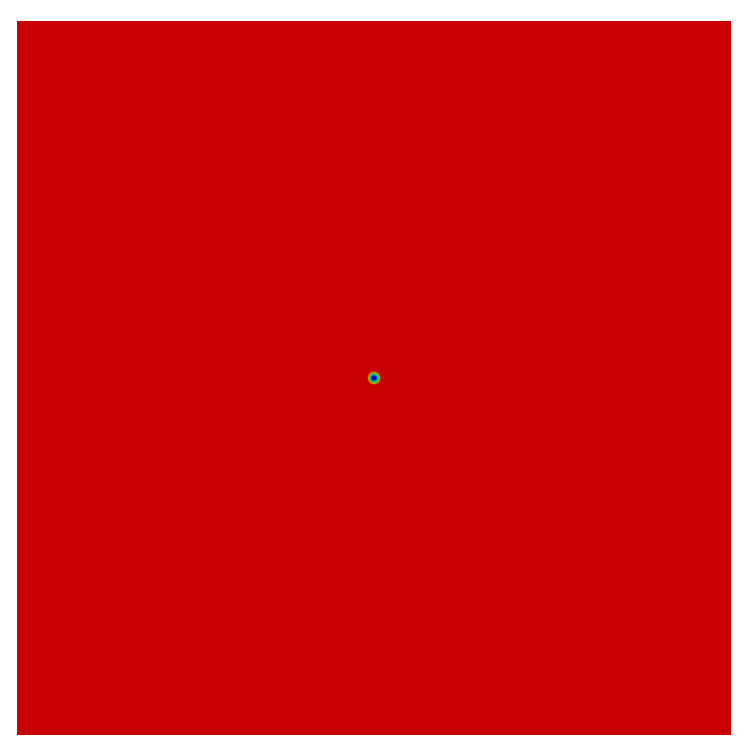}
\includegraphics[angle=-0,width=0.19\textwidth]{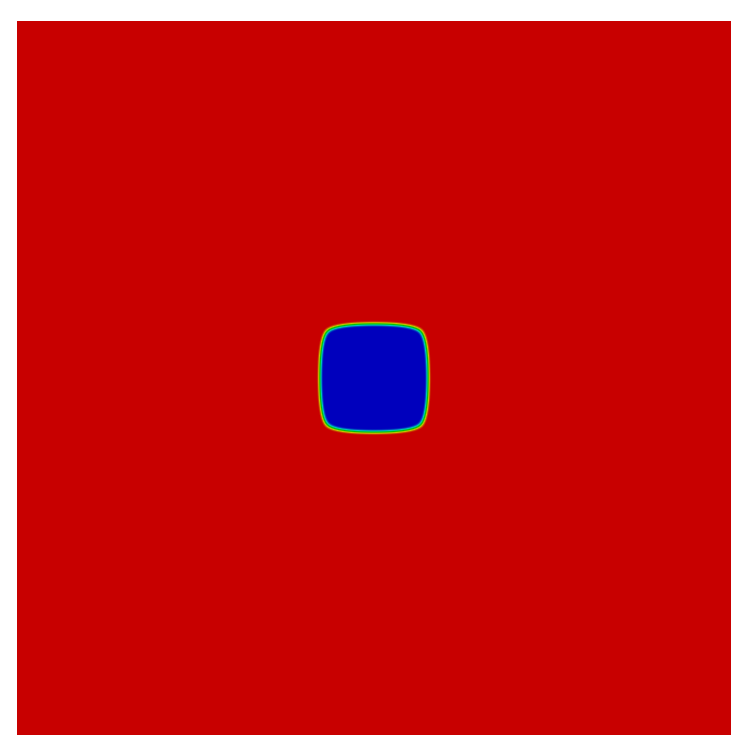}
\includegraphics[angle=-0,width=0.19\textwidth]{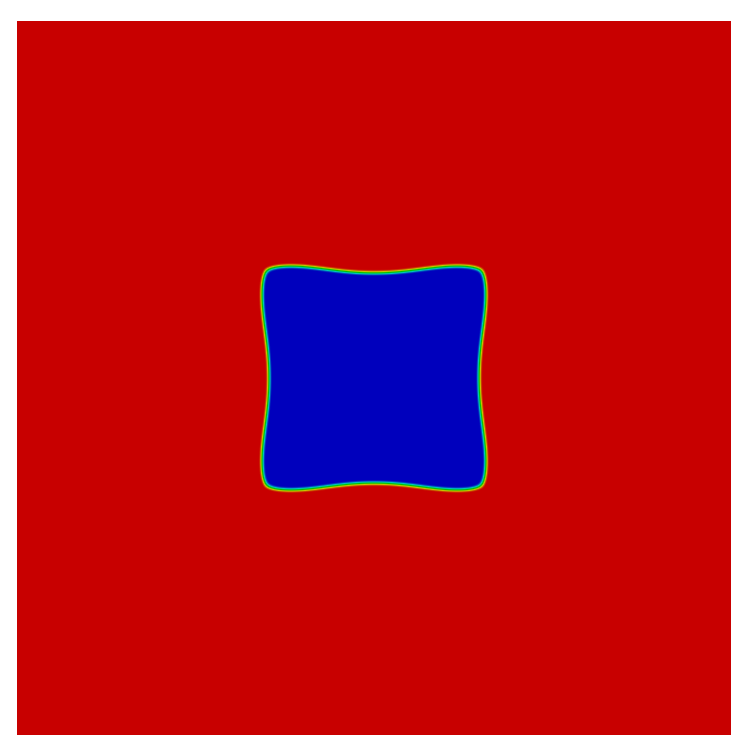}
\includegraphics[angle=-0,width=0.19\textwidth]{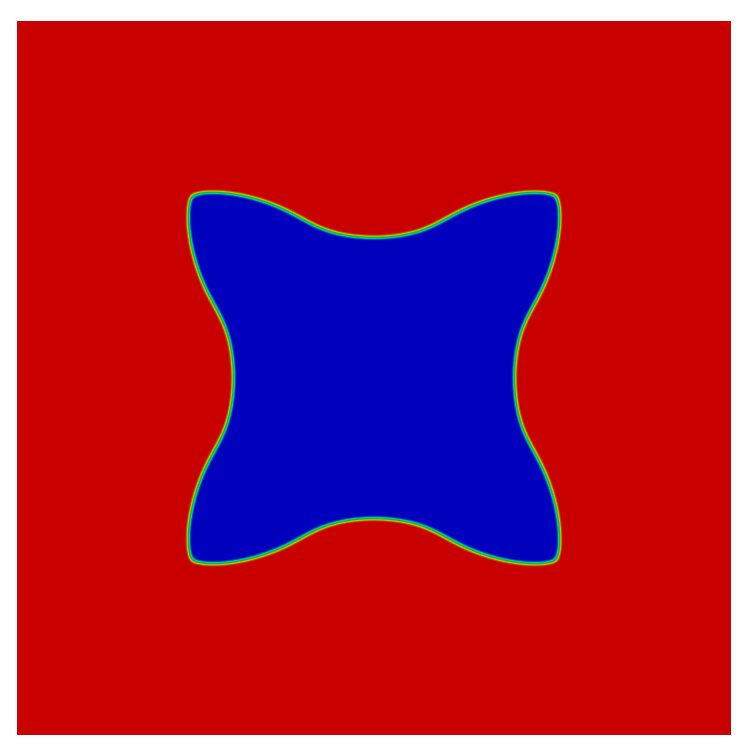}
\includegraphics[angle=-0,width=0.19\textwidth]{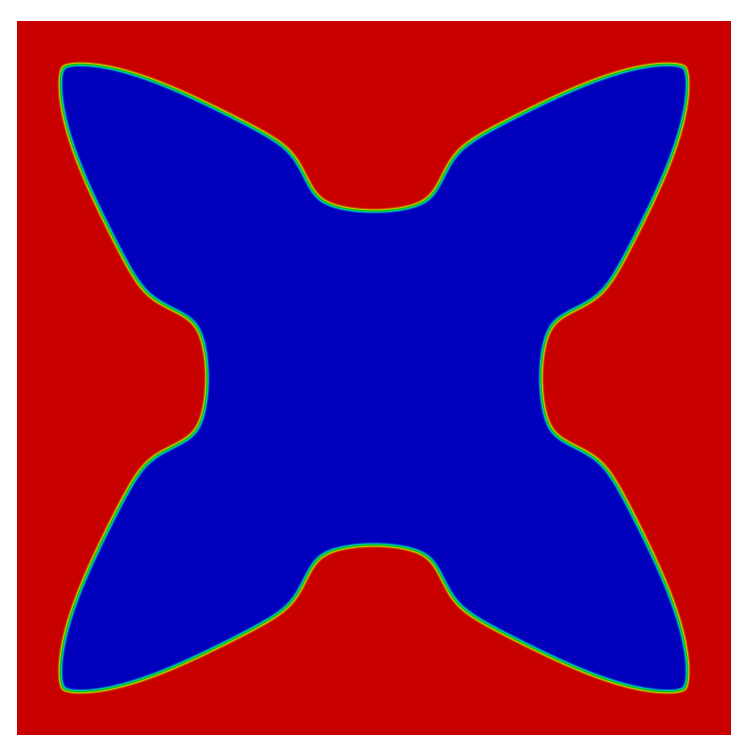}
\fi
\caption{($\epsilon^{-1} = 8\,\pi$, {\sc ani$_1^{(0.3)}$}, 
(\ref{eq:varrho})(iii), $\alpha=0.03$, $\rho = 0$, $\uD = -2$, 
$\Omega=(-8,8)^2$)
Snapshots of the solution at times $t=0,\,1,\,3,\,5,\,7$.
[This computation took $10.5$ hours.]
}
\label{fig:BSnewv}
\end{figure}%
While a further reduction in $\epsilon^{-1}$ and in the discretization
parameters $N_f$, $N_c$ and $\tau^{-1}$
leads to even bigger gains in computation times, the larger values
of $\epsilon$ soon lead to a loss of accuracy with respect to the approximation
of the underlying sharp interface problem (\ref{eq:1a}--e). We illustrate this
with an example for
$\epsilon^{-1} = 2\,\pi$ for the choice (\ref{eq:varrho})(ii)
together with $N_f = 256$, $N_c = 64$, 
$\tau = 10^{-2}$ and $T=6$. The results are shown in 
Figure~\ref{fig:BSnewii_2pi}, where we observe a qualitative difference to the
three previous simulations.
\begin{figure}
\center
\ifpdf
\includegraphics[angle=-0,width=0.19\textwidth]{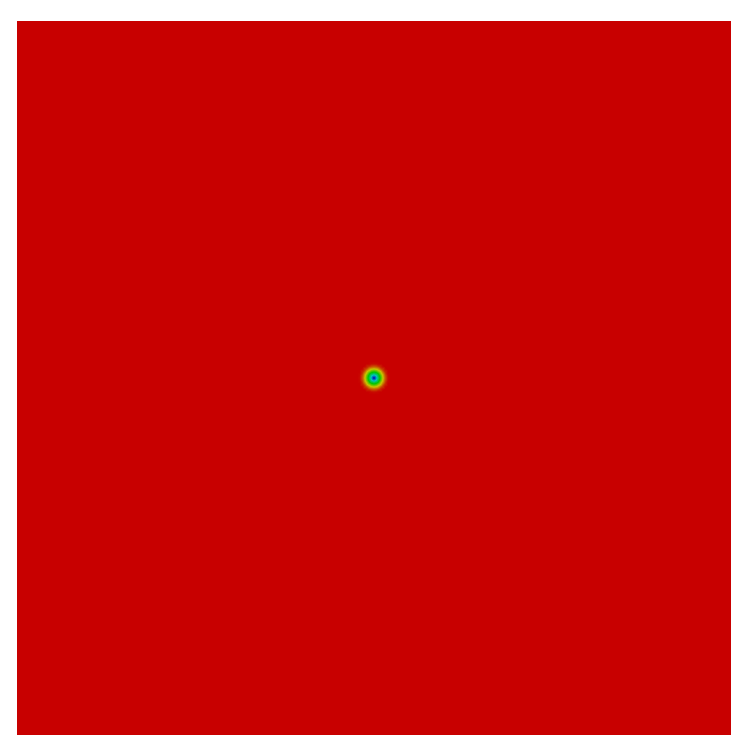}
\includegraphics[angle=-0,width=0.19\textwidth]{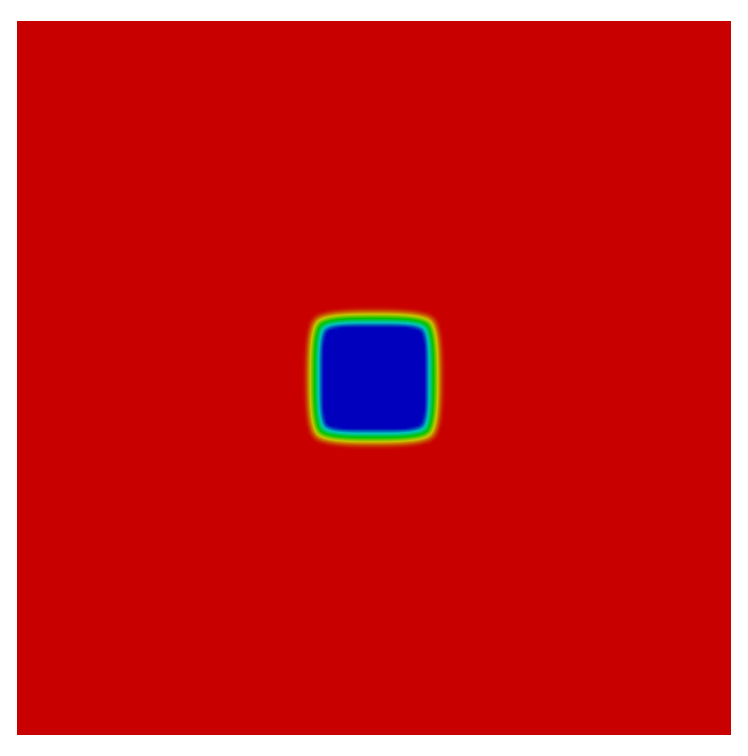}
\includegraphics[angle=-0,width=0.19\textwidth]{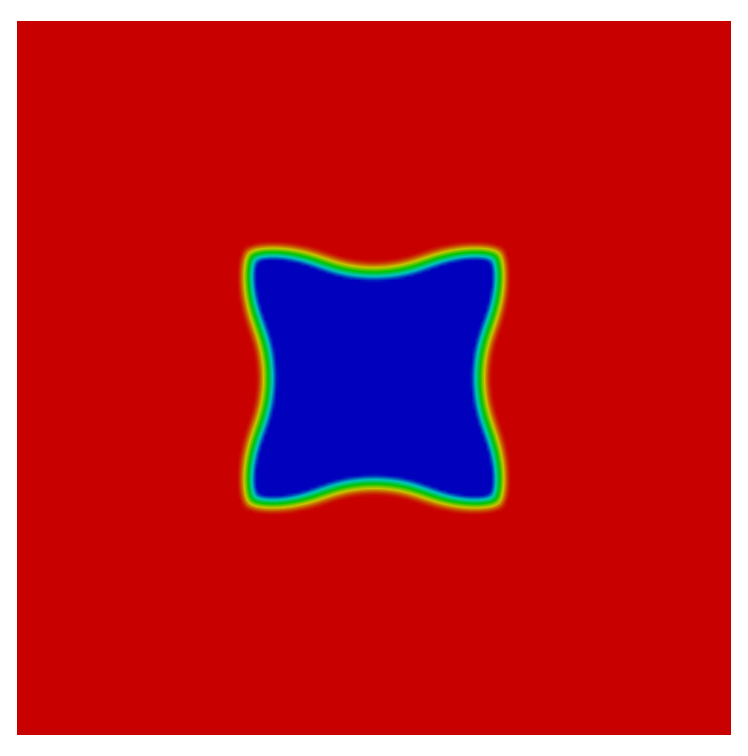}
\includegraphics[angle=-0,width=0.19\textwidth]{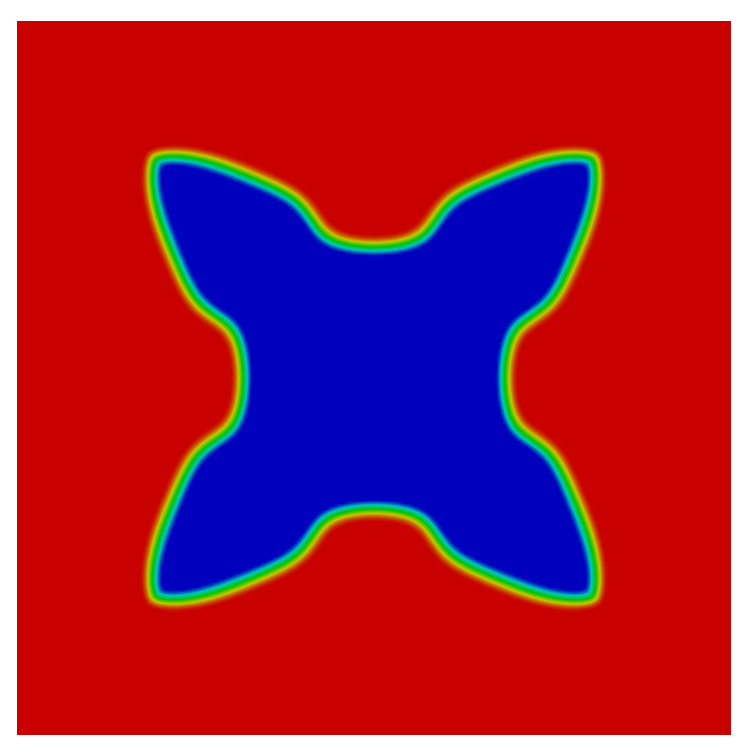}
\includegraphics[angle=-0,width=0.19\textwidth]{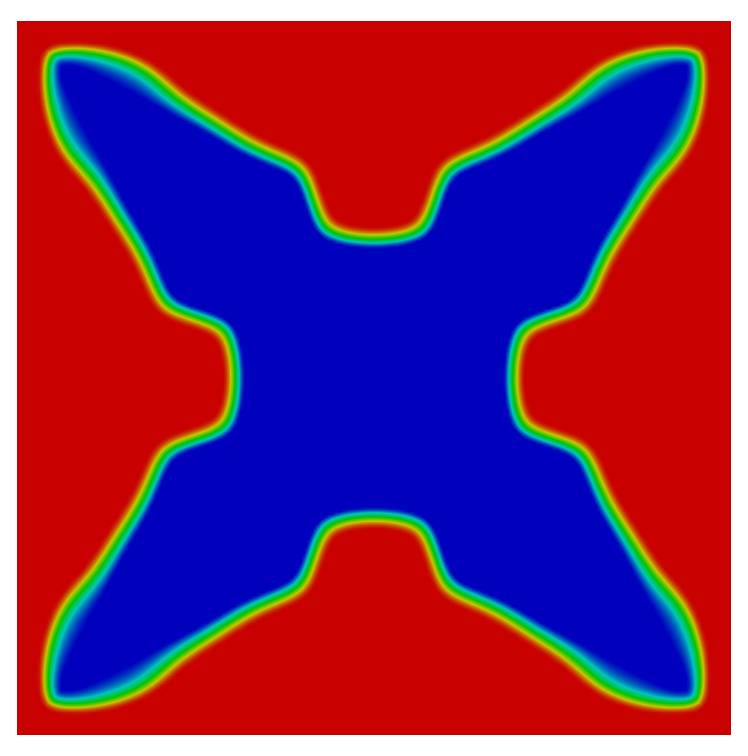}
\fi
\caption{($\epsilon^{-1} = 2\,\pi$, {\sc ani$_1^{(0.3)}$}, 
(\ref{eq:varrho})(ii), $\alpha=0.03$, $\rho = 0$, $\uD = -2$, 
$\Omega=(-8,8)^2$)
Snapshots of the solution at times $t=0,\,1,\,3,\,5,\,6$.
[This computation took $3$ minutes.]
}
\label{fig:BSnewii_2pi}
\end{figure}%

For the remainder of the simulations in this subsection we 
continue to employ (\ref{eq:varrho})(ii), but we now choose $\rho=0.01$
A simulation corresponding to Figure~\ref{fig:BSnewii_2pi} can be seen in
Figure~\ref{fig:BSrhoii_2pi}. We observe that in this example, the presence
of kinetic undercooling ($\rho>0$) only has a small influence on the overall
evolution.
\begin{figure}
\center
\ifpdf
\includegraphics[angle=-0,width=0.19\textwidth]{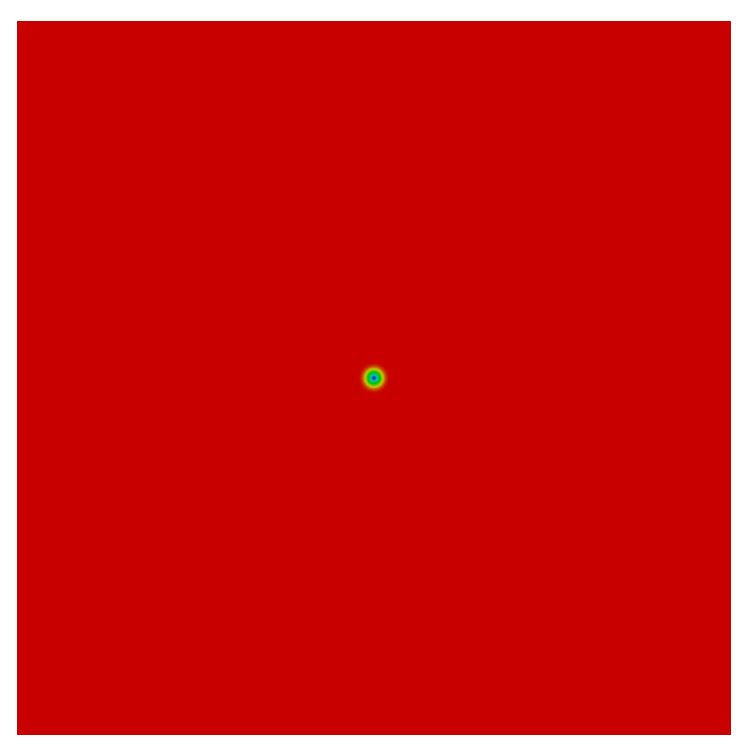}
\includegraphics[angle=-0,width=0.19\textwidth]{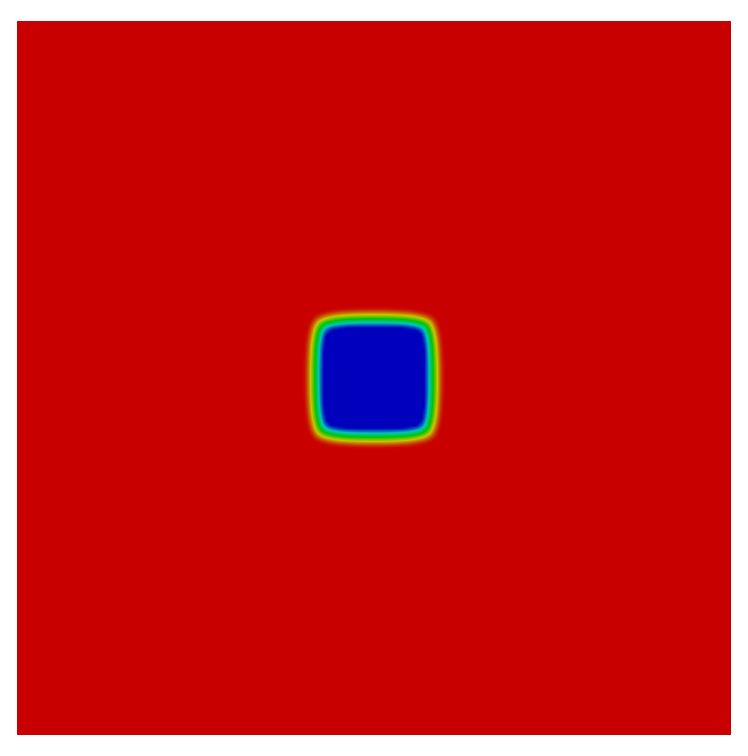}
\includegraphics[angle=-0,width=0.19\textwidth]{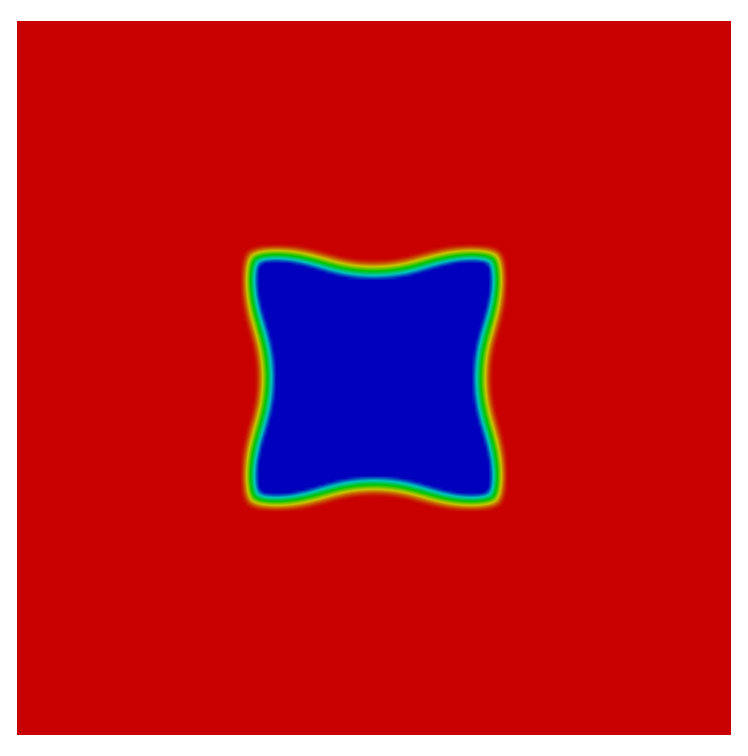}
\includegraphics[angle=-0,width=0.19\textwidth]{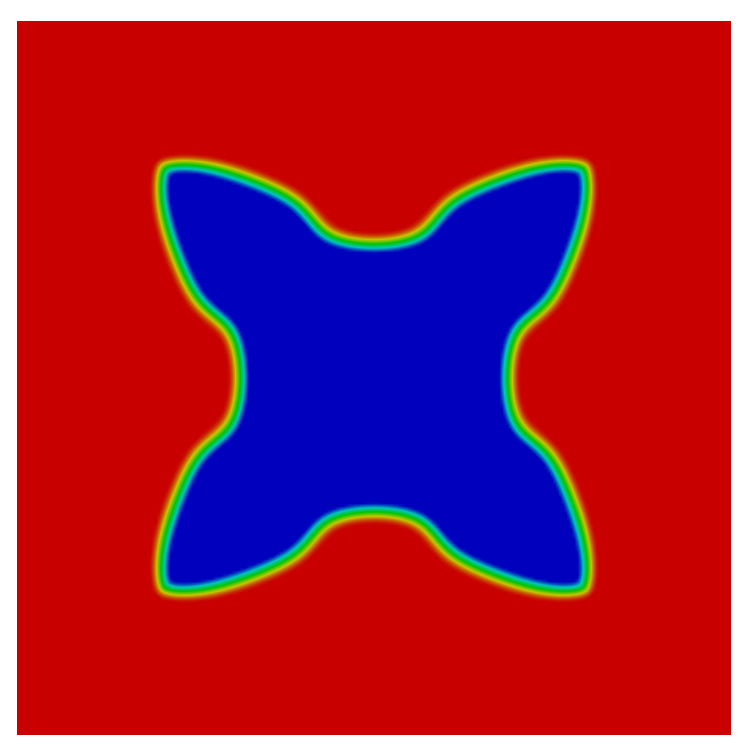}
\includegraphics[angle=-0,width=0.19\textwidth]{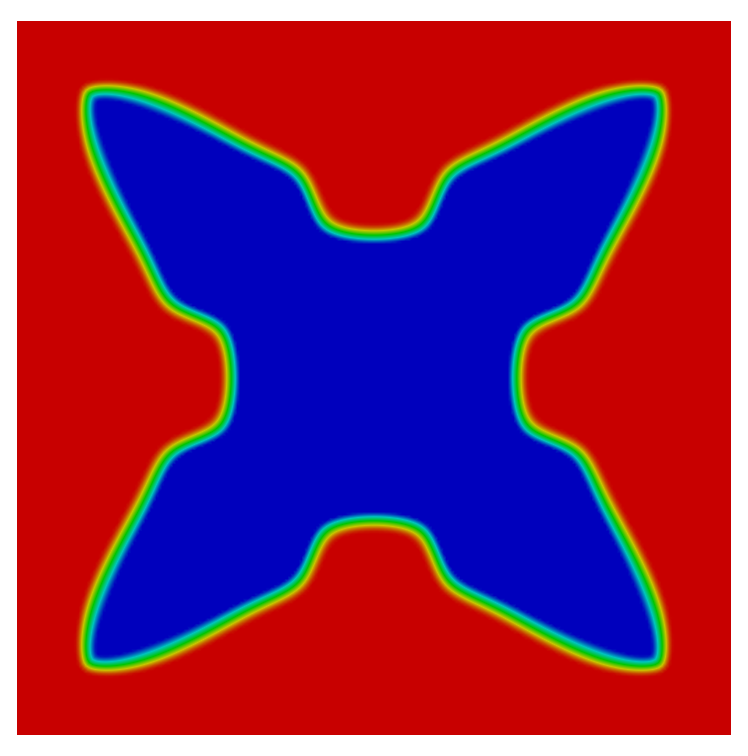}
\fi
\caption{($\epsilon^{-1} = 2\,\pi$, {\sc ani$_1^{(0.3)}$}, 
(\ref{eq:varrho})(ii), $\alpha=0.03$, $\rho = 0.01$, $\uD = -2$, 
$\Omega=(-8,8)^2$)
Snapshots of the solution at times $t=0,\,1,\,3,\,5,\,6$.
[This computation took $4$ minutes.]
}
\label{fig:BSrhoii_2pi}
\end{figure}%

The remaining computations in this subsection are for the rotated 
hexagonal anisotropy {\sc ani$_3^\star$}. The first simulation is analogous to
Figure~\ref{fig:BSrhoii_2pi}, but now on the larger domain $\Omega=(-16,16)^2$.
In particular, we keep all the parameters as before, apart from $\gamma$ and
apart from $N_f= 512$, $N_c=128$ due to the increased value of $H$.
The results are shown in Figure~\ref{fig:hexBSii_2pi_large}.
\begin{figure}
\center
\ifpdf
\includegraphics[angle=-0,width=0.19\textwidth]{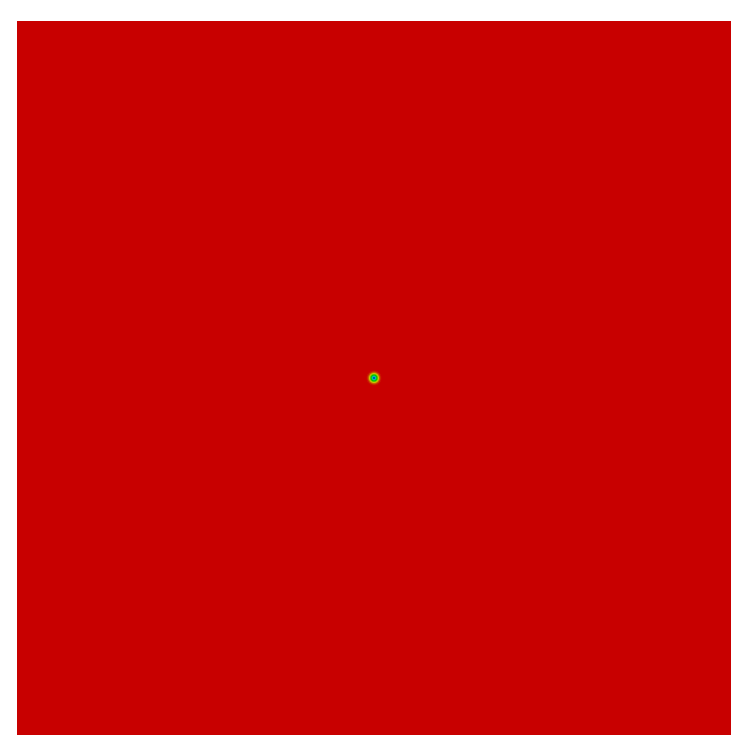}
\includegraphics[angle=-0,width=0.19\textwidth]{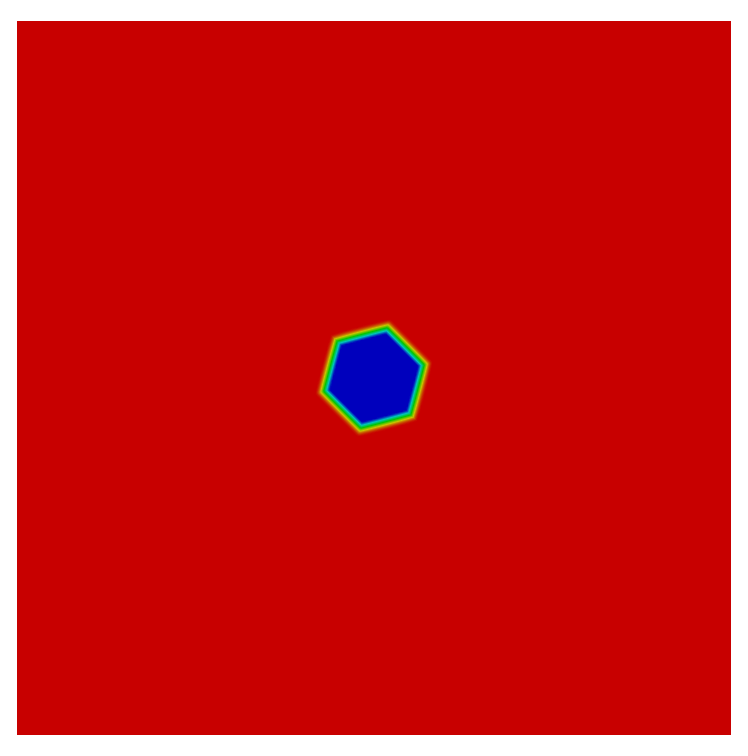}
\includegraphics[angle=-0,width=0.19\textwidth]{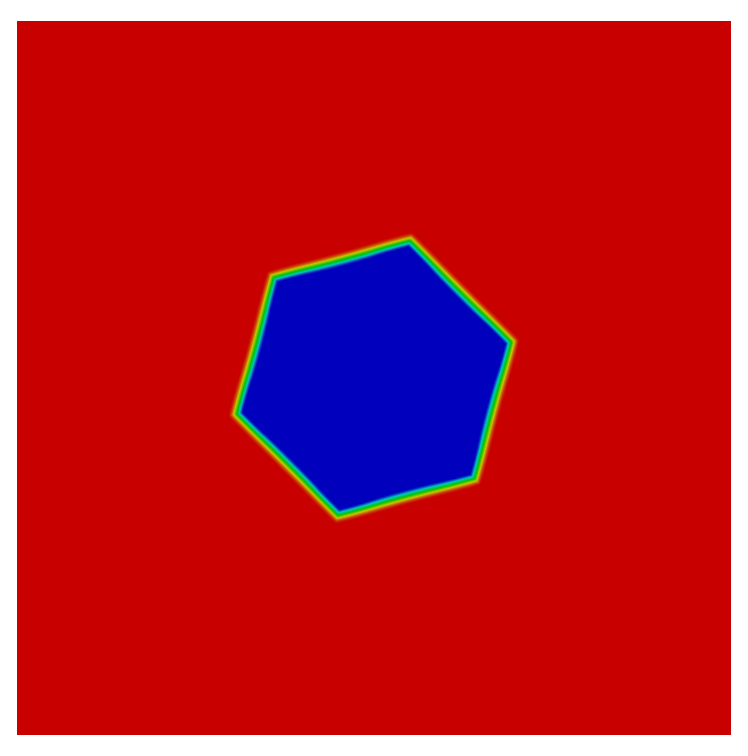}
\includegraphics[angle=-0,width=0.19\textwidth]{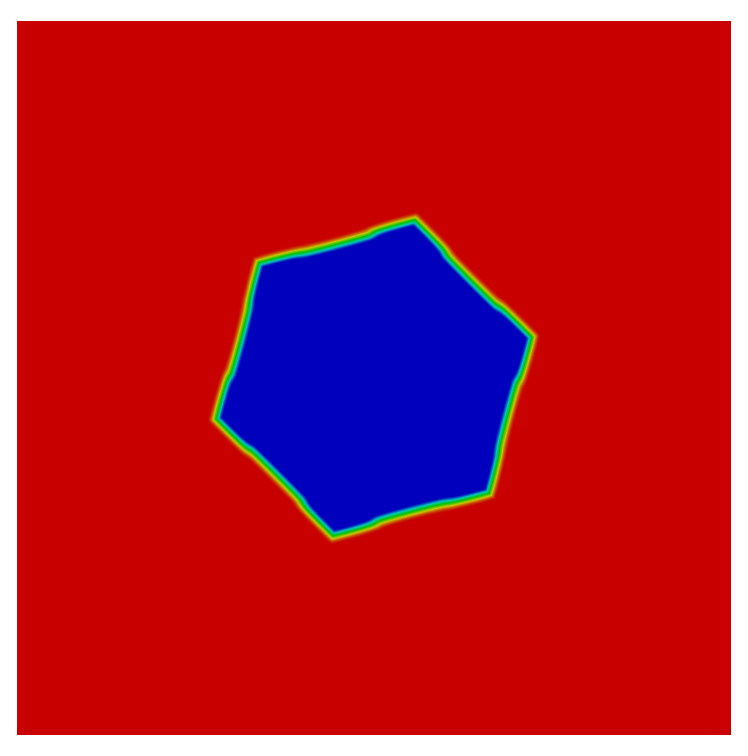}
\includegraphics[angle=-0,width=0.19\textwidth]{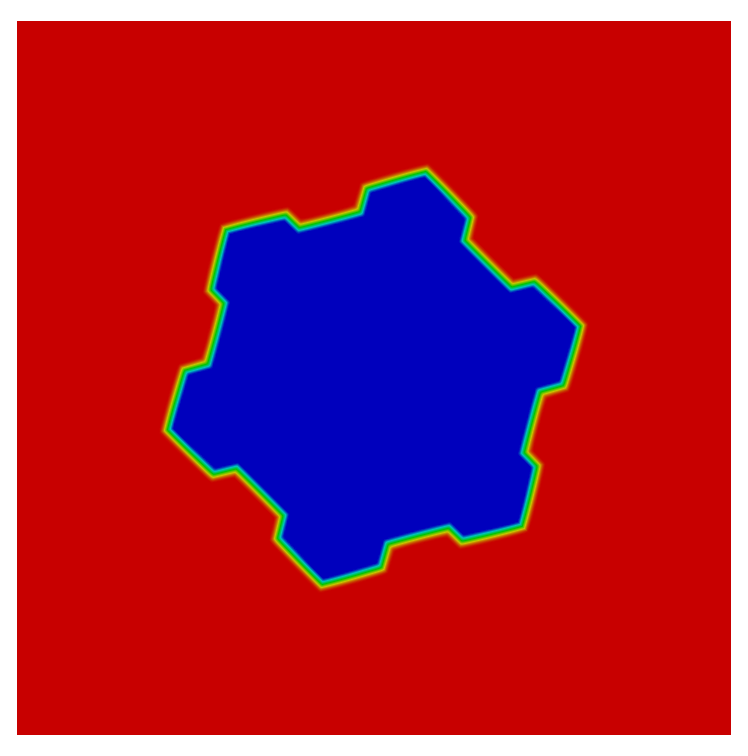}
\fi
\caption{($\epsilon^{-1} = 2\,\pi$, {\sc ani$_3^\star$}, (\ref{eq:varrho})(ii),
$\alpha=0.03$, $\rho = 0.01$, $\uD = -2$, $\Omega=(-16,16)^2$)
Snapshots of the solution at times $t=0,\,1,\,5,\,6,\,8$.
[This computation took $25$ minutes.]
}
\label{fig:hexBSii_2pi_large}
\end{figure}%
We have seen in previous simulations that the value of $\epsilon$ can have a
large influence on the evolution of the phase field approximation.
Reassuringly, in this example the evolution remains qualitatively unchanged if
we repeat the simulation for $\epsilon^{-1} = 4\,\pi$.
A run with $N_f= 1024$, $N_c=256$, $\tau=10^{-3}$ and $T=8$ 
is shown in Figure~\ref{fig:hexBSii_4pi_large}.
\begin{figure}
\center
\ifpdf
\includegraphics[angle=-0,width=0.19\textwidth]{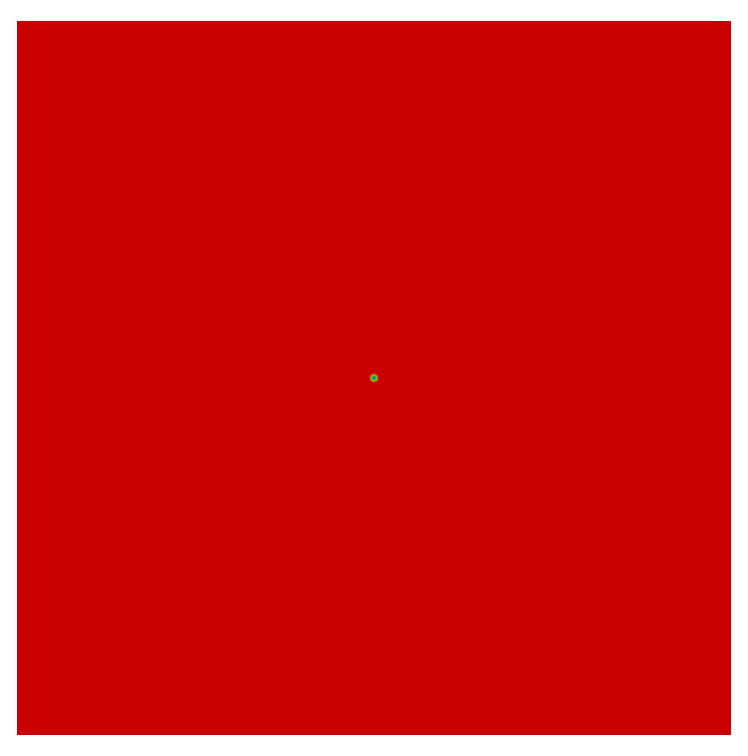}
\includegraphics[angle=-0,width=0.19\textwidth]{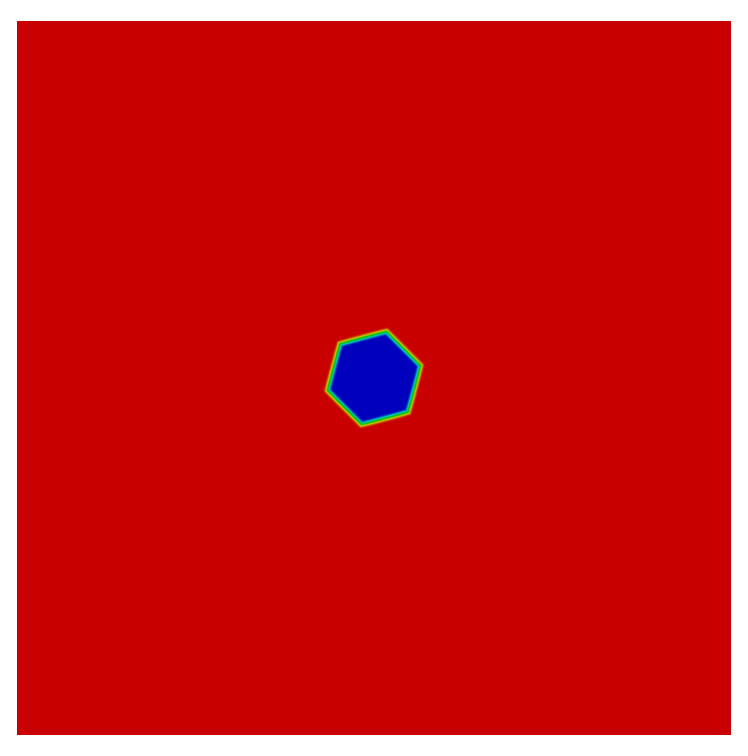}
\includegraphics[angle=-0,width=0.19\textwidth]{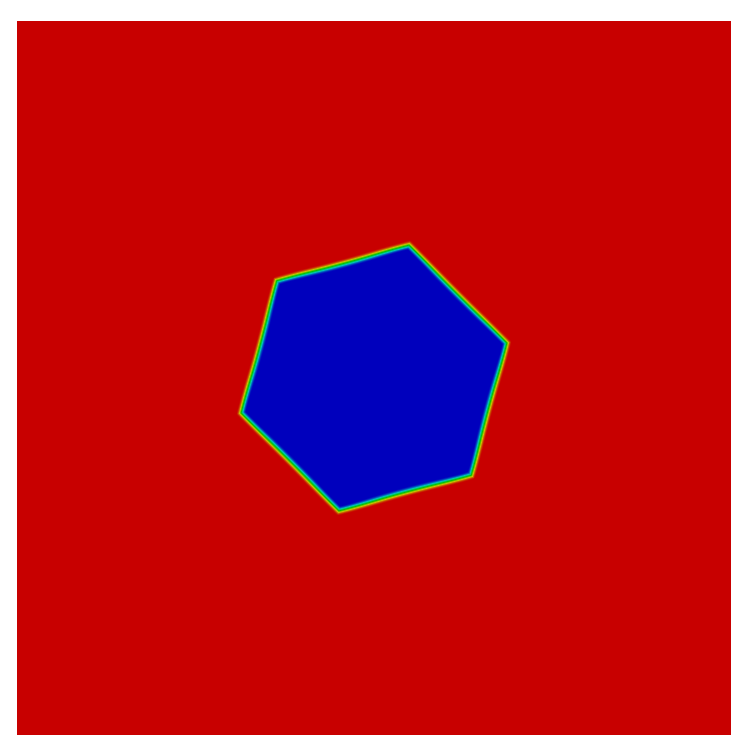}
\includegraphics[angle=-0,width=0.19\textwidth]{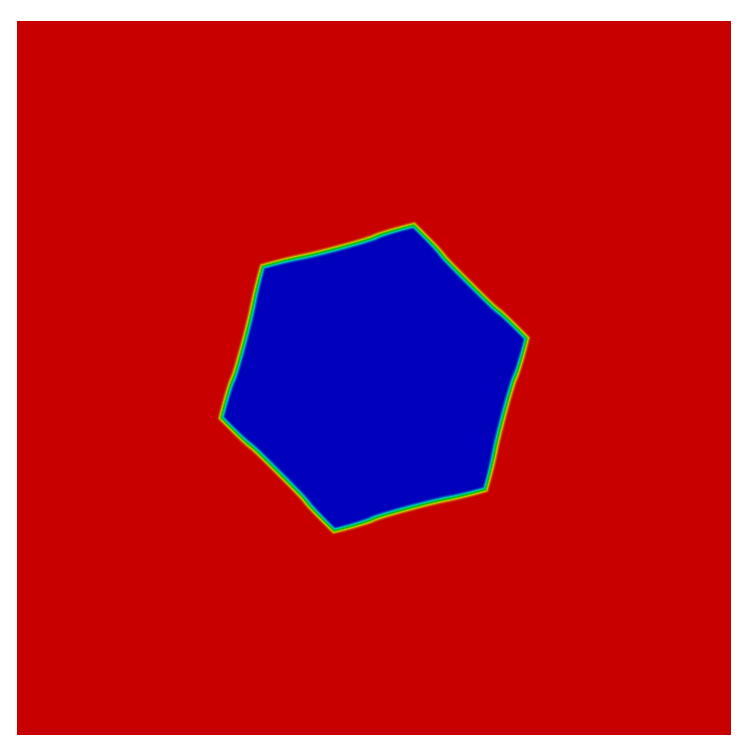}
\includegraphics[angle=-0,width=0.19\textwidth]{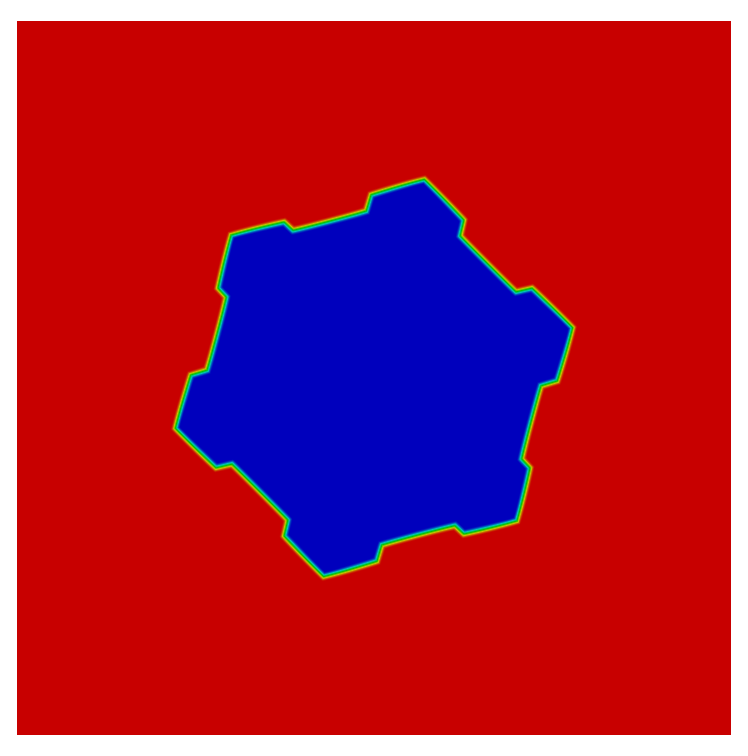}
\fi
\caption{($\epsilon^{-1} = 4\,\pi$, {\sc ani$_3^\star$}, (\ref{eq:varrho})(ii),
$\alpha=0.03$, $\rho = 0.01$, $\uD = -2$, $\Omega=(-16,16)^2$)
Snapshots of the solution at times $t=0,\,1,\,5,\,6,\,8$.
[This computation took $5$ hours, $17$ minutes.]
}
\label{fig:hexBSii_4pi_large}
\end{figure}%
We end this subsection with a repeat of the last computation, but now for the
stronger supercooling $\uD = -4$. The evolution now exhibits six distinct side
arms, as can be seen in Figure~\ref{fig:hexBSii_4pi_4large}.
\begin{figure}
\center
\ifpdf
\includegraphics[angle=-0,width=0.19\textwidth]{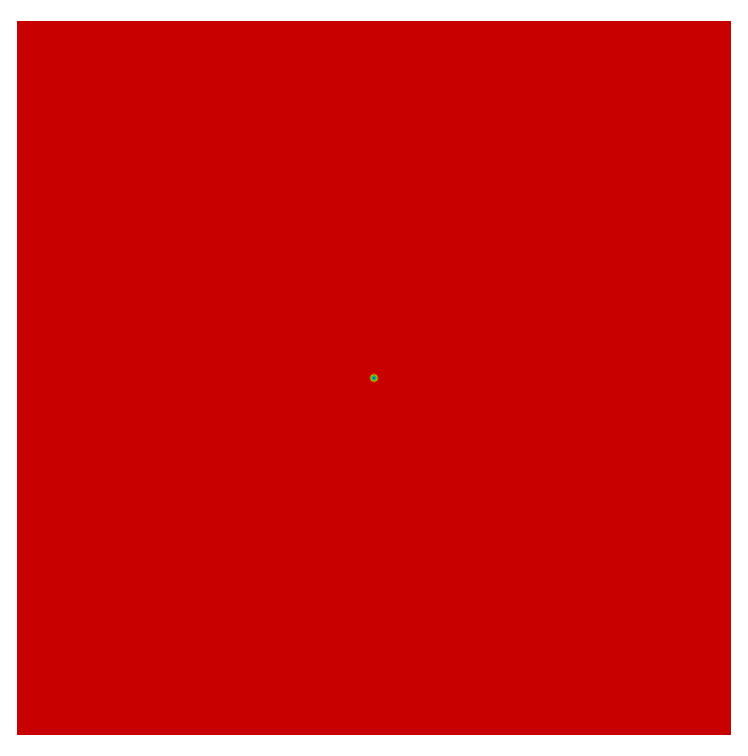}
\includegraphics[angle=-0,width=0.19\textwidth]{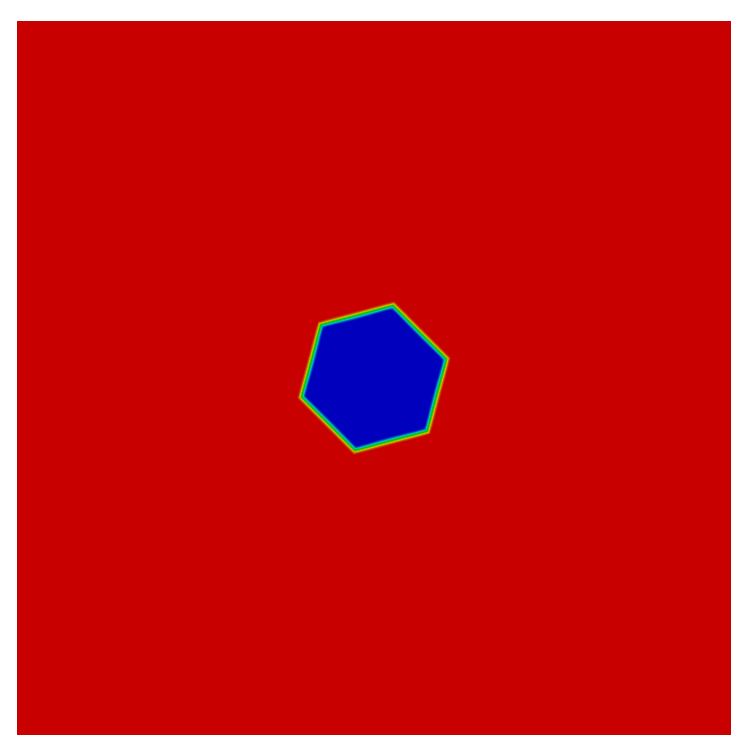}
\includegraphics[angle=-0,width=0.19\textwidth]{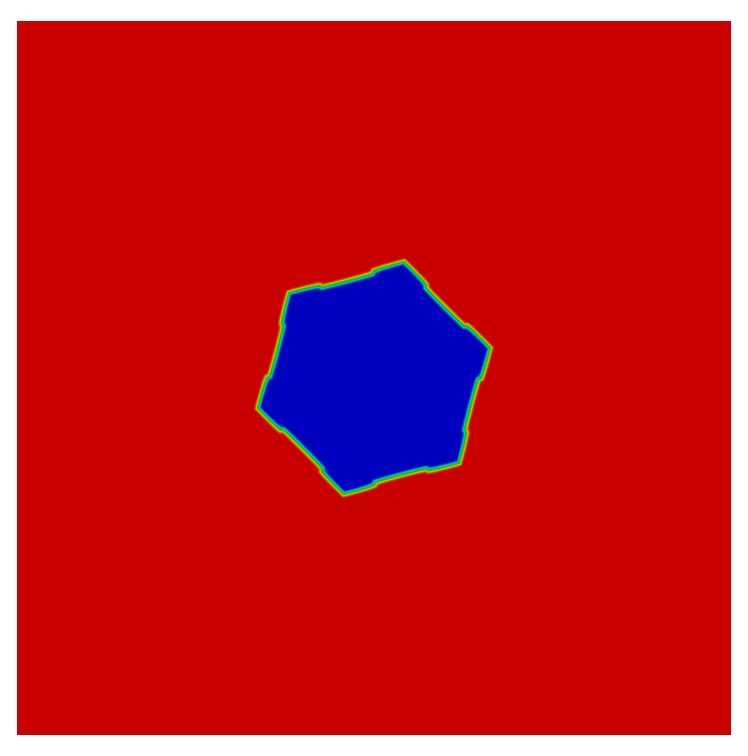}
\includegraphics[angle=-0,width=0.19\textwidth]{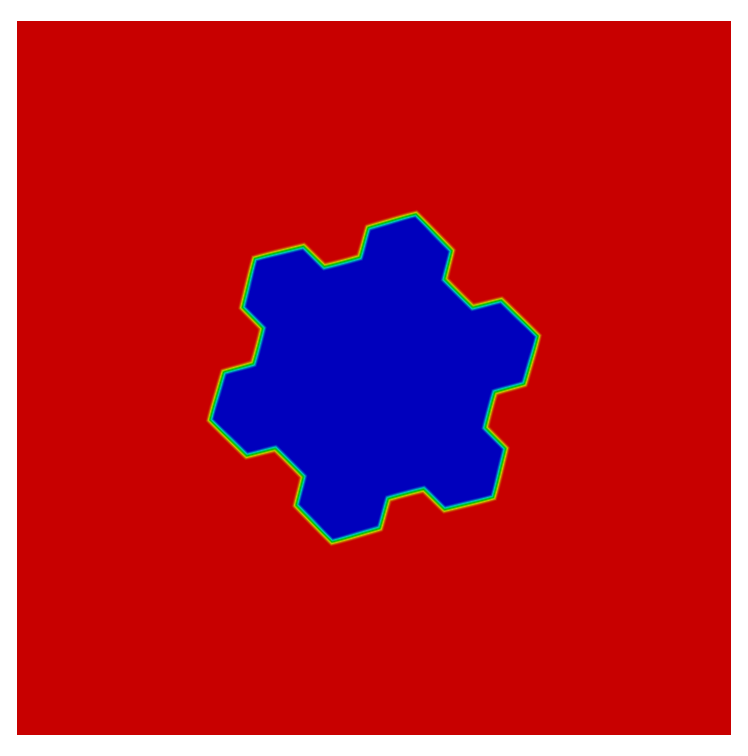}
\includegraphics[angle=-0,width=0.19\textwidth]{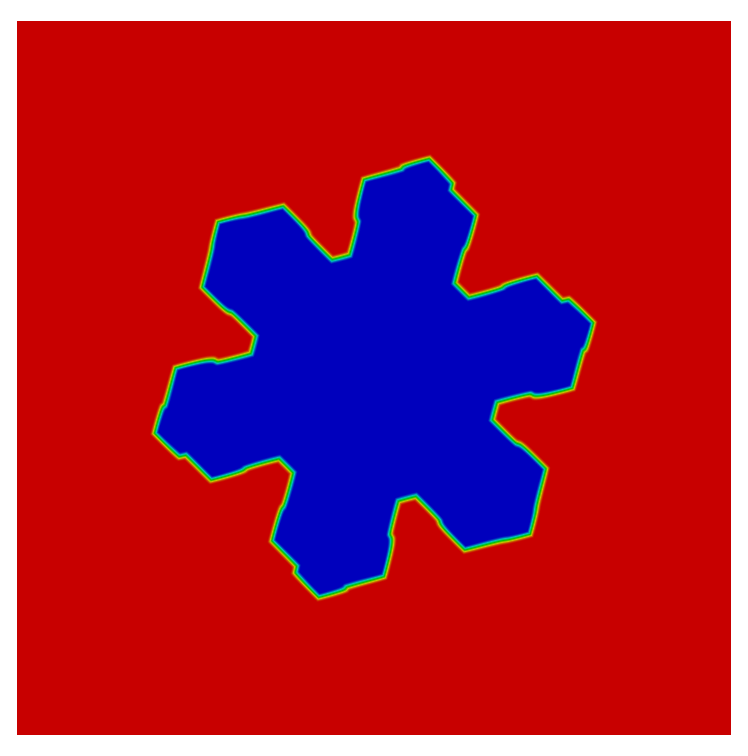}
\fi
\caption{($\epsilon^{-1} = 4\,\pi$, {\sc ani$_3^\star$}, (\ref{eq:varrho})(ii),
$\alpha=0.03$, $\rho = 0.01$, $\uD = -4$, $\Omega=(-16,16)^2$)
Snapshots of the solution at times $t=0,\,1,\,2,\,3,\,4$.
[This computation took $3$ hours, $5$ minutes.]
}
\label{fig:hexBSii_4pi_4large}
\end{figure}%

\subsection{Stefan problem in two space dimensions}

In a first simulation for the full Stefan problem, i.e.\ with $\vartheta > 0$, 
we take parameters that are close to the ones used in 
\citet[Fig.\ 10]{dendritic}. In particular, we have
$\vartheta=1$, $\alpha=5\times10^{-4}$, $\rho=0.01$, $\uD = -0.5$ and 
$R_0=0.2$, $H=8$.
An experiment with $\epsilon^{-1} =
4\,\pi$ together with $N_f = 512$, $N_c = 64$, $\tau = 10^{-3}$
and $T=1$ is shown in Figure~\ref{fig:Stefanii_4pi},
where we employ (\ref{eq:varrho})(ii).
We observe a very large interfacial region, which indicates that $\epsilon$
was not chosen small enough. 
\begin{figure}
\center
\ifpdf
\includegraphics[angle=-0,width=0.19\textwidth]{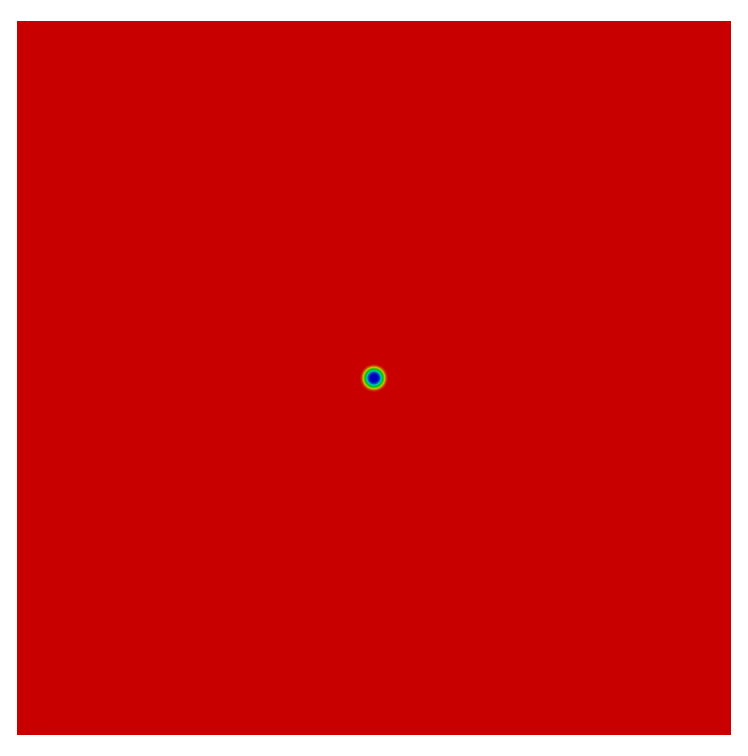}
\includegraphics[angle=-0,width=0.19\textwidth]{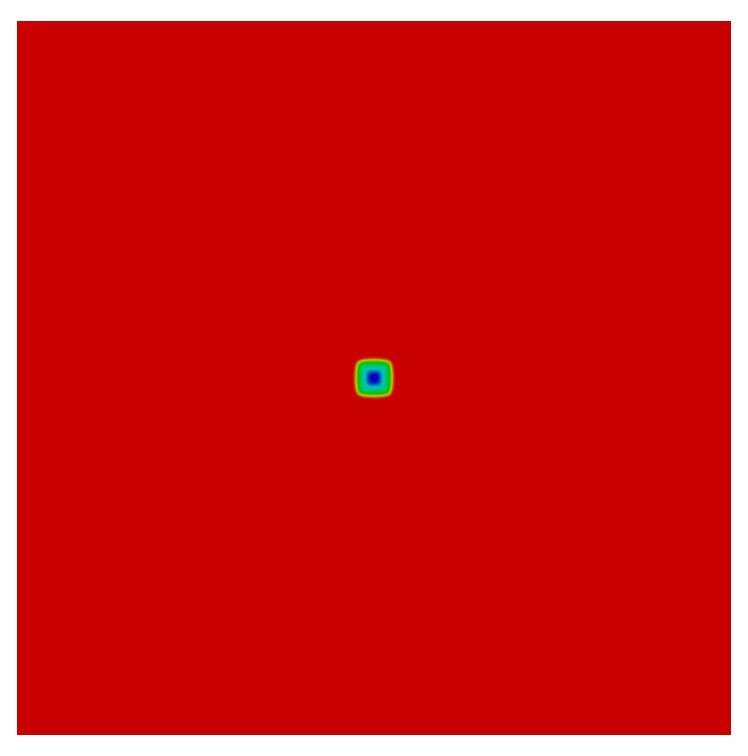}
\includegraphics[angle=-0,width=0.19\textwidth]{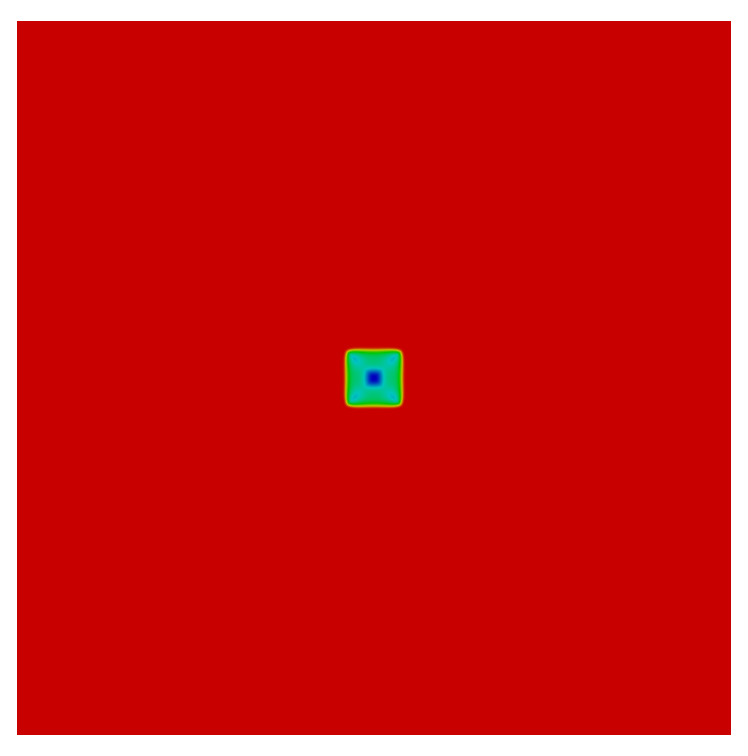}
\includegraphics[angle=-0,width=0.19\textwidth]{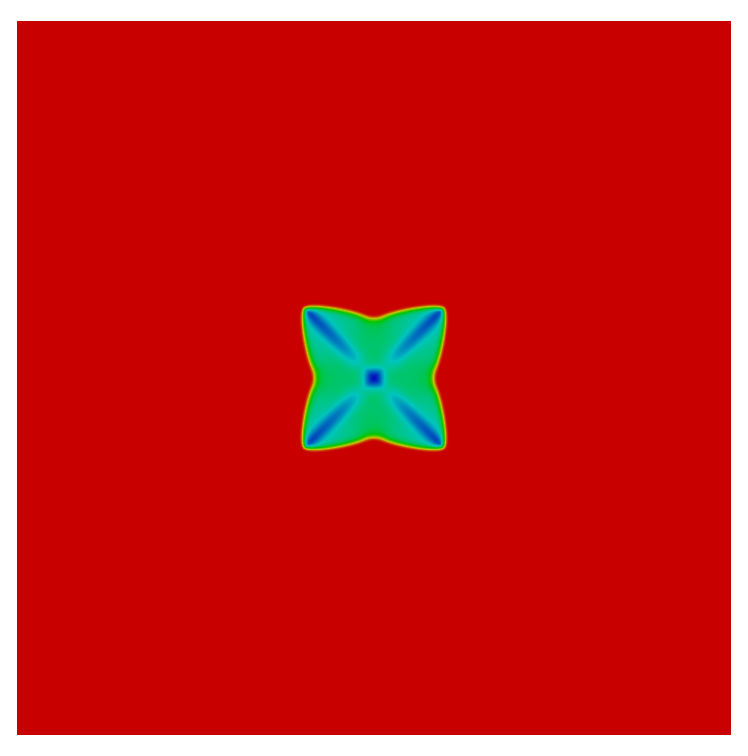}
\includegraphics[angle=-0,width=0.19\textwidth]{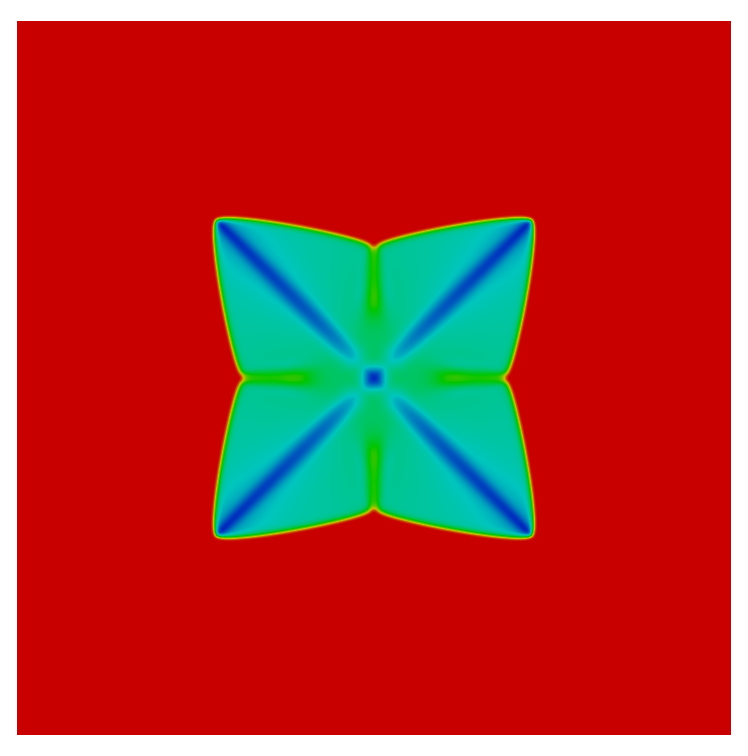}
\fi
\caption{($\epsilon^{-1} = 4\,\pi$, {\sc ani$_1^{(0.3)}$}, 
(\ref{eq:varrho})(ii), $\alpha=5\times10^{-4}$, $\rho=0.01$, $\uD = -0.5$, 
$\Omega=(-8,8)^2$)
Snapshots of the solution at times $t=0,\,0.1,\,0.2,\,0.5,\,1$.
[This computation took $76$ minutes.]
}
\label{fig:Stefanii_4pi}
\end{figure}%
A similar behaviour can be observed for the choice (\ref{eq:varrho})(iii),
see Figure~\ref{fig:Stefaniv_4pi}.
Here we note that in this example, 
in line with the analysis in (\ref{eq:obstcond}), 
there appears to be no benefit in using (\ref{eq:varrho})(iii) over
(\ref{eq:varrho})(ii). 
\begin{figure}
\center
\ifpdf
\includegraphics[angle=-0,width=0.19\textwidth]{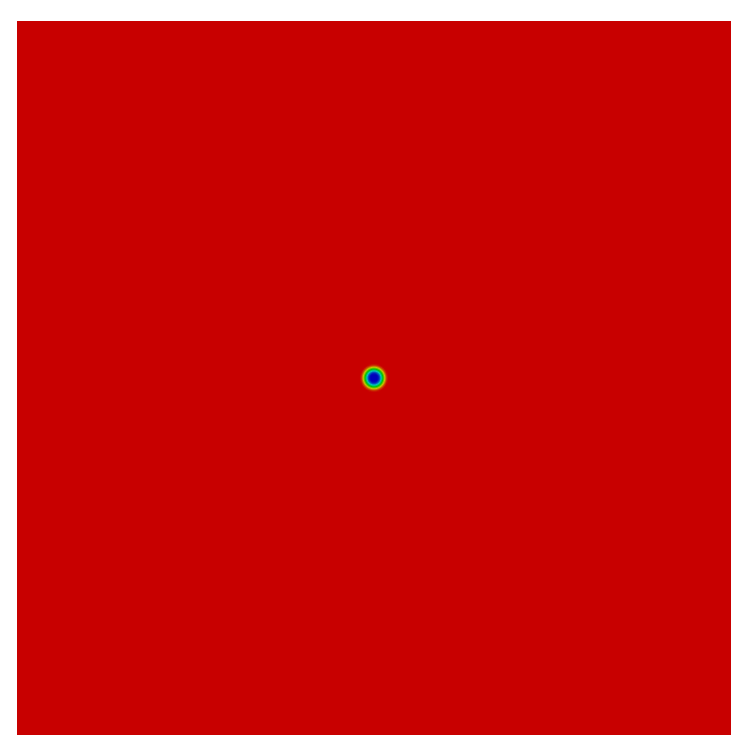}
\includegraphics[angle=-0,width=0.19\textwidth]{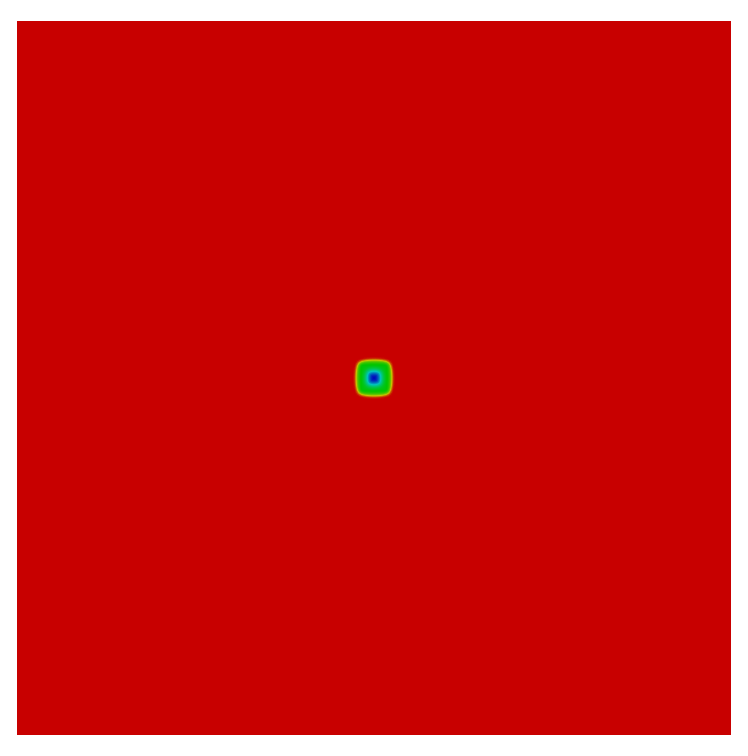}
\includegraphics[angle=-0,width=0.19\textwidth]{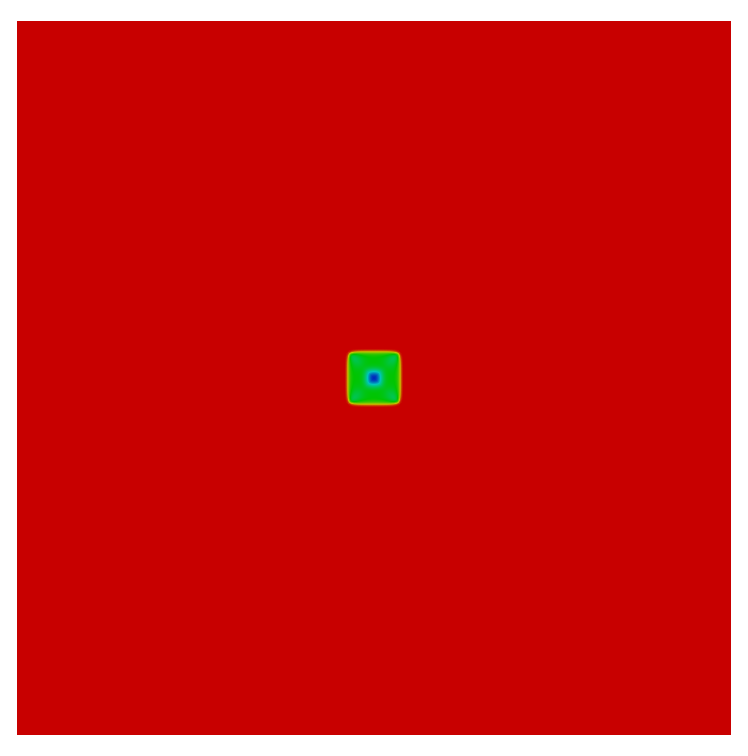}
\includegraphics[angle=-0,width=0.19\textwidth]{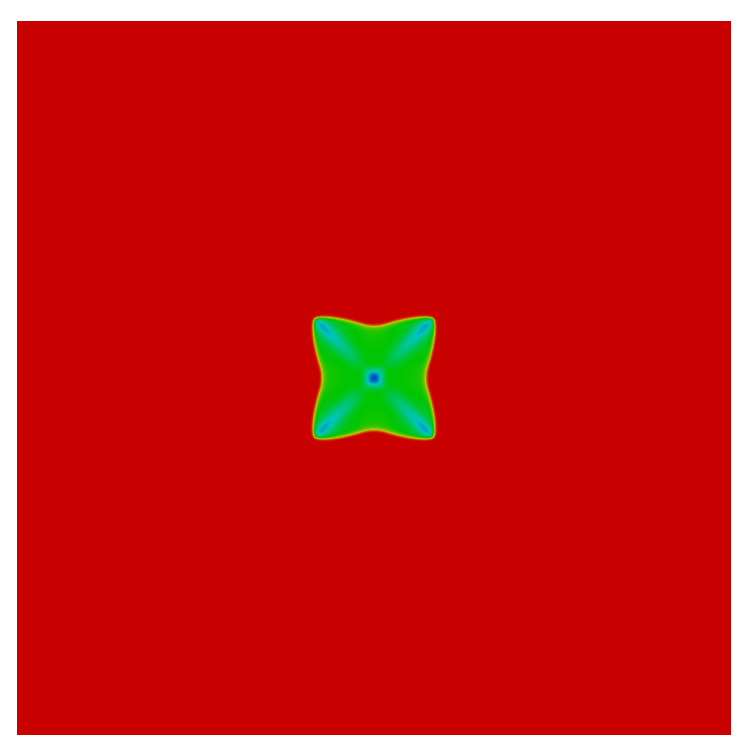}
\includegraphics[angle=-0,width=0.19\textwidth]{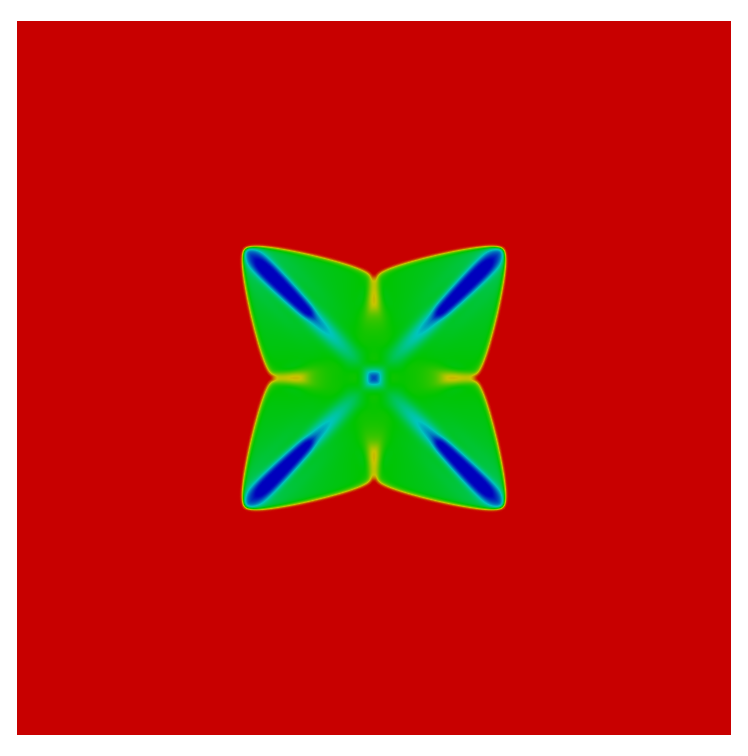}
\fi
\caption{($\epsilon^{-1} = 4\,\pi$, {\sc ani$_1^{(0.3)}$}, 
(\ref{eq:varrho})(iii), $\alpha=5\times10^{-4}$, $\rho=0.01$, $\uD = -0.5$, 
$\Omega=(-8,8)^2$)
Snapshots of the solution at times $t=0,\,0.1,\,0.2,\,0.5,\,1$.
[This computation took $165$ minutes.]
}
\label{fig:Stefaniv_4pi}
\end{figure}%
On reducing the size of the interfacial parameter $\epsilon$, the
phase field again assumes its expected profile across the interface, and we
obtain the following numerical results. 
If $\epsilon^{-1} = 16\,\pi$ together with $N_f = 2048$, $N_c = 128$, 
$\tau = 10^{-4}$ and $T=2$ we obtain the results shown in
Figure~\ref{fig:Stefanii_16pi}.
\begin{figure}
\center
\ifpdf
\includegraphics[angle=-0,width=0.19\textwidth]{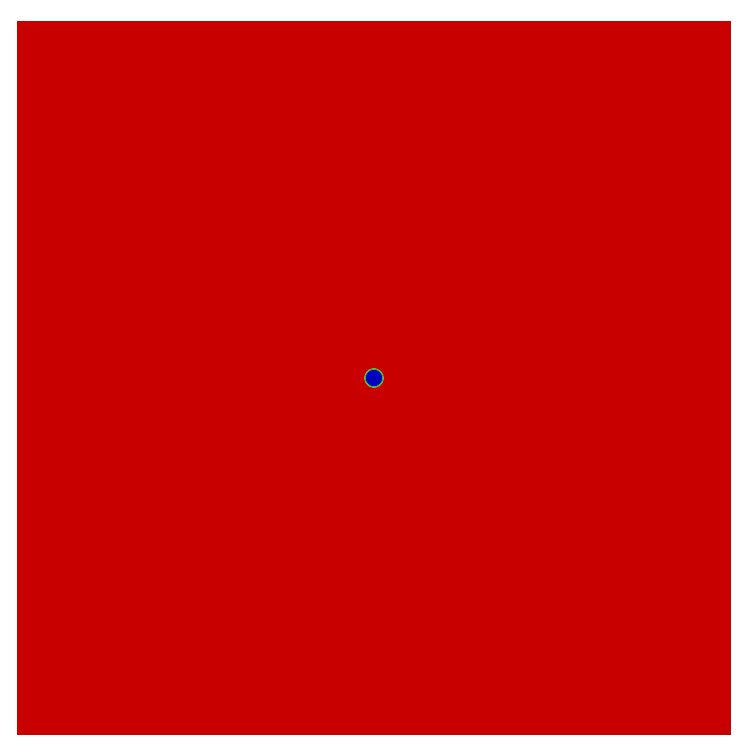}
\includegraphics[angle=-0,width=0.19\textwidth]{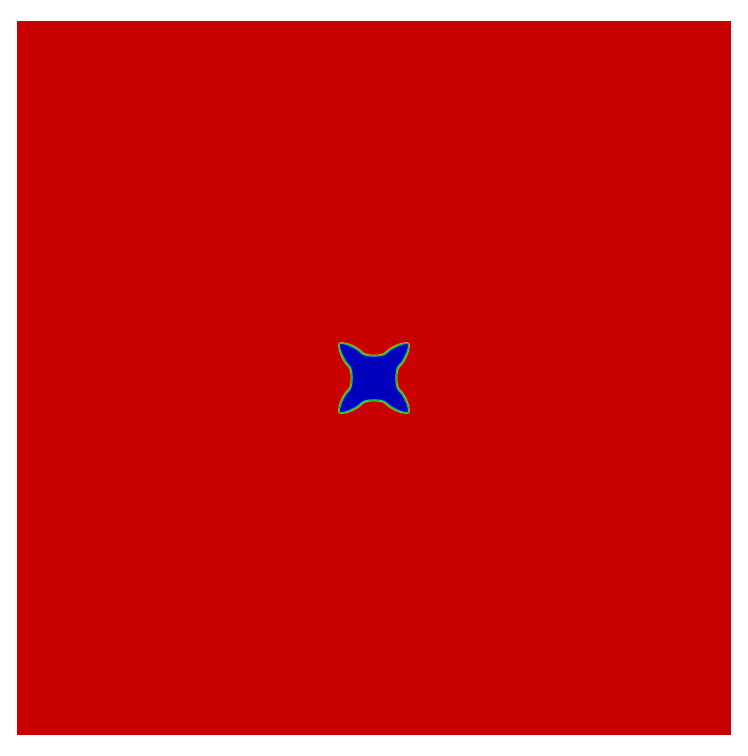}
\includegraphics[angle=-0,width=0.19\textwidth]{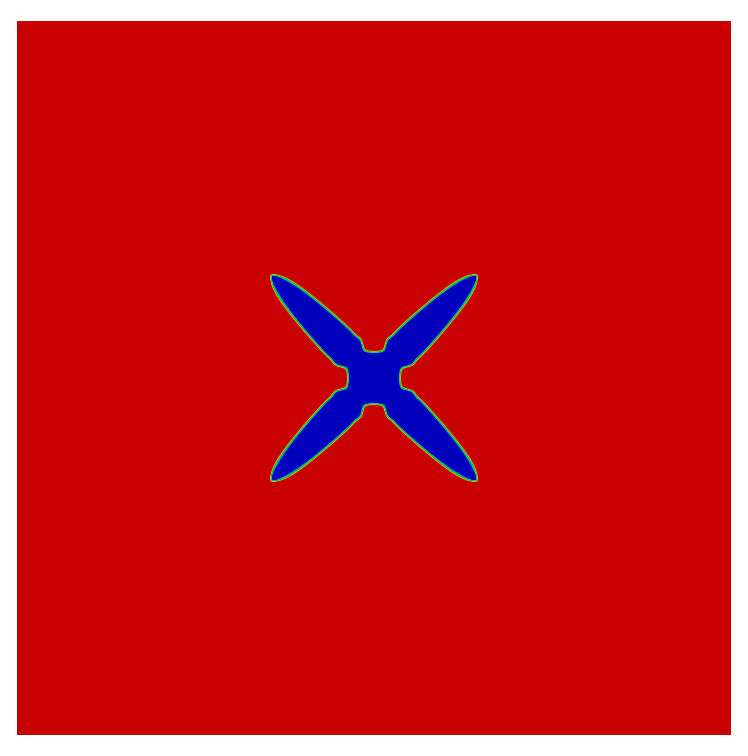}
\includegraphics[angle=-0,width=0.19\textwidth]{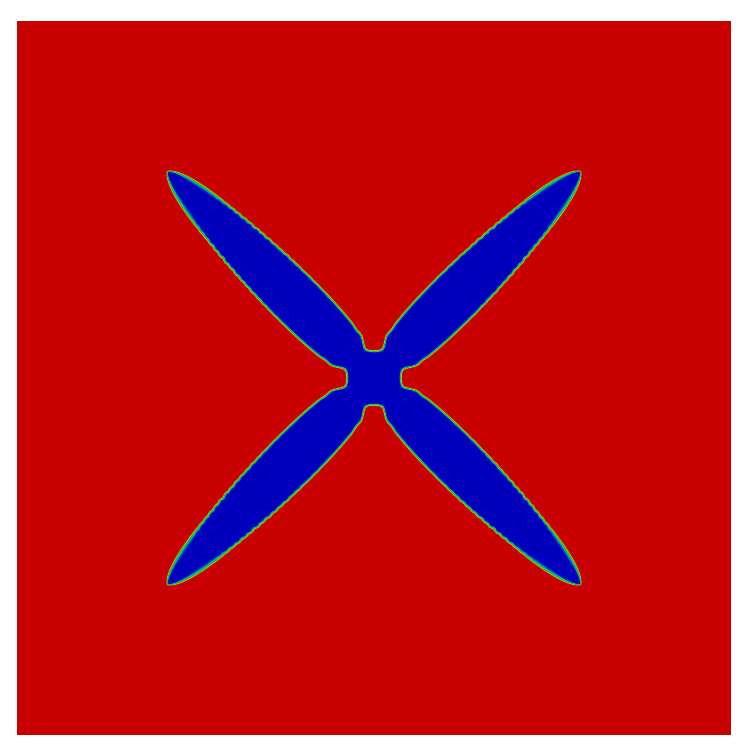}
\includegraphics[angle=-0,width=0.19\textwidth]{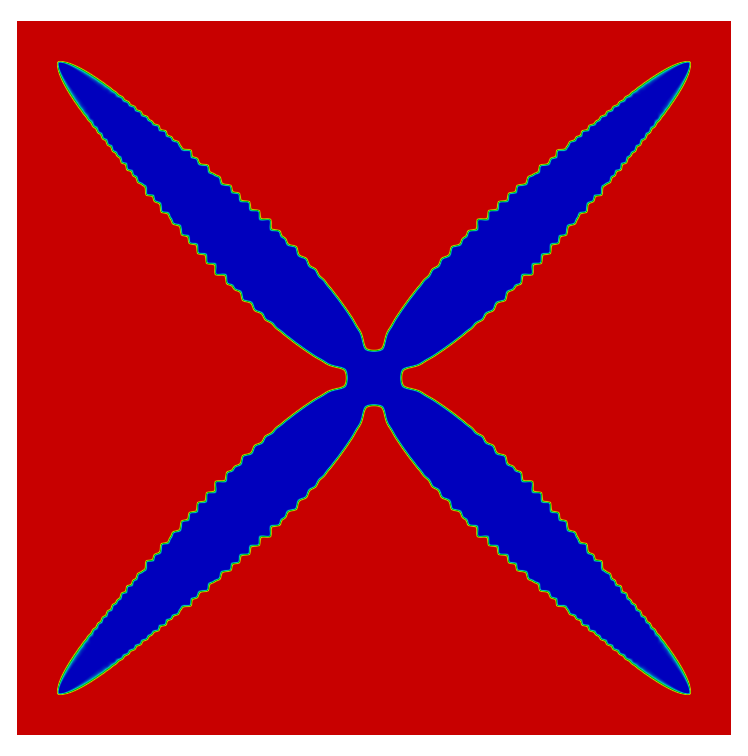}
\fi
\caption{($\epsilon^{-1} = 16\,\pi$, {\sc ani$_1^{(0.3)}$}, 
(\ref{eq:varrho})(ii), $\alpha=5\times10^{-4}$, $\rho=0.01$, $\uD = -0.5$, 
$\Omega=(-8,8)^2$)
Snapshots of the solution at times $t=0,\,0.5,\,1,\,1.5,\,2$.
[This computation took $15.5$ hours.]
}
\label{fig:Stefanii_16pi}
\end{figure}%
If $\epsilon^{-1} = 32\,\pi$ together with $N_f = 4096$, $N_c = 128$, 
$\tau = 10^{-4}$ and $T=2$ we obtain the results shown in
Figure~\ref{fig:Stefanii_32pi}. We can see that the 
small oscillations present
in the final snapshot in Figure~\ref{fig:Stefanii_16pi} have vanished in the
corresponding plot in Figure~\ref{fig:Stefanii_32pi}. 
\begin{figure}
\center
\ifpdf
\includegraphics[angle=-0,width=0.19\textwidth]{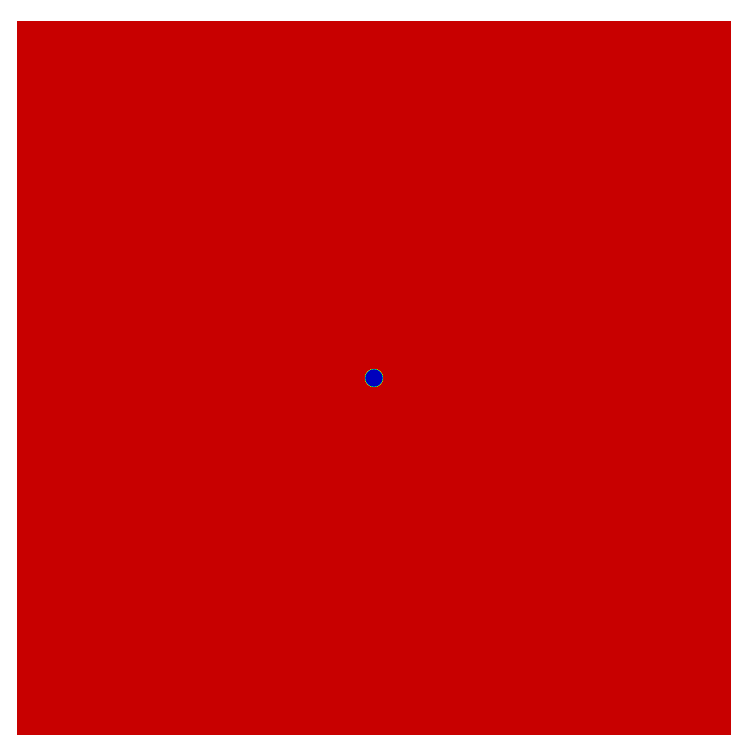}
\includegraphics[angle=-0,width=0.19\textwidth]{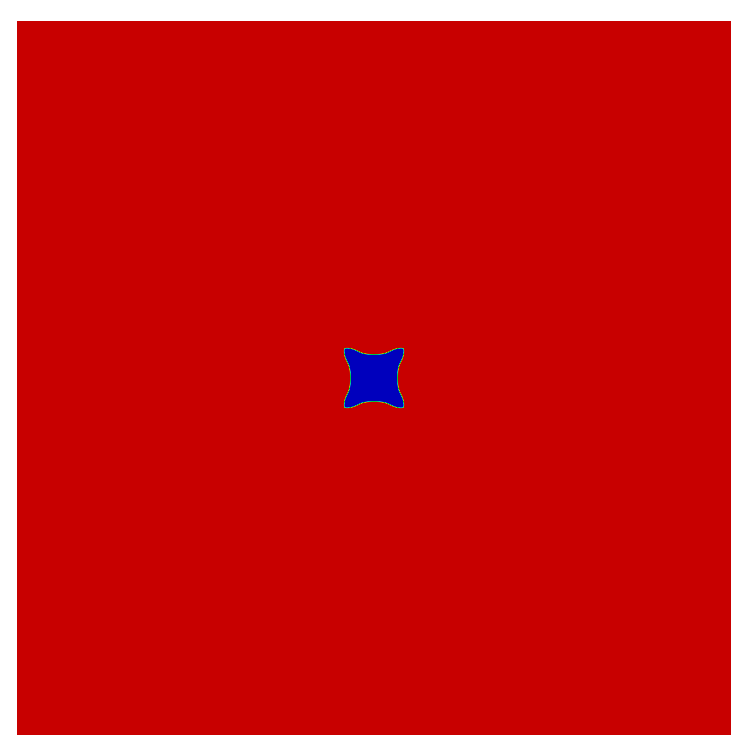}
\includegraphics[angle=-0,width=0.19\textwidth]{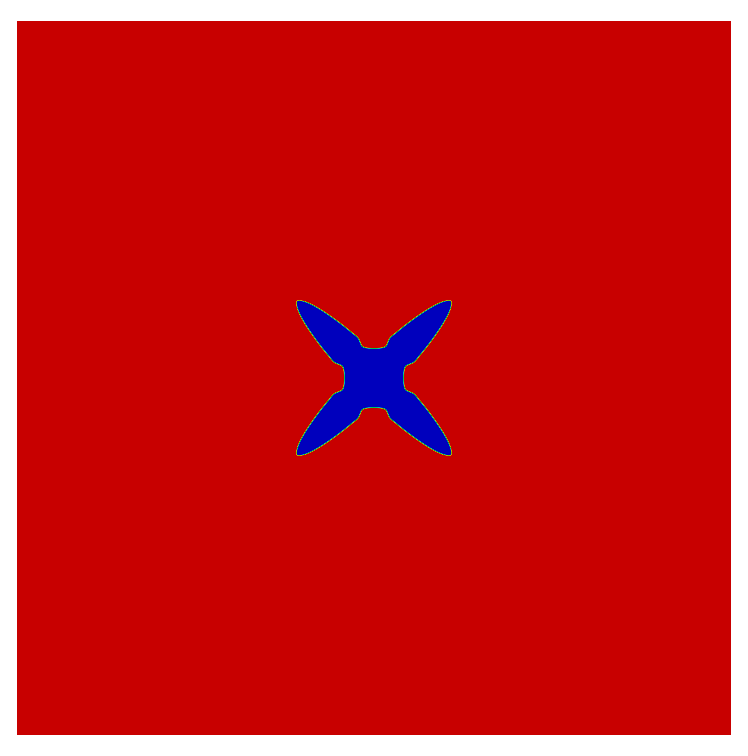}
\includegraphics[angle=-0,width=0.19\textwidth]{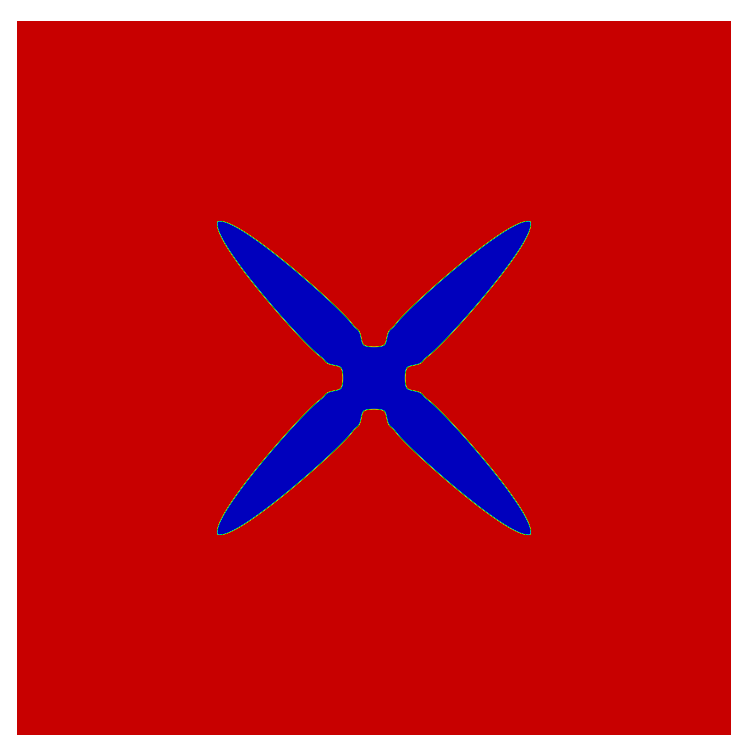}
\includegraphics[angle=-0,width=0.19\textwidth]{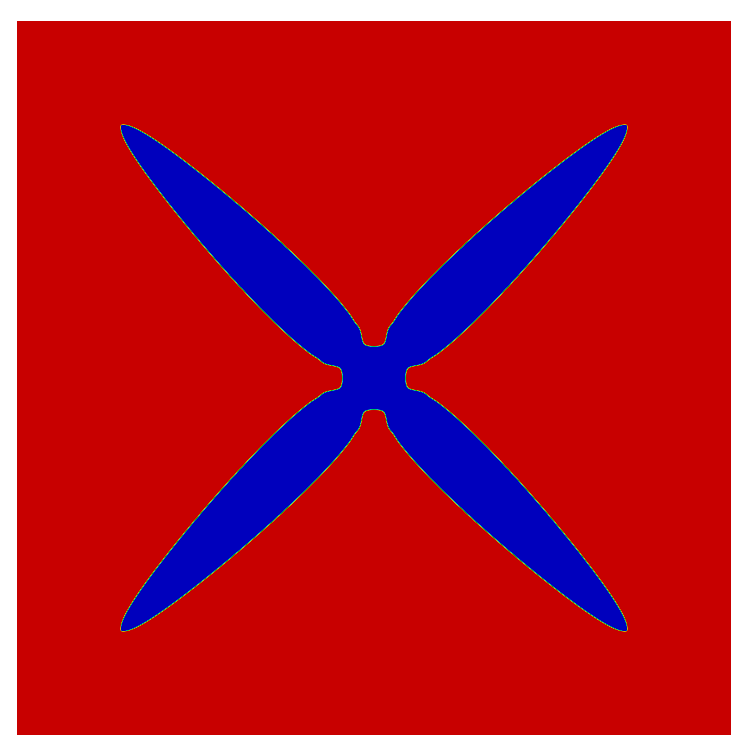}
\fi
\caption{($\epsilon^{-1} = 32\,\pi$, {\sc ani$_1^{(0.3)}$}, 
(\ref{eq:varrho})(ii), $\alpha=5\times10^{-4}$, $\rho=0.01$, $\uD = -0.5$, 
$\Omega=(-8,8)^2$)
Snapshots of the solution at times $t=0,\,0.5,\,1,\,1.5,\,2$.
[This computation took $19.5$ hours.]
}
\label{fig:Stefanii_32pi}
\end{figure}%
A closer comparison of the two solutions at time $t=2$ can be seen in
Figure~\ref{fig:Stefanii_16pi_detail}.
\begin{figure}
\center
\ifpdf
\includegraphics[angle=-0,width=0.4\textwidth]{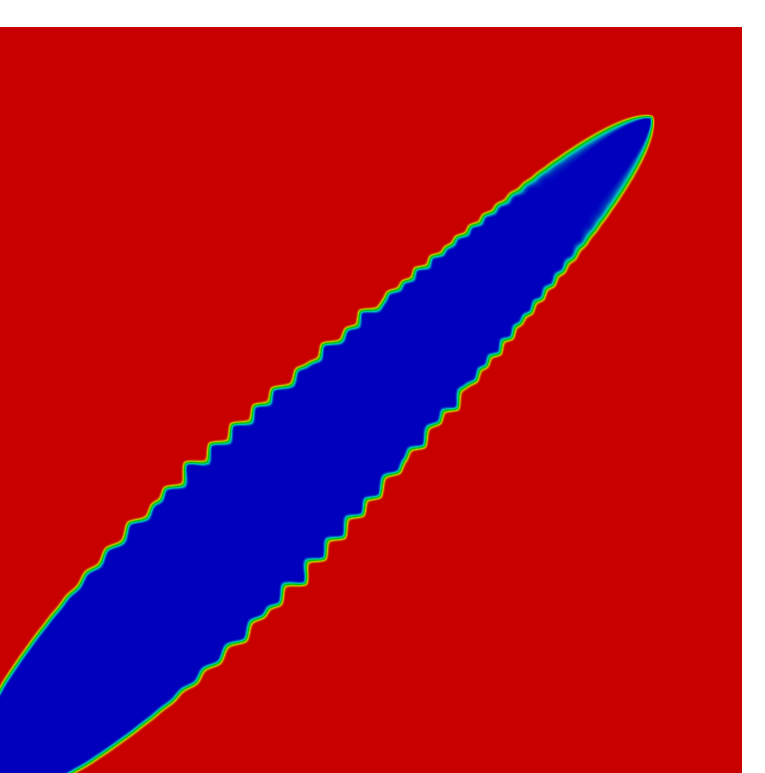}
\qquad
\includegraphics[angle=-0,width=0.4\textwidth]{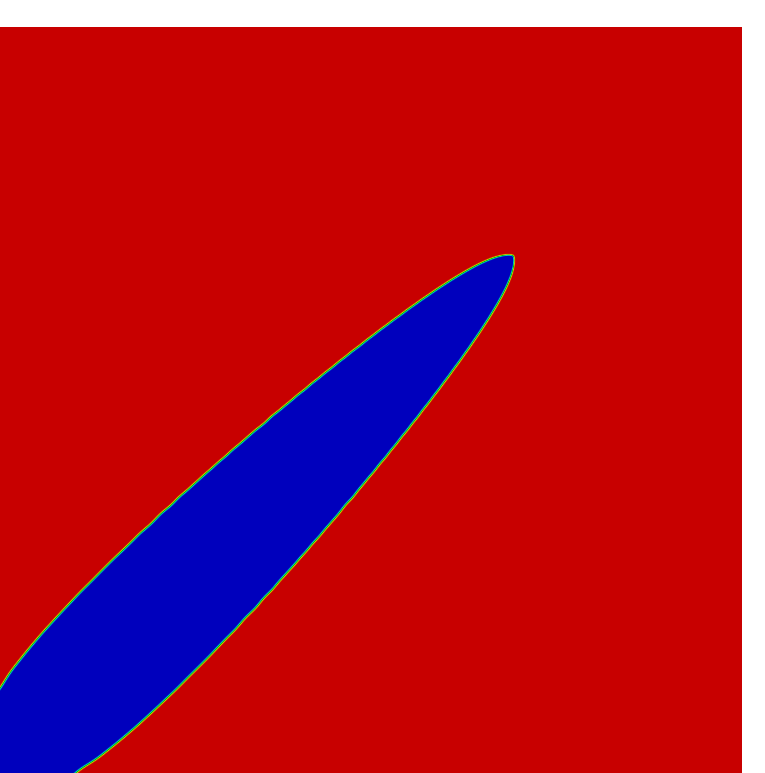}
\fi
\caption{A close comparison of the final solutions from
Figures~\ref{fig:Stefanii_16pi} and \ref{fig:Stefanii_32pi}. On the left 
$\epsilon^{-1} = 16\,\pi$, and on the right $\epsilon^{-1} = 32\,\pi$.
}
\label{fig:Stefanii_16pi_detail}
\end{figure}%

The next simulation is for the rotated hexagonal anisotropy 
{\sc ani$_3^\star$}. All the remaining parameters are as in
Figure~\ref{fig:Stefanii_16pi}, i.e.\
$\epsilon^{-1} = 16\,\pi$ together with $N_f = 2048$, $N_c = 128$, 
$\tau = 10^{-4}$ and $T=2$. See Figure~\ref{fig:hexStefanii_16pi} for the
numerical results. 
\begin{figure}
\center
\ifpdf
\includegraphics[angle=-0,width=0.19\textwidth]{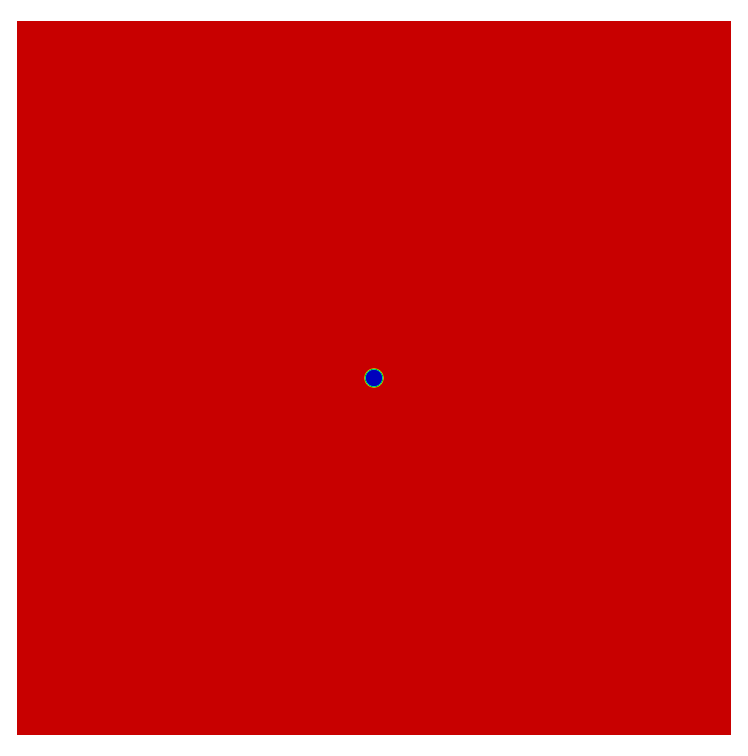}
\includegraphics[angle=-0,width=0.19\textwidth]{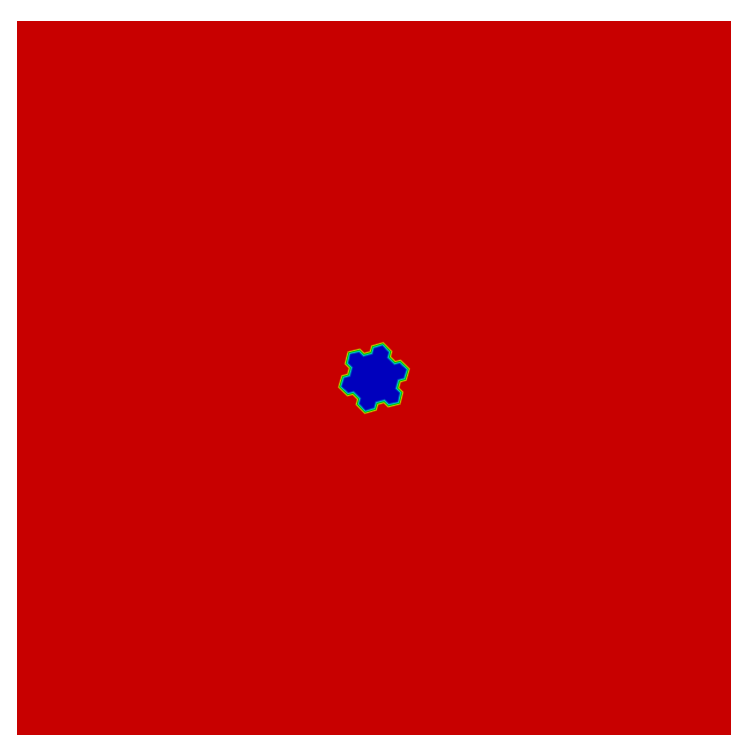}
\includegraphics[angle=-0,width=0.19\textwidth]{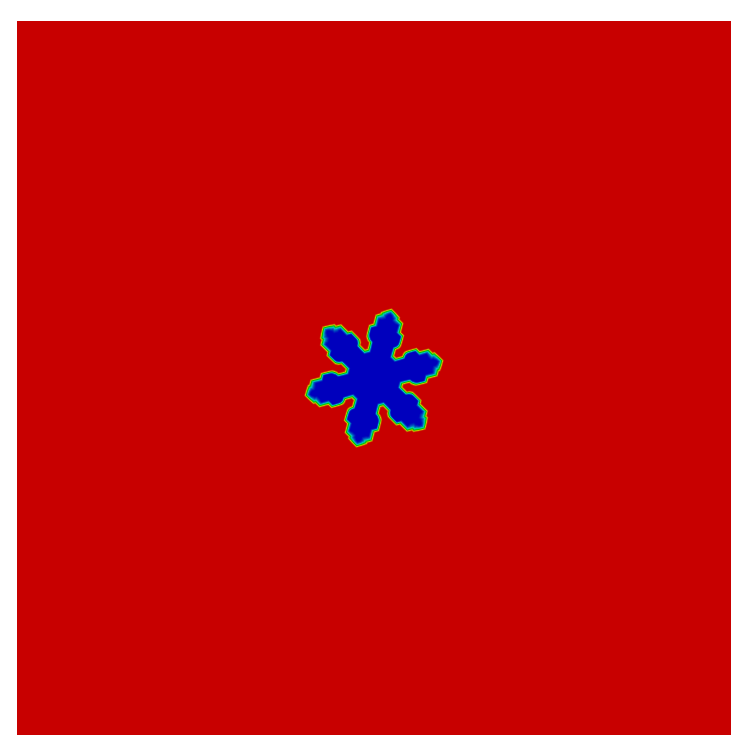}
\includegraphics[angle=-0,width=0.19\textwidth]{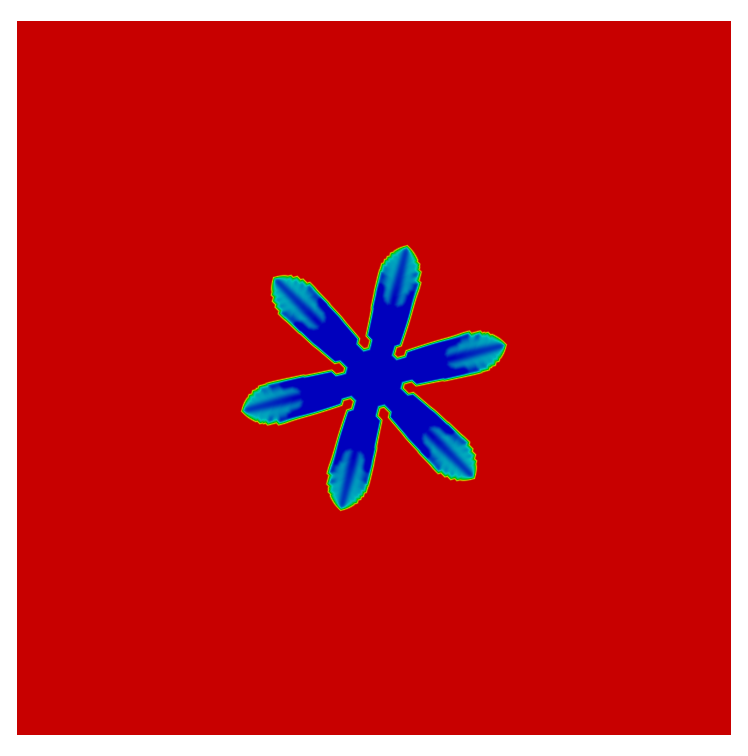}
\includegraphics[angle=-0,width=0.19\textwidth]{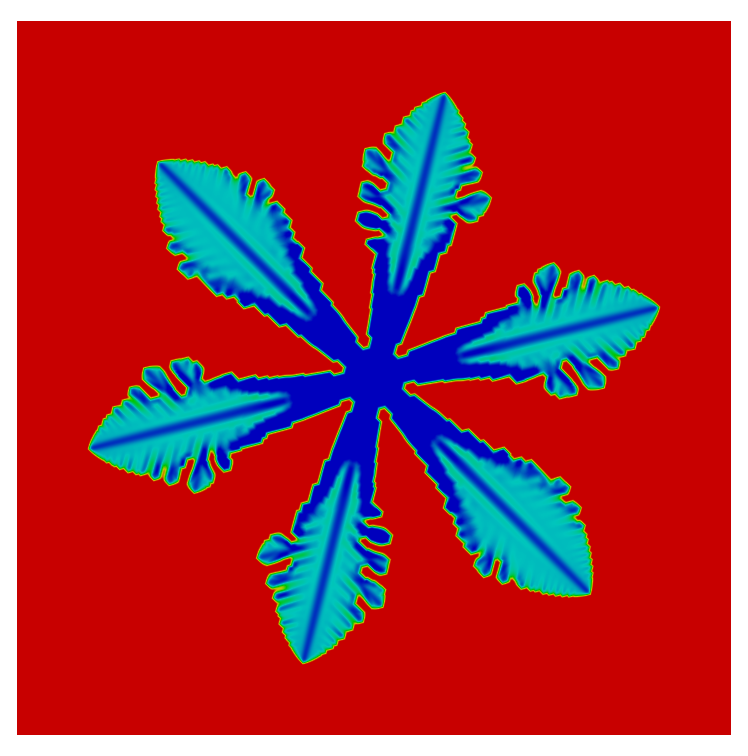}
\fi
\caption{($\epsilon^{-1} = 16\,\pi$, {\sc ani$_3^\star$}, 
(\ref{eq:varrho})(ii), $\alpha=5\times10^{-4}$, $\rho=0.01$, $\uD = -0.5$, 
$\Omega=(-8,8)^2$)
Snapshots of the solution at times $t=0,\,0.5,\,1,\,1.5,\,2$.
[This computation took $51$ hours.]
}
\label{fig:hexStefanii_16pi}
\end{figure}%
The large mushy regions in the final plot in Figure~\ref{fig:hexStefanii_16pi}
indicate once again that $\epsilon$ needs to be chosen smaller. 
Hence we repeat this experiment and now choose $\epsilon^{-1} = 32\,\pi$ 
together with $N_f = 4096$, $N_c = 128$, $\tau = 10^{-4}$ and $T=2$. 
See Figure~\ref{fig:hexStefanii_32pi} for the numerical results. We observe
that the interfacial region is now well defined and that the evolution 
exhibits six distinct side arms.
\begin{figure}
\center
\ifpdf
\includegraphics[angle=-0,width=0.19\textwidth]{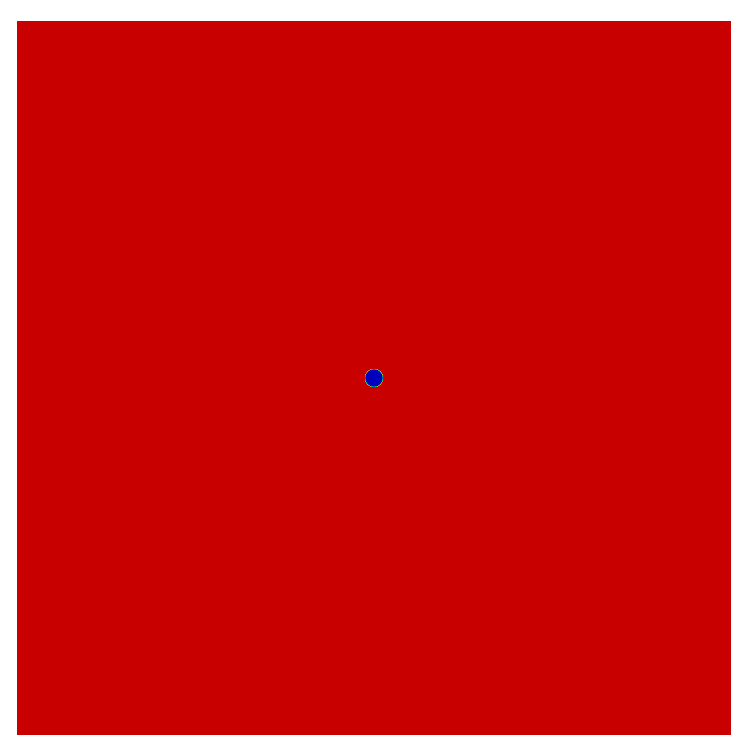}
\includegraphics[angle=-0,width=0.19\textwidth]{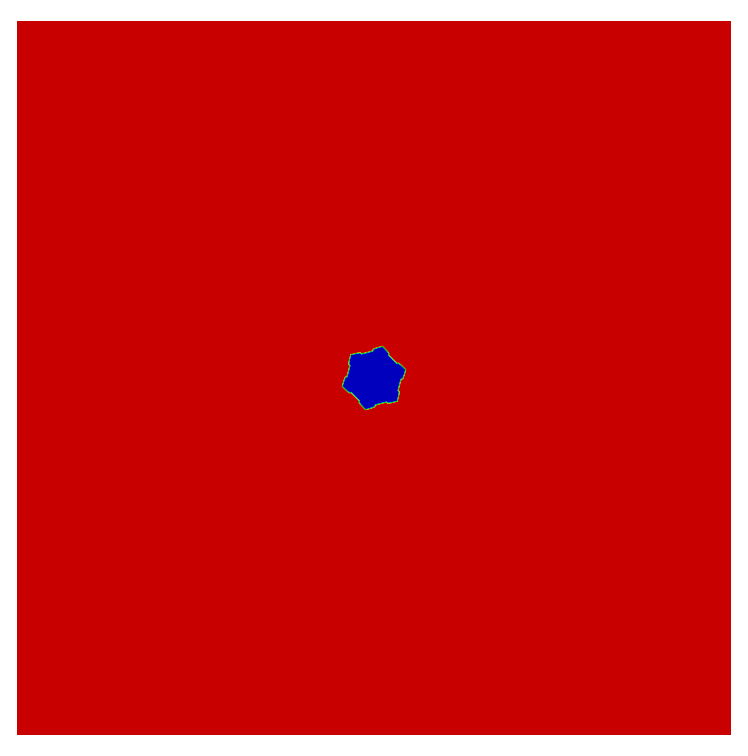}
\includegraphics[angle=-0,width=0.19\textwidth]{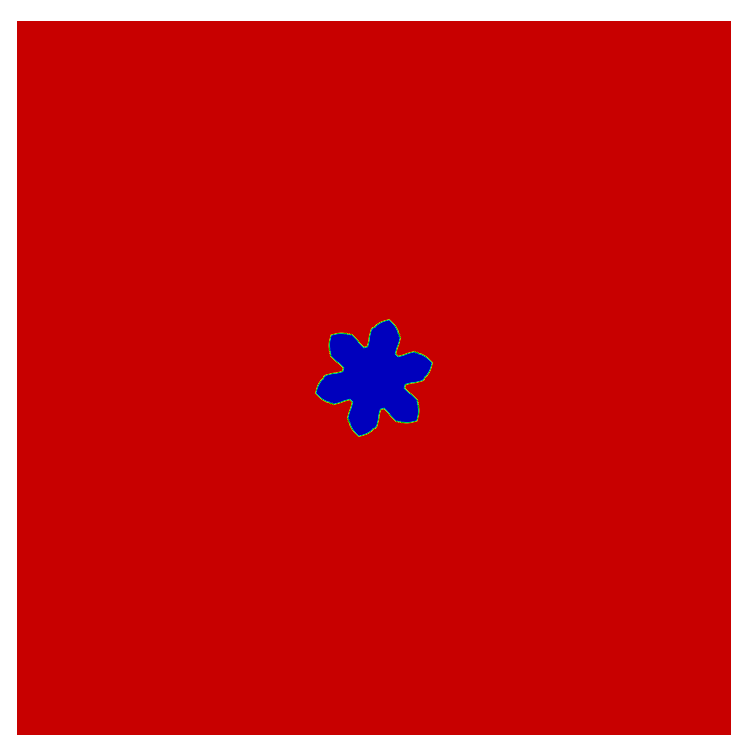}
\includegraphics[angle=-0,width=0.19\textwidth]{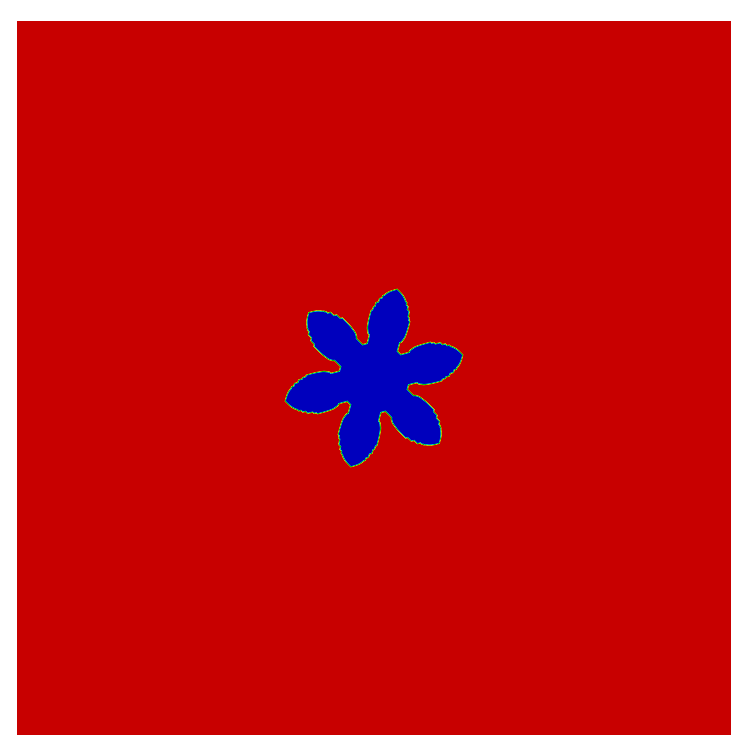}
\includegraphics[angle=-0,width=0.19\textwidth]{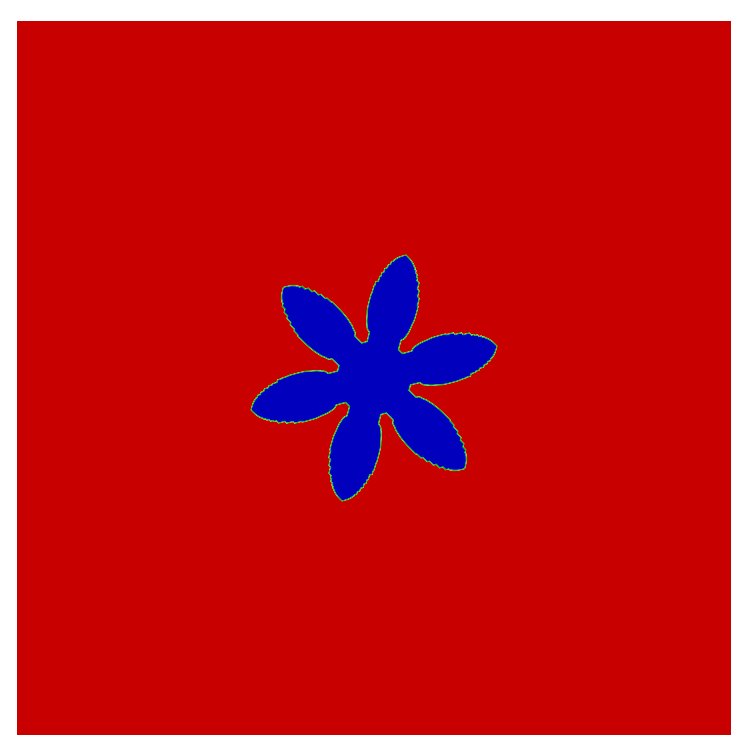}
\fi
\caption{($\epsilon^{-1} = 32\,\pi$, {\sc ani$_3^\star$}, 
(\ref{eq:varrho})(ii), $\alpha=5\times10^{-4}$, $\rho=0.01$, $\uD = -0.5$, 
$\Omega=(-8,8)^2$)
Snapshots of the solution at times $t=0,\,0.5,\,1,\,1.5,\,2$.
[This computation took $19$ hours.]
}
\label{fig:hexStefanii_32pi}
\end{figure}%

\subsection{Mullins--Sekerka in three space dimensions} \label{sec:53}
In this subsection we always employ (\ref{eq:varrho})(ii), and we always
let $\vartheta = 0$. At first we also choose
$\rho=0$, so that we approximate a Mullins--Sekerka problem without kinetic
undercooling.
A simulation for the cubic anisotropy {\sc ani$_9$} with 
$\epsilon^{-1} = 2\,\pi$, and for the physical parameters
$\alpha = 0.03$ and $\uD = -2$,
can be seen in Figure~\ref{fig:3dBSnewii_2pi}. 
The discretization parameters are
$N_f = 256$, $N_c = 64$, $\tau = 10^{-3}$ and $T=1$.
We observe that the cubic anisotropy induces the growth of the typical six
symmetric side arms.
\begin{figure}
\center
\ifpdf
\includegraphics[angle=-0,width=0.19\textwidth]{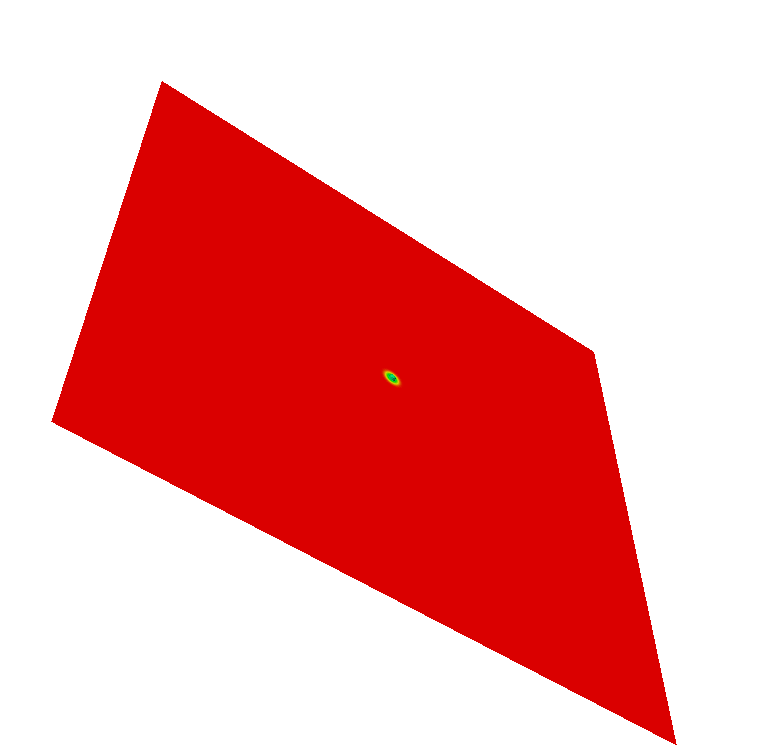}
\includegraphics[angle=-0,width=0.19\textwidth]{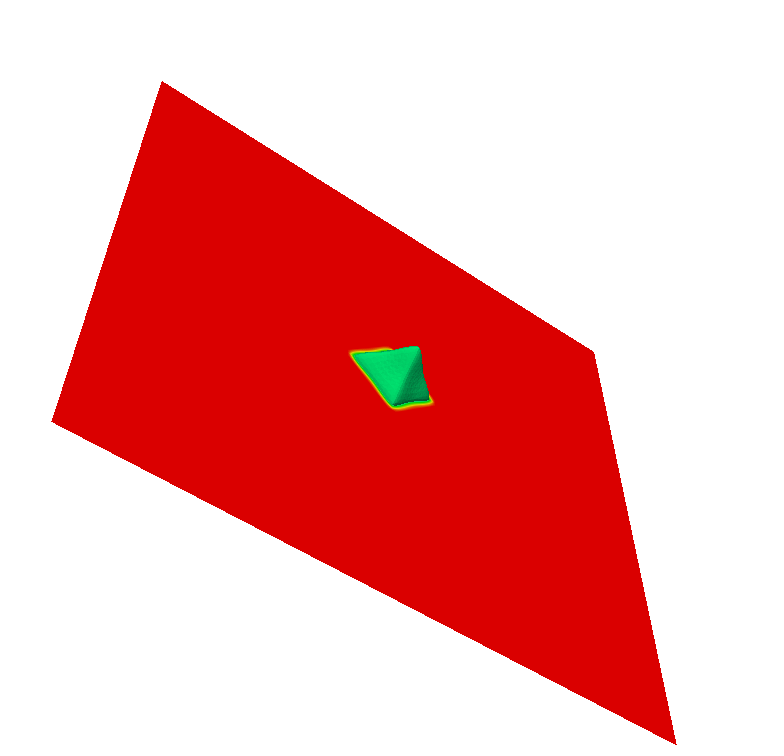} 
\includegraphics[angle=-0,width=0.19\textwidth]{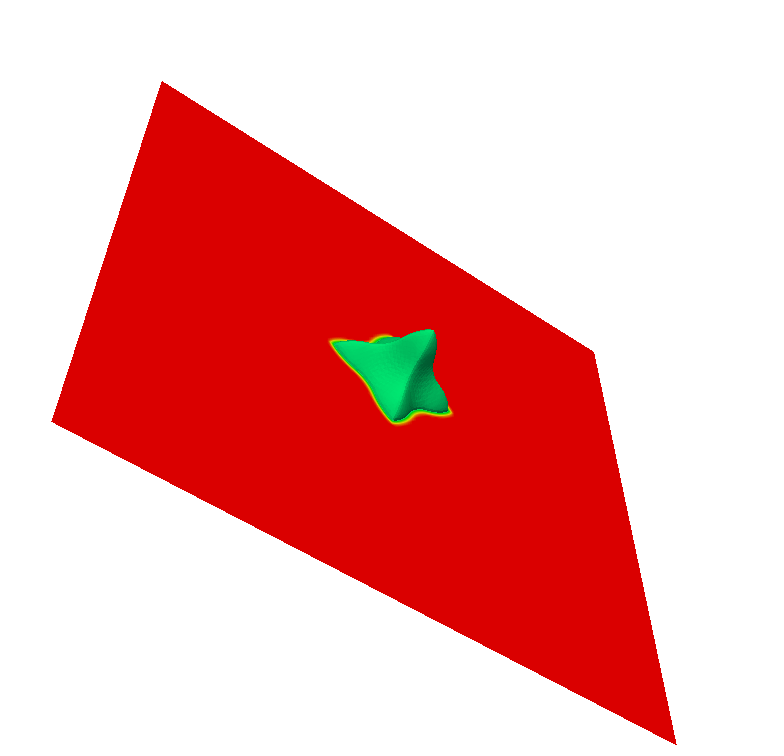}
\includegraphics[angle=-0,width=0.19\textwidth]{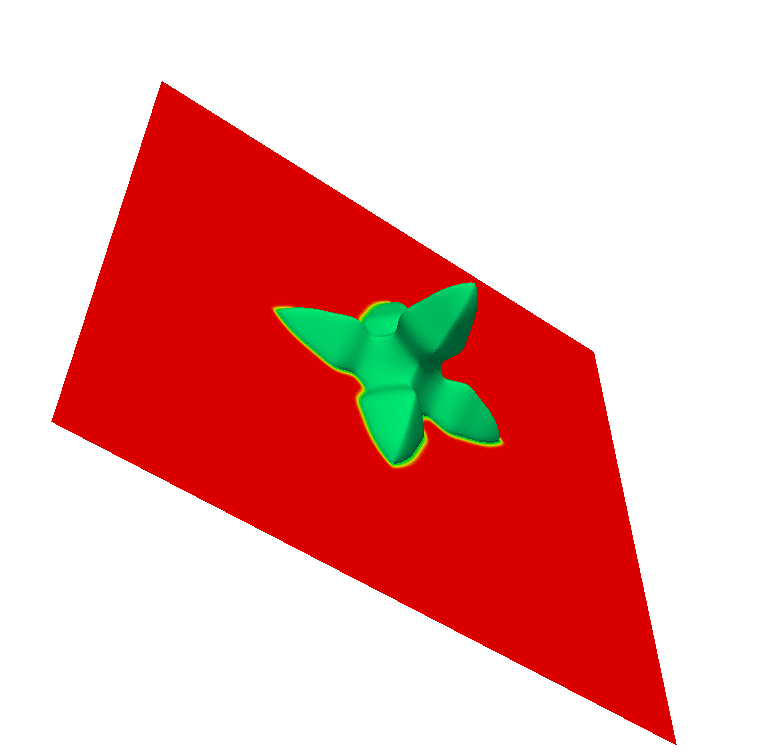}
\includegraphics[angle=-0,width=0.19\textwidth]{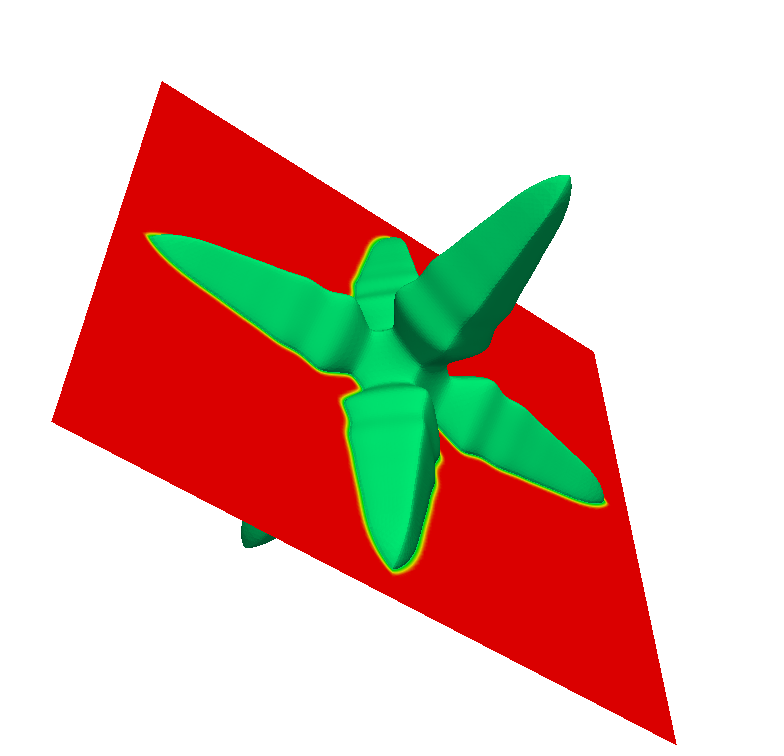}
\includegraphics[angle=-0,width=0.19\textwidth]{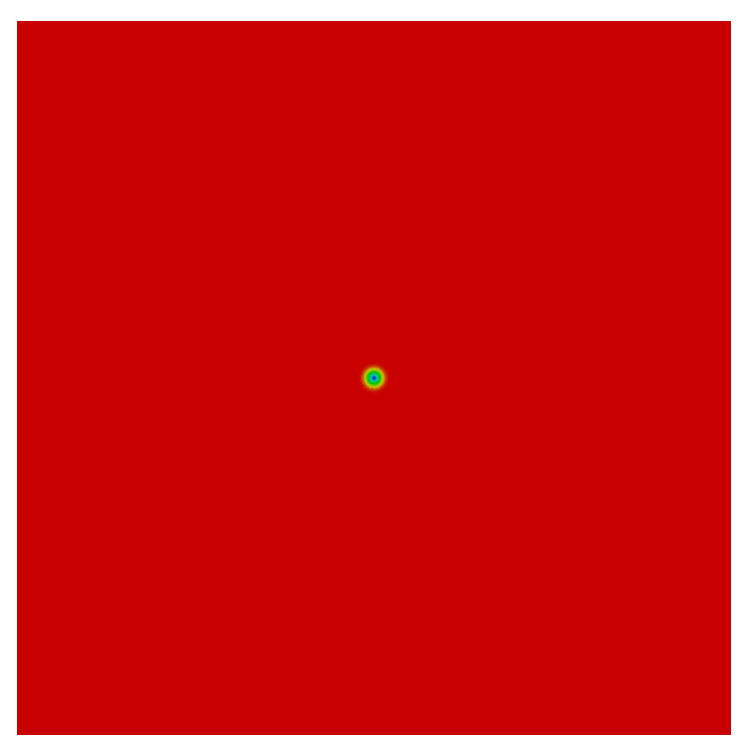}
\includegraphics[angle=-0,width=0.19\textwidth]{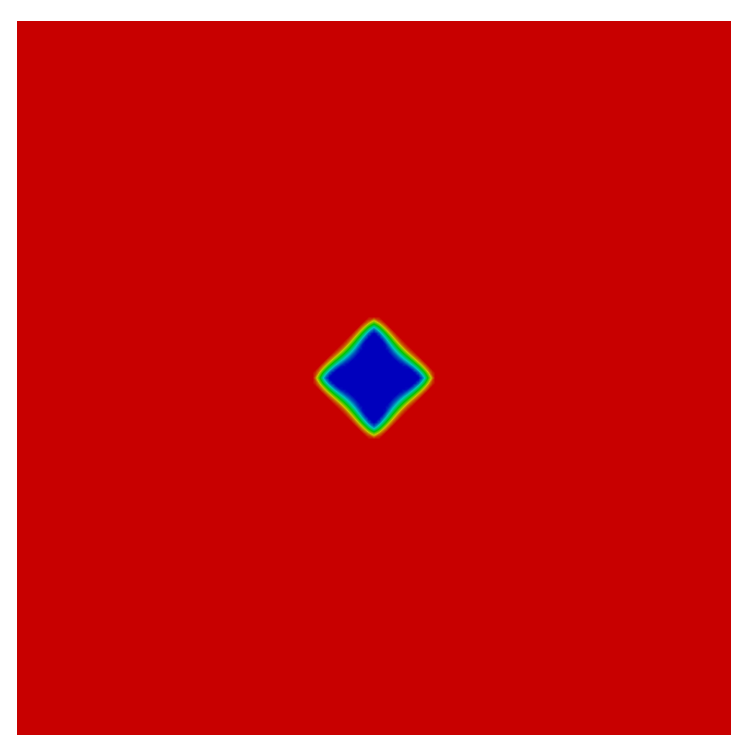}
\includegraphics[angle=-0,width=0.19\textwidth]{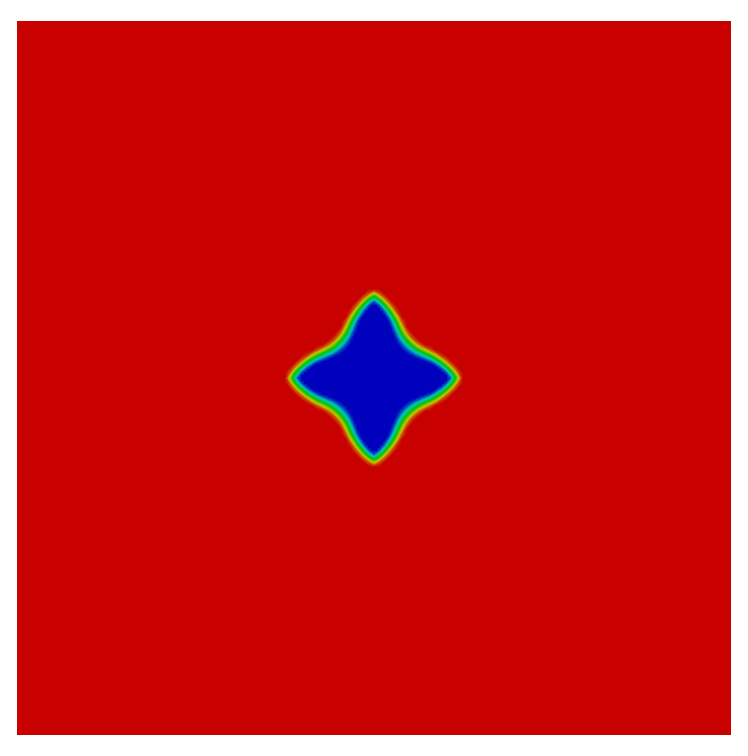}
\includegraphics[angle=-0,width=0.19\textwidth]{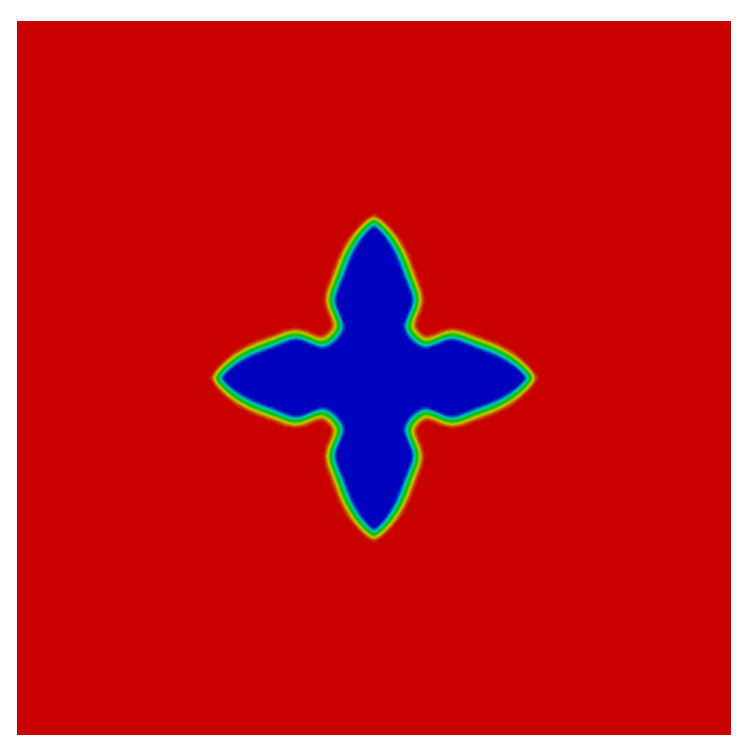}
\includegraphics[angle=-0,width=0.19\textwidth]{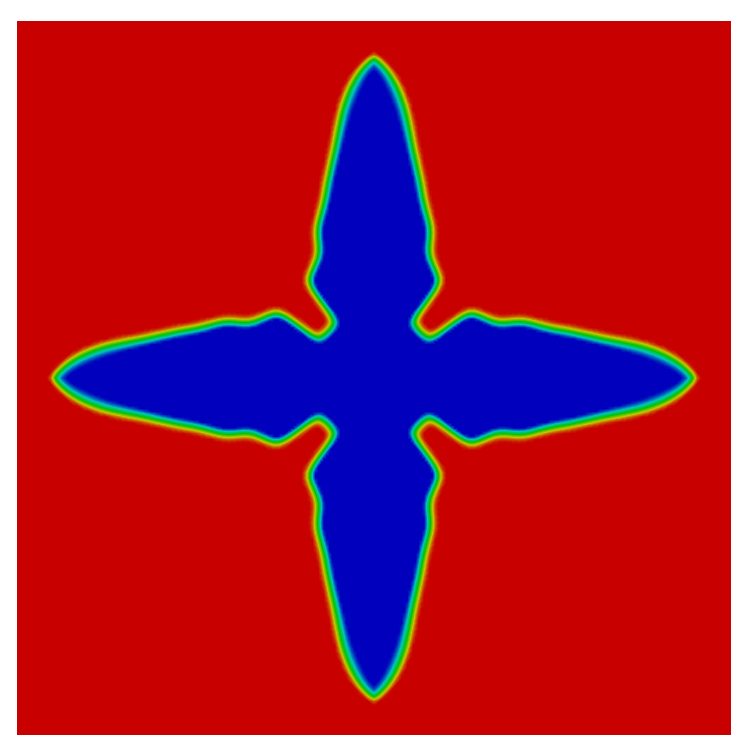}
\fi
\caption{($\epsilon^{-1} = 2\,\pi$, {\sc ani$_9$}, (\ref{eq:varrho})(ii),
$\alpha = 0.03$, $\rho=0$, $\uD = -2$, $\Omega=(-8,8)^3$)
Snapshots of the solution at times $t=0,\,0.1,\,0.2,\,0.5,\,1$.
[This computation took $13$ days.]
}
\label{fig:3dBSnewii_2pi}
\end{figure}%
If we repeat the simulation with $\rho=0.01$, which models the presence of
kinetic undercooling, the shape of the phase field approximation of the growing
crystal changes significantly. We present a run for the 
discretization parameters $N_f = 256$, $N_c = 64$, $\tau = 10^{-2}$ and $T=2$
in Figure~\ref{fig:3dBSrhoii_2pi}. Note that the larger time step size used
here yields a large reduction in the overall CPU time.
\begin{figure}
\center
\ifpdf
\includegraphics[angle=-0,width=0.19\textwidth]{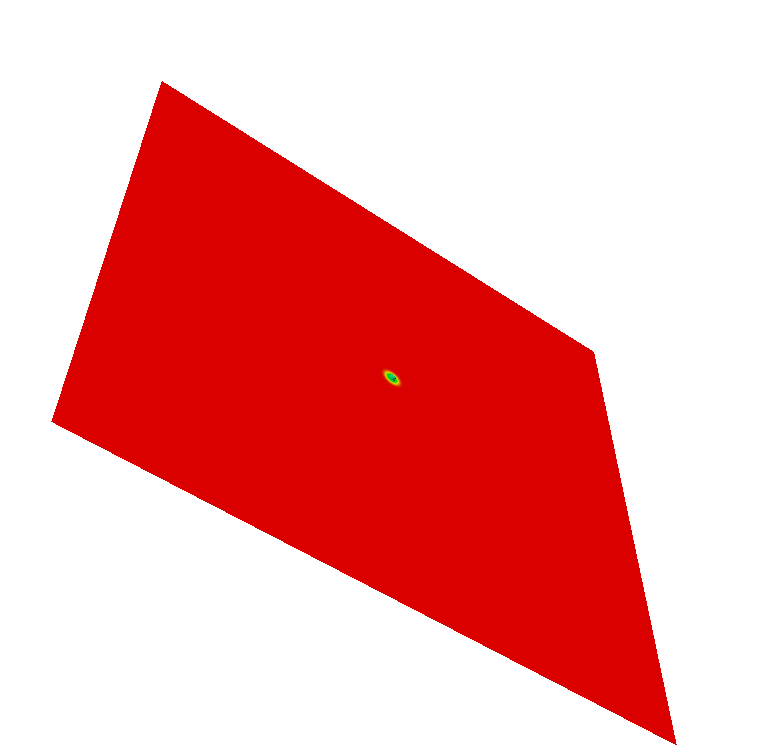}
\includegraphics[angle=-0,width=0.19\textwidth]{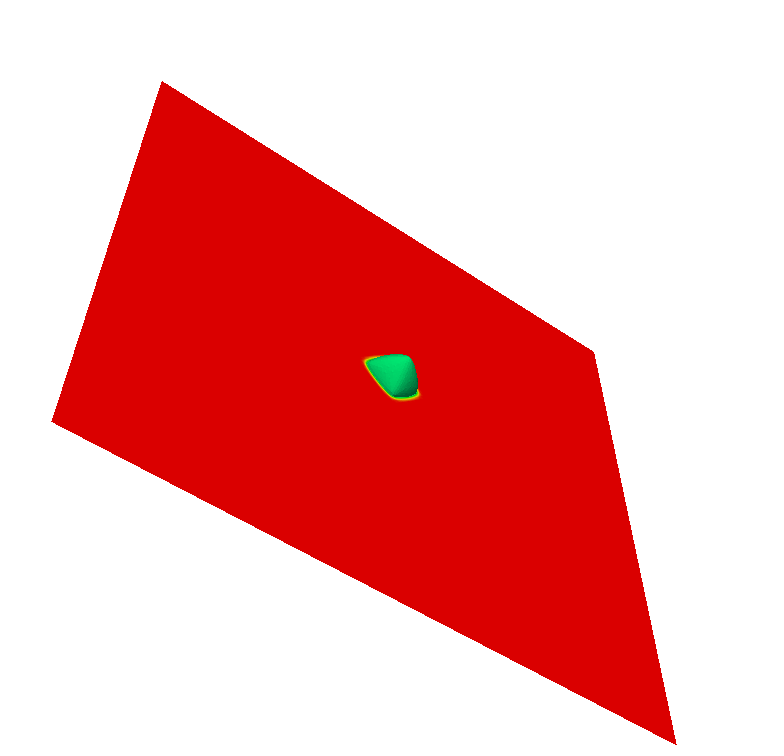}
\includegraphics[angle=-0,width=0.19\textwidth]{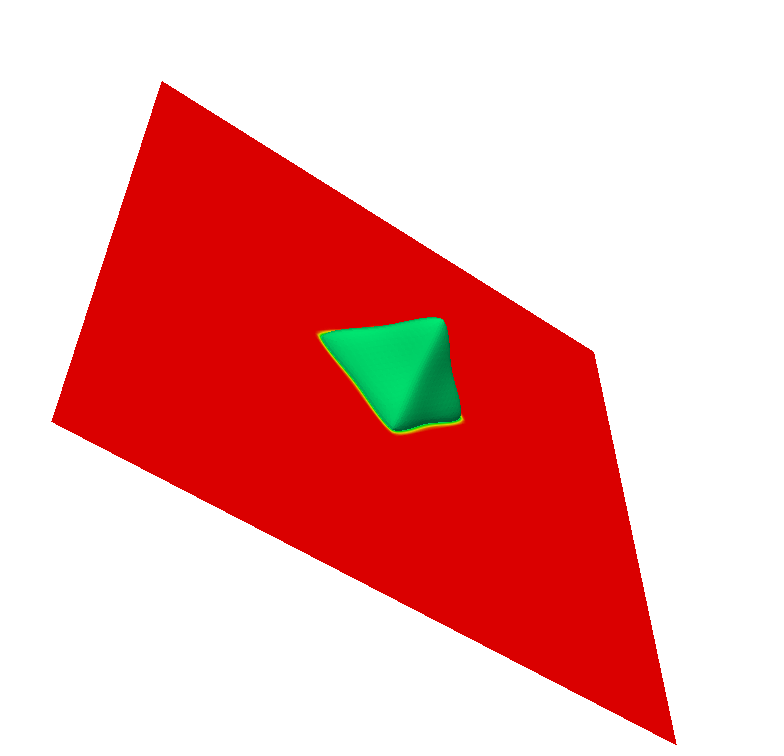}
\includegraphics[angle=-0,width=0.19\textwidth]{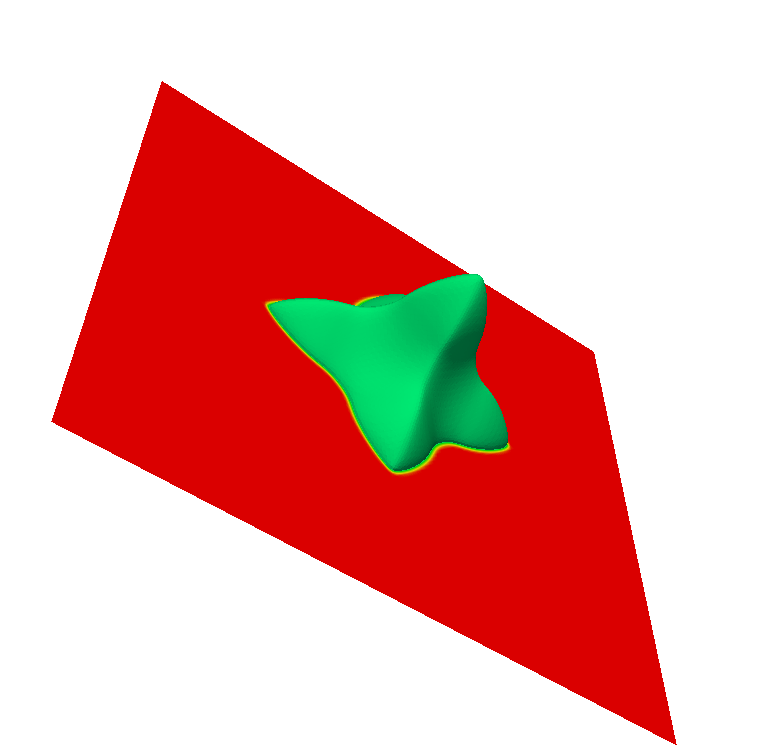}
\includegraphics[angle=-0,width=0.19\textwidth]{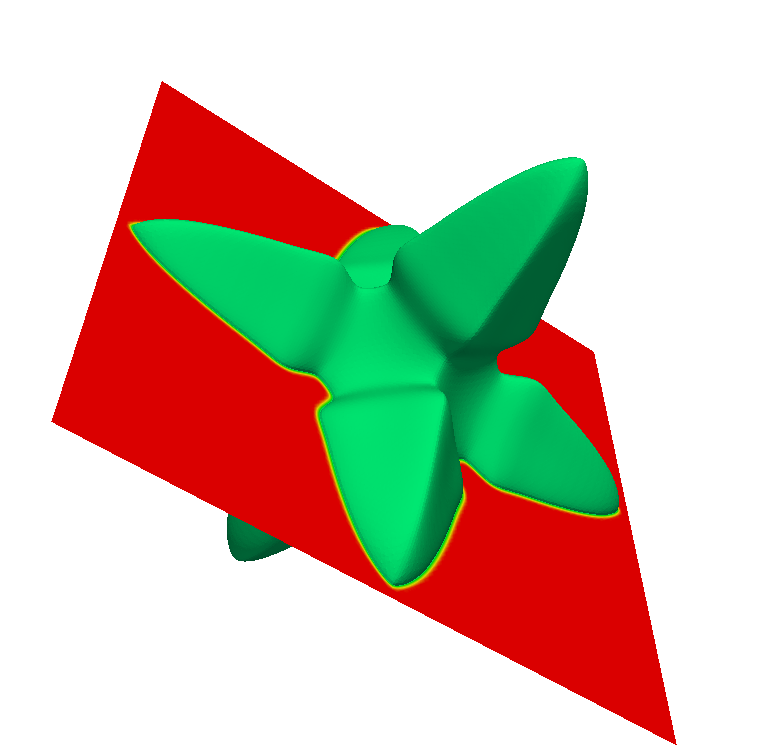}
\includegraphics[angle=-0,width=0.19\textwidth]{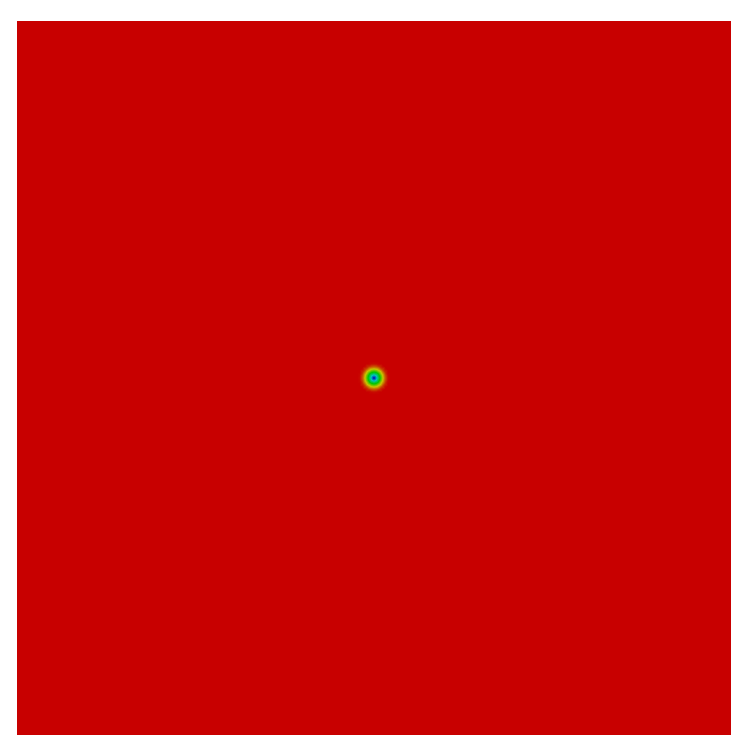}
\includegraphics[angle=-0,width=0.19\textwidth]{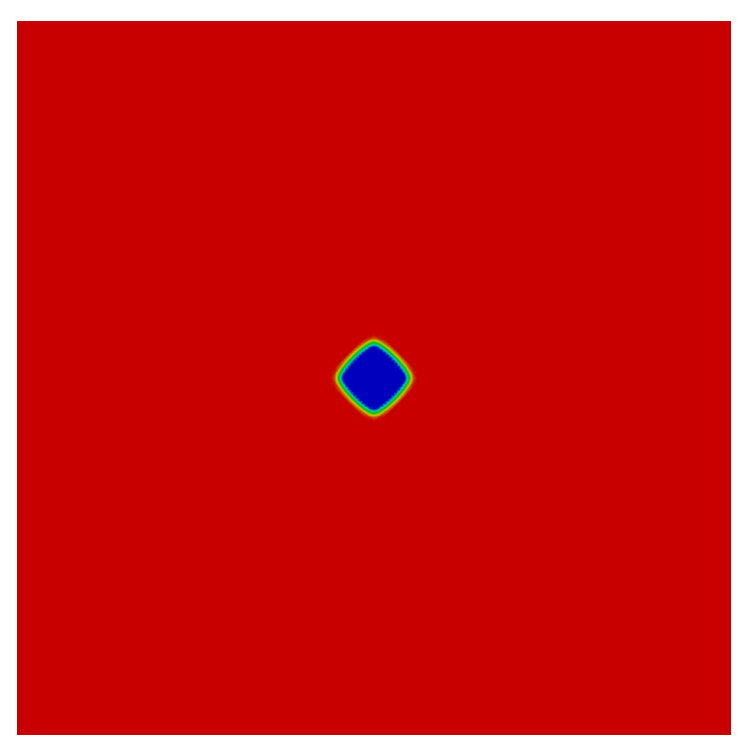}
\includegraphics[angle=-0,width=0.19\textwidth]{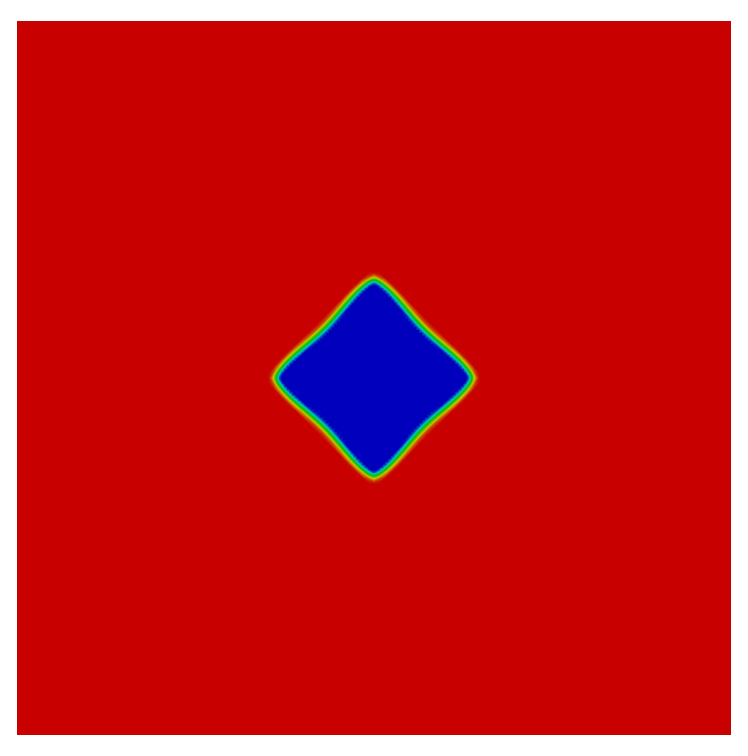}
\includegraphics[angle=-0,width=0.19\textwidth]{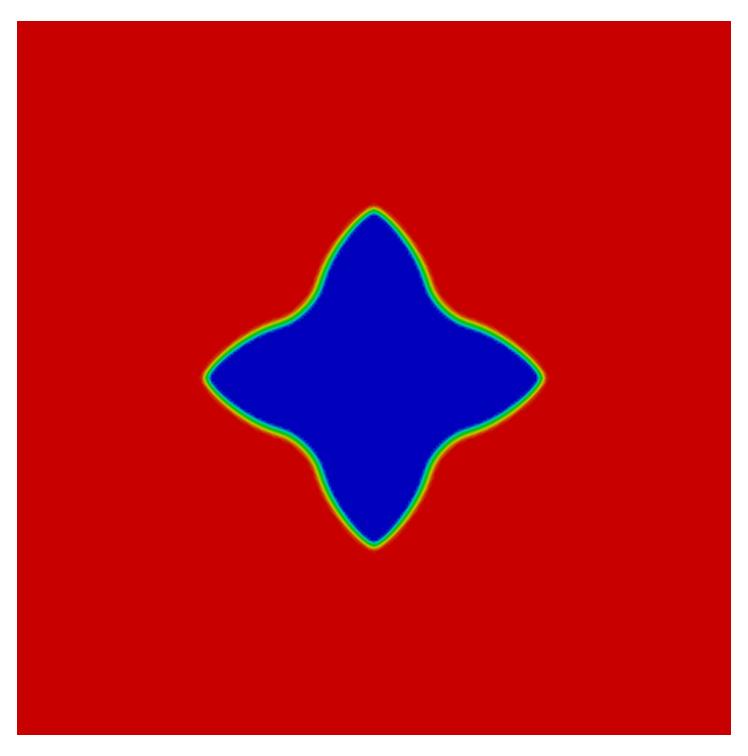}
\includegraphics[angle=-0,width=0.19\textwidth]{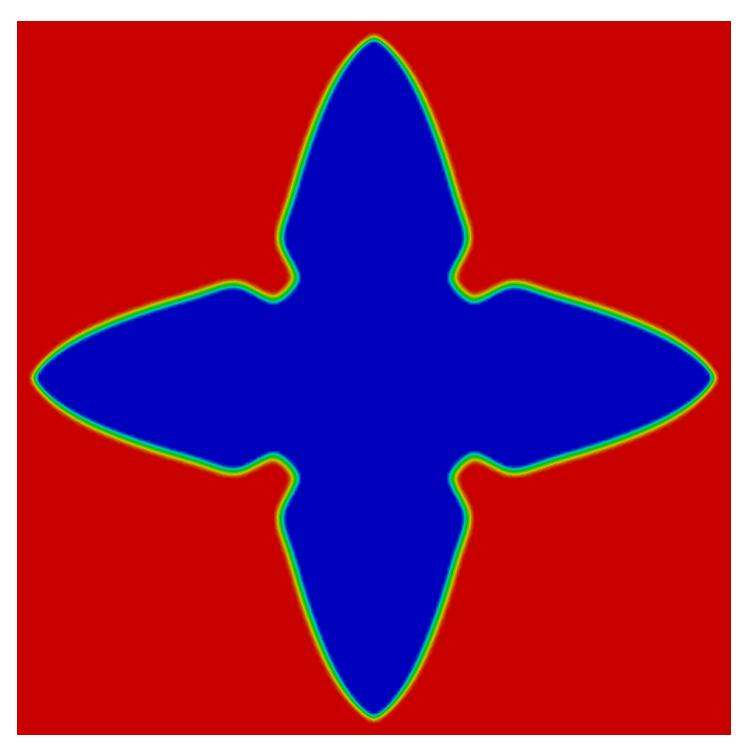}
\fi
\caption{($\epsilon^{-1} = 2\,\pi$, {\sc ani$_9$}, (\ref{eq:varrho})(ii),
$\alpha = 0.03$, $\rho=0.01$, $\uD = -2$, $\Omega=(-8,8)^3$)
Snapshots of the solution at times $t=0,\,0.1,\,0.5,\,1,\,2$.
[This computation took $4$ days.]
}
\label{fig:3dBSrhoii_2pi}
\end{figure}%

A repeat of the simulation in Figure~\ref{fig:3dBSrhoii_2pi}, but now for the
rotated hexagonal anisotropy {\sc ani$_4^\star$} can be seen in 
Figure~\ref{fig:3dxBSrhoii_2pi}. 
In this simulation we can
observe facet breaking, both in the basal and in the prismal directions,
similarly to the sharp interface computation shown in \citet[Fig.\ 18]{jcg}. 
\begin{figure}
\center
\ifpdf
\includegraphics[angle=-0,width=0.19\textwidth]{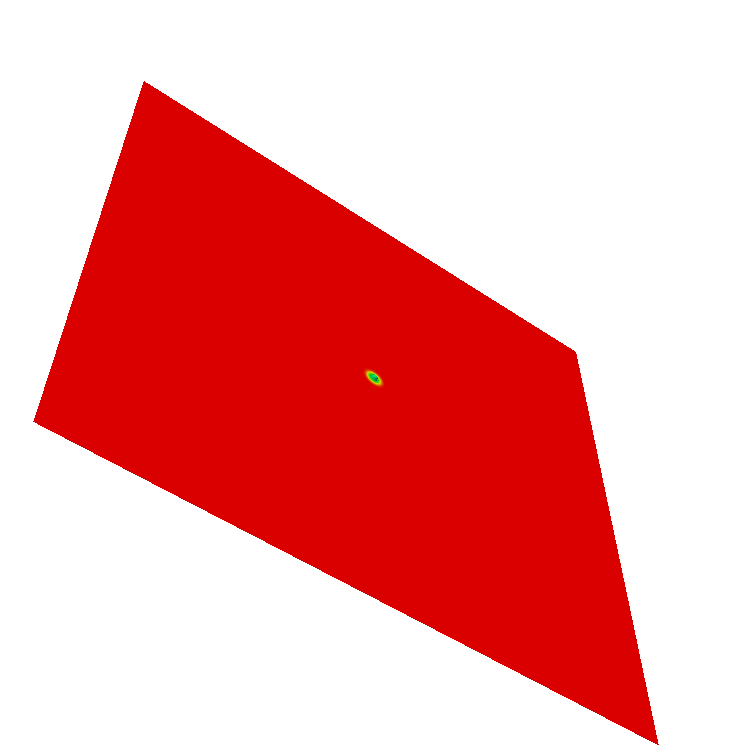}
\includegraphics[angle=-0,width=0.19\textwidth]{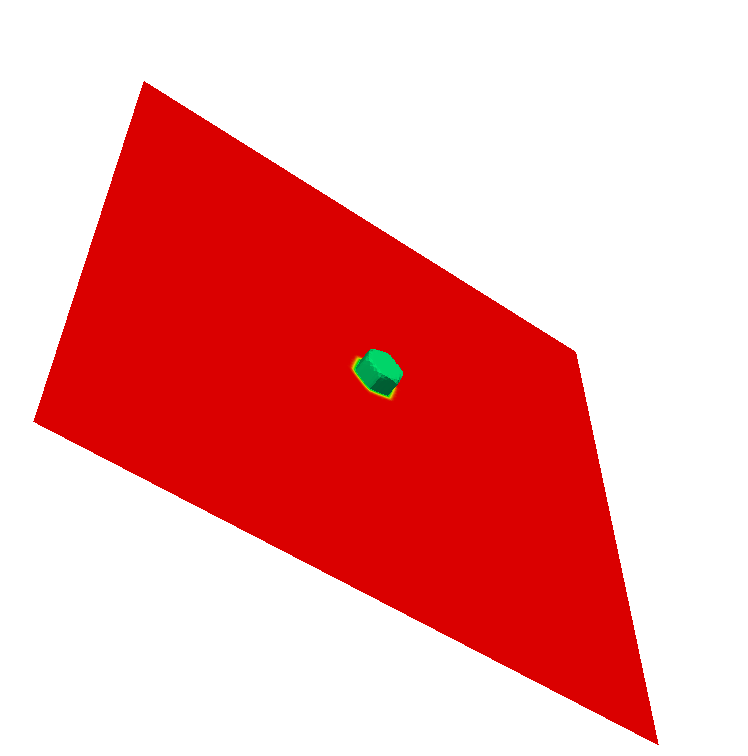}
\includegraphics[angle=-0,width=0.19\textwidth]{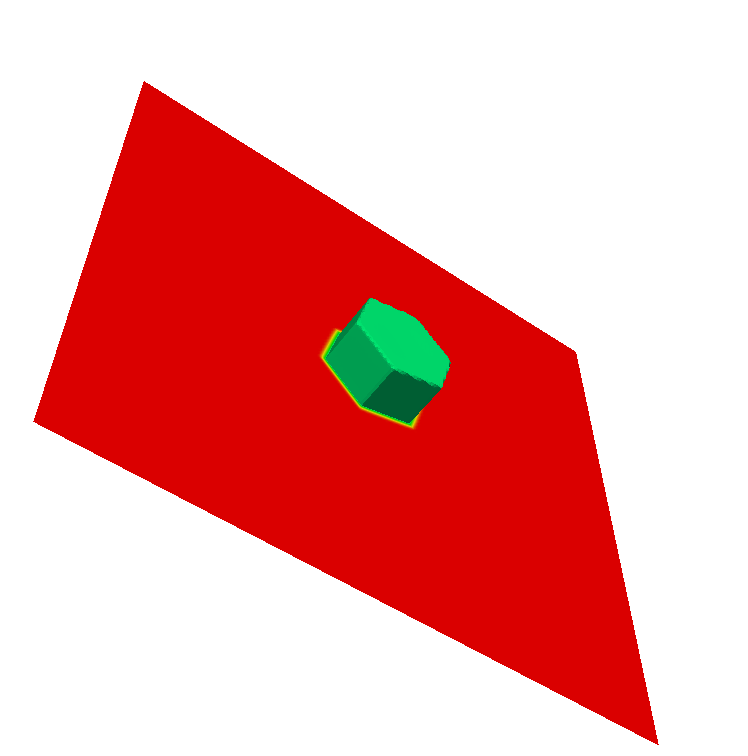}
\includegraphics[angle=-0,width=0.19\textwidth]{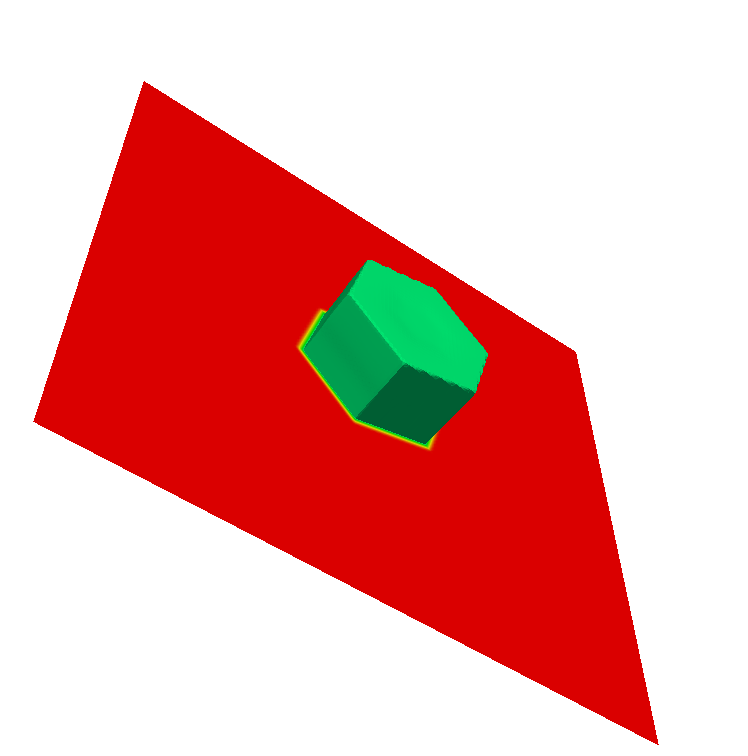}
\includegraphics[angle=-0,width=0.19\textwidth]{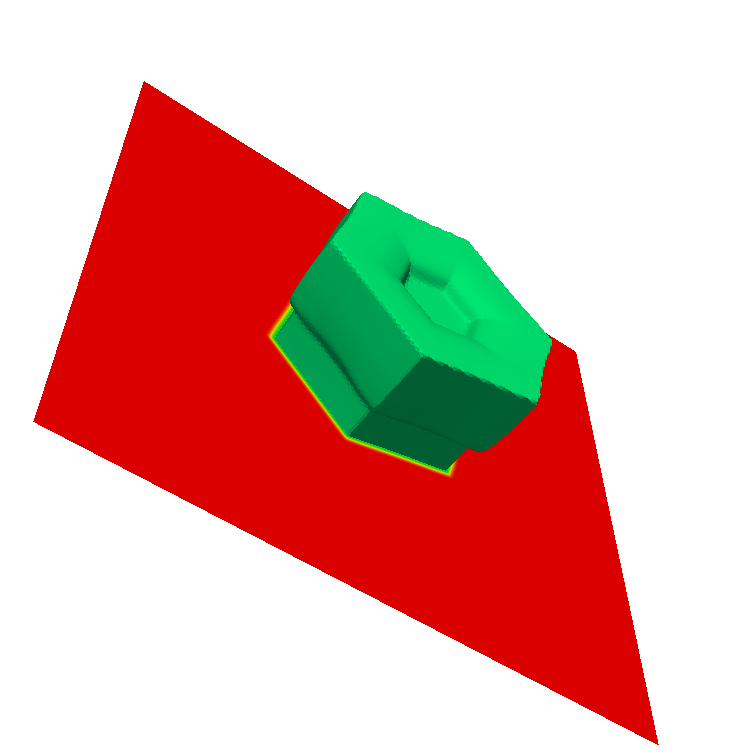}
\includegraphics[angle=-0,width=0.19\textwidth]{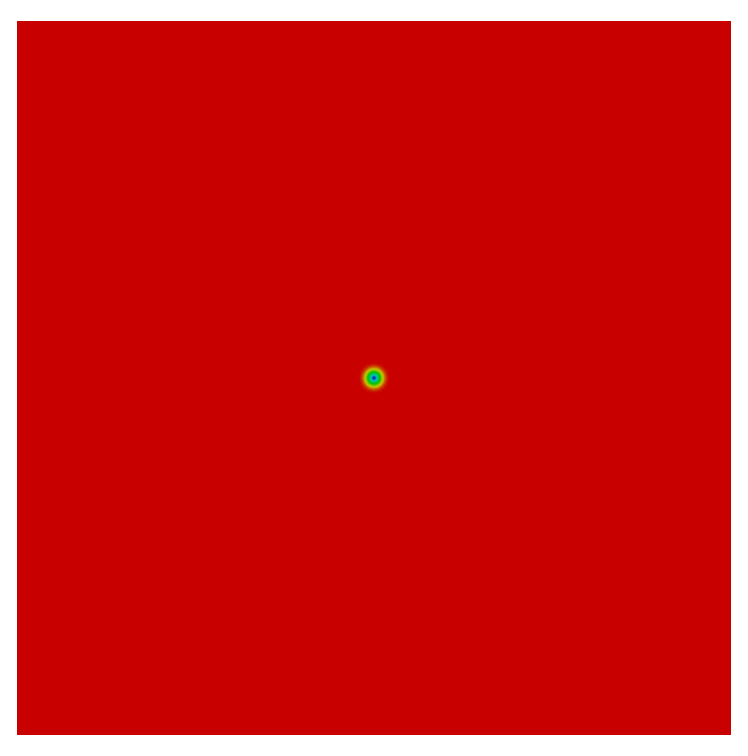}
\includegraphics[angle=-0,width=0.19\textwidth]{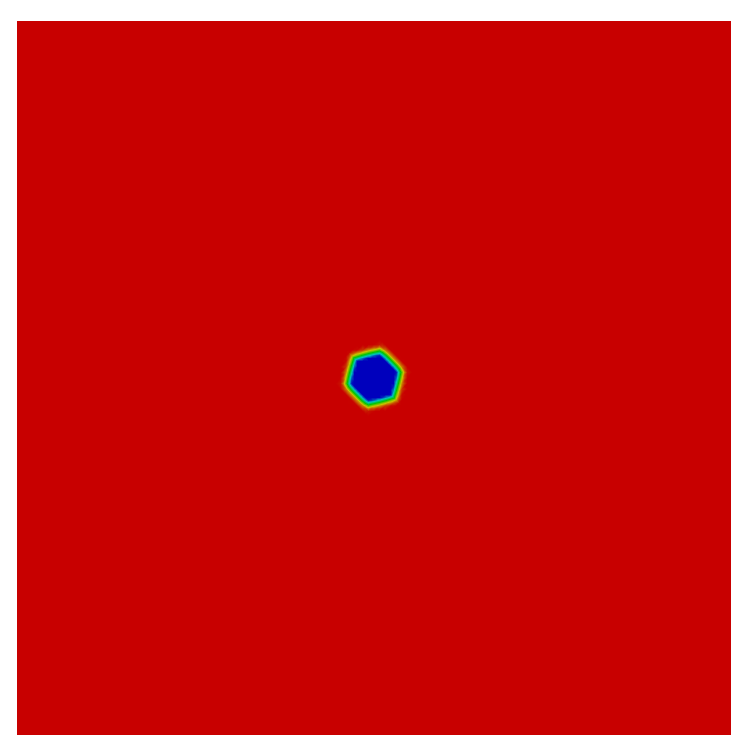}
\includegraphics[angle=-0,width=0.19\textwidth]{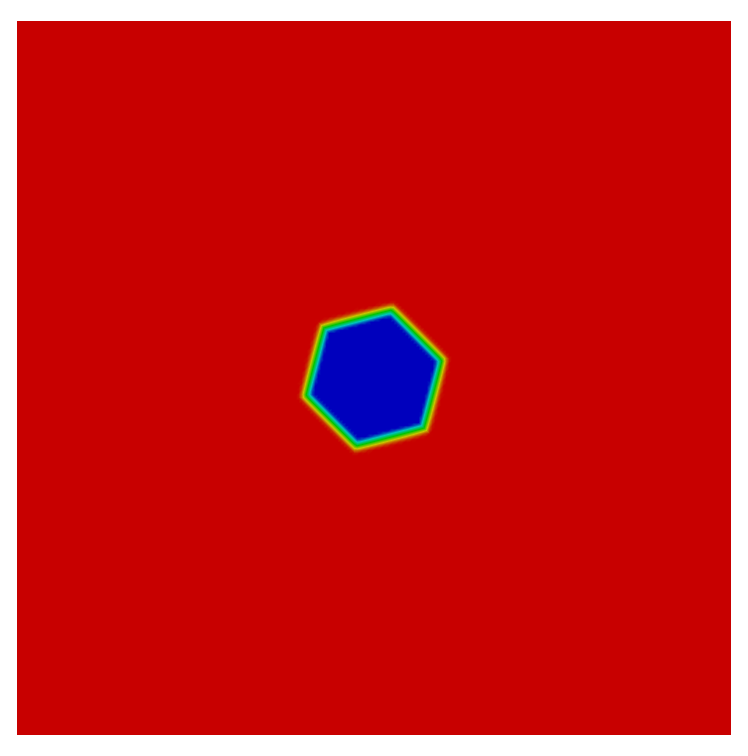}
\includegraphics[angle=-0,width=0.19\textwidth]{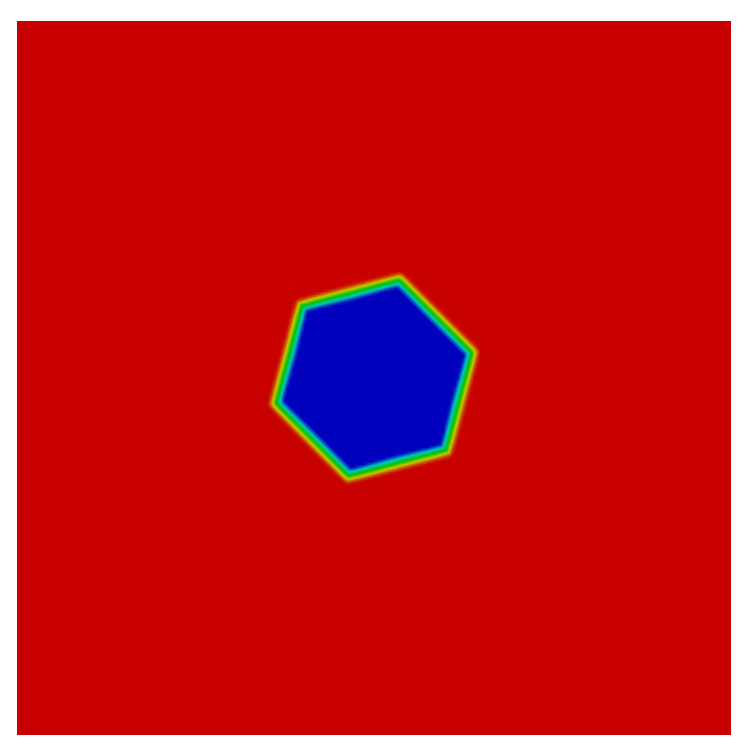}
\includegraphics[angle=-0,width=0.19\textwidth]{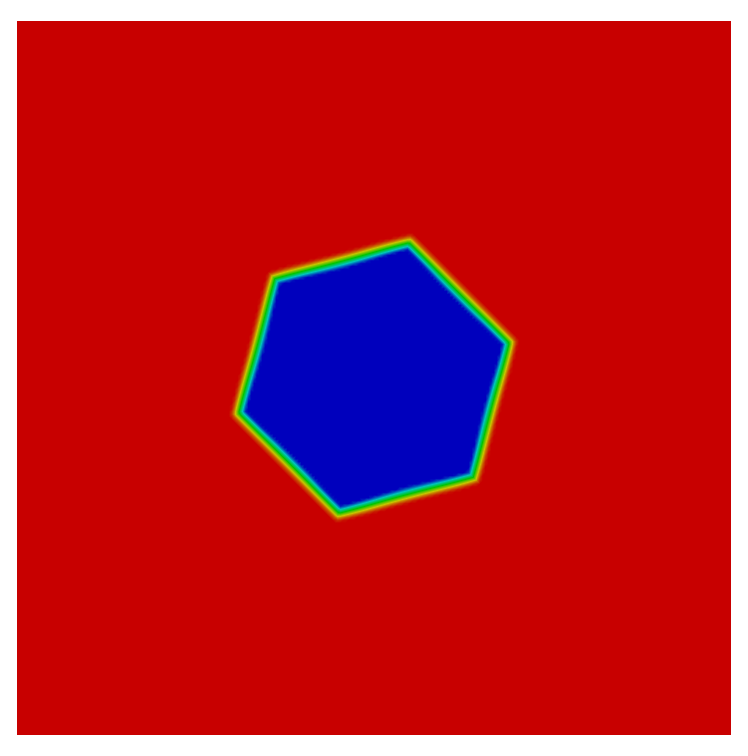}
\fi
\caption{($\epsilon^{-1} = 2\,\pi$, {\sc ani$_4^\star$}, (\ref{eq:varrho})(ii),
$\alpha = 0.03$, $\rho=0.01$, $\uD = -2$, $\Omega=(-8,8)^3$)
Snapshots of the solution at times $t=0,\,0.1,\,0.5,\,1,\,2$.
[This computation took $22$ hours.]
}
\label{fig:3dxBSrhoii_2pi}
\end{figure}%
With the next simulation we wish to highlight the effect that the choice of the
mobility coefficient $\beta$ can have on the evolution. If we replace
$\beta=\gamma$ with $\beta = \beta_{\rm flat,3}$, where
\begin{equation*} 
\beta_{\rm flat,\ell}(\vec{p}) 
:= [p_1^2 + p_2^2 + 10^{-2\ell}\,p_3^2]^\frac12 \qquad \forall\ \vec{p}\in\R^d
\end{equation*}
is defined as in \citet[Eq.\ (16)]{jcg}, 
and if we keep all of the remaining parameters as before, then we
obtain the results shown in Figure~\ref{fig:3dhexBSrhoii_2pi}. Clearly, the
growing crystal now assumes the shape of a flat prism.
\begin{figure}
\center
\ifpdf
\includegraphics[angle=-0,width=0.19\textwidth]{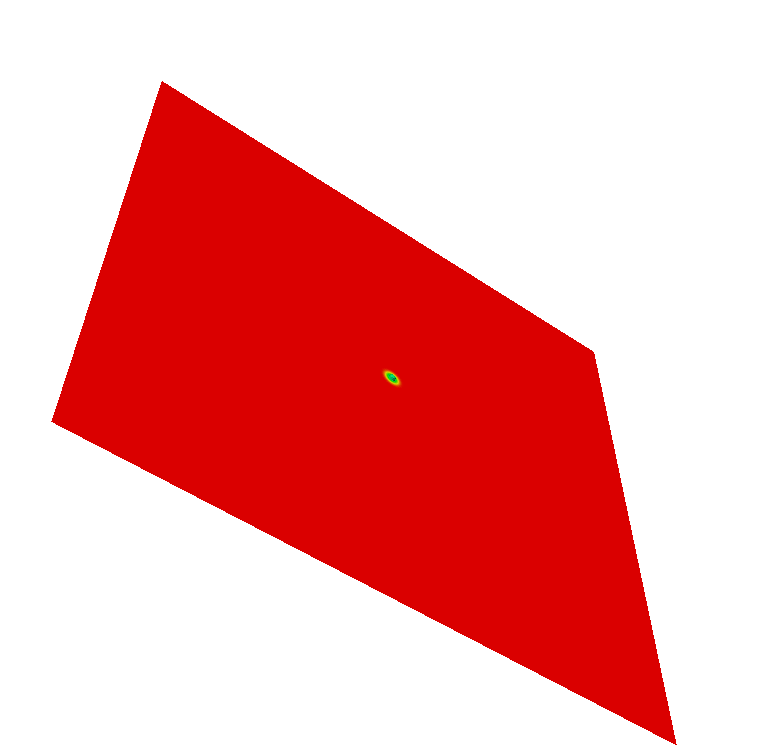}
\includegraphics[angle=-0,width=0.19\textwidth]{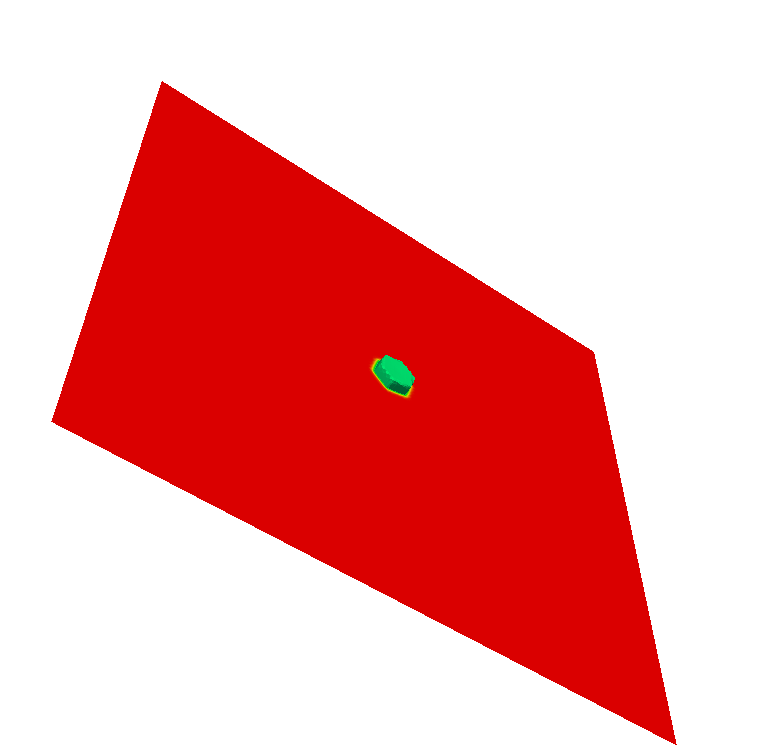}
\includegraphics[angle=-0,width=0.19\textwidth]{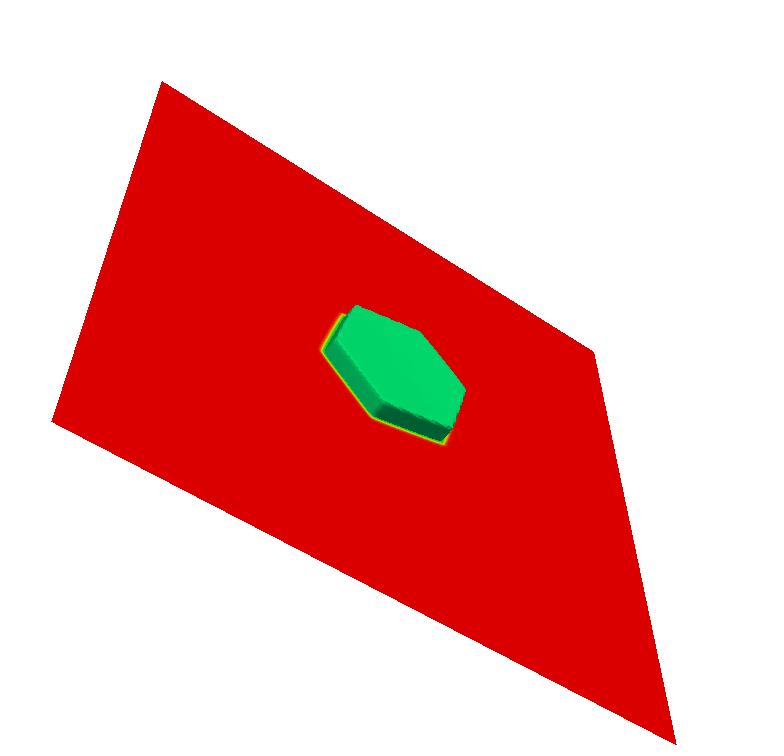}
\includegraphics[angle=-0,width=0.19\textwidth]{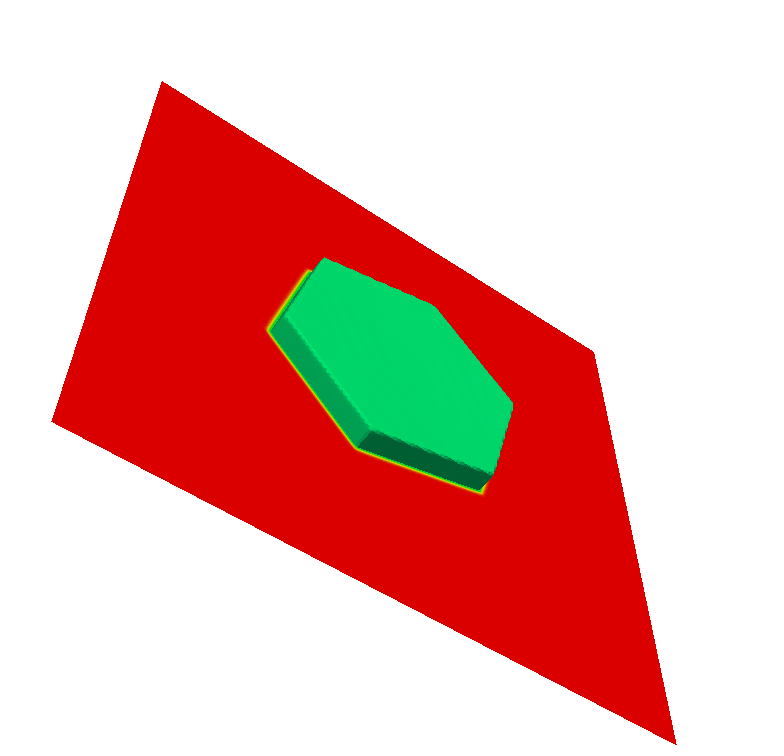}
\includegraphics[angle=-0,width=0.19\textwidth]{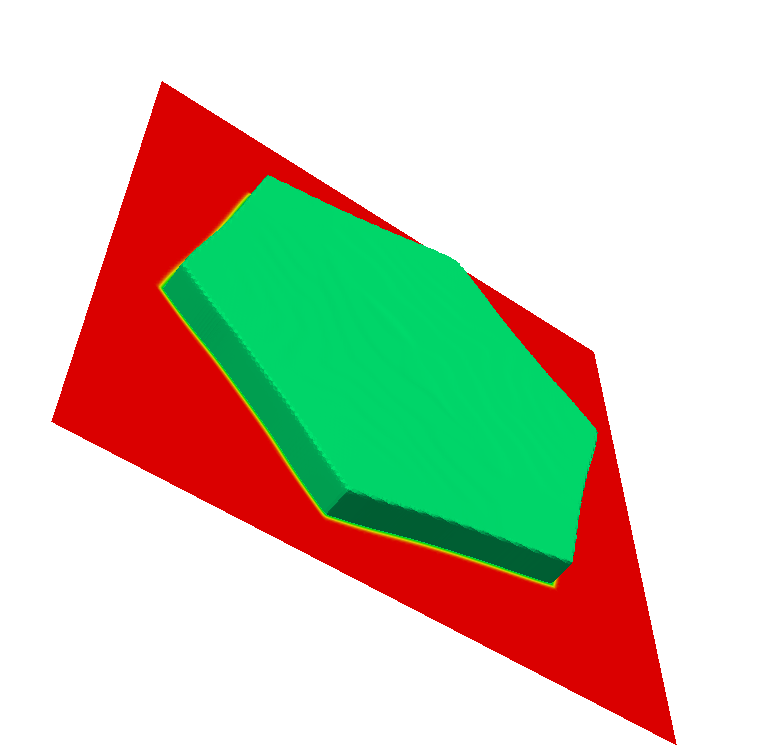}
\includegraphics[angle=-0,width=0.19\textwidth]{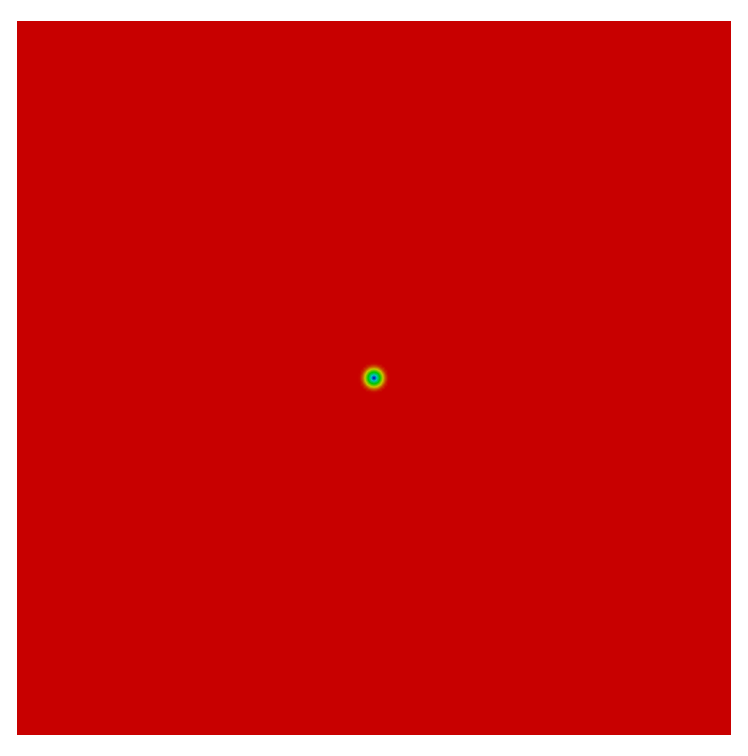}
\includegraphics[angle=-0,width=0.19\textwidth]{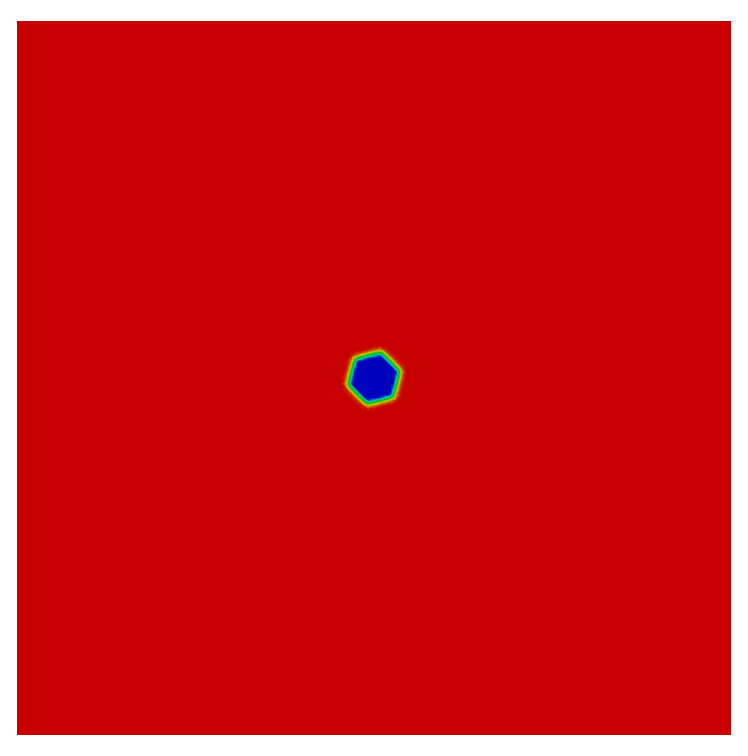}
\includegraphics[angle=-0,width=0.19\textwidth]{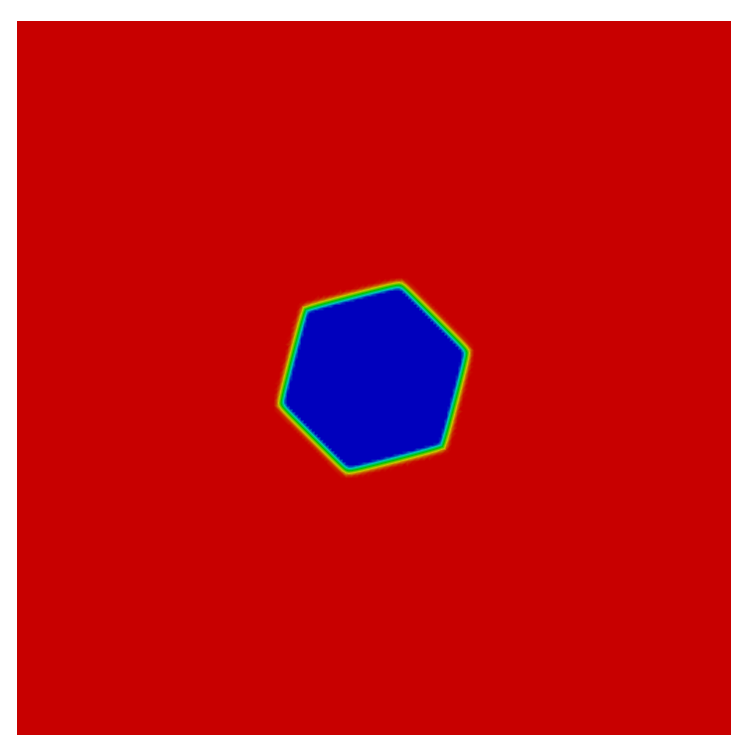}
\includegraphics[angle=-0,width=0.19\textwidth]{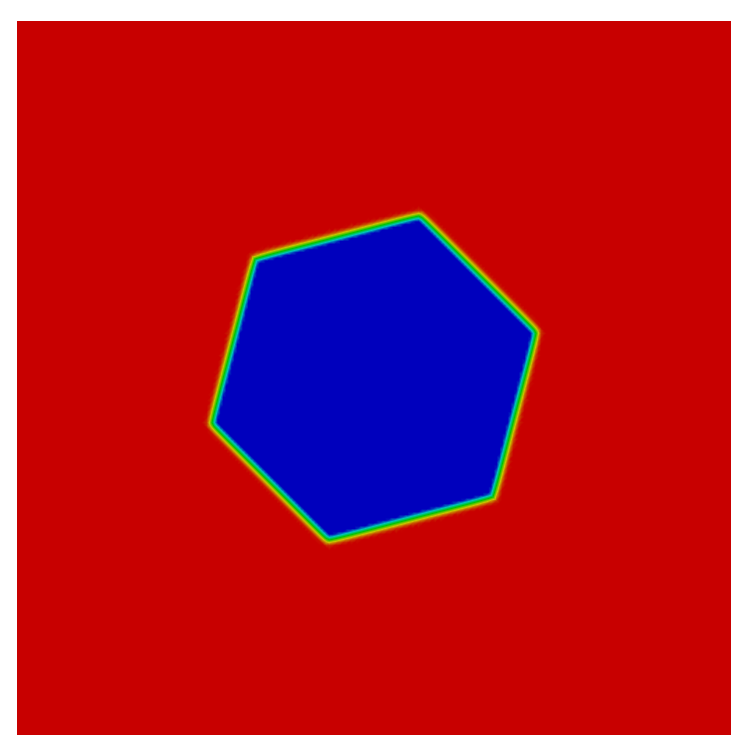}
\includegraphics[angle=-0,width=0.19\textwidth]{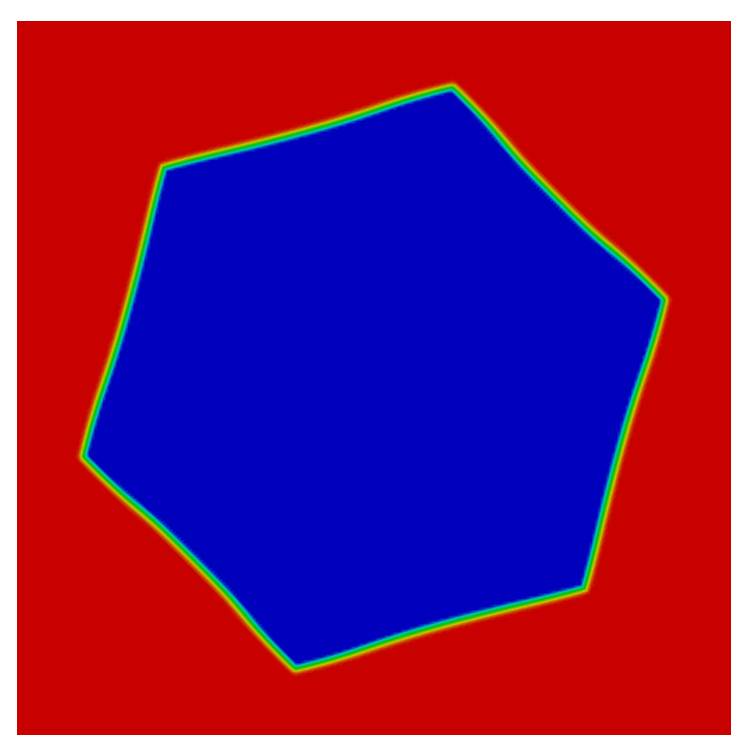}
\fi
\caption{($\epsilon^{-1} = 2\,\pi$, {\sc ani$_4^\star$}, (\ref{eq:varrho})(ii),
$\alpha = 0.03$, $\rho=0.01$, $\uD = -2$, $\Omega=(-8,8)^3$)
Snapshots of the solution at times $t=0,\,0.1,\,0.5,\,1,\,2$.
[This computation took $2$ days.]
}
\label{fig:3dhexBSrhoii_2pi}
\end{figure}%
Similarly, if we choose the mobility coefficient
$\beta = \beta_{\rm tall,2}$, where
\begin{equation*} 
\beta_{\rm tall,\ell}(\vec{p}) 
:= [10^{-2\ell}\,(p_1^2 + p_2^2) + p_3^2]^\frac12\qquad \forall\ \vec{p}\in\R^d
\end{equation*}
is defined as in \citet[Eq.\ (17)]{jcg}, 
then we obtain the simulation presented in 
Figure~\ref{fig:3dhextallBSrhoii_2pi}. 
This time the initially spherical crystal grows into a tall hexagonal prism.
\begin{figure}
\center
\ifpdf
\includegraphics[angle=-0,width=0.19\textwidth]{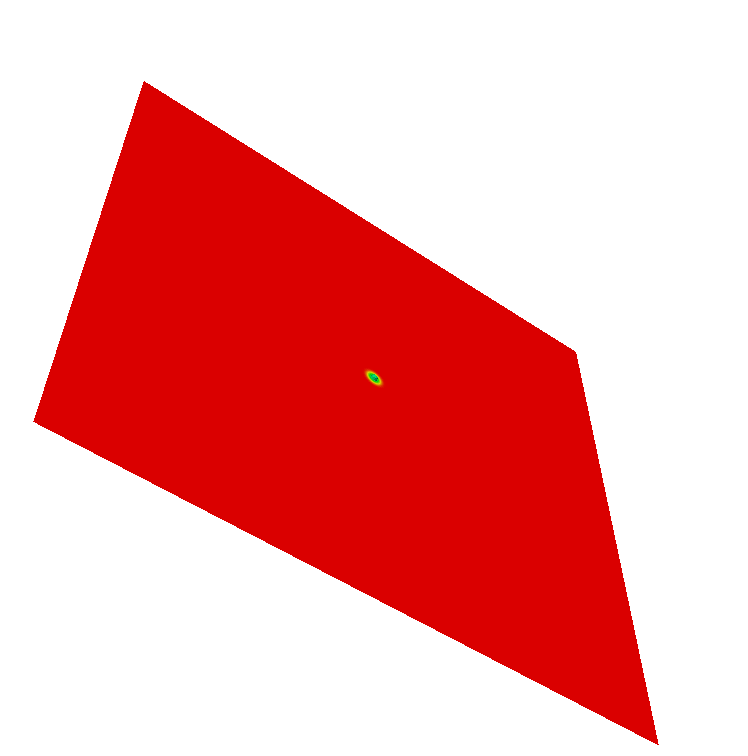}
\includegraphics[angle=-0,width=0.19\textwidth]{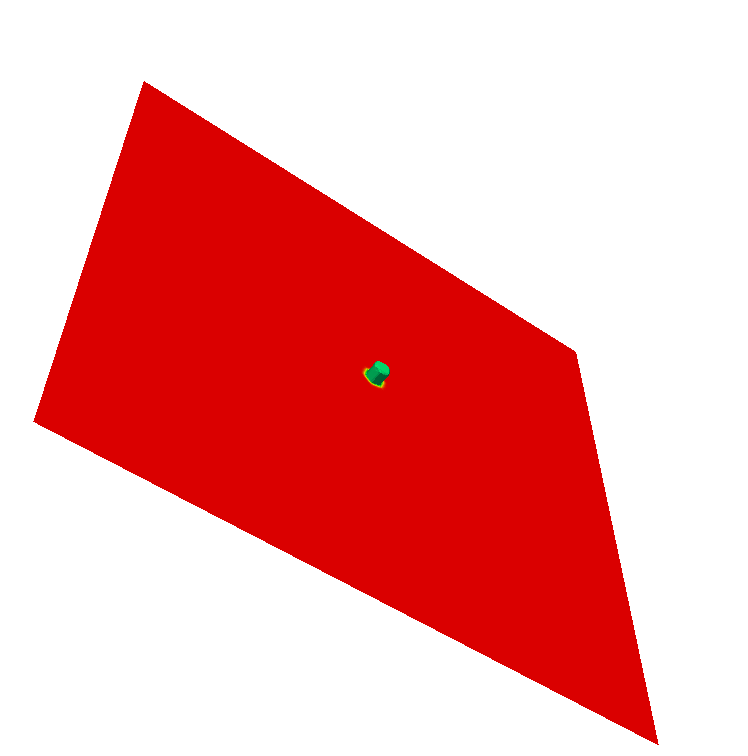}
\includegraphics[angle=-0,width=0.19\textwidth]{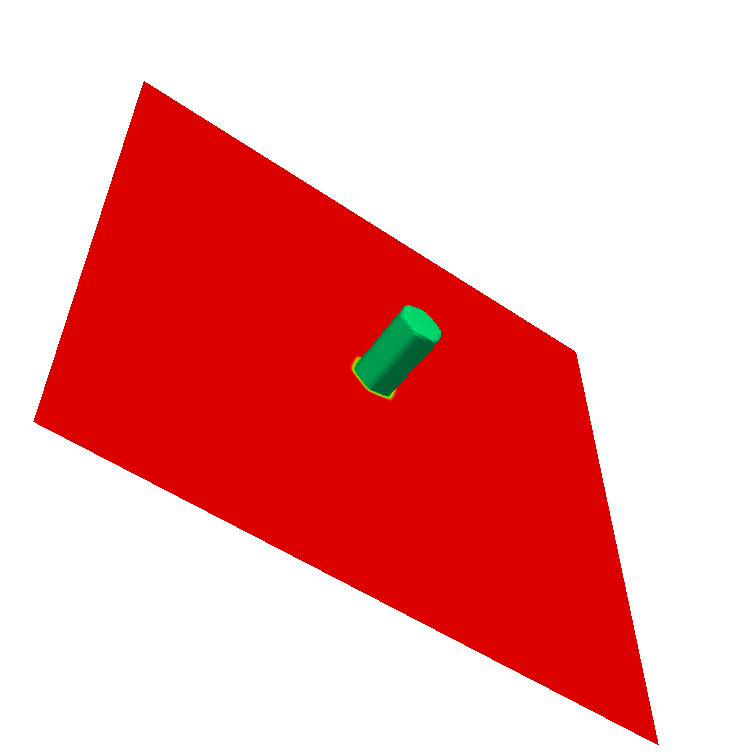}
\includegraphics[angle=-0,width=0.19\textwidth]{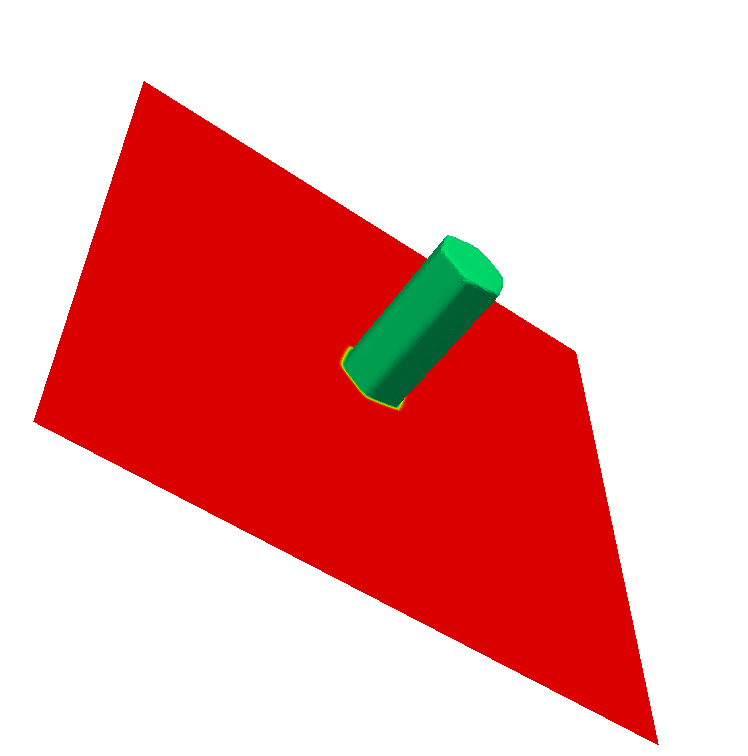}
\includegraphics[angle=-0,width=0.19\textwidth]{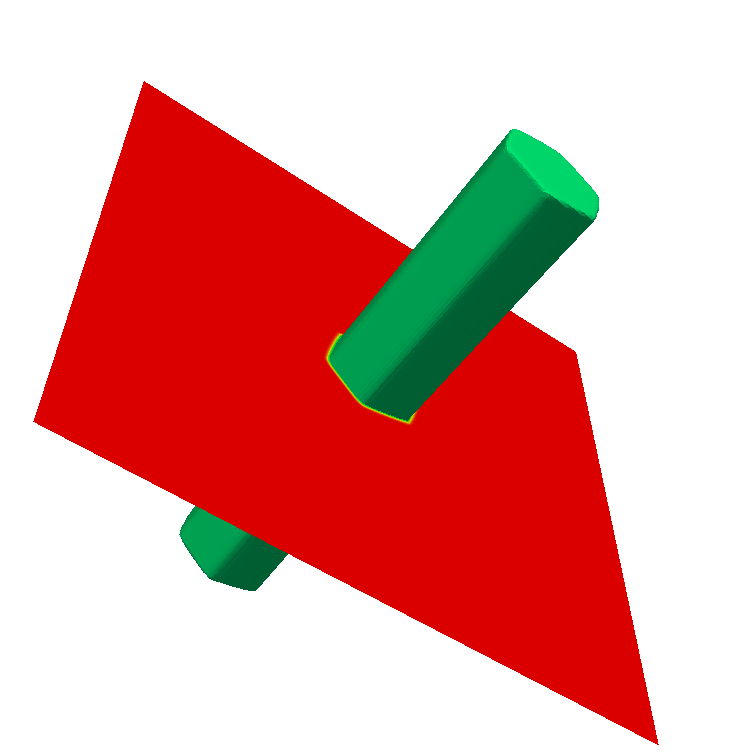}
\includegraphics[angle=-0,width=0.19\textwidth]{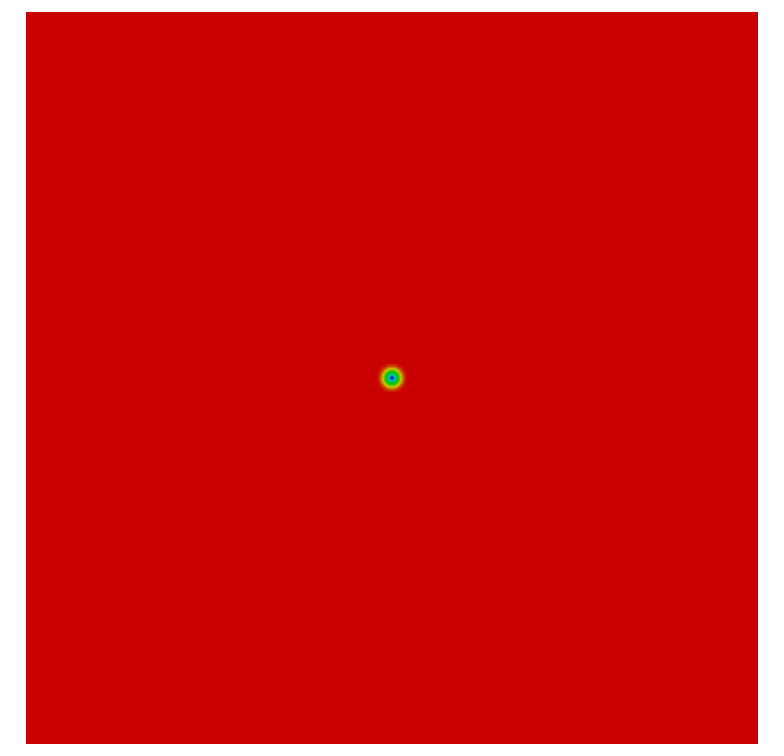}
\includegraphics[angle=-0,width=0.19\textwidth]{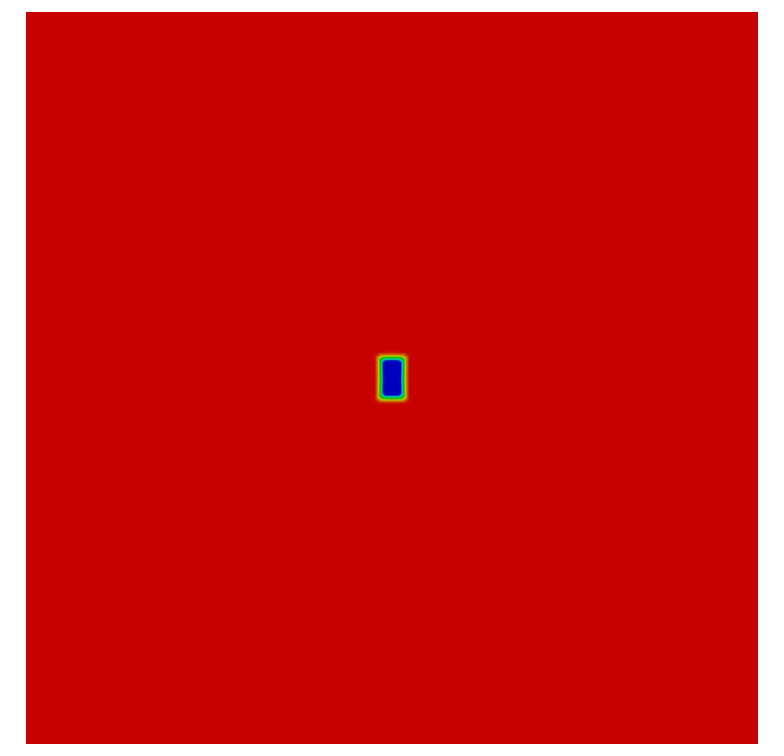}
\includegraphics[angle=-0,width=0.19\textwidth]{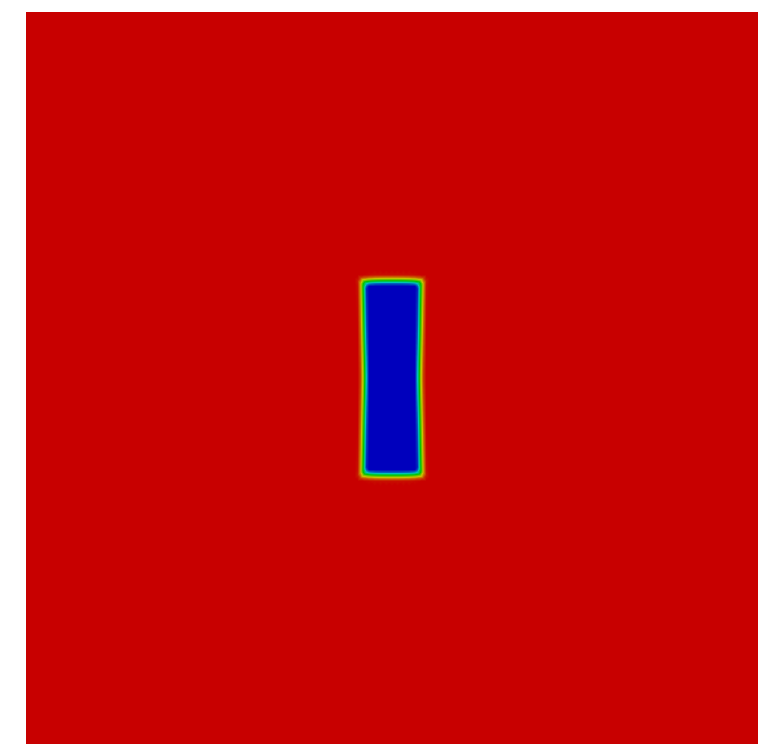}
\includegraphics[angle=-0,width=0.19\textwidth]{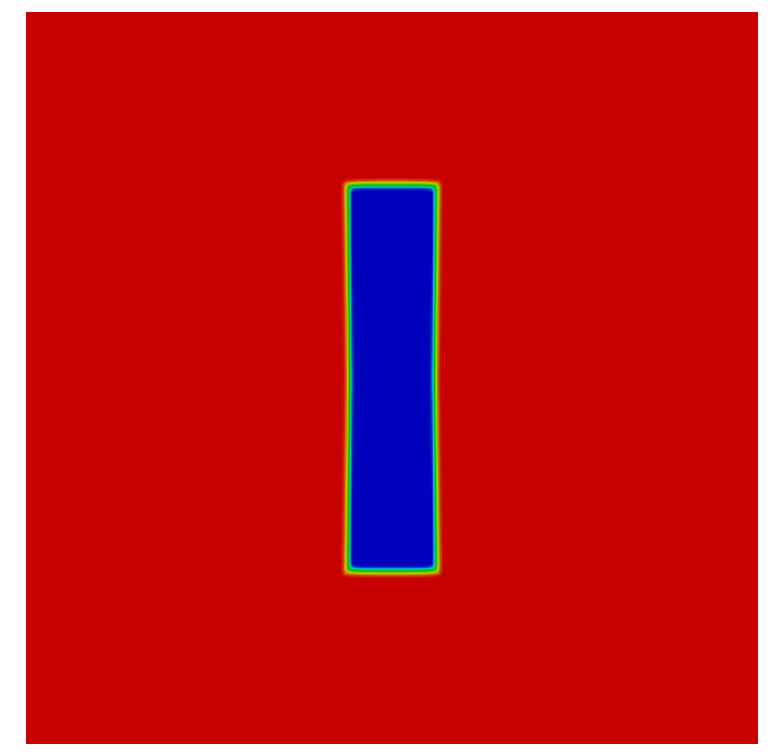}
\includegraphics[angle=-0,width=0.19\textwidth]{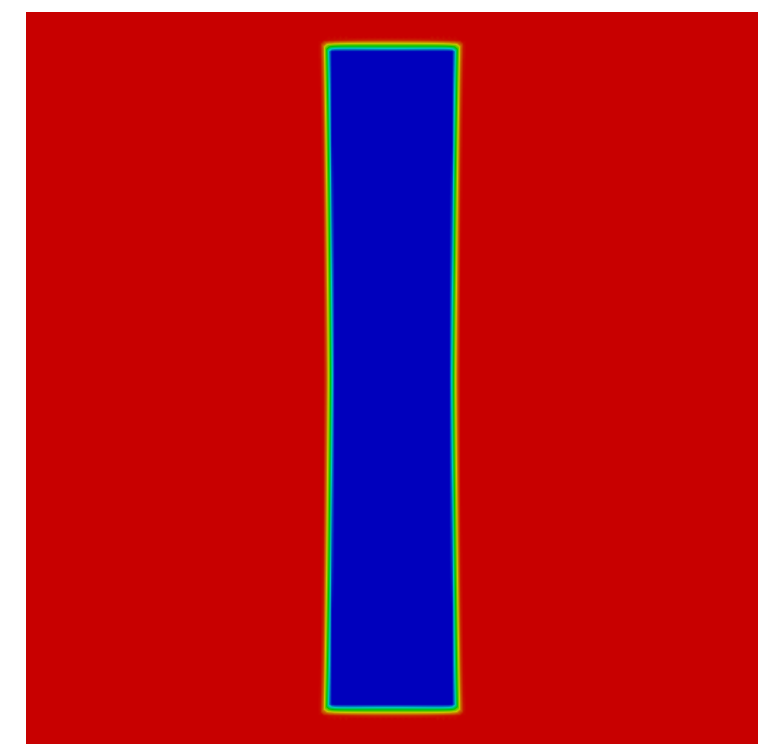}
\fi
\caption{($\epsilon^{-1} = 2\,\pi$, {\sc ani$_4^\star$}, (\ref{eq:varrho})(ii),
$\alpha = 0.03$, $\rho=0.01$, $\uD = -2$, $\Omega=(-8,8)^3$)
Snapshots of the solution at times $t=0,\,0.1,\,0.5,\,1,\,1.8$.
[This computation took $26$ hours.]
}
\label{fig:3dhextallBSrhoii_2pi}
\end{figure}%
It is discussed in \cite{Libbrecht05} that different mobility coefficients
$\beta$ are responsible for the various snow crystal shapes seen in nature.
In this context we remark that 
(\ref{eq:1a}--e) also appears in solidification from a supersaturated solution.
In this case $-u$ is a suitably scaled concentration with $-\uD$ being the 
scaled supersaturation, see e.g.\ \cite{jcg} for more details.

\subsection{Stefan problem in three space dimensions} \label{sec:54}
In this subsection we present a simulation for the full Stefan problem 
in three space dimensions for the anisotropy {\sc ani$_9$}. 
To this end, we consider the physical parameters
$\vartheta=1$, $\alpha=10^{-3}$, $\rho=0.01$, $\uD = -0.5$ and let
$\Omega = (-4,4)^3$.
A numerical computation for $\epsilon^{-1} = 16\,\pi$, together with 
$N_f = 1024$, $N_c = 64$, $\tau = 10^{-4}$ and $T=0.4$ can be seen in
Figure~\ref{fig:3dnewStefanii_16pi}. Similarly to the results in 
Figure~\ref{fig:3dBSrhoii_2pi} we observe that the growing crystal exhibits the
typical six symmetric side arms that are common in simulations of dendritic
growth.
\begin{figure}
\center
\ifpdf
\includegraphics[angle=-0,width=0.19\textwidth]{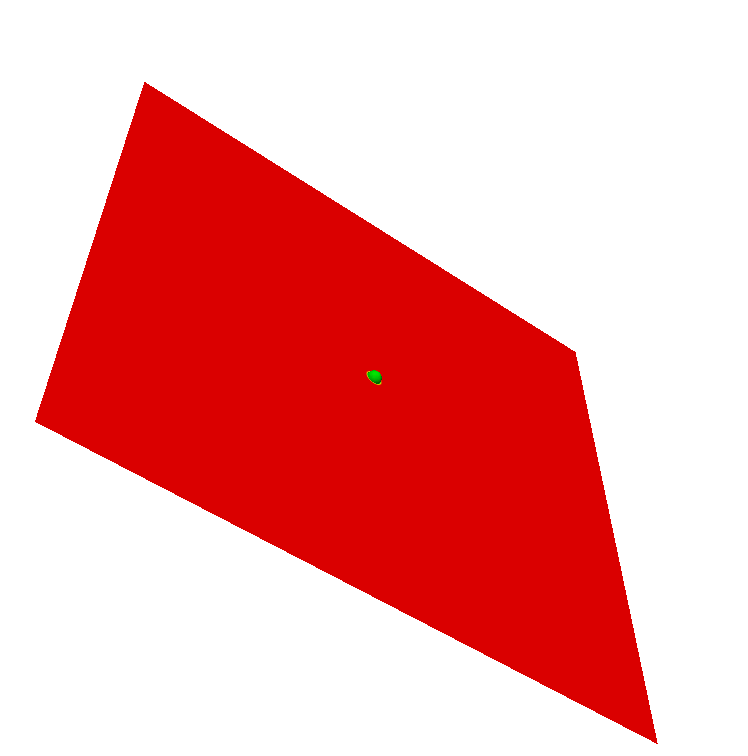}
\includegraphics[angle=-0,width=0.19\textwidth]{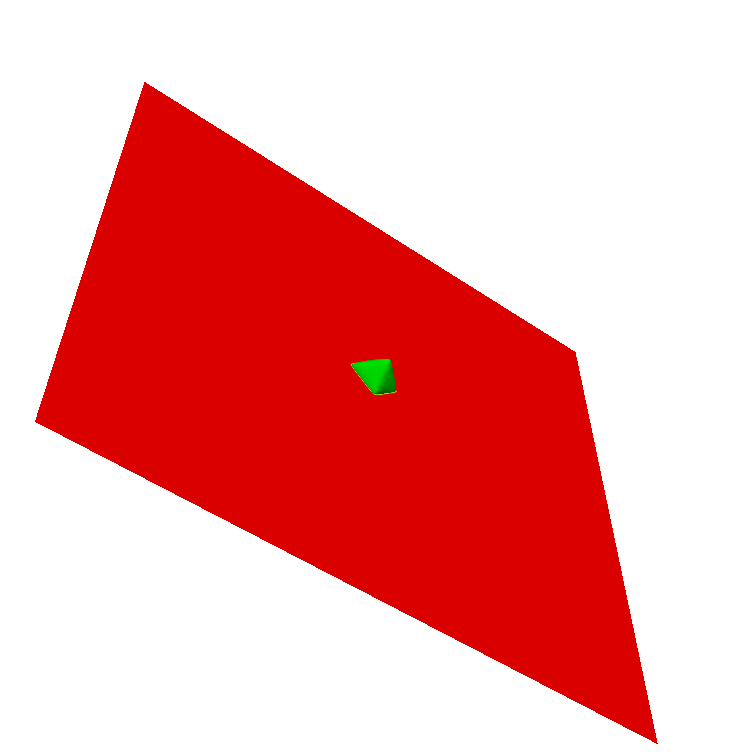}
\includegraphics[angle=-0,width=0.19\textwidth]{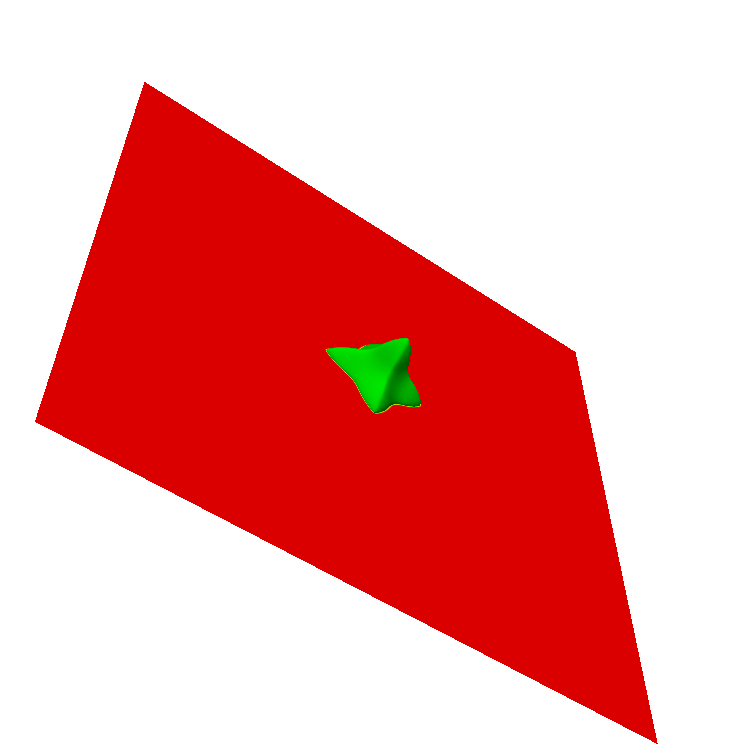}
\includegraphics[angle=-0,width=0.19\textwidth]{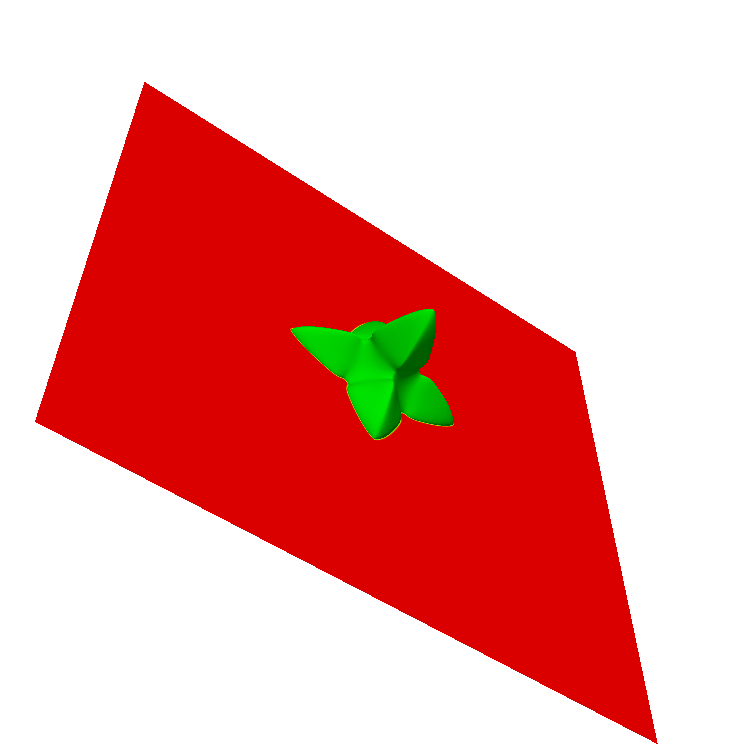}
\includegraphics[angle=-0,width=0.19\textwidth]{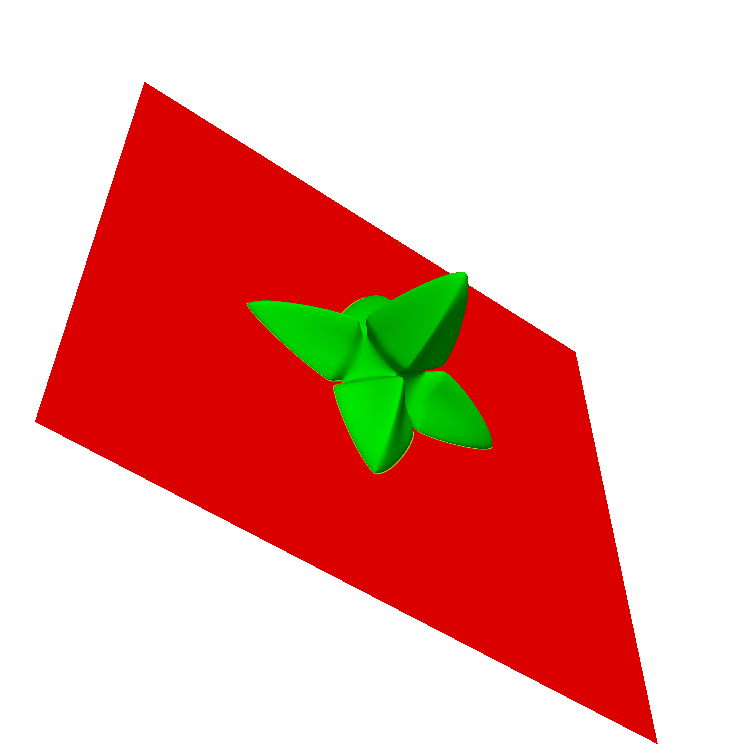}
\includegraphics[angle=-0,width=0.19\textwidth]{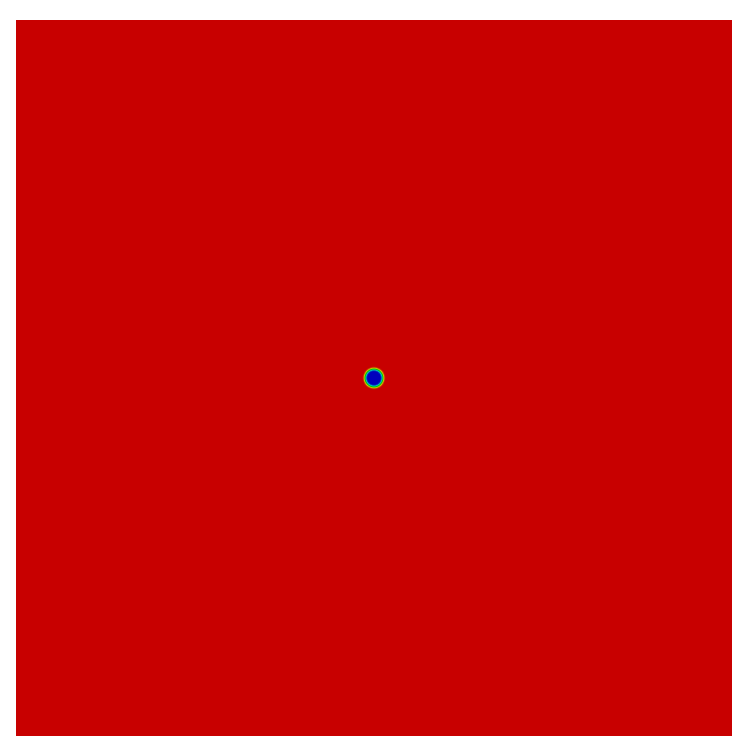}
\includegraphics[angle=-0,width=0.19\textwidth]{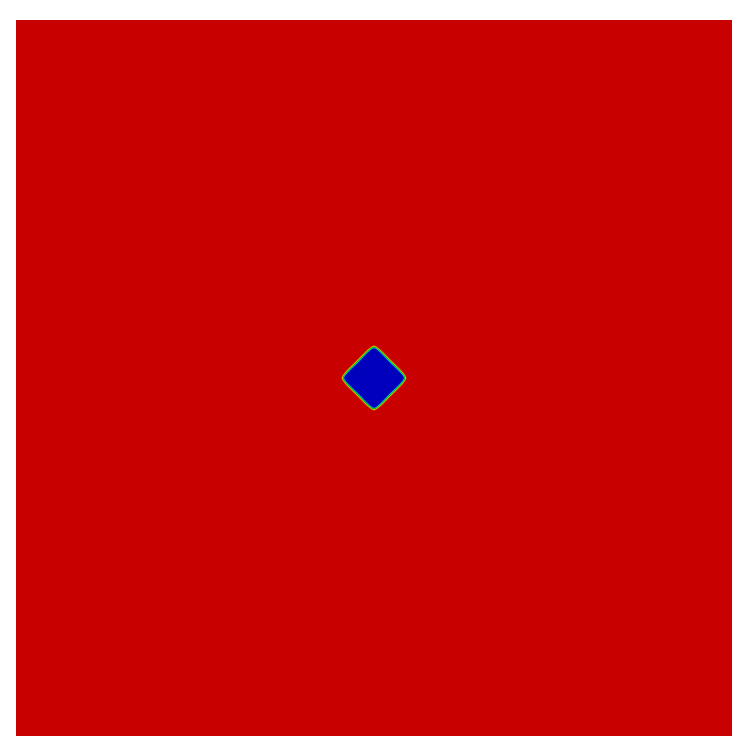}
\includegraphics[angle=-0,width=0.19\textwidth]{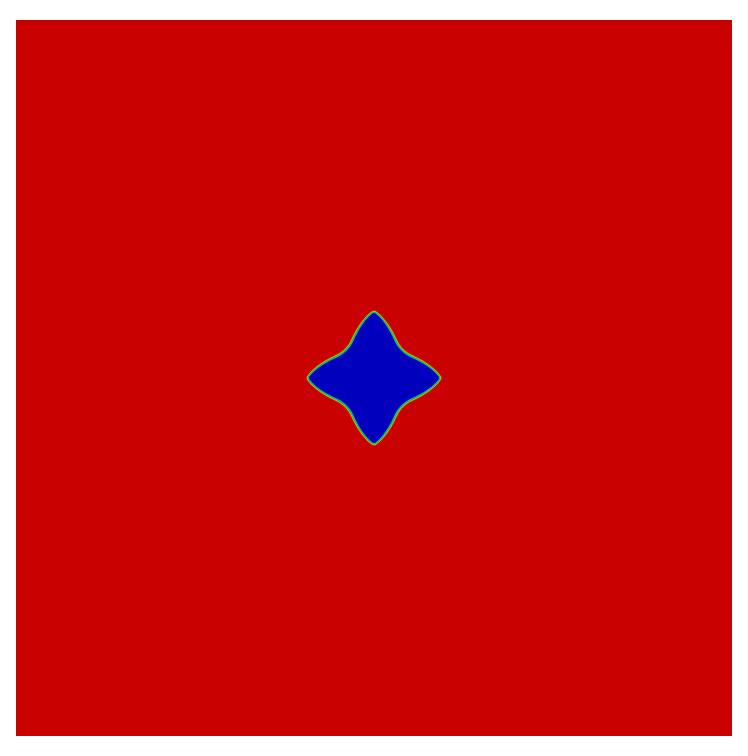}
\includegraphics[angle=-0,width=0.19\textwidth]{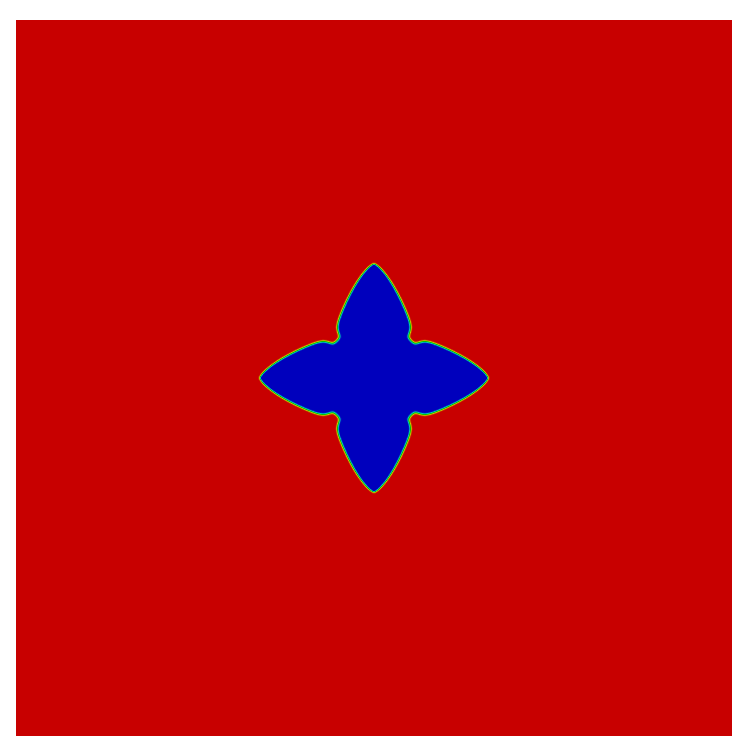}
\includegraphics[angle=-0,width=0.19\textwidth]{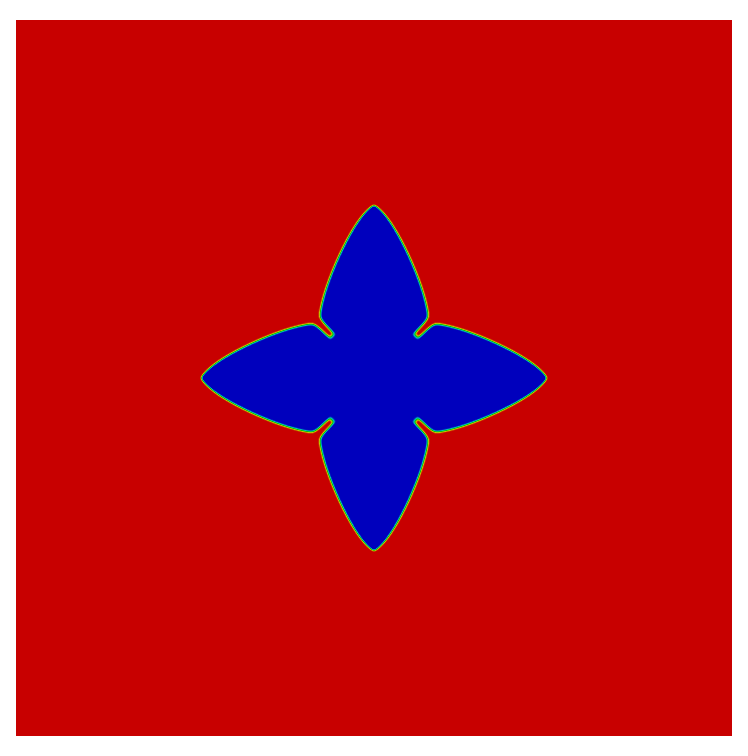}
\fi
\caption{($\epsilon^{-1} = 16\,\pi$, {\sc ani$_9$}, 
(\ref{eq:varrho})(ii), $\alpha=10^{-3}$, $\rho=0.01$, $\uD = -0.5$, 
$\Omega=(-4,4)^3$)
Snapshots of the solution at times $t=0,\,0.1,\,0.2,\,0.3,\,0.4$.
[This computation took $20$ days.]
}
\label{fig:3dnewStefanii_16pi}
\end{figure}%

\end{document}